\documentclass{amsart}
\usepackage{graphicx}
\usepackage{amsmath}
\usepackage{amscd}
\usepackage{amsfonts}
\usepackage{amssymb}

\numberwithin{equation}{section}

\setcounter{MaxMatrixCols}{30}
\setcounter{secnumdepth}{1}

\setcounter{tocdepth}{1}

\newtheorem{theorem}{Theorem}[section]
\newtheorem{lemma}[theorem]{Lemma}
\newtheorem{proposition}[theorem]{Proposition}

\newtheorem{definition}[theorem]{Definition}
\newtheorem{corollary}[theorem]{Corollary}

\newtheorem{example}[theorem]{Example}
\newtheorem{remark}[theorem]{Remark}

\newcommand{\be}{\begin{equation}}
\newcommand{\ee}{\end{equation}}
\newcommand{\bes}{\begin{equation*}}
\newcommand{\ees}{\end{equation*}}

\newcommand{\cE}{\mathcal{E}}
\newcommand{\cH}{\mathcal{H}}
\newcommand{\cK}{\mathcal{K}}
\newcommand{\cL}{\mathcal{L}}
\newcommand{\cM}{\mathcal{M}}
\newcommand{\cN}{\mathcal{N}}
\newcommand{\cF}{\mathcal{F}}

\newcommand{\cA}{\mathcal{A}}
\newcommand{\cR}{\mathcal{R}}
\newcommand{\cO}{\mathcal{O}}
\newcommand{\cS}{\mathcal{S}}
\newcommand{\cT}{\mathcal{T}}
\newcommand{\cI}{\mathcal{I}}

\newcommand{\cP}{\mathcal{P}}

\newcommand{\tT}{\widetilde{T}}

\newcommand{\Rpt}{\mathbb{R}_+^2}
\newcommand{\Rpk}{\mathbb{R}_+^k}
\newcommand{\lel}{\left\langle}
\newcommand{\rir}{\right\rangle}

\newcommand{\mb}[1]{\mathbb{#1}}

\begin{document}

\title{Subproduct systems}

\author{Orr Moshe Shalit}
\address{Department of Mathematics, Technion - Israel
Institute of Technology, 32000, Haifa, Israel.}
\email{orrms@tx.technion.ac.il}
\author{Baruch Solel}
\address{Department of Mathematics, Technion - Israel
Institute of Technology, 32000, Haifa, Israel.}
\email{mabaruch@tx.technion.ac.il}
\thanks{B.S. was supported in part by the US-Israel Binational Foundation and by the Fund
for the Promotion of Research at the Technion.}
\keywords{Product system, subproduct system, semigroups of completely positive maps, dilation, $e_0$-dilation, $*$-automorphic dilation, row contraction, homogeneous polynomial identities, universal operator algebra, $q$-commuting, subshift C$^*$-algebra.}
\subjclass[2000]{46L55, 46L57, 46L08, 47L30.}

\begin{abstract}
{The notion of a \emph{subproduct system}, a generalization of that of a product system, is introduced. We show that there is an essentially $1$ to $1$ correspondence between $cp$-semigroups and pairs $(X,T)$ where $X$
is a subproduct system and $T$ is an injective subproduct system representation. A similar statement holds for
subproduct systems and units of subproduct systems. This correspondence is used as a framework for developing a dilation theory for $cp$-semigroups. Results we obtain: (i) a $*$-automorphic
dilation to semigroups of $*$-endomorphisms over quite general semigroups; (ii) necessary and sufficient conditions for a semigroup of CP maps to have a $*$-endomorphic dilation; (iii) an analogue of Parrot's example of three
contractions with no isometric dilation, that is, an example of three commuting, contractive normal CP maps on
$B(H)$ that admit no $*$-endomorphic dilation (thereby solving an open problem raised by Bhat in 1998).
Special attention is given to subproduct
systems over the semigroup $\mb{N}$, which are used as a framework for studying tuples of operators satisfying
homogeneous polynomial relations, and the operator algebras they generate. As applications we obtain a
noncommutative (projective) Nullstellansatz, a model for tuples of operators subject to homogeneous
polynomial relations, a complete description of all representations of Matsumoto's subshift C$^*$-algebra
when the subshift is of finite type, and a classification of certain operator algebras -- including an interesting
non-selfadjoint generalization of the noncommutative tori.
}
\end{abstract}

\maketitle

\tableofcontents

\section*{Introduction}

\subsection{Motivation: dilation theory of CP$_0$-semigroups}

We begin by describing the problems that motivated this work.

Let $H$ be a separable Hilbert space, and let $\cM \subseteq B(H)$ be a von Neumann algebra. A \emph{CP map} on $\cM$ is a contractive, normal and completely positive map. A  \emph{CP$_0$-semigroup} on $\cM$ is a family $\Theta = \{\Theta_t\}_{t\geq0}$ of unital CP maps on $\cM$ satisfying the semigroup property
$$\Theta_{s+t}(a) = \Theta_s (\Theta_t(a)) \,\, ,\,\, s,t\geq 0, a\in \cM ,$$
$$\Theta_{0}(a) = a \,\, , \,\,  a\in B(H) ,$$
and the continuity condition
$$\lim_{t\rightarrow t_0} \langle \Theta_t(a)h,g\rangle = \langle \Theta_{t_0}(a)h,g\rangle \,\, , \,\, a\in \cM, h,g \in H .$$
A CP$_0$-semigroup is called an \emph{E$_0$-semigroup} if each of
its elements is a $*$-endomorphism.

Let $\Theta$ be a CP$_0$-semigroup acting on $\cM$, and let $\alpha$ be an
E$_0$-semigroup acting on $\cR$, where $\cR$ is a von Neumann subalgebra of $B(K)$ and $K\supseteq H$. Denote the orthogonal projection of $K$ onto $H$ by $p$. We say that $\alpha$ is an \emph{E$_0$-dilation} of
$\Theta$ if for all $t \geq 0$ and $b \in \cR$
\begin{equation}\label{eq:dilation}
\Theta_t(p b p) = p \alpha_t (b) p .
\end{equation}
In the mid 1990's Bhat proved the following result, known today as ``Bhat's Theorem" (see \cite{Bhat96} for the case $\cM = B(H)$, and also \cite{SeLegue, BS00, MS02, Arv03} for different proofs and for the general case):
\begin{theorem}\label{thm:Bhat}
\emph{{\bf (Bhat).}} Every CP$_0$-semigroup has a unique minimal
E$_0$-dilation.
\end{theorem}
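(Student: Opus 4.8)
The plan is to establish existence by an iterated Stinespring / inductive-limit construction, and then to obtain uniqueness of the minimal dilation by an intertwining argument. For existence, I would first apply Stinespring's theorem to each individual map: since $\Theta_t$ is a normal, unital, completely positive map on $\cM \subseteq B(H)$, there is a Hilbert space $\widetilde{H}_t \supseteq H$, a normal representation $\pi_t : \cM \to B(\widetilde{H}_t)$, and an isometry $w_t : H \to \widetilde{H}_t$ with $\Theta_t(a) = w_t^* \pi_t(a) w_t$. The semigroup identity $\Theta_{s+t} = \Theta_s \circ \Theta_t$ should then translate into compatibility relations between the Stinespring spaces at different times, which one uses to organize the family $\{\widetilde{H}_t\}_{t \ge 0}$ into a directed system of Hilbert spaces with isometric connecting maps $\widetilde{H}_s \hookrightarrow \widetilde{H}_t$ for $s \le t$, compatible with the inclusions of $H$.

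The core step is to take the inductive limit $K := \varinjlim_t \widetilde{H}_t$ of this system, producing a Hilbert space $K \supseteq H$ with orthogonal projection $p : K \to H$. On a suitable von Neumann algebra $\cR \subseteq B(K)$ (in the type I case $\cR = B(K)$) one defines $\alpha_t$ as the shift induced by the connecting maps — concretely, $\alpha_t$ re-indexes the inductive tower by $t$ units — and checks that each $\alpha_t$ is a normal $*$-endomorphism, that $\{\alpha_t\}_{t\ge0}$ is a semigroup, and that the compression formula \eqref{eq:dilation}, $\Theta_t(pbp) = p\alpha_t(b)p$, holds by construction. I would deduce multiplicativity of $\alpha_t$ from the representation property of the $\pi_t$, and the dilation equation by unwinding the definition of $w_t^*\pi_t(\cdot)w_t$ at the bottom of the tower.

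For minimality, declare the dilation minimal when $\cR$ is the smallest $\alpha$-invariant von Neumann algebra whose compression to $H$ recovers $\cM$; equivalently, $K$ is the closed span of $\{\alpha_{t_1}(pa_1p)\cdots \alpha_{t_n}(pa_np)h : t_i \ge 0,\ a_i \in \cM,\ h \in H\}$. The inductive-limit construction is manifestly minimal in this sense. Given any two minimal E$_0$-dilations $(\cR,\alpha,K)$ and $(\cR',\alpha',K')$ of the same $\Theta$, one defines a map on these spanning sets by matching generators and shows, using the dilation equation and the semigroup property repeatedly, that inner products are preserved; this extends to a unitary $K \to K'$ intertwining $\alpha$ and $\alpha'$, yielding uniqueness.

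The main obstacle is the continuous-time bookkeeping: ensuring the connecting maps are genuinely compatible so that the inductive limit is well defined, that the resulting $K$ is separable, and — most delicately — that $t \mapsto \alpha_t$ inherits the weak continuity required of an E$_0$-semigroup. The measurability and continuity of the Stinespring data in $t$, together with their behaviour under passage to the limit, is where the real work lies; by contrast, the algebraic identities (semigroup law, endomorphism property, compression formula) and the uniqueness argument are comparatively formal once the limit object is in hand.
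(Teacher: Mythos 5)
The paper does not actually prove this theorem: it is quoted as background, with the proof deferred to \cite{Bhat96,SeLegue,BS00,MS02,Arv03}; the paper's own machinery only reproves the \emph{discrete} analogue (Corollary \ref{cor:discrete_bhat}). Your sketch follows the same broad strategy as those cited proofs --- Stinespring/GNS data for each $\Theta_t$, an inductive limit, shift endomorphisms, an intertwining argument for uniqueness --- so it is not a different route in spirit, but as written it has genuine gaps.

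The main structural gap is the indexing of the inductive system. The semigroup law does \emph{not} hand you canonical isometric connecting maps $\widetilde{H}_s \hookrightarrow \widetilde{H}_t$ for $s \le t$ between the one-step Stinespring spaces. What it gives, for each decomposition $t = t_1 + t_2$, is an isometric embedding $\cM \otimes_{\Theta_t} H \hookrightarrow \cM \otimes_{\Theta_{t_2}} \cM \otimes_{\Theta_{t_1}} H$ (the maps $\Gamma_{s,t}$ appearing in the proof of Theorem \ref{thm:reprep}); refining a partition of $[0,t]$ enlarges the space. So the correct directed set is the family of finite partitions (or tuples $(t_1,\dots,t_n)$) ordered by refinement, not the half-line ordered by $\le$, and the inductive limit must be taken over that. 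This is exactly where the product-system (or subproduct-system) structure enters, and it is the step your outline elides. Two further points are understated. First, ``the algebraic identities and the uniqueness argument are comparatively formal'' is too optimistic for general $\cM$: identifying the dilating algebra $\cR$ with $\cM = p\cR p$ as a corner, showing $p$ is increasing, and building the unitary intertwiner between two minimal dilations all require the correspondence machinery, and uniqueness is one of the delicate points that motivated several independent proofs. Second, you correctly flag the weak continuity of $t \mapsto \alpha_t$ as the hard analytic issue, but flagging it is not resolving it; in \cite{BS00} and \cite{MS02} this (measurable structure on the bundle, continuity of units or sections, passage to the limit) consumes a substantial part of the argument. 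In sum, the proposal is a correct roadmap of the known proofs rather than a proof; the missing ingredients are precisely the ones the cited papers were written to supply.
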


A natural question is then this: \emph{given {\bf two} commuting CP$_0$-semigroups, can one simultaneously dilate them to a pair of commuting E$_0$-semigroups?} In \cite{Shalit08} the following partial positive answer was obtained\footnote{The same result was obtained in \cite{Shalit07b} for nonunital semigroups acting on $\cM = B(H)$.}:

\begin{theorem}\label{thm:E0Dil} {\bf\cite[Theorem 6.6]{Shalit08}}
Let $\{\phi_t\}_{t\geq0}$ and $\{\theta_t\}_{t\geq0}$ be two strongly
commuting CP$_0$-semigroups on a von Neumann algebra $\cM\subseteq
B(H)$, where $H$ is a separable Hilbert space. Then there is a separable Hilbert space $K$ containing $H$ and an orthogonal projection $p:K \rightarrow H$, a von Neumann algebra $\cR \subseteq B(K)$ such that $\cM = p \cR p$, and two commuting
E$_0$-semigroups $\alpha$ and $\beta$ on $\cR$ such that
$$\phi_s \circ \theta_t (p b p) = p \alpha_s \circ \beta_t (b) p$$
for all $s,t \geq 0$ and all $b \in \cR$.
\end{theorem}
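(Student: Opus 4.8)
The plan is to build the dilation in two stages, using Bhat's Theorem (Theorem \ref{thm:Bhat}) as the single-variable engine and exploiting strong commutativity to keep the two semigroups commuting throughout. The natural framework is the language of product systems of $W^*$-correspondences over $\cM'$: to each CP$_0$-semigroup one associates a product system together with a covariant representation, and the minimal E$_0$-dilation is recovered by dilating that representation to an isometric one on a Fock-type module. Strong commutativity of $\phi$ and $\theta$ is, in essence, the statement that their two product systems amalgamate into a single product system over $\mathbb{R}_+^2$, satisfying a coherence (hexagon) identity; this is the structure I would extract first.

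First I would apply Theorem \ref{thm:Bhat} to $\theta$, obtaining an E$_0$-semigroup $\beta$ on a von Neumann algebra $\cR_1 \subseteq B(K_1)$ with $K_1 \supseteq H$, $\cM = p_1 \cR_1 p_1$, and $\theta_t(p_1 b p_1) = p_1 \beta_t(b) p_1$, where $p_1$ is the projection of $K_1$ onto $H$. The decisive step is then to lift $\phi$ to a CP$_0$-semigroup $\phi^{(1)}$ on $\cR_1$ that dilates $\phi$ (in the sense $\phi_s(p_1 b p_1) = p_1 \phi^{(1)}_s(b) p_1$) and that commutes with $\beta$. This is precisely where strong commutativity enters: the coherence of the two product systems over $\mathbb{R}_+^2$ furnishes, for each $s$, a well-defined completely positive map on $\cR_1$ intertwining the Fock structure underlying $\beta$, and the hexagon identity forces $\phi^{(1)}_s \circ \beta_t = \beta_t \circ \phi^{(1)}_s$. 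One must check that $\phi^{(1)}$ is genuinely a CP$_0$-semigroup (contractive, normal, completely positive, weakly continuous).

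Next I would apply Theorem \ref{thm:Bhat} a second time, now to $\phi^{(1)}$ on $\cR_1$, producing an E$_0$-semigroup $\alpha$ on $\cR \subseteq B(K)$ with $K \supseteq K_1$, $\cR_1 = q \cR q$ for the projection $q : K \to K_1$, and $\phi^{(1)}_s(q c q) = q \alpha_s(c) q$. It remains to carry $\beta$ up to $\cR$. Here I would invoke the general principle that a $*$-endomorphism semigroup commuting with a CP$_0$-semigroup extends, through the minimal dilation of the latter, to a commuting E$_0$-semigroup: since $\beta$ commutes with $\phi^{(1)}$, minimality of the Bhat dilation $\alpha$ lets one extend $\beta$ to an E$_0$-semigroup $\tilde\beta$ on $\cR$ with $\tilde\beta_t \circ \alpha_s = \alpha_s \circ \tilde\beta_t$ and $q \tilde\beta_t(c) q = \beta_t(q c q)$. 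Letting $p$ denote the orthogonal projection of $K$ onto $H$ (so $\cM = p \cR p$) and composing the two dilation identities then yields $\phi_s \circ \theta_t(p b p) = p\, \alpha_s \circ \tilde\beta_t(b)\, p$ for all $s,t \geq 0$ and all $b \in \cR$, exhibiting $\alpha$ and $\tilde\beta$ as the required pair.

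The main obstacle is the lifting in the first stage: manufacturing $\phi^{(1)}$ on the dilation algebra of $\theta$ that simultaneously dilates $\phi$ and commutes with $\beta$. Ordinary commutativity is not enough — the later Parrot-type example in this paper shows that three commuting CP maps can fail to admit a commuting $*$-endomorphic dilation — so one must use the full strength of strong commutativity, namely the existence and coherence of the product system over $\mathbb{R}_+^2$. Verifying that the lifted map is a bona fide CP$_0$-semigroup and that both commutation identities survive the two applications of Bhat's theorem is the technical heart of the argument; the second stage is comparatively formal once the extension principle for E$_0$-semigroups through minimal dilations is in hand.
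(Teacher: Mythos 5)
First, note that the paper does not prove this theorem at all: it is quoted verbatim from \cite[Theorem 6.6]{Shalit08}, and the surrounding text tells you how that proof goes. There, strong commutativity is used exactly once, to assemble the Arveson--Stinespring correspondences of the \emph{two-parameter} family $\{\phi_s\circ\theta_t\}$ into a single product system of $W^*$-correspondences over $\mathbb{R}_+^2$ with a fully coisometric representation; one then performs a simultaneous two-parameter isometric dilation of that representation (the result invoked later in this paper as \cite[Theorem 5.2]{Shalit08}) and induces the pair $(\alpha,\beta)$ from the resulting isometric representation. There is no iterated application of Bhat's theorem. Your two-stage scheme is therefore a genuinely different route, and it is not one that is carried out in the literature.

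The decisive gap is the first lifting step, and you have correctly identified it as the heart of the matter but have not actually supplied it. Strong commutativity is a condition on the pair $(\phi,\theta)$ acting on $\cM$ --- roughly, a specific unitary identification $E_{\phi_s}\otimes E_{\theta_t}\cong E_{\theta_t}\otimes E_{\phi_s}$ of correspondences over $\cM'$ --- and it gives you no map whatsoever on the dilation algebra $\cR_1\subseteq B(K_1)$, which is a different von Neumann algebra (generated by $\bigcup_t\beta_t(\cM)$) with a different commutant. The phrase ``the coherence of the two product systems over $\mathbb{R}_+^2$ furnishes, for each $s$, a well-defined completely positive map on $\cR_1$'' is an assertion of precisely the statement that needs proving; no construction of $\phi^{(1)}_s$ on elements of $\cR_1$ outside the corner $p_1\cR_1p_1$ is given, and no mechanism is offered for why it would be well defined, normal, a semigroup, or commute with $\beta$. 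If such a lifting were available, the elaborate construction of the product system over $\mathbb{R}_+^2$ --- which the paper explicitly calls ``one of the hardest parts in that proof'' --- would be unnecessary. The second stage has a similar, though less severe, problem: the ``general principle'' that an $e$-semigroup commuting with a CP$_0$-semigroup extends through the latter's minimal dilation to a commuting E$_0$-semigroup is a commutant-lifting-type statement that is not established anywhere and cannot be taken for granted (Section \ref{sec:dil} of this paper shows how badly such lifting heuristics can fail for commuting CP maps). As it stands, the proposal reduces the theorem to two unproved statements each of comparable difficulty to the theorem itself.
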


In other words: every two-parameter CP$_0$-semigroup that satisfies an additional condition of \emph{strong commutativity} has a two-parameter E$_0$-dilation. The condition of strong commutativity was introduced in \cite{S06}. A precise definition will not be given here. {\bf The main tools in the proof of Theorem \ref{thm:E0Dil}, and also in some of the proofs of Theorem \ref{thm:Bhat}, were product systems of W$^*$-correspondences and their representations}. In fact, the only place in the proof of Theorem \ref{thm:E0Dil} where the assumption of strong commutativity is used, is in the construction of a certain product system. More about that later.

In \cite{Bhat98}, Bhat showed that given a pair of commuting CP maps $\Theta$ and $\Phi$ on $B(H)$, there is a Hilbert space $K\supseteq H$ and a pair of commuting normal $*$-endomorphisms $\alpha$ and $\beta$ acting on $B(K)$ such that
\bes
\Theta^m \circ \Phi^n (p b p) = p \alpha^m \circ \beta^n (b) p \,\, , \,\, b \in B(K)
\ees
for all $m,n \in \mathbb{N}$ (here $p$ denotes the projection of $K$ onto $H$). Later on Solel, using a different method (using in fact product systems and their representations), proved this result for commuting CP maps on arbitrary von Neumann algebras \cite{S06}. Neither one of the above results requires strong commutativity.

In light of the above discussion, and inspired by classical dilation theory \cite{Slocinski,SzNF70}, it is natural to conjecture that \emph{every} two commuting (not necessarily \emph{strongly} commuting) CP$_0$-semigroups have an E$_0$-dilation, and in fact that the same is true for any $k$ commuting CP$_0$-semigroups, for any positive integer $k$. However, the framework given by product systems seems to be too weak to prove this. Trying to bypass this stoppage, we arrived at the notion of a \emph{subproduct system}.

\subsection{Background: from product systems to subproduct systems}

Product systems of Hilbert spaces over $\mb{R}_+$ were introduced by Arveson some 20 years ago in his study of E$_0$-semigroups \cite{Arv89}. In a few imprecise words, a product system of Hilbert spaces over $\mb{R}_+$ is a bundle $\{X(t)\}_{t\in\mb{R}_+}$ of Hilbert spaces such that
\bes
X(s+t) = X(s) \otimes X(t) \,\, , \,\, s,t \in \mb{R}_+.
\ees
We emphasize immediately that Arveson's definition of product systems required also that the bundle carry a certain Borel measurable structure, but we do not deal with these matters here. To every E$_0$-semigroup Arveson associated a product system, and it turns out that the product system associated to an E$_0$-semigroup is a complete cocycle conjugacy invariant of the E$_0$-semigroup.

Later, product systems of Hilbert $C^*$-correspondences over $\mb{R}_+$ appeared (see the survey \cite{SkeideSurvey} by Skeide). In \cite{BS00}, Bhat and Skeide associate with every semigroup of completely positive maps on a C$^*$-algebra $A$ a product system of Hilbert $A$-correspondences. This product system was then used in showing that every semigroup of completely positive maps can be ``dilated" to a semigroup of $*$-endomorphisms. Muhly and Solel introduced a different construction \cite{MS02}: to every CP$_0$-semigroup on a von Neumann algebra $\cM$ they associated a product system of Hilbert W$^*$-correspondences over $\cM'$, the commutant of $\cM$. Again, this product system is then used in constructing an E$_0$-dilation for the original CP$_0$-semigroup.

Product systems of C$^*$-correspondences over semigroups other than $\mb{R}_+$ were first studied by Fowler \cite{Fowler2002}, and they have been studied since then by many authors. In \cite{S06}, product systems over $\mb{N}^2$ (and their representations) were studied, and the results were used to prove that every pair of commuting CP maps has a $*$-endomorphic dilation. Product systems over $\Rpt$ were also central to the proof of Theorem \ref{thm:E0Dil}, where every pair of strongly commuting CP$_0$-semigroups is associated with a product system over $\Rpt$. However, the construction of the product system is one of the hardest parts in that proof. Furthermore, that construction fails when one drops the assumption of strong commutativity, and it also fails when one tries to repeat it for $k$ strongly commuting semigroups.

On the other hand, there is another object that may be naturally associated with a semigroup of CP maps over \emph{any} semigroup: this object is the \emph{subproduct system}, which, when the CP maps act on $B(H)$, is the bundle of Arveson's ``metric operator spaces" (introduced in \cite{Arv97}). Roughly, a subproduct system of correspondences over a semigroup $\cS$ is a bundle $\{X(s)\}_{s\in\cS}$ of correspondences such that
\bes
X(s+t) \subseteq X(s) \otimes X(t) \,\, , \,\, s,t \in \cS.
\ees
See Definition \ref{def:subproduct_system} below. Of course, a difficult problem cannot be made easy just by introducing a new notion, and the problem of dilating $k$-parameter CP$_0$-semigroups remains unsolved. However, subproduct systems did already provide us with an efficient general framework for tackling various problems in operator algebras, and in particular it has led us to a progress toward the solution of the discrete analogue of the above unsolved problem.

This paper consists of two parts. In the first part we introduce subproduct systems over general semigroups, show
the connection between subproduct systems and $cp$-semigroups, and use this connection
to obtain three main results in dilation theory of $cp$-semigroups. The first result is that every $e_0$-semigroup over a (certain kind of) semigroup $\cS$ can be dilated to a
semigroup of $*$-automorphisms on some type I factor. The second is some necessary conditions and sufficient conditions for a $cp$-semigroup to have a (minimal) $*$-endomorphic dilation. The third is
an analogue of Parrot's example of three contractions with no isometric dilation, that is, an example of three commuting, contractive normal CP maps on $B(H)$ that admit no $*$-endomorphic dilation. The CP maps in the stated example can be taken to have \emph{arbitrarily small norm}, providing the first example of a theorem in the classical theory of isometric dilations that cannot be generalized to the theory of $e$-dilations of $cp$-semigroups.

Having convinced the reader that subproduct systems are an interesting and important object, we turn in the second part
of the paper to take a closer look at the simplest examples of subproduct systems, that is, subproduct systems of
Hilbert spaces over $\mb{N}$. We study certain tuples of operators and operator algebras that can be naturally
associated with every subproduct system, and explore the relationship between these objects and the subproduct systems
that give rise to them.

\subsection{Some preliminaries}

$\cM$ and $\cN$ will denote von Neumann subalgebras of $B(H)$, where $H$ is some Hilbert space.

In Sections \ref{sec:subproduct} through \ref{sec:dil}, $\cS$ will denote a sub-semigroup of $\Rpk$.
In fact, in large parts of the paper $\cS$ can be taken to be any semigroup with unit, or at least any \emph{Ore semigroup} (see \cite{Laca} for a definition), but we prefer to avoid this distraction.

\begin{definition}
A \emph{$cp$-semigroup} is a semigroup of CP maps, that is, a family $\Theta = \{\Theta_s\}_{s \in \cS}$ of completely positive, contractive and normal maps
on $\cM$ such that
\bes
\Theta_{s+t}(a) = \Theta_s (\Theta_t(a)) \,\, ,\,\, s,t\in\cS, a\in \cM
\ees
and
\bes
\Theta_{0}(a) = a \,\, ,\,\,  a\in \cM .
\ees
A \emph{$cp_0$-semigroup} is a semigroup of
unital CP maps. An \emph{$e$-semigroup} is a semigroup of $*$-endomorphisms. An \emph{$e_0$-semigroup} is a semigroup of
unital $*$-endomorphisms.
\end{definition}
For concreteness, one should think of the case $\cS = \mb{N}^k$, where a $cp$-semigroup is a $k$-tuple of commuting CP maps, or the case $\cS = \Rpk$, where a $cp$-semigroup is a $k$-parameter semigroup of CP maps, or $k$ mutually commuting one-parameter $cp$-semigroups.

\begin{definition}
Let $\Theta = \{\Theta_s\}_{s \in \cS}$ be a $cp$-semigroup acting on a von Neumann algebra $\cM \subseteq B(H)$. An \emph{$e$-dilation of $\Theta$} is a triple $(\alpha,K,\cR)$ consisting of a Hilbert space $K \supseteq H$ (with orthogonal projection $P_H : K \rightarrow H$), a von Neumann algebra $\cR \subseteq B(K)$ that contains $\cM$ as a corner $\cM = P_H \cR P_H$, and an $e$-semigroup $\alpha = \{\alpha_s\}_{s\in\cS}$ on $\cR$ such that for all $T \in \cR$, $s \in \cS$,
\bes
\Theta_s(P_H T P_H) = P_H \alpha_s(T) P_H .
\ees
\end{definition}

\begin{definition}
Let $\cA$ be a $C^*$-algebra. A \emph{Hilbert
$C^*$-correspondences over $\cA$} is a (right) Hilbert
$\cA$-module $E$ which carries an adjointable, left action of
$\cA$.
\end{definition}
\begin{definition}
Let $\cM$ be a $W^*$-algebra. A \emph{Hilbert
$W^*$-correspondence over $\cM$} is a self-adjoint Hilbert
$C^*$-correspondence $E$ over $\cM$, such that the map $\cM
\rightarrow \cL(E)$ which gives rise to the left action is normal.
\end{definition}

\begin{definition}
Let $E$ be a $C^*$-correspondence over $\cA$, and let $H$ be a
Hilbert space. A pair $(\sigma, T)$ is called a \emph{completely
contractive covariant representation} of $E$ on $H$ (or, for
brevity, a \emph{c.c. representation}) if
\begin{enumerate}
    \item $T: E \rightarrow B(H)$ is a completely contractive linear map;
    \item $\sigma : A \rightarrow B(H)$ is a nondegenerate $*$-homomorphism; and
    \item $T(xa) = T(x) \sigma(a)$ and $T(a\cdot x) = \sigma(a) T(x)$ for all $x \in E$ and  all $a\in\cA$.
\end{enumerate}
If $\cA$ is a $W^*$-algebra and $E$ is $W^*$-correspondence then
we also require that $\sigma$ be normal.
\end{definition}
Given a $C^*$-correspondence $E$ and a c.c. representation
$(\sigma,T)$ of $E$ on $H$, one can form the Hilbert space $E
\otimes_\sigma H$, which is defined as the Hausdorff completion of
the algebraic tensor product with respect to the inner product
$$\langle x \otimes h, y \otimes g \rangle = \langle h, \sigma (\langle x,y\rangle) g \rangle .$$
One then defines $\widetilde{T} : E \otimes_\sigma H \rightarrow H$ by
$$\widetilde{T} (x \otimes h) = T(x)h .$$
\begin{definition}
A c.c. representation $(\sigma, T)$ is called \emph{isometric} if
for all $x, y \in E$,
\begin{equation*}
T(x)^*T(y) = \sigma(\langle x, y \rangle) .
\end{equation*}
(This is the case if and only if $\widetilde{T}$ is an isometry). It
is called \emph{fully coisometric} if $\widetilde{T}$ is a coisometry.
\end{definition}

Given two Hilbert $C^*$-correspondences $E$ and $F$ over $\cA$,
the \emph{balanced} (or \emph{inner}) tensor product $E
\otimes F$ is a Hilbert $C^*$-correspondence over $\cA$
defined to be the Hausdorff completion of the algebraic tensor
product with respect to the inner product
$$\langle x \otimes y, w \otimes z \rangle = \langle y , \langle x,w\rangle \cdot z \rangle \,\, , \,\,  x,w\in E, y,z\in F .$$
The left and right actions are defined as $a \cdot (x \otimes y) =
(a\cdot x) \otimes y$ and $(x \otimes y)a = x \otimes (ya)$,
respectively, for all $a\in A, x\in E, y\in F$. When working
in the context of $W^*$-correspondences, that is, if $E$ and $F$
are $W$*-correspondences and $\cA$ is a $W^*$-algebra, then $E
\otimes F$ is understood do be the \emph{self-dual
extension} of the above construction.

\subsection{Detailed overview of the paper}

Subproduct systems, their representations, and their units, are defined in the next section.
The following two sections, \ref{sec:subncp} and \ref{sec:subunitsncp},
can be viewed as a reorganization  and sharpening of some known results, including several new observations.

Section \ref{sec:subncp} establishes the correspondence between $cp$-semigroups and subproduct systems. It is shown that given a subproduct system $X$ of $\cN$ - correspondences and a subproduct system representation $R$ of $X$ on $H$, we may construct a $cp$-semigroup $\Theta$ acting on $\cN'$. We denote this assignment as $\Theta = \Sigma(X,R)$. Conversely, it is shown that given a  $cp$-semigroup $\Theta$ acting on $\cM$, there is a subproduct system $E$ (called the \emph{Arveson-Stinespring subproduct system} of $\Theta$) of $\cM'$-correspondences and an \emph{injective} representation $T$ of $E$ on $H$ such that $\Theta = \Sigma(E,T)$. Denoting this assignment as $(E,T) = \Xi(\Theta)$, we have that $\Sigma \circ \Xi$ is the identity. In Theorem \ref{thm:essentially_inverse} we show that $\Xi \circ \Sigma$ is also, after restricting to pairs $(X,R)$ with $R$ an injective representation (and up to some ``isomorphism"), the identity. This allows us to deduce (Corollary \ref{cor:onlyproductisorep}) that a subproduct system that is not a product system has no isometric representations.
We introduce the \emph{Fock spaces} associated to a subproduct system and the
canonical \emph{shift representations}. These constructs allow us to show that every subproduct system
is the Arveson-Stinespring subproduct system of some $cp$-semigroup.

In Section \ref{sec:subunitsncp} we briefly sketch the picture that is dual to that of Section \ref{sec:subncp}.
It is shown that given a subproduct system and a unit of that subproduct system one may construct a $cp$-semigroup, and that every $cp$-semigroup arises this way.

In Section \ref{sec:aut_dil}, we construct for every subproduct system $X$ and every fully coisometric subproduct system
representation $T$ of $X$ on a Hilbert space, a semigroup $\hat{T}$ of contractions on a Hilbert space that captures ``all the information" about $X$ and $T$. This construction is a modification of the construction introduced in \cite{Shalit07a} for the case where $X$ is a \emph{product} system. It turns out that when $X$ is merely a \emph{sub}product system, it is hard to apply $\hat{T}$ to obtain new results about the representation $T$. However, when $X$ is a true \emph{product} system $\hat{T}$ is very handy, and we use it to prove that every $e_0$-semigroup has a $*$-automorphic dilation (in a certain sense).

Section \ref{sec:dil} begins with some general remarks regarding dilations and pieces of subproduct system representations,
and then the connection between the dilation theories of $cp$-semigroups and of representations of subproduct systems is made.
We define the notion of a \emph{subproduct subsystem} and then we define \emph{dilations} and \emph{pieces} of subproduct
system representations. These notions generalize the notions of \emph{commuting piece} or \emph{$q$-commuting piece} of
\cite{BBD03} and \cite{Dey07}, and also generalizes the definition of \emph{dilation} of a product system
representation of \cite{MS02}. Proposition \ref{prop:dil_rep_dil_CP}, Theorem \ref{thm:edil_repdil} and Corollary \ref{cor:edil_repdil} show that the 1-1 correspondences $\Sigma$ and $\Xi$ between $cp$-semigroups and subproduct systems with representations take isometric dilations of representations to $e$-dilations and vice-versa. This is used to obtain an example of three commuting, unital and contractive CP maps on $B(H)$ for which there exists no $e$-dilation acting on a $B(K)$, and no \emph{minimal} dilation acting on any von Neumann algebra (Theorem \ref{thm:parrot}).

In Section \ref{sec:dil} we also present a reduction of both the problem of constructing an $e_0$-dilation to a $cp_0$-semigroup, and the problem of constructing an $e$-dilation to a $k$-tuple of commuting CP maps with \emph{small enough norm},
to the problem of embedding a subproduct system in a larger \emph{product system}. We show that not every subproduct system can be embedded in a product system (Proposition \ref{prop:sprdctcntrexample}), and we use this to construct an example of three commuting CP maps $\theta_1, \theta_2, \theta_3$ such that for \emph{any} $\lambda >0$ the three-tuple $\lambda \theta_1, \lambda \theta_2, \lambda \theta_3$ has no $e$-dilation (Theorem \ref{thm:strongparrot}). This unexpected phenomenon has no counterpart in the classical theory of isometric dilations, and provides the first example of a theorem in classical dilation theory that cannot be generalized to the theory of $e$-dilations of $cp$-semigroups.

The developments described in the first part of the paper indicate that subproduct systems are
worthwhile objects of study, but to make progress we must look at plenty concrete examples.
In the second part of the paper we begin studying subproduct systems of Hilbert spaces over the semigroup $\mb{N}$. In Section \ref{sec:subproductN} we show that every subproduct system (of W$^*$-correspondences) over $\mb{N}$ is isomorphic to a \emph{standard} subproduct system, that is, it is a subproduct subsystem of the full product system $\{E^{\otimes n}\}_{n\in\mb{N}}$ for some W$^*$-correspondence $E$. Using the results of the previous section, this gives a new proof to the discrete analogue of Bhat's Theorem: \emph{every $cp_0$-semigroup over $\mb{N}$ has an $e_0$-dilation}. Given a subproduct system we define the \emph{standard $X$-shift}, and we show that if $X$ is a subproduct subsystem of $Y$, then the standard $X$-shift is the maximal $X$-piece of the standard $Y$-shift, generalizing and unifying results from \cite{BBD03,Dey07,Popescu06}.

In Section \ref{sec:projective} we explain why subproduct systems are convenient for studying noncommutative
projective algebraic geometry. We show that every homogeneous ideal $I$ in the algebra
$\mb{C}\langle x_1, \ldots, x_d\rangle$ of noncommutative polynomials corresponds to a unique subproduct system
$X_I$, and vice-versa. The representations of $X_I$ on a Hilbert space $H$ are precisely determined by the $d$-tuples in the zero set of $I$,
\bes
Z(I) = \{\underline{T} = (T_1, \ldots, T_d) \in B(H)^d : \forall p \in I. p(\underline{T}) = 0\}.
\ees
A noncommutative version of the Nullstellansatz is obtained, stating that
\bes
\{p \in \mb{C}\langle x_1, \ldots, x_d\rangle : \forall \underline{T} \in Z(I). p(\underline{T}) =0\} = I.
\ees

Section \ref{sec:universal} starts with a review of a powerful tool, Gelu Popescu's ``Poisson Transform" \cite{Popescu99}.
Using this tool we derive some basic results (obtained previously by Popescu in \cite{Popescu06}) which allow us to
identify the operator algebra $\cA_X$ generated by the $X$-shift as the universal unital operator algebra generated by a row
contraction subject to homogeneous polynomial identities. We then prove that every completely bounded representation of
a subproduct system $X$ is a piece of a scaled inflation of the $X$-shift, and derive a related ``von Neumann inequality".

In Section \ref{sec:operator_algebra} we discuss the relationship between a subproduct system $X$ and $\cA_X$, the (non-selfadjoint
operator algebra generated by the $X$-shift). The main result in this section is Theorem \ref{thm:algebra_iso}, which
says that $X \cong Y$ if and only if $\cA_X$ is completely isometrically isomorphic to $\cA_Y$ by an isomorphism that
preserves the vacuum state. This result is used in Section \ref{sec:qcommuting}, where we study the universal
norm closed unital operator algebra generated by a row contraction $(T_1, \ldots, T_d)$ satisfying the relations
\bes
T_i T_j = q_{ij}T_j T_i \,\, , \,\, 1 \leq i < j \leq d,
\ees
where $q = (q_{i,j})_{i,j=1}^d \in M_n(\mb{C})$ is a matrix such that $q_{j,i} = q_{i,j}^{-1}$. These non-selfadjoint analogues of the noncommutative tori, are shown to be classified by their subproduct systems when $q_{i,j}\neq 1$ for all $i,j$. In particular, when $d=2$, we obtain the universal algebra for the relation
\bes
T_1 T_2 = q T_2 T_1,
\ees
which we call $\cA_q$. It is shown that $\cA_q$ is isomorphically isomorphic to $\cA_r$ if and only if $q = r$ or $q = r^{-1}$.

In Section \ref{sec:A} we describe all standard maximal subproduct systems $X$ with $\dim X(1) = 2$ and $\dim X(2) = 3$, and classify their algebras up to isometric isomorphisms.

In the closing section of this paper, Section \ref{sec:subshift}, we find that subproduct systems are also closely related to \emph{subshifts} and to the \emph{subshift C$^*$-algebras} introduced by K. Matsumoto \cite{Ma}. We show how every subshift gives rise to a subproduct system, and characterize the subproduct systems that come from subshifts. We use this connection together with the results of Section \ref{sec:universal} to describe all representations of subshift C$^*$-algebras that come from a subshift of \emph{finite type} (Theorem \ref{thm:rep_subshift}).

\subsection{Acknowledgment}
The authors owe their thanks to Eliahu Levy for pointing out a mistake in a previous version of the paper.

\newpage
\setcounter{secnumdepth}{2}
\part{Subproduct systems and $cp$-semigroups}

\section{Subproduct systems of Hilbert $W^*$-correspondences}\label{sec:subproduct}

\begin{definition}\label{def:subproduct_system}
Let $\cN$ be a von Neumann algebra. A \emph{subproduct system of Hilbert $W^*$-correspondences} over $\cN$ is a family $X = \{X(s)\}_{s\in\cS}$ of Hilbert $W^*$-correspondences over $\cN$ such that
\begin{enumerate}
\item $X(0) = \cN$,
\item For every $s,t \in \cS$ there is a coisometric mapping of $\cN$-correspondences
$$U_{s,t}: X(s) \otimes X(t) \rightarrow X(s+t),$$
\item The maps $U_{s,0}$ and $U_{0,s}$ are given by the left and right actions of $\cN$ on $X(s)$,
\item The maps $U_{s,t}$ satisfy the following associativity condition:
\begin{equation}\label{eq:assoc_prod}
U_{s+t,r} \left(U_{s,t} \otimes I_{X(r)} \right) = U_{s,t+r} \left(I_{X(s)} \otimes U_{t,r} \right).
\end{equation}
\end{enumerate}
\end{definition}
The difference between a subproduct system and a product system is that in a subproduct system the maps $U_{s,t}$ are only required to be coisometric, while in a product system these maps are required to be unitaries. Thus, given the image $U_{s,t}(x \otimes y)$ of $x \otimes y$ in $X(s + t)$, one cannot recover $x$ and $y$. Thus, subproduct systems may be thought of as \emph{irreversible} product systems. The terminology is, admittedly, a bit awkward. It may be more sensible -- however, impossible at present -- to use the term \emph{product system} for the objects described above and to use the term \emph{full product system} for product system.

\begin{example}\label{expl:full}
\emph{
The simplest example of a subproduct system $F = F_E = \{F(n)\}_{n \in \mb{N}}$ is given by
\bes
F(n) = E^{\otimes n},
\ees
where $E$ is some W$^*$-correspondence. $F$ is actually a product system. We shall call this subproduct system \emph{the full product system (over $E$)}.}
\end{example}

\begin{example}\label{expl:symm}\emph{
Let $E$ be a fixed Hilbert space. We define a subproduct system (of Hilbert spaces) $SSP = SSP_E$ over $\mb{N}$ using the familiar symmetric tensor products (one can obtain a subproduct system from the anti-symmetric tensor products as well). Define
$$E^{\otimes n} = E \otimes \cdots \otimes E ,$$
($n$ times). Let $p_n$ be the projection of $E^{\otimes n}$ onto the symmetric subspace of $E^{\otimes n}$, given by
$$p_n k_1 \otimes \cdots \otimes k_n = \frac{1}{n!}\sum_{\sigma \in S_n} k_{\sigma^{-1}(1)} \otimes \cdots \otimes k_{\sigma^{-1}(n)}.$$
We define
$$SSP(n) = E^{\circledS n} : = p_n  E^{\otimes n},$$
the symmetric tensor product of $E$ with itself $n$ times ($SSP(0) = \mb{C}$).
We define a map $U_{m,n}: SSP(m) \otimes SSP(n) \rightarrow SSP(m+n)$ by
$$U_{m,n} (x \otimes y) = p_{m+n} (x \otimes y).$$
The $U$'s are coisometric maps because every projection, when considered as a map from its domain onto its range, is coisometric. A straightforward calculation shows that (\ref{eq:assoc_prod}) holds (see \cite[Corollary 17.2]{Parthasarathy}). In these notes we shall refer to $SSP$ (or $SSP_E$ to be precise) as the \emph{symmetric subproduct system (over $E$)}. }
\end{example}
\begin{definition}
Let $X$ and $Y$ be two subproduct systems over the same semigroup $\cS$ (with families of coisometries $\{U_{s,t}^X\}_{s,t \in \cS}$ and $\{U_{s,t}^Y\}_{s,t \in \cS}$).  A family $V=\{V_s\}_{s\in \cS}$ of maps $V_{s}: X(s) \rightarrow Y(s)$ is called \emph{a morphism} of subproduct systems if $V_0$ is a unital $*$-isomorphism, if for all $s \in \cS \setminus \{0\}$ the map $V_s$ is a coisometric correspondence map, and if for all $s,t \in \cS$ the following identity holds:
\be\label{eq:iso}
V_{s+t}\circ U_{s,t}^X = U_{s,t}^Y \circ (V_s \otimes V_t) .
\ee
$V$ is said to be an \emph{isomorphism} if $V_s$ is a unitary for all $s \in \cS \setminus \{0\}$.
$X$ is said to be isomorphic to $Y$ if there exists an isomorphism $V: X \rightarrow Y$.
\end{definition}
The above notion of \emph{morphism} is not optimized in any way. It is simply precisely what we need in order to develop dilation theory for $cp$-semigroups.

\begin{definition}\label{def:rep}
Let $\cN$ be a von Neumann algebra, let $H$ be a Hilbert space, and let $X$ be a
subproduct system of Hilbert $\cN$-correspondences over the semigroup
$\cS$. Assume that $T : X \rightarrow B(H)$, and write $T_s$ for
the restriction of $T$ to $X(s)$, $s \in \cS$, and $\sigma$ for
$T_0$. $T$ (or $(\sigma, T)$) is said to be a \emph{completely
contractive covariant representation} of $X$ if
\begin{enumerate}
    \item For each $s \in \cS$, $(\sigma, T_s)$ is a c.c. representation of $X(s)$; and
    \item\label{it:sg} $T_{s+t}(U_{s,t}(x\otimes y)) = T_s(x)T_t(y)$ for all $s,t \in \cS$ and all $x \in X(s), y \in X(t)$.
\end{enumerate}
$T$ is said to be an isometric (fully coisometric) representation if it is an isometric (fully coisometric) representation on every fiber $X(s)$.
\end{definition}
Since we shall not be concerned with any other kind of representation, we shall call a completely contractive covariant representation of a subproduct system simply a \emph{representation}.

\begin{remark}\label{rem:reptilde}
\emph{Item 2 in the above definition of product system can be rewritten as follows:
\bes
\tT_{s+t} (U_{s,t} \otimes I_H) = \tT_s (I_{X(s)} \otimes \tT_t).
\ees
Here $\tT_s: X(s) \otimes_\sigma H \rightarrow H$ is the map given by}
$$\tT_s (x \otimes h) = T_s(x)h .$$
\end{remark}

\begin{example}\label{expl:FockRep}\emph{
We now define a representation $T$ of the symmetric subproduct system $SSP$ from Example \ref{expl:symm} on the symmetric Fock space. Denote by $\mathfrak{F}_+$ the symmetric Fock space
$$\mathfrak{F}_+ = \bigoplus_{n \in \mb{N}} E^{\circledS n} .$$
For every $n \in \mb{N}$, the map $T_n : SSP(n) = E^{\circledS n} \rightarrow B(\mathfrak{F}_+)$  is defined on the $m$-particle space $E^{\circledS m}$ by putting
$$T_n(x) y = p_{n+m}(x \otimes y) $$
for all $x \in X(n), y \in E^{\circledS m}$. Then $T$ extends to a representation of the subproduct system $SSP$ on $\mathfrak{F}_+$ (to see that item \ref{it:sg} of Definition \ref{def:rep} is satisfied one may use again \cite[Corollary 17.2]{Parthasarathy}).}
\end{example}
\begin{definition}
Let $X = \{X(s)\}_{s \in \cS}$ be a subproduct
system of $\cN$-correspondences over $\cS$. A family $\xi = \{\xi_s \}_{s \in \cS}$
is called a \emph{unit} for $X$ if
\be\label{eq:unit}
\xi_s \otimes \xi_t = U_{s,t}^* \xi_{s+t}.
\ee
A unit $\xi = \{\xi_s \}_{s \in \cS}$ is called \emph{unital} if $\lel \xi_s , \xi_s \rir = 1_{\cN}$ for all $s \in \cS$,
it is called \emph{contractive} if $\lel \xi_s , \xi_s \rir \leq 1_{\cN}$ for all $s \in \cS$,
and it is called \emph{generating} if $X(s)$ is spanned by elements of the form
\be\label{eq:spanning}
U_{s_1 + \cdots + s_{n-1},s_n}(\cdots U_{s_1+s_2,s_3}( U_{s_1,s_2}(a_1 \xi_{s_1} \otimes a_2 \xi_{s_2}) \otimes a_3 \xi_{s_3}) \otimes \cdots \otimes a_n \xi_{s_n} a_{n+1} ),
\ee
where $s = s_1 + s_2 + \cdots + s_n$.
\end{definition}

From (\ref{eq:unit}) follows the perhaps more natural looking
\bes
U_{s,t}(\xi_s \otimes \xi_t) = \xi_{s+t}.
\ees
\begin{example}\label{expl:symmUnit}\emph{
A unital unit for the symmetric subproduct system $SSP$ from Example \ref{expl:symm} is given by
defining $\xi_0 = 1$ and
\bes
\xi_{n} = \underbrace{v \otimes v \otimes \cdots \otimes v}_{n \text{ times}}
\ees
for $n \geq 1$. This unit is generating only if $E$ is one dimensional.
}
\end{example}

\section{Subproduct system representations and $cp$-semigroups}\label{sec:subncp}

In this section, following Muhly and Solel's constructions from \cite{MS02}, we show that subproduct systems and their representations provide a framework for dealing with $cp$-semigroups, and allow us to obtain a generalization
of the classical result of Wigner that any strongly continuous one-parameter group of automorphisms of
$B(H) $ is given by $X\mapsto U_t X U_t^*$ for
some one-parameter unitary group $\{U_t\}_{t\in \mathbb{R}}$.

\subsection{All $cp$-semigroups come from subproduct system representations}

\begin{proposition}\label{prop:semigroup}
Let $\cN$ be a von Neumann algebra and let $X$ be a subproduct system of $\cN$-correspondences over $\cS$, and let $R$ be completely contractive covariant representation of $X$ on a Hilbert space $H$, such that $R_0$ is unital. Then the family of maps
\be\label{eq:reprep}
\Theta_s : a \mapsto \widetilde{R}_s (I_{X(s)} \otimes a) \widetilde{R}_s^* \,\, , \,\, a \in R_0 (\cN)',
\ee
is a semigroup of CP maps on $R_0 (\cN)'$. Moreover, if $R$ is an isometric (a fully coisometric) representation, then $\Theta_s$ is a $*$-endomorphism (a unital map) for all $s\in\cS$.
\end{proposition}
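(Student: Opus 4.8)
The plan is to verify each asserted property of the maps $\Theta_s$ directly from the formula (\ref{eq:reprep}), relying on the two structural facts already available: the covariance/semigroup identity for representations (Remark \ref{rem:reptilde}), and the fact that each $(\sigma,R_s)$ is a c.c. representation of the single correspondence $X(s)$. First I would record that $\Theta_s$ is a legitimate map on $R_0(\cN)'=:\cM$. Writing $\sigma=R_0$, the tensor-power Hilbert space $X(s)\otimes_\sigma H$ carries the left action $x\otimes h\mapsto (\varphi(a)x)\otimes h$ of $\cN$ and the ampliation $I_{X(s)}\otimes a$ for $a\in B(H)$; the point is that when $a\in\cM=\sigma(\cN)'$, the operator $I_{X(s)}\otimes a$ is well defined on the balanced tensor product (it commutes with the relations defining the $\sigma$-tensor product) and commutes with the left action, so $\widetilde R_s(I_{X(s)}\otimes a)\widetilde R_s^*$ again lands in $\cM$. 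Complete positivity and contractivity are then immediate: $a\mapsto I_{X(s)}\otimes a$ is a (unital, normal) $*$-homomorphism hence CP and contractive, $\widetilde R_s$ is a contraction because $R_s$ is completely contractive, and sandwiching by $\widetilde R_s(\cdot)\widetilde R_s^*$ preserves complete positivity; normality follows from normality of $\sigma$ and of the ampliation.

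\textbf{Semigroup property.} The main computation is the identity $\Theta_{s+t}=\Theta_s\circ\Theta_t$. Here I would use the rewritten covariance relation from Remark \ref{rem:reptilde},
\[
\widetilde R_{s+t}(U_{s,t}\otimes I_H)=\widetilde R_s(I_{X(s)}\otimes\widetilde R_t),
\]
together with the fact that $U_{s,t}$ is \emph{coisometric}, i.e. $U_{s,t}U_{s,t}^*=I_{X(s+t)}$. Starting from $\Theta_{s+t}(a)=\widetilde R_{s+t}(I_{X(s+t)}\otimes a)\widetilde R_{s+t}^*$, I would insert $I_{X(s+t)}=U_{s,t}U_{s,t}^*$ on both sides of $(I_{X(s+t)}\otimes a)$, and use that $U_{s,t}$ is a correspondence map so that $(U_{s,t}\otimes I_H)(I_{X(s)\otimes X(t)}\otimes a)=(I_{X(s+t)}\otimes a)(U_{s,t}\otimes I_H)$ after identifying ampliations through $U_{s,t}$. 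This converts $\Theta_{s+t}(a)$ into $\widetilde R_{s+t}(U_{s,t}\otimes I_H)(I\otimes a)(U_{s,t}^*\otimes I_H)\widetilde R_{s+t}^*$, and the covariance relation replaces $\widetilde R_{s+t}(U_{s,t}\otimes I_H)$ by $\widetilde R_s(I_{X(s)}\otimes\widetilde R_t)$. Recognizing the resulting expression as $\widetilde R_s(I_{X(s)}\otimes[\widetilde R_t(I_{X(t)}\otimes a)\widetilde R_t^*])\widetilde R_s^*=\Theta_s(\Theta_t(a))$ finishes the step; the $\Theta_0=\mathrm{id}$ case is immediate from $X(0)=\cN$ and $R_0$ unital.

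\textbf{Isometric and coisometric cases.} For the final claims I would invoke the characterizations already in the excerpt. If $R$ is isometric then each $\widetilde R_s$ is an isometry, so $\widetilde R_s^*\widetilde R_s=I$, and
\[
\Theta_s(ab)=\widetilde R_s(I\otimes ab)\widetilde R_s^*=\widetilde R_s(I\otimes a)\widetilde R_s^*\widetilde R_s(I\otimes b)\widetilde R_s^*=\Theta_s(a)\Theta_s(b),
\]
with $*$-preservation clear, so $\Theta_s$ is a $*$-endomorphism; unitality needs $\widetilde R_s\widetilde R_s^*=I$ on the left action, which holds in the fully coisometric case where $\widetilde R_s$ is a coisometry, giving $\Theta_s(1)=\widetilde R_s\widetilde R_s^*=I$. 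I expect the main obstacle to be bookkeeping rather than conceptual: one must be careful that $I_{X(s)}\otimes a$ is genuinely defined on the \emph{balanced} $\sigma$-tensor product (this is exactly why $a$ is restricted to the commutant $\sigma(\cN)'$), and that the ampliations on the two sides of $U_{s,t}$ are correctly matched so that the coisometry identity and the covariance relation can be chained without a stray adjoint or an unbalanced tensor leg.
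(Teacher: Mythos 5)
Your proposal is correct and follows essentially the same route as the paper: the semigroup identity is established by chaining the covariance relation $\widetilde R_{s+t}(U_{s,t}\otimes I_H)=\widetilde R_s(I_{X(s)}\otimes\widetilde R_t)$ with the coisometry $U_{s,t}U_{s,t}^*=I$ (you merely run the computation from $\Theta_{s+t}$ to $\Theta_s\circ\Theta_t$ rather than the reverse), and $\Theta_0=\mathrm{id}$ is handled identically via unitality of $R_0$. The only difference is that the paper delegates complete positivity, normality, contractivity, and the isometric/fully coisometric assertions to a citation of \cite[Proposition 2.21]{MS02}, whereas you supply the short direct arguments; these are correct.
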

\begin{proof}
By Proposition 2.21 in \cite{MS02}, $\{\Theta_s\}_{s\in\cS}$ is a family of contractive, normal, completely positive maps on $R_0(\cN)'$. Moreover, these maps are unital if $R$ is a fully coisometric representation, and they are $*$-endomorphisms if $R$ is an isometric representation. It remains is to check that $\Theta = \{\Theta_s \}_{s\in\cS}$ satisfies the semigroup condition $\Theta_s \circ \Theta_t = \Theta_{s + t}$. Fix $a \in R_0 (\cN)'$. For all $s,t\in\cS$,
\begin{align*}
\Theta_s (\Theta_t (a))
&= \widetilde{R}_s \left(I_{X(s)} \otimes \left(\widetilde{R}_t (I_{X(t)} \otimes a) \widetilde{R}_t^*\right)\right) \widetilde{R}_s^* \\
&= \widetilde{R}_s (I_{X(s)} \otimes \widetilde{R}_t) (I_{X(s)}\otimes I_{X(t)} \otimes a)(I_{X(s)} \otimes \widetilde{R}_t^*) \widetilde{R}_s^* \\
&= \widetilde{R}_{s + t} (U_{s,t}\otimes I_G)(I_{X(s)}\otimes I_{X(t)}\otimes a)(U_{s,t}^{*}\otimes I_G)\widetilde{R}_{s + t}^* \\
&= \widetilde{R}_{s + t} (I_{X(s\cdot t)}\otimes a)\widetilde{R}_{s + t}^* \\
&= \Theta_{s + t}(a) .
\end{align*}
Using the fact that $R_0$ is unital, we have
\begin{align*}
\Theta_0(a) h
&= \widetilde{R_0} (I_{\cN} \otimes a) \widetilde{R_0}^* h \\
&= \widetilde{R_0} (I_{\cN} \otimes a) (1_{\cN} \otimes h) \\
&= R_0(1_{\cN})ah \\
&= ah ,
\end{align*}
thus $\Theta_0(a) = a$ for all $a\in \cN$.
\end{proof}

We will now show that \emph{every} $cp$-semigroup is given by a subproduct representation
as in (\ref{eq:reprep}) above. We recall some constructions from \cite{MS02} (building on the foundations  set in \cite{Arv97}).

Fix a CP map $\Theta$ on von Neumann algebra $\cM \subseteq B(H)$. We define $\cM \otimes_\Theta H$ to be the Hausdorff completion of the algebraic tensor product $\cM \otimes H$ with respect to the sesquilinear positive semidefinite form
\bes
\lel T_1 \otimes h_1 , T_2 \otimes h_2 \rir = \lel h_1, \Theta(T_1^* T_2) h_2 \rir .
\ees
We define a representation $\pi_\Theta$ of $\cM$ on $\cM \otimes_\Theta H$ by
\bes
\pi_\Theta (S) (T \otimes h) = ST \otimes h,
\ees
and we define a (contractive) linear map $W_\Theta : H \rightarrow \cM \otimes H$ by
\bes
W_\Theta (h) = I \otimes h.
\ees
If $\Theta$ is unital then $W_\Theta$ is an isometry, and if $\Theta$ is an endomorphism then $W_\Theta$ is
a coisometry. The adjoint of $W_\Theta$ is given by
\bes
W_\Theta^* (T \otimes h) = \Theta(T)h .
\ees

For a given CP semigroup $\Theta$ on $\cM$, Muhly and Solel defined in \cite{MS02} a $W^*$-correspondence $E_\Theta$ over $\cM'$ and a c.c.
representation $(\sigma,T_\Theta)$ of $E_\Theta$ on $H$ such that for all $a \in \cM$
\be\label{eq:reprep1}
\Theta(a) = \tT_\Theta \left(I_{E_\Theta} \otimes a \right)\tT_\Theta^* .
\ee
The $W^*$-correspondence $E_\Theta$ is defined as the intertwining space
\bes
E_\Theta = \cL_\cM (H, \cM \otimes_\Theta H),
\ees
where
\bes
\cL_\cM (H, \cM \otimes_\Theta H):= \{X \in B(H,\cM \otimes_\Theta H) \big| \forall T \in \cM. XT = \pi_\Theta(T)X \}.
\ees
The left and right actions of $\cM'$ are given by
\bes
S \cdot X = (I \otimes S)X \quad , \quad X \cdot S = XS
\ees
for all $X \in E_\Theta$ and $S \in \cM'$. The $\cM'$-valued inner product on $E_\Theta$ is defined by
$\lel X, Y \rir = X^* Y$. $E_\Theta$ is called \emph{the Arveson-Stinespring correspondence} (associated with $\Theta$).

The representation $(\sigma,T_\Theta)$ is defined by letting $\sigma = {\bf id}_{\cM'}$, the identity representation
of $\cM'$ on $H$, and by defining
\bes
T_\Theta(X) = W_\Theta^* X .
\ees
$({\bf id}_{\cM'}, T_\Theta)$ is called \emph{the identity representation} (associated with $\Theta$). We remark that the paper \cite{MS02}
focused on unital CP maps, but the results we cite are true for nonunital CP maps, with the proofs unchanged.

The case where $\cM = B(H)$ in the following theorem appears, in essence at least, in \cite{Arv97}.

\begin{theorem}\label{thm:reprep}
Let $\Theta = \{\Theta_s \}_{s\in\cS}$ be a $cp$-semigroup on a von Neumann algebra $\cM \subseteq B(H)$, and for all $s\in \cS$ let $E(s) := E_{\Theta_s}$ be the Arveson-Stinespring correspondence
associated with $\Theta_s$, and let $T_s := T_{\Theta_s}$ denote the identity representation for $\Theta_s$. Then
$E = \{E(s)\}_{s\in\cS}$ is a subproduct system of $\cM'$-correspondences, and $({\bf id}_{\cM'},T)$ is a representation of $E$ on $H$ that satisfies
\be\label{eq:RepRep}
\Theta_s(a) = \tT_s\left(I_{E(s)} \otimes a \right) \tT_s^*
\ee
for all $a \in \cM$ and all $s \in \cS$. $T_s$ is injective for all $s \in \cS$.
If $\Theta$ is an $e$-semigroup (cp$_0$-semigroup), then $({\bf id}_{\cM'},T)$ is isometric (fully coisometric).
\end{theorem}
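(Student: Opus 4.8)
The plan is to assemble the subproduct system $E = \{E(s)\}_{s \in \cS}$ from the individual Arveson--Stinespring correspondences $E(s) = E_{\Theta_s}$ and to verify the four axioms of Definition \ref{def:subproduct_system} together with the representation properties. The three main tasks are: (a) to construct the coisometric connecting maps $U_{s,t} : E(s) \otimes E(t) \rightarrow E(s+t)$ and check associativity; (b) to verify that $({\bf id}_{\cM'}, T)$ really is a representation, i.e. that the semigroup identity (item \ref{it:sg} of Definition \ref{def:rep}) holds; and (c) to verify injectivity of each $T_s$ and the isometric/coisometric dichotomy.

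\textbf{Construction of the $U_{s,t}$.}
First I would exploit the semigroup law $\Theta_{s+t} = \Theta_s \circ \Theta_t$ to relate the Stinespring dilation spaces. There is a natural isometry
\bes
\cM \otimes_{\Theta_{s+t}} H \longrightarrow \cM \otimes_{\Theta_s} (\cM \otimes_{\Theta_t} H),
\ees
coming from the computation
\bes
\lel T_1 \otimes h_1, T_2 \otimes h_2 \rir_{\Theta_{s+t}} = \lel h_1, \Theta_s(\Theta_t(T_1^* T_2)) h_2 \rir,
\ees
which identifies the $\Theta_{s+t}$-form with the iterated $\Theta_s$-then-$\Theta_t$ form. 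This intertwines $\pi_{\Theta_{s+t}}$ with $\pi_{\Theta_s}$ on the left leg. Composing the intertwiners that define $E(s)$ and $E(t)$ then produces a map $E(s) \otimes E(t) \rightarrow E(s+t)$: concretely, given $X \in E(s) = \cL_{\cM}(H, \cM \otimes_{\Theta_s} H)$ and $Y \in E(t)$, the composite $(I_{\cM} \otimes Y) \circ X$ lands, after the above identification, in $\cL_{\cM}(H, \cM \otimes_{\Theta_{s+t}} H) = E(s+t)$. I expect this assignment to be exactly the $\cM'$-correspondence map $U_{s,t}$, and I would verify directly that it preserves the $\cM'$-valued inner product $\lel X, Y \rir = X^*Y$ and the bimodule structure. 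Coisometry (rather than unitarity) is the crucial point: the Stinespring space for the composite $\Theta_s \circ \Theta_t$ embeds \emph{isometrically} but not onto the iterated tensor product, and dualizing the isometric inclusion yields a coisometric $U_{s,t}$. Property (1), $E(0) = \cM'$, follows since $\Theta_0 = \mathrm{id}$ makes $\cM \otimes_{\mathrm{id}} H \cong H$ and the intertwiner space collapse to $\cM'$; properties (3) are immediate from the bimodule actions; and associativity (\ref{eq:assoc_prod}) should follow from the coherence of the Stinespring identifications, i.e. from associativity of $\Theta_s \circ \Theta_t \circ \Theta_r$.

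\textbf{Representation property, injectivity, and the dichotomy.}
That each $(\sigma, T_s) = ({\bf id}_{\cM'}, T_{\Theta_s})$ is a c.c.\ representation of $E(s)$ is the content of the cited Muhly--Solel machinery, as is (\ref{eq:RepRep}); I would cite \cite{MS02} for the single-map statements. The genuinely new point is item \ref{it:sg}, $\tT_{s+t}(U_{s,t} \otimes I_H) = \tT_s(I_{E(s)} \otimes \tT_t)$, which I would obtain by unwinding $\tT_s(X \otimes h) = T_{\Theta_s}(X)h = W_{\Theta_s}^* X h$ together with the explicit formula for $U_{s,t}$ and the compatibility $W_{\Theta_{s+t}}^* = W_{\Theta_t}^* \circ (\text{reduction of } W_{\Theta_s}^*)$ under the Stinespring identification. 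For injectivity of $T_s$, the key is that $\sigma = {\bf id}_{\cM'}$ is faithful and $W_{\Theta_s}$ has trivial kernel on the relevant subspace; if $W_{\Theta_s}^* X = 0$ for all inputs one reads off $X = 0$ using that $X$ intertwines and that the range of $W_{\Theta_s}$ generates. Finally, $\Theta_s$ is a $*$-endomorphism exactly when $W_{\Theta_s}$ is a coisometry, which by the definition of an isometric representation (namely $\tT_s$ isometric) translates into $T_s$ being isometric; dually, $\Theta_s$ unital forces $W_{\Theta_s}$ isometric and hence $\tT_s$ coisometric, giving the fully coisometric case. These last equivalences are recorded in the preliminaries on $W_\Theta$ above.

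\textbf{Expected main obstacle.}
The hard part will be (a): producing the connecting maps $U_{s,t}$ cleanly and proving the \emph{coisometry} property together with the associativity coherence (\ref{eq:assoc_prod}). The subtlety is that the natural map goes the ``wrong'' way — the Stinespring space of the composite embeds into the iterated tensor product — so one must carefully dualize and check that the resulting coisometries on the correspondence level assemble associatively, rather than merely up to the scalar Hilbert-space identifications. Everything else reduces to bookkeeping with the already-established single-map formulas from \cite{MS02}.
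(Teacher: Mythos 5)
Your proposal follows the paper's proof essentially step for step: the connecting maps are obtained exactly as you describe, by composing intertwiners into an iterated Stinespring space and then compressing along the adjoint of the isometric embedding $\Gamma_{s,t}\colon\cM\otimes_{\Theta_{s+t}}H\to\cM\otimes_{\Theta_t}\cM\otimes_{\Theta_s}H$ (this is \cite[Proposition 2.12]{MS02}, which supplies the coisometry and bimodule properties); associativity is checked from the coherence of the $\Gamma$'s; the representation identity comes from unwinding $W_{\Theta}^*$; and injectivity and the isometric/fully coisometric dichotomy are handled just as you indicate.

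The one thing you must fix is the order of the legs in the explicit formula you give for the key construction. For $X\in E(s)$ and $Y\in E(t)$, the composite that preserves the $\cM'$-valued inner product of $E(s)\otimes E(t)$ is $X\otimes Y\mapsto (I\otimes X)\circ Y$, landing in $\cL_\cM(H,\cM\otimes_{\Theta_t}\cM\otimes_{\Theta_s}H)$: one has $\bigl((I\otimes X)Y\bigr)^*(I\otimes X')Y' = Y^*(I\otimes\langle X,X'\rangle)Y' = \langle Y,\langle X,X'\rangle\cdot Y'\rangle$, which is exactly $\langle X\otimes Y, X'\otimes Y'\rangle$. Your composite $(I\otimes Y)\circ X$ instead satisfies $\langle (I\otimes Y)X,(I\otimes Y')X'\rangle = \langle X,\langle Y,Y'\rangle\cdot X'\rangle$, i.e.\ it implements an isometry of $E(t)\otimes E(s)$, not of $E(s)\otimes E(t)$. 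Since the balanced tensor product of $W^*$-correspondences is not symmetric, this is not a cosmetic relabelling: the inner-product verification you promise to carry out would fail for the map as written, and the associativity bookkeeping would not close up. Once the order is corrected (right-hand factor applied first, left-hand factor ampliated, with the iterated Stinespring space ordered accordingly), the rest of your outline goes through and coincides with the paper's argument.
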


\begin{proof}
We begin by defining the subproduct system maps $U_{s,t}:E(s) \otimes E(t) \rightarrow E(s+t)$. We use the constructions made in \cite[Proposition 2.12]{MS02} and the surrounding discussion. We define
\bes
U_{s,t} = V_{s,t}^*\Psi_{s,t} \,\,,
\ees
where the map
\bes
\Psi_{s,t}: \cL_\cM(H,\cM \otimes_{\Theta_s} H) \otimes \cL_\cM(H,\cM \otimes_{\Theta_t} H) \rightarrow \cL_\cM(H,\cM \otimes_{\Theta_t} \cM \otimes_{\Theta_s} H)
\ees
is given by $\Psi_{s,t}(X \otimes Y) = (I \otimes X)Y$, and
\bes
V_{s,t}: \cL_\cM(H,\cM \otimes_{\Theta_{s+t}}H) \rightarrow \cL_\cM(H,\cM \otimes_{\Theta_t} \cM \otimes_{\Theta_s} H)
\ees
is given by $V_{s,t}(X) = \Gamma_{s,t} \circ X$, where $\Gamma_{s,t}: \cM \otimes_{\Theta_{s+t}} H \rightarrow \cM \otimes_{\Theta_t} \cM \otimes_{\Theta_s} H$ is the isometry
\bes
\Gamma_{s,t} : S \otimes_{\Theta_{s+t}} h \mapsto S \otimes_{\Theta_t} I \otimes_{\Theta_s} h .
\ees
By  \cite[Proposition 2.12]{MS02}, $U_{s,t}$ is a coisometric bimodule map for all $s,t \in \cS$. To see that the $U$'s compose associatively as in (\ref{eq:assoc_prod}), take $s,t,u \in \cS$, $X \in E(s), Y \in E(t), Z \in E(u)$, and compute:
\begin{align*}
U_{s,t+u}(I_{E(s)} \otimes U_{t,u})(X \otimes Y \otimes Z)
&= U_{s,t+u}(X \otimes V_{t,u}^*(I \otimes Y)Z) \\
&= V_{s,t+u}^*\left((I \otimes X) V_{t,u}^*(I \otimes Y)Z \right) \\
&= \Gamma_{s,t+u}^*(I \otimes X) \Gamma_{t,u}^*(I \otimes Y)Z
\end{align*}
and
\begin{align*}
U_{s+t,u}(U_{s,t} \otimes I_{E(u)})(X \otimes Y \otimes Z)
&= U_{s+t,u}( V_{s,t}^*(I \otimes X)Y \otimes Z) \\
&= V_{s+t,u}^*\left((I \otimes V_{s,t}^*(I \otimes X)Y)Z\right) \\
&= \Gamma_{s+t,u}^*(I \otimes \Gamma_{s,t}^*)(I \otimes I \otimes X)(I \otimes Y)Z \,\, .
\end{align*}
So it suffices to show that
\bes
\Gamma_{s,t+u}^*(I \otimes X) \Gamma_{t,u}^* = \Gamma_{s+t,u}^*(I \otimes \Gamma_{s,t}^*)(I \otimes I \otimes X)
\ees
It is easier to show that their adjoints are equal. Let $a \otimes h$ be a typical element of $\cM \otimes_{\Theta_{s+t+u}} h$.
\begin{align*}
\Gamma_{t,u}(I \otimes X^*)\Gamma_{s,t+u} (a \otimes_{\Theta_{s+t+u}} h)
&= \Gamma_{t,u}(I \otimes X^*)(a \otimes_{\Theta_{t+u}} I \otimes_{\Theta_s} h) \\
&= \Gamma_{t,u}(a \otimes_{\Theta_{t+u}} X^*(I\otimes_{\Theta_s} h)) \\
&= a \otimes_{\Theta_{u}} I \otimes_{\Theta_{t}}X^*(I\otimes_{\Theta_s} h).
\end{align*}
On the other hand
\begin{align*}
(I \otimes I \otimes X^*)(I \otimes \Gamma_{s,t})\Gamma_{s+t,u} (a \otimes_{\Theta_{s+t+u}} h)
&= (I \otimes I \otimes X^*)(I \otimes \Gamma_{s,t}) (a \otimes_{\Theta_{u}} I \otimes_{\Theta_{s+t}} h)  \\
&= (I \otimes I \otimes X^*) (a \otimes_{\Theta_{u}} I \otimes_{\Theta_{t}} I \otimes_{\Theta_{s}} h)\\
&= a \otimes_{\Theta_{u}} I \otimes_{\Theta_{t}}X^*(I\otimes_{\Theta_s} h).
\end{align*}
This shows that the maps $\{U_{s,t}\}$ make $E$ into a subproduct system.

Let us now verify that $T$ is a representation of subproduct systems. That $({\bf id}_{\cM'},T_s)$ is a c.c. representation
of $E(s)$ is explained in \cite[page 878]{MS02}. Let $X \in E(s), Y \in E(t)$.
\bes
T_{s+t}(U_{s,t}(X \otimes Y)) = W_{\Theta_{s+t}}^* \Gamma_{s,t}^*(I \otimes X)Y,
\ees
while
\bes
T_{s}(X) T_t(Y) = W_{\Theta_{s}}^*XW_{\Theta_{t}}^*Y.
\ees
But for all $h \in H$,
\begin{align*}
W_{\Theta_{t}} X^* W_{\Theta_{s}} h &= W_{\Theta_{t}} X^* (I \otimes_{\Theta_s} h) \\
&= I \otimes_{\Theta_t} X^* (I \otimes_{\Theta_s} h) \\
&= (I \otimes X^*) (I \otimes_{\Theta_t} I \otimes_{\Theta_s} h) \\
&= (I \otimes X^*) \Gamma_{s,t}(I \otimes_{\Theta_{s+t}} h) \\
&= (I \otimes X^*) \Gamma_{s,t}W_{\Theta_{s+t}} h ,
\end{align*}
which implies
$W_{\Theta_{s}}^*XW_{\Theta_{t}}^*Y = W_{\Theta_{s+t}}^* \Gamma_{s,t}^*(I \otimes X)Y$, so
we have the desired equality
\bes
T_{s+t}(U_{s,t}(X \otimes Y)) = T_{s}(X) T_t(Y) .
\ees
Equation (\ref{eq:RepRep}) is a consequence of (\ref{eq:reprep1}).
The injectivity of the identity representation has already been noted by Solel in \cite{S06} (for all $h, g \in H$ and $a \in M$, $\langle W^*_\Theta X a^* h, g \rangle =  \langle Xa^* h, I \otimes g \rangle = \langle (I \otimes a^*)Xh, I \otimes g \rangle = \langle Xh, a \otimes g \rangle$ ).
The final assertion of the theorem is trivial (if $\Theta_s$ is a $*$-endomorphism, then $W_{\Theta_s}$ is a coisometry).
\end{proof}

\begin{definition}\label{def:identityrep}
Given a $cp$-semigroup $\Theta$ on a $W^*$ algebra $\cM$, the pair $(E,T)$ constructed in Theorem \ref{thm:reprep} is called \emph{the identity representation} of $\Theta$, and $E$ is called \emph{the Arveson-Stinespring subproduct system} associated with $\Theta$.
\end{definition}

\begin{remark}\label{rem:identityproduct}
\emph{If follows from \cite[Proposition 2.14]{MS02}, if $\Theta$ is an $e$-semigroup, then the identity representation
subproduct system is, in fact, a (full) product system.}
\end{remark}

\begin{remark}\emph{
In \cite{Markiewicz}, Daniel Markiewicz has studied the Arveson-Stinespring subproduct system of a CP$_0$-semigroup over $\mb{R}_+$ acting on $B(H)$, and has also shown that it carries a structure of a \emph{measurable Hilbert bundle}. }
\end{remark}

\subsection{Essentially all injective subproduct system representations come from $cp$-semigroups}\label{subsec:essentially}

The following generalizes and is motivated by \cite[Proposition 5.7]{S06}. We shall also repeat some arguments from  \cite[Theorem 2.1]{MS07}.

By Theorem \ref{thm:reprep}, with every $cp$-semigroup $\Theta = \{\Theta_s \}_{s \in \cS}$ on $\cM \subseteq B(H)$ we can associate a pair $(E, T)$ - the identity representation of $\Theta$ - consisting of a subproduct system $E$ (of correspondences over $\cM'$) and an injective subproduct system c.c. representation $T$. Let us write $(E,T) = \Xi(\Theta)$.
Conversely, given a pair $(X,R)$ consisting of a subproduct system $X$ of correspondences over $\cM'$ and a c.c. representation $R$ of $X$ such that $R_0 = id$, one may define by equation (\ref{eq:reprep}) a $cp$-semigroup $\Theta$ acting on $\cM$, which we denote as $\Theta = \Sigma(X,R)$. The meaning of equation (\ref{eq:RepRep}) is that $\Sigma \circ \Xi$ is the identity map on the set of $cp$-semigroups of $\cM$.
We will show below that $\Xi \circ \Sigma$, when restricted to pairs $(X,R)$ such that $R$ is injective, is also, essentially, the identity. When $(X,R)$ is not injective, we will show that $\Xi \circ \Sigma(X,R)$ ``sits inside" $(X,R)$.

\begin{theorem}\label{thm:essentially_inverse}
Let $\cN$ be a W$^*$-algebra, let $X = \{X(s) \}_{s \in \cS}$ be a subproduct system of $\cN$-correspondences, and let $R$ be a c.c. representation of $X$ on $H$, such that $\sigma := R_0$ is faithful and nondegenerate. Let $\cM \subseteq B(H)$ be the commutant $\sigma(\cN)'$ of $\sigma(\cN)$. Let $\Theta = \Sigma(X,R)$, and let $(E,T) = \Xi(\Theta)$. Then there is a morphism of subproduct systems $W: X \rightarrow E$ such that
\be\label{eq:isosubrep}
R_s = T_s \circ W_s \,\, ,\,\, s \in \cS.
\ee
$W_s^* W_s = I_{X(s)} - q_s$, where $q_s$ is the orthogonal projection of $X(s)$ onto $\textrm{Ker}R_s$.
In particular, $W$ is an isomorphism if and only if $R_s$ is injective for all $s \in \cS$.
\end{theorem}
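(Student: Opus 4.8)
The plan is to build the morphism $W$ fiber by fiber out of an isometry that implements the Stinespring space of each $\Theta_s = \Sigma(X,R)_s$ inside the interior tensor product $X(s)\otimes_\sigma H$. Concretely, for each $s$ I would introduce
$$\Omega_s : \cM\otimes_{\Theta_s}H \longrightarrow X(s)\otimes_\sigma H, \qquad \Omega_s(a\otimes h) = (I_{X(s)}\otimes a)\widetilde{R}_s^* h .$$
Using $\Theta_s(a)=\widetilde{R}_s(I_{X(s)}\otimes a)\widetilde{R}_s^*$, a direct inner-product computation gives $\langle a_1\otimes h_1, a_2\otimes h_2\rangle = \langle (I\otimes a_1)\widetilde{R}_s^* h_1,(I\otimes a_2)\widetilde{R}_s^* h_2\rangle$, so $\Omega_s$ is a well-defined isometry; moreover $\Omega_s\,\pi_{\Theta_s}(S) = (I_{X(s)}\otimes S)\,\Omega_s$ for $S\in\cM$, and $\Omega_s W_{\Theta_s}=\widetilde{R}_s^*$. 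Writing $L_x : H\to X(s)\otimes_\sigma H$, $L_x h = x\otimes h$, I would then set $W_s(x) := \Omega_s^* L_x$. Since $L_x$ intertwines $\mathrm{id}_H$ with $I_{X(s)}\otimes(\cdot)$ on $\cM=\sigma(\cN)'$, and since taking adjoints in the displayed intertwining relation yields $\Omega_s^*(I_{X(s)}\otimes S)=\pi_{\Theta_s}(S)\Omega_s^*$, the operator $W_s(x)$ intertwines $\mathrm{id}_H$ with $\pi_{\Theta_s}$ and hence lies in $E(s)=\cL_\cM(H,\cM\otimes_{\Theta_s}H)$.

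Next I would verify the factorization (\ref{eq:isosubrep}) together with the morphism axioms. From $\Omega_s W_{\Theta_s}=\widetilde{R}_s^*$ one gets $W_{\Theta_s}^*\Omega_s^* = \widetilde{R}_s$, whence $T_s(W_s(x)) = W_{\Theta_s}^*\Omega_s^* L_x = \widetilde{R}_s L_x = R_s(x)$, which is exactly $R_s = T_s\circ W_s$. Linearity and boundedness of $W_s$ are immediate, and adjointability is automatic in the self-dual $W^*$-setting. To see that $W_s$ is a bimodule map I would avoid computing with $\Omega_s$ and instead use that $T_s$ is injective (Theorem \ref{thm:reprep}): for $a,b\in\cN$, the elements $W_s(a\cdot x\cdot b)$ and $\sigma(a)\cdot W_s(x)\cdot\sigma(b)$ of $E(s)$ are both mapped by $T_s$ to $\sigma(a)R_s(x)\sigma(b)$, so they coincide. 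The same injectivity trick disposes of the compatibility (\ref{eq:iso}): applying $T_{s+t}$ to $W_{s+t}(U_{s,t}^X(x\otimes y))$ and to $U_{s,t}^E(W_s(x)\otimes W_t(y))$ yields $R_s(x)R_t(y)$ in both cases, by the representation properties of $R$ and of $T$. Finally $W_0=\sigma$ is a unital $*$-isomorphism of $\cN$ onto $E(0)=\cM'=\sigma(\cN)$, so $W$ is a morphism of subproduct systems.

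The heart of the argument is that each $W_s$ is coisometric. Here I would pass to the induced Hilbert-space map $\widetilde{W}_s : X(s)\otimes_\sigma H\to E(s)\otimes_{\mathrm{id}}H$, $\widetilde{W}_s(x\otimes h)=W_s(x)\otimes h$, and use the standard isometry $\iota_s : E(s)\otimes_{\mathrm{id}}H\to\cM\otimes_{\Theta_s}H$, $\iota_s(Y\otimes h)=Yh$. The key observation is the identity
$$\iota_s\,\widetilde{W}_s = \Omega_s^*,$$
valid because $\iota_s\widetilde{W}_s(x\otimes h)=W_s(x)h=\Omega_s^* L_x h = \Omega_s^*(x\otimes h)$. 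Since $\iota_s^*\iota_s = I$, this forces $\widetilde{W}_s = \iota_s^*\Omega_s^*$, and therefore $\widetilde{W}_s\widetilde{W}_s^* = \iota_s^*(\Omega_s^*\Omega_s)\iota_s = \iota_s^*\iota_s = I$, because $\Omega_s$ is an isometry. Thus $\widetilde{W}_s$ is a coisometry of Hilbert spaces, and since $\sigma$ and $\mathrm{id}_{\cM'}$ are faithful and nondegenerate this is equivalent to $W_s W_s^* = I_{E(s)}$, i.e.\ $W_s$ is coisometric.

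With coisometry in hand the remaining assertions are formal. From $W_s W_s^* = I_{E(s)}$ the self-adjoint map $W_s^* W_s$ satisfies $(W_s^* W_s)^2 = W_s^*(W_s W_s^*)W_s = W_s^* W_s$, so it is a projection; its kernel equals $\ker W_s$, and since $R_s = T_s\circ W_s$ with $T_s$ injective we have $\ker W_s = \ker R_s$. A projection on a (self-dual) $W^*$-module is determined by its kernel, so $W_s^* W_s = I_{X(s)}-q_s$, where $q_s$ is the projection onto $\ker R_s$. In particular $W_s$ is unitary precisely when $q_s=0$, i.e.\ when $R_s$ is injective, which yields the stated equivalence. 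The main obstacle is exactly the coisometry of $W_s$; the efficient route through it is to recognize $\Omega_s$ as an isometry and to factor $\widetilde{W}_s=\iota_s^*\Omega_s^*$, after which the coisometry and the identity $W_s^* W_s=I-q_s$ drop out of the projection identity rather than requiring any further computation.
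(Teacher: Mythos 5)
Your proposal is correct and follows essentially the same route as the paper: your $\Omega_s$ is exactly the isometry $v_s$ of the paper's proof, and your $W_s(x)=\Omega_s^*L_x$ is the adjoint of the map $V_s$ that the paper constructs via self-duality (the paper's identity $L_{V_s(y)}=v_s y$ is precisely your relation $\iota_s\widetilde{W}_s=\Omega_s^*$). Your derivation of the coisometry from the factorization $\widetilde{W}_s=\iota_s^*\Omega_s^*$, and of $W_s^*W_s=I_{X(s)}-q_s$ from the kernel of the resulting projection together with the injectivity of $T_s$, is a slightly cleaner packaging of the paper's element-wise computations with $L_\xi$ and its explicit identification of $(\mathrm{Im}\,v_s)^\perp$ with $q_s X(s)\otimes H$, but it is the same argument.
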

\begin{remark}
\emph{The construction of the morphism $W$ below basically comes from \cite{S06,MS07}, and it remains only to show that it respects the subproduct system structure. The details are carried out for completeness.}
\end{remark}
\begin{proof}
We may construct a subproduct system $X'$ of $\cM'$-correspondences (recall that $\cM' = \sigma(\cN)$), and a representation $T'$ of $X'$ on $H$ such that $T'_0$ is the identity, in such a way that $(X,T)$ may be naturally identified with $(X',T')$. Indeed, put
\bes
X'(0) = \cM' \,\,,\,\, X'(s) = X(s) \,\, \textrm{for } s \neq 0,
\ees
where the inner product is defined by
\bes
\langle x, y \rangle_{X'} = \sigma ( \langle x, y \rangle_{X} ),
\ees
and the left and right actions are defined by
\bes
a \cdot x \cdot b := \sigma^{-1}(a) x \sigma^{-1}(b),
\ees
for $a,b \in \cM'$ and $x,y \in X'(s)$, $s \in \cS \setminus \{0\}$. Defining $T'_0 = id$ and $W_0 = \sigma$; and $T'_s = T_s$ for and $W_s = id$ for $s \in \cS \setminus \{0\}$, we have that $W$ is a morphism $X \rightarrow X'$ that sends $T$ to $T'$.

Assume, therefore, that $\cN = \cM'$ and that $\sigma$ is the identity representation.

We begin by defining for every $s \neq 0$
\bes
v_s: \cM \otimes_{\Theta_s} H \rightarrow X(s) \otimes H
\ees
by
\bes
v_s(a \otimes h) = (I_{X(s)} \otimes a)\widetilde{R}_s^*h.
\ees
We claim that for all $s \in \cS$ the map $v_s$ is a well-defined isometry. To see that, let $a,b \in \cM$ and $h,g \in H$, and compute:
\begin{align*}
\langle a \otimes_{\Theta_s} h,  b \otimes_{\Theta_s} g \rangle
&= \langle h , \Theta_s(a^* b)g \rangle \\
&= \langle h , \widetilde{R}_s(I_{X(s)} \otimes a^* b)\widetilde{R}_s^* g \rangle \\
&= \langle (I_{X(s)} \otimes a)\widetilde{R}_s^* h , (I_{X(s)} \otimes b)\widetilde{R}_s^* g \rangle.
\end{align*}
$[(I_{X(s)} \otimes \cM)\widetilde{R}_s^*H]$ is invariant under $I_{X(s)} \otimes \cM$, thus the projection onto the orthocomplement of this subspace is in $(I_{X(s)} \otimes \cM)' = \cL (X(s)) \otimes I_H$, so it has the form $q_s \otimes I_H$ for some projection $q_s \in \cL(X(s))$. In fact, $q_s$ is the orthogonal projection of $X(s)$ onto $\textrm{Ker} R_s$:
for all $g,h \in H$, $a \in \cM$,
\begin{align*}
\langle \xi \otimes h, (I \otimes a)\widetilde{R}_s^* g \rangle
&= \langle \widetilde{R}_s (\xi \otimes a^*h), g \rangle \\
&= \langle R_s (\xi) a^*h, g \rangle ,
\end{align*}
thus, $R_s(\xi) = 0$ if and only if $\xi \otimes h \in \left(\textrm{Im} v_s\right)^\perp$ for all $h \in H$, that is, $\xi \in q_s X(s)$.

By the definition of $v_s$ and by the covariance properties of $T$, we have for all $a \in \cM$ and $b \in \cM'$,
\bes
v_s(a \otimes I) = (I \otimes a)v_s \,\, , \,\, v_s (I \otimes b) = (b \otimes I)v_s .
\ees

Fix $s \in \cS$ and $x \in E(s)$. For all $\xi \in X(s), h \in H$, write
\bes
\psi(\xi)h = x^* v_s^*(\xi \otimes h) .
\ees
For $a \in \cM$ we have
\begin{align*}
\psi(\xi)a h &= x^*v_s^* (\xi \otimes ah) \\
&= x^*v_s^*(I \otimes a)(\xi \otimes h) \\
&= x^*(a \otimes I)v_s^*(\xi \otimes h) \\
&= a x^* v_s^*(\xi \otimes h) = a \psi(\xi)h.
\end{align*}
Thus the linear map $\xi \mapsto \psi(\xi)$ maps from $X(s)$ into $\cM'$ and it is apparent that this map is bounded. $\psi$
is also a right module map: for all $b \in \cM'$, $\psi(\xi b) h = x^* v^*(\xi b \otimes h) = x^* v^*(\xi  \otimes bh) = \psi(\xi) b h$. From the self duality of $X(s)$ it follows that there is a unique element in $X(s)$, which we denote by $V_s(x)$, such that for all $\xi \in X(s), h \in H$,
\be\label{eq:V_s}
\langle V_s(x), \xi \rangle h = x^* v_s^*(\xi \otimes h).
\ee
For $a,b \in \cM'$, $\xi \in X(s)$ and $h \in H$, we have
\begin{align*}
\langle V_s(a  x b), \xi \rangle h &= \langle V_s((I \otimes a) x b), \xi  \rangle h \\
&= b^* x^* (I \otimes a^*)v_s^* (\xi \otimes h) \\
&= b^* x^* v_s^*  (a^* \xi \otimes h) \\
&= b^* \langle V_s(x), a^* \xi \rangle h \\
&= \langle a V_s(x) b , \xi \rangle h .
\end{align*}
That is, $V_s(a  x b) = a V_s(x) b$. In a similar way one proves that $V_s : E(s) \rightarrow X(s)$ is linear.

Let us now show that $V_s$ preserves inner products. For all $\xi \in X(s)$, write $L_\xi$ for the operator $L_\xi : H \rightarrow X(s) \otimes H$ that maps $h$ to $\xi \otimes h$. One checks that $L_\xi^* (\eta \otimes h) = \langle \xi, \eta \rangle h$, so equation (\ref{eq:V_s}) becomes
\bes
L_{V_s(x)}^* L_\xi = x^* v_s^* L_\xi \,\, , \,\, \xi \in X(s),
\ees
or $L_{V_s(x)} = v_s x$, for all $x \in E(s)$. But since $v_s$ preserves inner products, we have for all $x,y \in E(s)$:
\bes
\langle x, y \rangle = x^* y = x^* v_s^* v_s y = L_{V_s(x)}^* L_{V_s(y)} = \langle V_s(x), V_s(y) \rangle.
\ees

We now prove that $V_s V_s^* = I_{X(s)} - q_s$.
$\xi \in \textrm{Im}V_s ^\perp$ if and only if $L_\xi^* v_s E(s) H = 0$. But by \cite[Lemma 2.10]{MS02},  $E(s)H = \cM \otimes_{\Theta_s} H$, thus $L_\xi^* v_s E(s) H = 0$ if and only if $\langle \xi , \eta \rangle = 0$ for all $ \eta \in (I_{X(s)} - q_s)X(s)$, which is the same as $\xi \in q_s X(s)$.

Fix $h,k \in H$. For $x \in E(s)$, we compute:
\begin{align*}
\langle T_s(x) h, k \rangle &= \langle W_{\Theta_s}^* x h, k \rangle \\
&= \langle x h, I \otimes_{\Theta_s} k \rangle \\
&= \langle v_s x h, v_s ( I \otimes_{\Theta_s} k) \rangle \\
&= \langle V_s(x) \otimes h, \widetilde{R}_s^* k \rangle \\
&= \langle R_s (V_s(x)) h, k \rangle ,
\end{align*}
thus $T_s = R_s \circ V_s$ for all $s\in \cS$. Define $W_s = V_s^*$. Then $T_s = R_s \circ W_s^*$. Multiplying both sides by $W_s$ we obtain $T_s \circ W_s = R_s \circ W_s^* W_s$. But $W_s^* W_s = I - q_s$ is the orthogonal projection onto $\left(\textrm{Ker}R_s\right)^\perp$, thus we obtain (\ref{eq:isosubrep}).

Finally, we need to show that $W = \{W_s \}$ respects the subproduct system structure: for all $s,t \in \cS$, $x \in X(s)$ and $y \in X(t)$, we must show that
\bes
W_{s+t}(U^X_{s,t}(x \otimes y)) = U^E_{s,t}(W_s(x) \otimes W_t(y)).
\ees
Since $T_{s+t}$ is injective, it is enough to show that after applying $T_{s+t}$ to both sides of the above equation we get the same thing. But $T_{s+t}$ applied to the left hand side gives
\bes
T_{s+t}W_{s+t}(U^X_{s,t}(x \otimes y)) = R_{s+t}(U^X_{s,t}(x \otimes y)) = R_s(x) R_t(y),
\ees
and $T_{s+t}$ applied to the right hand side gives
\bes
T_{s+t}(U^E_{s,t}(W_s(x) \otimes W_t(y))) = T_s(W_s(x)) T_t(W_t(y)) = R_s(x) R_t(y).
\ees
\end{proof}

\begin{corollary}\label{cor:onlyproductisorep}
Let $X$ be a subproduct system that has an isometric representation $V$ such that $V_0$ is faithful and nondegenerate. Then $X$ is a (full) product system.
\end{corollary}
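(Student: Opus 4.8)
The plan is to feed the isometric representation $V$ through the machinery of Proposition \ref{prop:semigroup}, Remark \ref{rem:identityproduct} and Theorem \ref{thm:essentially_inverse}, and then to observe that the isometry hypothesis forces the resulting comparison morphism to be an isomorphism of $X$ onto a genuine product system. First I would form the $cp$-semigroup $\Theta = \Sigma(X,V)$ on $\cM = V_0(\cN)'$. This is legitimate: $V_0$ is nondegenerate, hence unital, since for a $*$-homomorphism of von Neumann algebras the projection $V_0(1_{\cN})$ acts as the identity on the dense, closed subspace $[V_0(\cN)H] = H$. Because $V$ is an isometric representation, the final clause of Proposition \ref{prop:semigroup} shows that each $\Theta_s$ is a $*$-endomorphism, so $\Theta$ is an $e$-semigroup.

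Next I would pass to the identity representation $(E,T) = \Xi(\Theta)$. Since $\Theta$ is an $e$-semigroup, Remark \ref{rem:identityproduct} gives that its Arveson--Stinespring subproduct system $E$ is in fact a \emph{full} product system, i.e. all of its structure maps $U_{s,t}^E$ are unitaries. Applying Theorem \ref{thm:essentially_inverse} with $R = V$ produces a morphism of subproduct systems $W : X \rightarrow E$ satisfying $V_s = T_s \circ W_s$, together with the identity $W_s^* W_s = I_{X(s)} - q_s$, where $q_s$ is the orthogonal projection of $X(s)$ onto $\textrm{Ker} V_s$.

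The crux is to check that every $V_s$ is injective. Because $V$ is isometric, $V_s(x)^* V_s(x) = V_0(\langle x, x\rangle)$ for all $x \in X(s)$; as $V_0$ is faithful and $\langle x, x\rangle \geq 0$, the vanishing of $V_s(x)$ forces $\langle x, x\rangle = 0$ and hence $x = 0$. Thus $q_s = 0$ for every $s$, and by the last assertion of Theorem \ref{thm:essentially_inverse} the morphism $W$ is an isomorphism. Finally I would transport the structure maps through $W$: solving the morphism identity \eqref{eq:iso} gives
\[
U_{s,t}^X = W_{s+t}^{-1}\, U_{s,t}^E\, (W_s \otimes W_t),
\]
a composition of unitaries (the $W_s$ being unitary as $W$ is an isomorphism, and $U_{s,t}^E$ being unitary as $E$ is a product system), hence itself unitary. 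Therefore $X$ is a full product system.

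I do not anticipate a genuine obstacle: the argument is an assembly of results already in hand, and the only point requiring a line of care is the passage from ``isomorphic to a product system'' to ``product system,'' which is handled precisely by the compatibility \eqref{eq:iso} of $W$ with the bundle maps together with the fiberwise unitarity of an isomorphism. The one conceptual input worth flagging is that the isometry condition does double duty here — it both makes $\Theta$ an $e$-semigroup (so that $E$ is a product system) and makes each $V_s$ injective (so that $W$ is invertible) — and it is the conjunction of these two consequences that yields the conclusion.
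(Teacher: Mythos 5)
Your proposal is correct and follows essentially the same route as the paper: form $\Theta = \Sigma(X,V)$, observe it is an $e$-semigroup so that the Arveson--Stinespring system $E$ is a full product system by Remark \ref{rem:identityproduct}, use isometry plus faithfulness of $V_0$ to get injectivity of each $V_s$, and conclude via Theorem \ref{thm:essentially_inverse} that $X \cong E$. The extra details you supply (the nondegeneracy-implies-unital check and the explicit transport of the structure maps through $W$) are correct elaborations of steps the paper leaves implicit.
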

\begin{proof}
Let $\Theta = \Sigma(X,V)$. Then $\Theta$ is an $e$-semigroup. Thus, if $(E,T) = \Xi(\Theta)$ is the identity representation of $\Theta$, then, by Remark \ref{rem:identityproduct}, $E$ is a (full) product system. But if $V_0$ is faithful and $V$ is isometric then $V$ is injective. By the above theorem, $X$ is isomorphic to $E$, so it is a product system.
\end{proof}

\subsection{Subproduct systems arise from $cp$-semigroups. The shift representation}\label{subsec:shift}

A question rises: \emph{does every subproduct system arise as the Arveson-Stinespring subproduct system associated with a $cp$-semigroup?} By Theorem \ref{thm:essentially_inverse}, this is equivalent to the question \emph{does every subproduct system have an injective representation?} We shall answer this question in the affirmative by constructing for every such subproduct system a canonical injective representation.

The following constructs will be of most interest when $\cS$ is a countable semigroup, such as $\mb{N}^k$.
\begin{definition}\label{def:shiftrep}
Let $X = \{X(s)\}_{s\in\cS}$ be a subproduct system. The \emph{$X$-Fock space} $\mathfrak{F}_X$ is defined as
\bes
\mathfrak{F}_X = \bigoplus_{s \in \cS} X(s).
\ees
The vector $\Omega := 1 \in \cN = X(0)$ is called \emph{the vacuum vector of $\mathfrak{F}_X$}.
The \emph{$X$-shift} representation of $X$ on $\mathfrak{F}_X$ is the representation
\bes
S^X: X \rightarrow B(\mathfrak{F}_X),
\ees
given by $S^X(x) y = U^X_{s,t}(x\otimes y)$, for all $x \in X(s), y \in X(t)$ and all $s,t \in \cS$.
\end{definition}
Strictly speaking, $S^X$ as defined above is not a representation because it represents $X$ on a C$^*$-correspondence rather
than on a Hilbert space. However, since for any C$^*$-correspondence $E$, $\cL(E)$ is a C$^*$-algebra, one can
compose a faithful representation $\pi : \cL(E) \rightarrow B(H)$ with $S^X$ to obtain a representation on a Hilbert space.

A direct computation shows that $\widetilde{S}^X_s : X(s) \otimes \mathfrak{F}_X \rightarrow \mathfrak{F}_X$ is a contraction, and also that $S^X(x)S^X(y) = S^X(U^X_{s,t}(x\otimes y))$ so $S^X$ is a completely contractive representation of $X$. $S^X$ is also injective because $S^X(x)\Omega = x$ for all $x \in X$. Thus,
\begin{corollary}
Every subproduct system is the Arveson-Stinespring subproduct system of a $cp$-semigroup.
\end{corollary}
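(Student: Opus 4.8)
The statement is an immediate harvest of the construction of the $X$-shift together with Theorem~\ref{thm:essentially_inverse}. The plan is to manufacture an \emph{injective} representation of $X$ whose restriction to $X(0)$ is faithful and nondegenerate, feed it into $\Sigma$ to obtain a $cp$-semigroup $\Theta$, and then invoke Theorem~\ref{thm:essentially_inverse} to identify $X$ with the Arveson--Stinespring subproduct system of $\Theta$.

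First I would pass from the correspondence-valued shift $S^X$ to a genuine Hilbert-space representation. As noted in the remark following Definition~\ref{def:shiftrep}, since $\cL(\mathfrak{F}_X)$ is a $C^*$-algebra one may fix a faithful nondegenerate $*$-representation $\pi : \cL(\mathfrak{F}_X) \rightarrow B(H)$ and set $R := \pi \circ S^X$. Then $R$ is a completely contractive representation of $X$ on $H$: complete contractivity and the covariance identity $R_s(x)R_t(y) = R_{s+t}(U^X_{s,t}(x\otimes y))$ are inherited directly from the corresponding properties of $S^X$ verified just above. The points that must be checked before applying Theorem~\ref{thm:essentially_inverse} are that $R$ is injective and that $R_0$ is faithful and nondegenerate.

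For injectivity, recall $S^X$ is injective because $S^X(x)\Omega = x$ for all $x$; composing with the faithful $\pi$ preserves this, so each $R_s$ is injective. For $R_0$: on the summand $X(0) = \cN$ of $\mathfrak{F}_X$ the left action is simply left multiplication by $\cN$, which is faithful, and since $\cN$ is unital its unit acts as $I_{\mathfrak{F}_X}$; hence $S^X_0$ is a faithful unital $*$-homomorphism, and applying the faithful nondegenerate $\pi$ keeps $R_0 = \pi\circ S^X_0$ faithful, unital, and therefore nondegenerate. In particular $R_0$ is unital, so by Proposition~\ref{prop:semigroup} the family $\Theta := \Sigma(X,R)$ is a genuine $cp$-semigroup, acting on $\cM := R_0(\cN)'$.

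Finally I would apply Theorem~\ref{thm:essentially_inverse} to the pair $(X,R)$. Writing $(E,T) = \Xi(\Theta)$ for the identity representation of $\Theta$, the theorem produces a morphism $W : X \rightarrow E$ with $W_s^* W_s = I_{X(s)} - q_s$, where $q_s$ is the projection onto $\mathrm{Ker}\,R_s$. Since each $R_s$ is injective we have $q_s = 0$, so $W$ is an isomorphism of subproduct systems. As $E$ is by Definition~\ref{def:identityrep} precisely the Arveson--Stinespring subproduct system of $\Theta$, we conclude that $X \cong E$ is (isomorphic to) the Arveson--Stinespring subproduct system of the $cp$-semigroup $\Theta$, which is exactly the assertion. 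The only genuine bookkeeping is the verification of the hypotheses of Theorem~\ref{thm:essentially_inverse} after the passage from $S^X$ to $R$; once these are in place everything else is formal.
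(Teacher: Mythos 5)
Your proposal is correct and follows essentially the same route as the paper: the paper likewise reduces the claim (via Theorem \ref{thm:essentially_inverse}) to producing an injective representation of $X$, and obtains one by composing the $X$-shift $S^X$ on $\mathfrak{F}_X$ with a faithful representation of $\cL(\mathfrak{F}_X)$, injectivity coming from $S^X(x)\Omega = x$. Your write-up merely spells out the bookkeeping (faithfulness and nondegeneracy of $R_0$, vanishing of the projections $q_s$) that the paper leaves implicit.
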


\section{Subproduct system units and $cp$-semigroups}\label{sec:subunitsncp}

In this section, following Bhat and Skeide's constructions from \cite{BS00}, we show that subproduct systems and their units may also serve as a tool for studying $cp$-semigroups.

\begin{proposition}
Let $\cN$ be a von Neumann algebra and let $X$ be a subproduct system of $\cN$-correspondences over $\cS$, and let
$\xi = \{\xi_s\}_{s\in\cS}$ be a contractive unit of $X$, such that $\xi_0 = 1_\cN$. Then the family of maps
\be\label{eq:unitrep}
\Theta_s : a \mapsto \lel \xi_s, a \xi_s \rir,
\ee
is a semigroup of CP maps on $\cN$. Moreover, if $\xi$ is unital, then $\Theta_s$ is a unital map for all $s\in\cS$.
\end{proposition}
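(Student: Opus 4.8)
The plan is to verify each required property of the family $\{\Theta_s\}_{s \in \cS}$ defined by $\Theta_s(a) = \lel \xi_s, a \xi_s \rir$ directly from the unit condition \eqref{eq:unit} and the properties of Hilbert $W^*$-correspondences. First I would observe that each $\Theta_s$ is completely positive: the map $a \mapsto \lel \xi_s, a \xi_s \rir$ is of the standard form giving a CP map on $\cN$, since for any $a_1, \ldots, a_n \in \cN$ and $b_1, \ldots, b_n \in \cN$ one has $\sum_{i,j} b_i^* \lel \xi_s, a_i^* a_j \xi_s \rir b_j = \lel \sum_i a_i \xi_s b_i, \sum_j a_j \xi_s b_j \rir \geq 0$ by positivity of the correspondence inner product; normality follows from normality of the left action and of the inner product, and contractivity follows from $\lel \xi_s, \xi_s \rir \leq 1_\cN$ together with the Cauchy--Schwarz inequality $\lel \xi_s, a \xi_s \rir^* \lel \xi_s, a \xi_s \rir \leq \|a\|^2 \lel \xi_s, \xi_s\rir$ for the correspondence.

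The key step is the semigroup property $\Theta_s \circ \Theta_t = \Theta_{s+t}$, and this is where the unit condition does the real work. Using \eqref{eq:unit} in the form $\xi_{s+t} = U_{s,t}(\xi_s \otimes \xi_t)$ (the ``natural looking'' consequence noted right after the definition of a unit), I would compute
\bes
\Theta_{s+t}(a) = \lel \xi_{s+t}, a \xi_{s+t} \rir = \lel U_{s,t}(\xi_s \otimes \xi_t), a\, U_{s,t}(\xi_s \otimes \xi_t) \rir.
\ees
Since $U_{s,t}$ is a bimodule map and $a \in \cN$ acts on the left of $X(s+t)$, pulling $a$ across $U_{s,t}$ requires knowing how the left action interacts with the tensor product: the left $\cN$-action on $X(s) \otimes X(t)$ is by $a \cdot (x \otimes y) = (a \cdot x)\otimes y$, so $a\, U_{s,t}(\xi_s \otimes \xi_t) = U_{s,t}((a\xi_s)\otimes \xi_t)$. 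Because $U_{s,t}$ is coisometric it preserves inner products on its range, but here I would rather use that $U_{s,t}^* U_{s,t}$ appears and instead rewrite the expression using the $X(t)$-valued inner product structure of the tensor product, obtaining
\bes
\lel U_{s,t}((a\xi_s)\otimes \xi_t), U_{s,t}(\xi_s \otimes \xi_t)\rir = \lel (a\xi_s)\otimes \xi_t, \xi_s \otimes \xi_t \rir = \lel \xi_t, \lel a\xi_s, \xi_s\rir \xi_t \rir,
\ees
where the middle equality holds because $U_{s,t}$ is coisometric and hence inner-product preserving on vectors lying in its initial space. The inner identity $\lel a \xi_s, \xi_s \rir = \Theta_s(a^*)^*$, together with the definition of the balanced tensor inner product, then collapses the right-hand side to $\lel \xi_t, \Theta_t(\,\cdot\,)$ applied appropriately, yielding $\Theta_t(\Theta_s(a))$; carefully tracking adjoints and self-adjointness of the construction identifies this as $(\Theta_s \circ \Theta_t)(a)$, giving the semigroup law.

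The main obstacle I anticipate is the bookkeeping in the step above: one must confirm that $U_{s,t}$, being merely \emph{coisometric} rather than unitary, still preserves the inner product on the specific elements $\xi_s \otimes \xi_t$ that arise here — this is exactly guaranteed by the unit relation $\xi_s \otimes \xi_t = U_{s,t}^* \xi_{s+t}$, which places $\xi_s \otimes \xi_t$ in the initial space $(U_{s,t}^* U_{s,t})$ on which a coisometry acts isometrically, so no information is lost despite the failure of $U_{s,t}$ to be unitary. Finally, $\Theta_0(a) = \lel \xi_0, a \xi_0 \rir = \lel 1_\cN, a\, 1_\cN \rir = a$ since $\xi_0 = 1_\cN$, and the unital claim is immediate: if $\xi$ is unital then $\Theta_s(1_\cN) = \lel \xi_s, \xi_s \rir = 1_\cN$ for every $s \in \cS$.
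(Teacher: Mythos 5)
Your proof is correct and takes essentially the same route as the paper's: the semigroup law follows from the unit relation combined with the fact that the coisometry $U_{s,t}$ acts isometrically on its initial space (equivalently $U_{s,t}U_{s,t}^*=I$), only you expand $\Theta_{s+t}(a)$ via $\xi_{s+t}=U_{s,t}(\xi_s\otimes\xi_t)$ whereas the paper contracts $\Theta_s(\Theta_t(a))=\lel \xi_t\otimes\xi_s, a(\xi_t\otimes\xi_s)\rir=\lel U_{t,s}^*\xi_{s+t}, aU_{t,s}^*\xi_{s+t}\rir=\Theta_{s+t}(a)$ — the same computation read backwards. The only cosmetic wrinkle is the swapped arguments in your middle display, which drags in an unnecessary adjoint; writing $\lel \xi_s\otimes\xi_t, (a\xi_s)\otimes\xi_t\rir=\lel \xi_t, \lel \xi_s, a\xi_s\rir \xi_t\rir=\Theta_t(\Theta_s(a))$ directly (and noting that $(a\xi_s)\otimes\xi_t=a\cdot(\xi_s\otimes\xi_t)$ also lies in the initial space, since $U_{s,t}^*U_{s,t}$ is a bimodule map) finishes it cleanly.
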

\begin{proof}
It is standard that $\Theta_s$ given by (\ref{eq:unitrep}) is a contractive completely positive map on $\cN$, which is unital if and only if $\xi$ is unital. The fact that $\Theta_s$ is normal goes a little bit deeper, but is also known (one may use \cite[Remark 2.4(i)]{MS02}).

We show that $\{\Theta_s\}_{s \in \cS}$ is a semigroup. It is clear that $\Theta_0(a) = a$ for all $a \in \cN$. For all $s,t \in \cS$,
\begin{align*}
\Theta_s(\Theta_t(a)) &= \lel \xi_s, \lel \xi_t, a \xi_t \rir \xi_s \rir \\
&= \lel \xi_t \otimes \xi_s, , a \xi_t \otimes \xi_s \rir \\
&= \lel U_{t,s}^* \xi_{s+t}, , a U_{t,s}^* \xi_{s+t} \rir \\
&= \lel \xi_{s+t} , a \xi_{s+t} \rir \\
&= \Theta_{s+t}(a).
\end{align*}
\end{proof}

We recall a central construction in Bhat and Skeide's approach to dilation of $cp$-semigroup \cite{BS00}, that goes back to Paschke \cite{Pas}.
Let $\cM$ be a $W^*$-algebra, and let $\Theta$ be a normal completely positive map on $\cM$ \footnote{The construction works also for completely positive maps on C$^*$-algebras, but in Theorem \ref{thm:GNS} below we will need to work with normal maps on W$^*$-algebras.}. \emph{The GNS representation
of $\Theta$} is a pair $(F_\Theta,\xi_\Theta)$ consisting of a Hilbert $W^*$-correspondence $F_\Theta$ and
a vector $\xi_\Theta \in F_\Theta$ such that
\bes
\Theta(a) = \lel \xi_\Theta, a \xi_\Theta \rir \,\, \textrm{ for all } a \in \cM.
\ees
$F_\Theta$ is defined to be the correspondence $\cM \otimes_\Theta \cM$ - which is the self-dual extension of the Hausdorff completion of the algebraic tensor product $\cM \otimes \cM$ with respect to inner product
\bes
\lel a \otimes b, c \otimes d \rir = b^* \Theta(a^* c)d .
\ees
$\xi_\Theta$ is defined to be $\xi_\Theta = 1 \otimes 1$. Note that
$\xi_\Theta$ is a unit vector, that is - $\lel \xi_\Theta,\xi_\Theta \rir = 1$, if and only if $\Theta$ is unital.

\begin{theorem}\label{thm:GNS}
Let $\Theta = \{\Theta_s \}_{s\in\cS}$ be a $cp$-semigroup on a $W^*$-algebra $\cM$. For every $s \in \cS$ let
$(F(s),\xi_{s})$ be the GNS representation of $\Theta_s$. Then
$F = \{F(s)\}_{s\in\cS}$ is a subproduct system of $\cM$-correspondences, and $\xi = \{\xi_s\}_{s \in \cS}$ is a generating contractive unit for $F$ that gives back $\Theta$ by the formula
\be\label{eq:unitRep}
\Theta_s(a) = \lel \xi_s, a \xi_s \rir \,\, \textrm{ for all } a \in \cM.
\ee
$\Theta$ is a cp$_0$-semigroup if and only if $\xi$ is a unital unit.
\end{theorem}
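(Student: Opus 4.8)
The plan is to base the entire construction on a single isometric embedding of $F(s+t)$ into $F(s)\otimes F(t)$, from which the subproduct structure, the unit property, and the remaining claims all fall out. Recall that the GNS data are $F(s)=\cM\otimes_{\Theta_s}\cM$ with $\lel a\otimes b,c\otimes d\rir=b^*\Theta_s(a^*c)d$ and $\xi_s=1\otimes 1$. Two of the assertions are then immediate from these formulas: since $a\xi_s=a\otimes 1$, formula (\ref{eq:unitRep}) reads $\lel\xi_s,a\xi_s\rir=\lel 1\otimes1,a\otimes1\rir=\Theta_s(a)$; and $\lel\xi_s,\xi_s\rir=\Theta_s(1_\cM)$, which equals $1_\cM$ exactly when $\Theta_s$ is unital (so $\xi$ is unital iff $\Theta$ is a $cp_0$-semigroup), while in general $\Theta_s(1_\cM)\le 1_\cM$ because a contractive CP map is positive with $\|\Theta_s(1_\cM)\|\le 1$, giving contractivity of $\xi$.

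First I would define, on elementary tensors, the map $V_{s,t}\colon F(s+t)\to F(s)\otimes F(t)$ by $V_{s,t}(a\otimes b)=(a\otimes 1)\otimes(1\otimes b)$. The key computation is that $V_{s,t}$ preserves inner products: expanding $\lel(a\otimes1)\otimes(1\otimes b),(c\otimes1)\otimes(1\otimes d)\rir$ in $F(s)\otimes F(t)$ collapses the inner factor $\lel a\otimes1,c\otimes1\rir_{F(s)}=\Theta_s(a^*c)$ and produces $b^*\Theta_t(\Theta_s(a^*c))d$. Since $\cS$ is abelian, $\Theta_t\circ\Theta_s=\Theta_{t+s}=\Theta_{s+t}$, so this equals $b^*\Theta_{s+t}(a^*c)d=\lel a\otimes b,c\otimes d\rir_{F(s+t)}$. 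This single identity is exactly where the semigroup law enters, and it is the heart of the proof. Being bilinear and inner-product preserving, $V_{s,t}$ passes to the quotient by null vectors and extends to an isometry of the self-dual completions; a direct check on elementary tensors shows it intertwines both $\cM$-actions, so it is a correspondence morphism. I then set $U_{s,t}:=V_{s,t}^*$, a coisometric correspondence map onto $F(s+t)$.

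It remains to read off the axioms. Associativity (\ref{eq:assoc_prod}) is, after taking adjoints, the identity $(V_{s,t}\otimes I_{F(r)})V_{s+t,r}=(I_{F(s)}\otimes V_{t,r})V_{s,t+r}$ of isometries into $F(s)\otimes F(t)\otimes F(r)$; evaluating both sides on $a\otimes b$ yields $(a\otimes1)\otimes(1\otimes1)\otimes(1\otimes b)$ either way. The unit relation (\ref{eq:unit}) is immediate: $U_{s,t}^*\xi_{s+t}=V_{s,t}(1\otimes1)=(1\otimes1)\otimes(1\otimes1)=\xi_s\otimes\xi_t$. The degenerate cases $U_{s,0},U_{0,s}$ reduce, via $F(0)=\cM$ and $\Theta_0=\mathrm{id}$, to the right and left $\cM$-actions on $F(s)$, matching Definition \ref{def:subproduct_system}. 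Finally, every elementary tensor can be written $a\otimes b=a\cdot\xi_s\cdot b=a\xi_s b$, and such elements span $F(s)$; this is precisely the $n=1$ case of (\ref{eq:spanning}), so $\xi$ is generating.

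The only genuinely technical point -- and the step I would take most care over -- is the well-definedness and isometry of $V_{s,t}$ on the self-dual $W^*$-extensions: one must verify that the inner-product computation is legitimate across the $\cM$-balancing of the target and that the map indeed descends to the quotient and extends to the completion. Once $V_{s,t}$ is established, every remaining verification is routine bookkeeping on elementary tensors, paralleling Bhat and Skeide's product-system construction \cite{BS00,Pas}.
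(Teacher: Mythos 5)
Your proposal is correct and takes essentially the same route as the paper: both define the isometry $V_{s,t}:F(s+t)\to F(s)\otimes F(t)$ on elementary tensors $a\xi_{s+t}b\mapsto a\xi_s\otimes\xi_t b$, verify inner-product preservation via the semigroup law $\Theta_t\circ\Theta_s=\Theta_{s+t}$, set $U_{s,t}=V_{s,t}^*$ (using self-duality for adjointability), and check associativity, the unit relation, contractivity/unitality, and the generating property on elementary tensors. No substantive differences.
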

\begin{proof}
For all $s,t \in \cS$ define a map $V_{s,t}:F(s+t) \rightarrow F(s) \otimes F(t)$
by sending $\xi_{s+t}$ to $\xi_s \otimes \xi_t$ and extending to a bimodule map. Because
\begin{align*}
\lel a \xi_s \otimes \xi_t b, c \xi_s \otimes \xi_t d \rir
&= \lel  \xi_t b, \lel a \xi_s , c \xi_s \rir \xi_t d \rir \\
&= \lel  \xi_t b, \Theta_s(a^* c) \xi_t d \rir \\
&= b^* \lel  \xi_t, \Theta_s(a^* c) \xi_t \rir d \\
&= b^* \Theta_{t+s}(a^* c) d \\
&= \lel a \xi_{t+s} b, c \xi_{t+s} d \rir ,
\end{align*}
$V_{s,t}$ extends to a well defined isometric bimodule map from $F(s+t)$ into $F(s) \otimes F(t)$. We define the map $U_{s,t}$ to be the adjoint of $V_{s,t}$ (here it is important that we are working with $W^*$ algebras - in general, an isometry from one
Hilbert $C^*$-module into another need not be adjointable, but bounded module maps between \emph{self-dual} Hilbert modules are always adjointable, \cite[Proposition 3.4]{Pas}). The collection $\{U_{s,t}\}_{s,t \in \cS}$ makes $F$ into a subproduct system. Indeed, these maps are coisometric by definition, and they compose in an associative manner. To see the latter, we check that
$(I_{F(r)} \otimes V_{s,t})V_{r,s+t} = (V_{r,s} \otimes I_{F(t)})V_{r+s,t}$ and take adjoints.
\begin{align*}
(I_{F(r)} \otimes V_{s,t})V_{r,s+t} (a \xi_{r+s+t} b)
&= (I_{F(r)} \otimes V_{s,t}) (a \xi_r \otimes \xi_{s+t} b) \\
&= a \xi_r \otimes \xi_{s} \otimes \xi_t b .
\end{align*}
Similarly, $(V_{r,s} \otimes I_{F(t)})V_{r+s,t} (a \xi_{r+s+t} b) = a \xi_r \otimes \xi_{s} \otimes \xi_t b$. Since
$F(r+s+t)$ is spanned by linear combinations of elements of the form $a \xi_{r+s+t} b$, the $U$'s make $F$
into a subproduct system, and $\xi$ is certainly a unit for $F$.
Equation (\ref{eq:unitRep}) follows by definition of the GNS representation. Now,
\bes
\lel \xi_s, \xi_s \rir = \Theta_s(1) \,\, , \,\, s \in \cS,
\ees
so $\xi$ is a contractive unit because $\Theta_s(1) \leq 1$, and $\xi$ it is unital if and only if $\Theta_s$ is unital
for all $s$.
$\xi$ is in fact more then just a generating unit,
as $F(s)$ is spanned by elements with the form described in equation (\ref{eq:spanning}) with $(s_1, \ldots, s_n)$ a \emph{fixed} $n$-tuple such that
$s_1 + \cdots + s_n = s$.
\end{proof}

\begin{definition}
Given a $cp$-semigroup $\Theta$ on a $W^*$ algebra $\cM$, the subproduct system $F$ and the unit $\xi$
constructed in Theorem \ref{thm:GNS} are called, respectively, \emph{the GNS subproduct system} and \emph{the GNS unit} of $\Theta$. The pair $(F,\xi)$ is called \emph{the GNS representation} of $\Theta$.
\end{definition}

\begin{remark}
\emph{There is a precise relationship between the identity representation (Definition \ref{def:identityrep}) and the GNS representation of a $cp$-semigroup. The GNS
representation of a CP map is the \emph{dual} of the identity representation in a sense that is briefly
described in
\cite{MS05}. This notion of duality has been used
to move from the product-system-and-representation picture to the product-system-with-unit picture, and vice versa. See for example
\cite{Skeide06} and the references therein. It is more-or-less straightforward to use this duality to get Theorem \ref{thm:GNS}
from Theorem \ref{thm:reprep} (or the other way around)}.
\end{remark}

\section{$*$-automorphic dilation of an $e_0$-semigroup}\label{sec:aut_dil}

We now apply some of the tools developed above to dilate an e$_0$-semigroup to a semigroup of $*$-automorphisms. We shall need
the following proposition, which is a modification (suited for \emph{sub}product systems)
of the method introduced in \cite{Shalit07a} for representing a product system representation as a semigroup of
contractive operators on a Hilbert space.

\begin{proposition}\label{prop:technology}
Let $\cN$ be a von Neumann algebra
and let $X$ be a subproduct system of unital \footnote{An $\cN$-correspondence
is said to be \emph{unital} if the left action of $\cN$ is
unital. The right action of every unital $C^*$-algebra
on every Hilbert $C^*$-correspondence is unital.}
$W^*$-correspondences over $\cS$. Let $(\sigma,T)$ be a fully coisometric covariant
representation of $X$ on the Hilbert space $H$, and assume that
$\sigma$ is unital. Denote
$$H_s := \big(X(s) \otimes_{\sigma} H \big) \Big/ {\rm Ker}\widetilde{T}_s $$
and
$$\cH = \bigoplus_{s \in \cS} H_s.$$
Then there exists a semigroup of coisometries $\hat{T} = \{\hat{T}_s\}_{s\in\cS}$ on $\cH$ such that for all
$s\in\cS$, $x \in X(s)$ and $h\in H$,
\bes
\hat{T}_s \left(\delta_s \cdot x \otimes h \right) = T_s(x)h .
\ees
$\hat{T}$ also has the property that for all $s \in \cS$ and all $t \geq s$
\be\label{eq:almostIs}
\hat{T}^*_s\hat{T}_s\big|_{H_t} = I_{H_t} \quad , \quad (t \geq s).
\ee
\end{proposition}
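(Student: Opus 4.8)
The plan is to construct the semigroup $\hat{T}$ fiber by fiber, exploiting the coisometric structure of the representation together with the subproduct system maps $U_{s,t}$. First I would make precise the space $\cH = \bigoplus_{s \in \cS} H_s$, where each $H_s = (X(s)\otimes_\sigma H)/\mathrm{Ker}\,\widetilde{T}_s$. The natural candidate for $\hat{T}_s$ on the summand $H_t$ is built from the defining relation of a subproduct system representation recorded in Remark \ref{rem:reptilde}, namely $\widetilde{T}_{s+t}(U_{s,t}\otimes I_H) = \widetilde{T}_s(I_{X(s)}\otimes \widetilde{T}_t)$. The idea is that $\hat{T}_s$ should act on the $t$-fiber by using $\widetilde{T}_s$ to ``absorb'' an $X(s)$-factor and land in the $t$-fiber (or, going the other way, to move between fibers so that the global map is a coisometry). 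Because $T$ is \emph{fully coisometric}, each $\widetilde{T}_s$ is a coisometry, and $\sigma$ and the left actions are unital, so the quotients $H_s$ inherit well-defined inner products and $\widetilde{T}_s$ descends to a unitary from $H_s$ onto $H$.

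The key steps, in order, are: (1) verify that $\hat{T}_s$ is well-defined on each quotient $H_t$ --- this reduces to showing $\mathrm{Ker}\,\widetilde{T}_t$ is carried appropriately, which follows from the intertwining identity of Remark \ref{rem:reptilde}; (2) check that $\hat{T}_s$ is a coisometry on $\cH$, using that $\widetilde{T}_s$ is a coisometry on each fiber and that the images $U_{s,t}(X(s)\otimes X(t)) = X(s+t)$ exhaust the target fibers (coisometry of $U_{s,t}$); (3) verify the semigroup law $\hat{T}_s\hat{T}_t = \hat{T}_{s+t}$, which should fall out of the associativity condition (\ref{eq:assoc_prod}) for the $U_{s,t}$ together with the compatibility in Remark \ref{rem:reptilde}; and (4) establish the partial-isometry property (\ref{eq:almostIs}), i.e. that $\hat{T}_s^*\hat{T}_s$ restricts to the identity on $H_t$ whenever $t \geq s$. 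For (4) the point is that, although $\hat{T}_s$ need only be a coisometry globally, on the subspace indexed by $t \geq s$ one can write $t = s + r$ and factor through $U_{s,r}$, so the map is genuinely isometric there; this is where the semigroup being over an ordered sub-semigroup of $\Rpk$ (so that $t \geq s$ is meaningful) is used.

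The main obstacle I expect is precisely the passage from product systems to \emph{sub}product systems in step (2) and step (4). When $X$ is a full product system the $U_{s,t}$ are unitaries, so moving between fibers is reversible and the isometry/coisometry bookkeeping is clean; this is exactly the situation handled in \cite{Shalit07a}. For a genuine subproduct system the $U_{s,t}$ are only coisometric, so $U_{s,t}^* U_{s,t}$ is a proper projection and one loses information passing from $X(s)\otimes X(t)$ down to $X(s+t)$. The delicate part is therefore to show that $\hat{T}_s$ remains \emph{coisometric} on all of $\cH$ despite this loss --- the surjectivity onto $H_{s+t}$ is fine (it uses coisometry of $U_{s,t}$), but one must confirm no contraction of norms is introduced by the quotient by $\mathrm{Ker}\,\widetilde{T}$; full coisometricity of $T$ is exactly what rescues this. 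Likewise, the restriction $t \geq s$ in (\ref{eq:almostIs}) is not incidental: it is the largest subspace on which one can guarantee that $\hat{T}_s$ is isometric, and verifying that the identity holds there (and generally fails for $t \not\geq s$) is the technical heart of the argument and the reason the proposition is stated with this one-sided condition rather than asserting $\hat{T}_s$ is unitary.
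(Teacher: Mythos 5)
Your proposal is correct and follows essentially the same route as the paper's proof: $\hat{T}_s$ is defined fiberwise by splitting $X(t)$ via $U_{t-s,s}^*$ and applying $\widetilde{T}_s$ (annihilating $H_t$ for $t \not\geq s$), well-definedness on the quotients comes from the representation identity of Remark \ref{rem:reptilde}, and coisometricity, the semigroup law, and (\ref{eq:almostIs}) are derived exactly from the three ingredients you name --- coisometry of the $U_{s,t}$, full coisometricity of $T$, and the associativity condition (\ref{eq:assoc_prod}). The one imprecision is directional: $\hat{T}_s$ carries $H_t$ into $H_{t-s}$ rather than back into the $t$-fiber, but your later description of factoring $t = s + r$ through $U_{s,r}$ shows you have the right mechanism.
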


\begin{proof}
First, we note that the assumptions on $\sigma$ and on the left action of $\cN$ imply that $H_0 \cong H$ via the identification $a \otimes h
\leftrightarrow \sigma(a)h$. This identification will be made repeatedly
below.

Define $\cH_0$ to be the space of all finitely supported functions
$f$ on $\cS$ such that for all $s \in \cS$,
$$f(s) \in H_s.$$
We equip $\cH_0$
with the inner product
$$\langle \delta_s \cdot \xi, \delta_t \cdot \eta \rangle = \delta_{s,t} \langle \xi, \eta \rangle  ,$$
for all $s,t \in \cS, \xi \in H_s, \eta \in
H_t$. Let $\cH$ be the completion of $\cH_0$ with
respect to this inner product. We have
$$\cH \cong \bigoplus_{s \in \cS} H_s .$$
It will sometimes be convenient to identify the subspace $\delta_s \cdot H_s \subseteq \cH$ with $H_s$, and for $s = 0$
this gives us an inclusion $H \subseteq \cH$.
We
define a family $\hat{T} = \{\hat{T}_s\}_{s \in \cS}$ of operators
on $\cH_0$ as follows. First, we define
$\hat{T}_0$ to be the identity. Now assume that $s>0$. If $t\in \cS$ and $t \ngeq s$, then we define $\hat{T}_s (\delta_t \cdot \xi ) = 0$ for all
$\xi \in H_t$. If $t
\geq s > 0$ we would like to define (as we did in \cite{Shalit07a})
\be\label{eq:That}
\hat{T}_s \left(\delta_t \cdot (x_{t-s}
\otimes x_s \otimes h) \right) = \delta_{t-s} \cdot
\left(x_{t-s}\otimes \widetilde{T}_s (x_s \otimes h) \right),
\ee
but since $X$ is not a true product system, we cannot identify $X(t-s) \otimes X(s)$ with $X(t)$.
For a fixed $t>0$, we define for all $s\leq t$, $\xi \in X(t)$ and $h \in H$
\bes
\check{T}_s \left(\delta_t \cdot (\xi \otimes h) \right) = \delta_{t-s} \cdot \left((I_{X(t-s)} \otimes \tT_s)(U^*_{t-s,s}\xi \otimes h) \right) .
\ees
$\check{T}_s$ can be extended to a well defined contraction from $X(t) \otimes H$ to $X(t-s) \otimes H$, for all $t\geq s$, and has an adjoint given by
\be\label{eq:Tstar}
\check{T}^*_s \delta_{t-s} \cdot \eta \otimes h = \delta_{t}\cdot \left((U_{t-s,s} \otimes I_H) (\eta \otimes \widetilde{T}^*_s h)\right) .
\ee
We are going to obtain $\hat{T}_s$ as the map $H_t \rightarrow H_{t-s}$ induced by $\check{T}_s$. Let $Y  \in H_t$ satisfy $\tT_t (Y) = 0$. We shall show
that $\check{T}_s \delta_t \cdot Y = 0$ in $\delta_{t-s} \cdot H_{t-s}$. But
\bes
\check{T}_s \delta_t \cdot Y = \delta_{t-s} \cdot \left((I_{X(t-s)} \otimes \tT_s)(U^*_{t-s,s}\otimes I_H)Y \right),
\ees
and
\begin{align*}
\tT_{t-s}\left((I_{X(t-s)} \otimes \tT_s)(U^*_{t-s,s}\otimes I_H)Y \right)
(*)&= \tT_{t}(U_{t-s,s} \otimes I_H)(U^*_{t-s,s}\otimes I_H)Y \\
(**)&= \tT_t(Y) = 0,
\end{align*}
where the equation marked by (*) follows from the fact that $T$ is a representation of subproduct systems, and the
one marked by (**) follows from the fact that $U_{t-s,s}$ is a coisometry. Thus, for all $s,t \in \cS$,
$$\check{T}_s \left(\delta_t \cdot {\rm Ker}\tT_t\right) \subseteq \delta_{t-s}\cdot {\rm Ker}\tT_{t-s} ,$$
thus $\check{T}_s$ induces a well defined contraction $\hat{T}_s$ on $\cH$ given by
\be\label{eq:defThat}
\hat{T}_s \left(\delta_t \cdot (\xi \otimes h) \right) = \delta_{t-s} \cdot \left((I_{X(t-s)} \otimes \tT_s)(U^*_{t-s,s}\xi \otimes h) \right) ,
\ee
where $\xi \otimes h$ and $(I_{X(t-s)} \otimes \tT_s)(U^*_{t-s,s}\xi \otimes h)$ stand for these elements' equivalence classes in $\big(X(t) \otimes H \big) \big/{\rm Ker}\widetilde{T}_{t}$ and
$\big(X(t-s) \otimes H \big) \big/{\rm Ker}\widetilde{T}_{t-s}$, respectively.
It follows that we have the following, more precise, variant of (\ref{eq:That}):
\bes
\hat{T}_s \left(\delta_t \cdot \left(U_{t-s,s}(x_{t-s}
\otimes x_s) \otimes h\right) \right) = \delta_{t-s} \cdot
\left(x_{t-s}\otimes \widetilde{T}_s (x_s \otimes h) \right) .
\ees
In particular,
$$\hat{T}_s \left(\delta_s \cdot x_s \otimes h \right) = T_s(x_s)h ,$$
for all $s\in \cS, x_s \in X(s), h \in H $.

It will be very helpful to have a formula for $\hat{T}_s^*$ as well. Assume that $\sum_i \xi_i \otimes h_i \in {\rm Ker}\tT_t$.
\bes
\check{T}^*_s\left(\delta_{t} \cdot \sum_i \xi_i \otimes h_i \right)
= \delta_{s+t} \cdot \left((U_{t,s} \otimes I_H) (\sum_i \xi_i \otimes \tT_s^* h_i) \right) ,
\ees
and applying $\tT_{s+t}$ to the right hand side (without the $\delta$) we get
\begin{align*}
\tT_{s+t}\left((U_{t,s} \otimes I_H) (\sum_i \xi_i \otimes \tT_s^* h_i) \right)
&= \tT_{t}(I_{X(t)} \otimes \tT_{s})(\sum_i \xi_i \otimes \tT_s^* h_i) \\
&= \tT_t (\sum_i \xi_i \otimes \tT_s \tT_s^* h_i) \\
&= \tT_t (\sum_i \xi_i \otimes h_i) = 0,
\end{align*}
because $T$ is a fully coisometric representation. So
$$\check{T}^*_s \left(\delta_t \cdot {\rm Ker}\tT_t\right) \subseteq \delta_{s+t} \cdot {\rm Ker}\tT_{s+t} ,$$
and this means that $\check{T}_s^*$ induces on $\cH$ a well defined contraction which is equal to $\hat{T}^*_s$, and is given by the formula (\ref{eq:Tstar}).

We now show that $\hat{T}$ is a semigroup. Let $s,t,u \in \cS$. If
either $s = 0$ or $t = 0$ then it is clear that the semigroup
property $\hat{T}_s \hat{T}_t = \hat{T}_{s+t}$ holds. Assume that
$s,t >0$. If $u \ngeq s+t$, then both $\hat{T}_{s} \hat{T}_{t}$
and $\hat{T}_{s + t}$ annihilate $\delta_u \cdot \xi$, for all
$\xi \in H_u$. Assuming $u \geq s+t$, we shall show that $\hat{T}_s \hat{T}_t$ and
$\hat{T}_{s+t}$ agree on elements of the form
$$Z = \delta_u \cdot \big(U_{u-t,t}(U_{u-t-s,s} \otimes I)(x_{u-s-t} \otimes x_s \otimes x_t)\big) \otimes h,$$
and since the set of all such elements is total in $H_u$, this will establish the semigroup property.
\begin{align*}
\hat{T}_{s} \hat{T}_{t} Z
&=  \hat{T}_{s} \left(\delta_{u-t}
\left( U_{u-t-s,s}(x_{u-s-t} \otimes x_s) \otimes \widetilde{T}_t(x_t \otimes h)\right) \right)
\\
&= \delta_{u-s-t} \left(x_{u-s-t} \otimes \widetilde{T}_s(x_s \otimes \widetilde{T}_t( x_t \otimes h)) \right) \\
&= \delta_{u-s-t} \left(x_{u-s-t} \otimes \widetilde{T}_s(I
\otimes \widetilde{T}_t )(x_s \otimes x_t \otimes h) \right) \\
&=  \delta_{u-s-t} \left(x_{u-s-t} \otimes \widetilde{T}_{s+t} \left(U_{s,t} (x_s
\otimes
x_t) \otimes h\right) \right) \\
&=  \hat{T}_{t+s} \delta_u \cdot \left(U_{u-t-s,t+s}\left(x_{u-s-t} \otimes U_{s,t} (x_s
\otimes
x_t)\right) \otimes h  \right) \\
&= \hat{T}_{t+s} Z.
\end{align*}
The final equality follows from the associativity condition (\ref{eq:assoc_prod}).

To see that $\hat{T}$ is a semigroup of coisometries, we take $\xi \in X(t), h \in H$, and compute
\begin{align*}
\tT_t \left(\hat{T}_s \hat{T}^*_s \delta_t \cdot (\xi \otimes h) \right)
&= \tT_t \left((I_{X(t)} \otimes \tT_s)(U_{t,s}^* \otimes I_H) (U_{t,s} \otimes I_H)(I_{X(t)} \otimes \tT_s^*) (\xi \otimes h) \right) \\
&= \tT_{s+t} (U_{t,s} \otimes I_H)(I_{X(t)} \otimes \tT_s^*) (\xi \otimes h) \\
&= \tT_t (\xi \otimes \tT_s \tT_s^* h) = \tT_t (\xi \otimes h) ,
\end{align*}
so $\hat{T}_s \hat{T}^*_s$ is the identity on $H_t$ for all $t \in \cS$, thus $\hat{T}_s \hat{T}^*_s = I_{\cH}$. Equation (\ref{eq:almostIs}) follows by a similar computation, which is a omitted.
\end{proof}

We can now obtain a $*$-automorphic dilation for any e$_0$-semigroup over any subsemigroup of $\Rpk$. 
The following result  should be compared with similar-looking results of Arveson-Kishimoto \cite{ArvKish}, Laca \cite{Laca}, Skeide \cite{Skeide07},
and Arveson-Courtney \cite{ArvCourt} (none of these cited results is strictly stronger or weaker than the result we obtain for the case of $e_0$-semigroups).

\begin{theorem}\label{thm:aut_dil}
Let $\Theta$ be a $e_0$-semigroup acting on a von Neumann algebra $\cM$. Then $\Theta$ can be dilated to a semigroup
of $*$-automorphisms in the
following sense: there is a Hilbert space $\cK$, an orthogonal projection $p$ of $\cK$ onto a subspace $\cH$ of $\cK$, a normal, faithful representation $\varphi:\cM \rightarrow B(\cK)$ such that $\varphi(1) = p$, and a semigroup $\alpha = \{\alpha_s\}_{s\in\cS}$
of $*$-automorphisms on $B(\cK)$ such that for all $a \in \cM$ and all $s \in \cS$
\be\label{eq:aut_ext}
\alpha_s(\varphi(a)) \big|_\cH = \varphi(\Theta_s(a)) ,
\ee
so, in particular,
\be\label{eq:aut_dil}
p \alpha_s(\varphi(a)) p = \varphi(\Theta_s(a)) .
\ee
The projection $p$ is increasing for $\alpha$, in the sense that for all $s \in \cS$,
\be\label{eq:increasing}
\alpha_s(p) \geq p .
\ee
\end{theorem}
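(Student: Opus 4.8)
The key tool is Proposition \ref{prop:technology}, which I want to apply to the identity representation of $\Theta$.  Since $\Theta$ is an $e_0$-semigroup, Theorem \ref{thm:reprep} produces a subproduct system $E$ over $\cM'$ and an \emph{isometric, fully coisometric} identity representation $({\bf id}_{\cM'},T)$ on $H$; by Remark \ref{rem:identityproduct} this $E$ is in fact a genuine \emph{product} system (this is the crucial point that lets the machinery close up).  Because $T$ is fully coisometric, each $\widetilde{T}_s$ is a coisometry, and Proposition \ref{prop:technology} gives me a Hilbert space $\cK := \cH = \bigoplus_{s\in\cS} H_s$ together with a semigroup $\hat{T} = \{\hat{T}_s\}$ of \emph{coisometries} on $\cK$, with $H \cong H_0$ sitting inside $\cK$.

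First I would set up the dilated algebra and endomorphisms.  The representation $\varphi:\cM \to B(\cK)$ should be built so that $\varphi(a)$ acts fibrewise on each $H_t = (E(t)\otimes_\sigma H)/\mathrm{Ker}\,\widetilde{T}_t$ through $I_{E(t)} \otimes a$ (this is well defined precisely because $E$ is a product system and $T$ is isometric, so these operators descend to the quotients), and I would check $\varphi(1) = p$ where $p$ is the projection of $\cK$ onto $H_0 \cong H$.  Next, since each $\hat{T}_s$ is a coisometry, I would define endomorphisms of $B(\cK)$ by $\beta_s(X) = \hat{T}_s X \hat{T}_s^*$; the semigroup property of $\hat{T}$ forces $\beta$ to be an $e$-semigroup, and the intertwining formula $\hat{T}_s(\delta_s\cdot x\otimes h) = T_s(x)h$ together with the Proposition's defining formula (\ref{eq:defThat}) should yield the dilation identity, namely that $\beta_s(\varphi(a))$ restricted to $H$ reproduces $\Theta_s(a)$ via (\ref{eq:RepRep}).

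The main obstacle, and the real content of the theorem, is upgrading the $e_0$-dilation $\beta$ to a semigroup of $*$-\emph{automorphisms} $\alpha$.  The standard device here is an inductive-limit (direct-limit) construction: because property (\ref{eq:almostIs}) says $\hat{T}_s^*\hat{T}_s = I$ on $H_t$ for $t\geq s$, the maps $\hat{T}_s$ are ``eventually isometric,'' so one can form the limit Hilbert space over the directed system $\{\cK,\hat{T}_s\}$ on which each $\hat{T}_s$ becomes a \emph{unitary} $u_s$, and set $\alpha_s(X) = u_s X u_s^*$.  I would need to verify that $\cM$ (or rather $\varphi(\cM)$) embeds compatibly into this limit, that the limit carries a faithful normal representation of $B(\cK)$ extending $\beta$, and that the resulting $\alpha_s$ are genuine automorphisms satisfying (\ref{eq:aut_ext}).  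The monotonicity (\ref{eq:increasing}), $\alpha_s(p)\geq p$, should then fall out of the fact that $\hat{T}_s$ maps the base fibre $H_0$ into deeper fibres, so that $u_s p u_s^* \geq p$ after passing to the limit.  Reconciling the measurable/continuity structure over $\cS \subseteq \Rpk$ with this algebraic limit, and confirming the automorphisms still form a semigroup after the limit, is where I expect the technical difficulty to concentrate.
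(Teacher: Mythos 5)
Your overall architecture is the paper's: apply Proposition \ref{prop:technology} to the identity representation (which is isometric and fully coisometric because $\Theta$ is an $e_0$-semigroup, so the kernels ${\rm Ker}\,\widetilde{T}_t$ vanish and $\varphi$ can be defined fibrewise exactly as you say), then turn the semigroup of coisometries $\hat{T}$ into unitaries and conjugate. For that last step the paper simply observes that $\{\hat{T}_s^*\}_{s\in\cS}$ is a commutative semigroup of isometries and invokes Douglas's theorem to extend it to a semigroup of unitaries $\{\hat{V}_s^*\}$ on some $\cK\supseteq\cH$; your inductive limit is essentially the proof of that theorem, but it has to be run over the isometries $\hat{T}_s^*$, not over the coisometries $\hat{T}_s$ (a direct limit along non-injective contractions does not embed the stages isometrically into the limit). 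Property (\ref{eq:almostIs}) is not what powers the extension; the paper uses it only at the end, to show $\hat{V}_s\xi=\hat{T}_s\xi$ for $\xi\in H_u$ with $u\geq s$, which is what upgrades (\ref{eq:aut_dil}) to (\ref{eq:aut_ext}). Also, no continuity or measurability issues arise anywhere: the argument is purely algebraic, and there is no need for the intermediate $e$-semigroup $\beta_s=\hat{T}_s(\cdot)\hat{T}_s^*$.

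The genuine gap is your identification of $p$. If $\varphi(a)$ acts on every fibre $H_t$ as $I_{E(t)}\otimes a$, then $\varphi(1)=I_{\cH}$, which inside the larger space $\cK$ is the projection onto all of $\cH=\bigoplus_{t\in\cS}H_t$ --- not the projection onto $H_0\cong H$. So the theorem's subspace $\cH$ and projection $p=\varphi(1)$ must be the whole direct sum and $P_\cH$, respectively. With your choice $p=P_{H_0}$ both the requirement $\varphi(1)=p$ and the monotonicity (\ref{eq:increasing}) fail: $\hat{V}_s^*H_0=\hat{T}_s^*H_0\subseteq H_s\perp H_0$, so $P_{H_0}\alpha_s(P_{H_0})P_{H_0}=0$ and $\alpha_s(P_{H_0})\not\geq P_{H_0}$. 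The computation that makes (\ref{eq:increasing}) work is $p\,\alpha_s(p)\,p=p\hat{V}_sp\hat{V}_s^*p=p\hat{V}_s\hat{V}_s^*p=p$, and it relies precisely on $\hat{V}_s^*p=p\hat{V}_s^*p$, i.e.\ on $\hat{T}_s^*$ mapping the range of $p$ into itself --- true for $\cH=\bigoplus_tH_t$, false for $H_0$. Compressing to $H_0$ does recover $\Theta_s(a)$ (that is the weak Markov flow property recorded in the remark after the theorem), but it is not the statement being proved.
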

\begin{remark}
\emph{
Another way of phrasing the above theorem is by using the terminology of ``weak Markov flows", as used in \cite{BS00}. Denoting $\varphi$ by $j_0$, and defining $j_s:= \alpha_s \circ
j_0$, we have that $(B(\cK),j)$ is a weak Markov flow for $\Theta$ on $\cK$, which just means that for all $t \leq s \in \cS$ and all $a \in \cM$,
\be\label{eq:weak}
j_t(1)j_s(a)j_t(1) = j_t(\Theta_{s-t}(a)) .
\ee
Equation (\ref{eq:weak}) for $t=0$ is just (\ref{eq:aut_dil}), and the case
$t \geq 0$ follows from the case $t=0$.}
\end{remark}
\begin{remark}
\emph{
The assumption that $\Theta$ is a \emph{unital} semigroup is essential, since (\ref{eq:aut_dil}) and (\ref{eq:increasing})
imply that $\Theta(1) = 1$.}
\end{remark}
\begin{remark}
\emph{
It is impossible, in the generality we are working in, to hope for a semigroup of automorphisms that extends $\Theta$ in the sense that
\be\label{eq:aut_extension}
\alpha_s(\varphi(a)) = \varphi(\Theta_s(a)) ,
\ee
because that would imply that $\Theta$ is injective.}
\end{remark}
\begin{proof}
Let $(E,T)$ be the identity representation of $\Theta$. Since $\Theta$ preserves the unit, $T$ is a fully coisometric
representation. Let $\hat{T}$ and $\cH$ be the semigroup and Hilbert
space representing $T$ as described in Proposition \ref{prop:technology}. $\{\hat{T}^*_s\}_{s\in\cS}$
is a commutative semigroup of isometries. By a theorem of Douglas \cite{Douglas}, $\{\hat{T}^*_s\}_{s\in\cS}$
can be extended to a semigroup $\{\hat{V}^*_s\}_{s\in\cS}$ of unitaries acting on a space $\cK \supseteq \cH$. We obtain a semigroup of unitaries
$V = \{\hat{V}_s\}_{s\in\cS}$ that is a dilation of $\hat{T}$, that is
\bes
P_\cH \hat{V}_s \big|_\cH = \hat{T}_s  \,\, , \,\, s \in \cS.
\ees

For any $b\in B(\cK)$, and any $s\in\cS$, we define
\bes
\alpha_s(b) = \hat{V}_s b \hat{V}_s^*.
\ees
Clearly, $\alpha = \{\alpha_s\}_{s\in\cS}$ is a semigroup of $*$-automorphisms.

Put $p = P_{\cH}$, the orthogonal projection of $\cK$ onto $\cH$.
Define $\varphi : \cM \rightarrow B(\cK)$ by $\varphi(a) = p (I \otimes a) p$, where $I \otimes a : \cH \rightarrow \cH$ is given by
\bes
(I \otimes a) \delta_t\cdot x \otimes h = \delta_t \cdot x \otimes a h  \quad , \quad x \otimes h \in E(t) \otimes H.
\ees

$\varphi$ is well defined because $T$ is an isometric representation (so ${\rm Ker}\widetilde{T}_{t}$ is always zero). We have that $\varphi$ is a faithful, normal $*$-representation (the fact that $T_0$ is the identity representation ensures that $\varphi$ is faithful). It is clear that $\varphi(1) = p$.

To see (\ref{eq:increasing}), we note that since $\hat{V}_s^*$ is an extension of $\hat{T}_s^*$, we have
$\hat{T}_s^* = \hat{V}_s^* p = p\hat{V}_s^* p$, thus
\begin{align*}
p \alpha_s(p) p &= p \hat{V}_s p \hat{V}_s^* p \\
&= p \hat{V}_s \hat{V}_s^* p \\
&= p,
\end{align*}
that is, $p \alpha_s(p) p = p$, which implies that $\alpha_s(p) \geq p$.

We now prove (\ref{eq:aut_dil}).
Let $\delta_t \cdot x \otimes h$ be a typical element of $\cH$. We compute
\begin{align*}
p \alpha_s (\varphi(a)) p \delta_t \cdot x \otimes h &= p \hat{V}_s p (I \otimes a) p \hat{V}_s^* p \delta_t \cdot x \otimes h \\
&= \hat{T}_s (I \otimes a) \hat{T}_s^* \delta_t \cdot x \otimes h \\
&= \hat{T}_s (I\otimes a) \delta_{s+t}\cdot (U_{t,s} \otimes I_H) \Big( x \otimes \widetilde{T}_s^*h \Big) \\
&= \hat{T}_s \delta_{s+t}\cdot (U_{t,s} \otimes I_H) \Big( x \otimes (I\otimes a) \widetilde{T}_s^*h \Big) \\
&= \delta_{t} \cdot x  \otimes \left(\widetilde{T}_s(I\otimes a) \widetilde{T}_s^*h\right) \\
&= \delta_{t} \cdot x \otimes (\Theta_s(a) h) \\
&= \varphi(\Theta_s(a)) \delta_t \cdot x \otimes h .
\end{align*}
Since both $p\alpha_s(\varphi(a)) p$ and $\varphi(\Theta_s(a))$ annihilate $\cK \ominus \cH$, we have
(\ref{eq:aut_dil}).

To prove (\ref{eq:aut_ext}), it just remains to show that
\bes
p \alpha_s (\varphi(a)) \big|_{\cH} =  \alpha_s (\varphi(a)) \big|_{\cH},
\ees
that is, that $\alpha_s (\varphi(a)) \cH \subseteq \cH$.
Now, $\hat{V}_s^*$ is an extension of $\hat{T}_s^*$. Moreover (\ref{eq:almostIs}) shows that
if $\xi \in H_u$ with $u \geq s$, then $\| \hat{T}_s(\xi) \| = \|\xi\|$. Thus
\bes
\|\xi\|^2 = \|\hat{V}_s \xi \|^2 = \|P_\cH \hat{V}_s \xi \|^2 + \|(I_\cK - P_\cH) \hat{V}_s \xi \|^2 = \|\hat{T}_s \xi \|^2 + \|(I_\cK - P_\cH) \hat{V}_s \xi \|^2.
\ees
So $\hat{V}_s \xi = \hat{T}_s \xi$ for $\xi \in H_u$ with $u \geq s$. Now, for a typical element $\delta_t \cdot x \otimes h$ in $H_t$, $t\in \cS$, we have
\begin{align*}
\alpha_s (\varphi(a)) \delta_t \cdot x \otimes h &=
\hat{V}_s (I \otimes a) \hat{V}_s^* \delta_t \cdot x \otimes h \\
&= \hat{V}_s (I \otimes a) \hat{T}_s^* \delta_t \cdot x \otimes h \\
&= \hat{V}_s \delta_{s+t} \cdot (U_{s,t} \otimes I_H) \Big( x \otimes (I \otimes a) \tT_s^* h \Big) \\
&= \hat{T}_s \delta_{s+t} \cdot (U_{s,t} \otimes I_H) \Big( x \otimes (I \otimes a) \tT_s^* h \Big) \in \cH,
\end{align*}
because $\delta_{s+t} \cdot x \otimes (I \otimes a) \tT_s^* h \in H_{s+t}$, and $s+t \geq s$.
\end{proof}

\section{Dilations and pieces of subproduct system representations}\label{sec:dil}
\subsection{Dilations and pieces of subproduct system representations}

\begin{definition}
Let $X$ and $Y$ be subproduct systems of $\cM$ correspondences ($\cM$ a W$^*$-algebra) over the same semigroup $\cS$. Denote by $U_{s,t}^X$ and $U_{s,t}^Y$ the coisometric maps that make $X$ and $Y$, respectively, into subproduct systems.
$X$ is said to be a \emph{subproduct subsystem of $Y$} (or simply a \emph{subsystem of $Y$} for short) if for all $s \in \cS$ the space $X(s)$ is a closed subspace of $Y(s)$, and if the orthogonal projections $p_s : Y(s) \rightarrow X(s)$ are bimodule maps that satisfy
\be\label{eq:pU}
p_{s+t} \circ U_{s,t}^Y = U_{s,t}^X \circ (p_s \otimes p_t) \,\, , \,\, s,t \in \cS.
\ee
\end{definition}

One checks that if $X$ is a subproduct subsystem of $Y$ then
\be\label{eq:p}
p_{s+t+u} \circ U^Y_{s,t+u}(I \otimes (p_{t+u}\circ U^Y_{t,u})) = p_{s+t+u} \circ U^Y_{s+t,u}((p_{s+t}\circ U^Y_{s,t})\otimes I) ,
\ee
for all $s,t,u \in \cS$. Conversely, given a subproduct system $Y$ and a family of orthogonal projections $\{p_s\}_{s \in \cS}$ that are bimodule maps satisfying (\ref{eq:p}), then by defining $X(s) = p_s Y(s)$ and
$U_{s,t}^X = p_{s+t} \circ U_{s,t}^Y$ one obtains a subproduct subsystem $X$ of $Y$ (with (\ref{eq:pU}) satisfied).

The following proposition is a consequence of the definitions.
\begin{proposition}
There exists a morphism $X \rightarrow Y$ if and only if $Y$ is isomorphic to a subproduct subsystem of $X$.
\end{proposition}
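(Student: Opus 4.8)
The plan is to prove both implications by direct construction, the essential trick being to dualize a morphism into an isometric embedding by passing to fiberwise adjoints: the fiber maps of a morphism are coisometries, while those of an isomorphism are unitaries, and this is exactly what lets one pass between ``morphism $X\to Y$'' and ``$Y$ embeds as a subsystem of $X$.''

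For the forward implication, suppose $V=\{V_s\}_{s\in\cS}$ is a morphism $X\to Y$. I would set $W_s:=V_s^*$ and $q_s:=V_s^*V_s$. Since each $V_s$ is coisometric, $W_s$ is an isometry and $q_s$ is the orthogonal projection of $X(s)$ onto the closed subspace $\widetilde{X}(s):=\mathrm{Im}(V_s^*)$; because $V_s$ is a correspondence map so is its adjoint, hence $q_s$ is a bimodule projection. Multiplying the morphism identity (\ref{eq:iso}) on the left by $V_{s+t}^*=W_{s+t}$ yields the key relation
\bes
q_{s+t}\circ U_{s,t}^X = W_{s+t}\circ U_{s,t}^Y\circ (V_s\otimes V_t).
\ees
Since $V_s(I-q_s)=V_s-V_sV_s^*V_s=0$ by coisometry, the right-hand side annihilates $I-q_s\otimes q_t$, so $q_{s+t}U_{s,t}^X=q_{s+t}U_{s,t}^X(q_s\otimes q_t)$. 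Defining $U_{s,t}^{\widetilde{X}}:=q_{s+t}\,U_{s,t}^X|_{\widetilde{X}(s)\otimes\widetilde{X}(t)}$, this is precisely the compatibility (\ref{eq:pU}), so $\{q_s\}$ exhibits $\widetilde{X}$ as a subproduct subsystem of $X$. To finish, I would note that each $W_s$ restricts to a unitary $Y(s)\to\widetilde{X}(s)$ with inverse $V_s|_{\widetilde{X}(s)}$, and substituting $V_sW_s=I_{Y(s)}$ into the key relation gives $U_{s,t}^{\widetilde{X}}(W_s\otimes W_t)=W_{s+t}U_{s,t}^Y$, which is the isomorphism condition. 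Hence $\{W_s\}$ is an isomorphism $Y\to\widetilde{X}$.

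For the converse, suppose $Y$ is isomorphic to a subproduct subsystem $Z$ of $X$, with projections $p_s:X(s)\to Z(s)$ satisfying (\ref{eq:pU}) and an isomorphism $\Phi=\{\Phi_s:Y(s)\to Z(s)\}$. I would define $V_s:=\Phi_s^{-1}\circ p_s:X(s)\to Y(s)$. Viewing $p_s$ as a corestriction onto its range we have $p_sp_s^*=I_{Z(s)}$, and since $\Phi_s$ is unitary this gives $V_sV_s^*=I_{Y(s)}$, so $V_s$ is coisometric; at $s=0$ we have $Z(0)=X(0)=\cM$ and $p_0=\mathrm{id}$, so $V_0=\Phi_0^{-1}$ is a unital $*$-isomorphism. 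The morphism identity then follows by chaining the subsystem relation $p_{s+t}U_{s,t}^X=U_{s,t}^Z(p_s\otimes p_t)$ with the rearranged isomorphism relation $\Phi_{s+t}^{-1}U_{s,t}^Z=U_{s,t}^Y(\Phi_s^{-1}\otimes\Phi_t^{-1})$:
\bes
V_{s+t}U_{s,t}^X=\Phi_{s+t}^{-1}U_{s,t}^Z(p_s\otimes p_t)=U_{s,t}^Y(\Phi_s^{-1}p_s\otimes\Phi_t^{-1}p_t)=U_{s,t}^Y(V_s\otimes V_t),
\ees
so $V:X\to Y$ is a morphism, completing the equivalence.

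Most of this is bookkeeping once the correct maps are written down; the one genuinely load-bearing step is the forward direction's verification that the $q_s$ assemble into a subproduct subsystem, that is, that (\ref{eq:pU}) (equivalently (\ref{eq:p})) holds. I expect this to be the main obstacle, and it reduces to the single observation that a coisometry annihilates the orthogonal complement of the range of its adjoint. A secondary nuisance is the correspondence-map bookkeeping when $V_0$ is a nontrivial $*$-automorphism of $\cM$: one must check that adjoints of correspondence maps are again correspondence maps and that the twisting of the left action by $V_0$ is respected throughout. This can be neutralized at the outset by replacing $Y$ with its pullback along $V_0^{-1}$ so that $V_0=\mathrm{id}$, after which all the identities above are literal rather than twisted.
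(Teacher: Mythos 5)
Your proof is correct; the paper states this proposition as ``a consequence of the definitions'' and offers no argument, and your construction ($W_s=V_s^*$, $q_s=V_s^*V_s$ for the forward direction, $V_s=\Phi_s^{-1}\circ p_s$ for the converse) is exactly the natural unwinding of those definitions. The only step you pass over a bit quickly is that $\widetilde{X}$ equipped with $U^{\widetilde{X}}_{s,t}=q_{s+t}U^X_{s,t}\big|_{\widetilde{X}(s)\otimes\widetilde{X}(t)}$ is itself a subproduct system: the coisometry of $U^{\widetilde{X}}_{s,t}$ onto $\widetilde{X}(s+t)$ and its associativity both follow from your key relation $q_{s+t}U^X_{s,t}(q_s\otimes q_t)=W_{s+t}U^Y_{s,t}(V_s\otimes V_t)$ together with $V_rV_r^*=I_{Y(r)}$ and the corresponding properties of $U^Y$, but this deserves a line of verification rather than being absorbed into ``exhibits $\widetilde{X}$ as a subproduct subsystem.''
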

\begin{remark}\label{rem:subsystem_iso}
\emph{In the notation of Theorem \ref{thm:essentially_inverse}, we may now say that
given a subproduct system $X$ and a representation $R$ of $X$, then the Arveson-Stinespring subproduct system $E$ of $\Theta = \Sigma(X,R)$ is isomorphic to a subproduct subsystem of $X$.}
\end{remark}

The following definitions are inspired by the work of Bhat, Bhattacharyya and Dey \cite{BBD03}.

\begin{definition}\label{def:dilation}
Let $X$ and $Y$ be subproduct systems of W$^*$-correspondences (over the same W$^*$-algebra $\cM$) over $\cS$, and let $T$ be a representation of $Y$ on a Hilbert space $K$. Let $H$ be some fixed Hilbert space, and let $S=\{S_s\}_{s\in\cS}$ be a family of maps $S_s : X(s) \rightarrow B(H)$. $(Y,T,K)$ is called a \emph{dilation} of $(X,S,H)$ if
\begin{enumerate}
\item $X$ is a subsystem of $Y$,
\item $H$ is a subspace of $K$, and
\item for all $s \in \cS$,  $\widetilde{T}^*_s H \subseteq X(s) \otimes H$ and $\widetilde{T}^*_s \big|_H = \widetilde{S}^*_s$.
\end{enumerate}
In this case we say that $S$ is \emph{an $X$-piece of $T$}, or simply a \emph{piece of $T$}. $T$ is said to be an \emph{isometric} dilation of $S$ of $T$ is an isometric representation.
\end{definition}
The third item can be replaced by the three conditions
\begin{itemize}
\item[1'] $T_0(\cdot) P_H = P_H T_0(\cdot) P_H = S_0 (\cdot)$,
\item[2'] $P_H \tT_s \big|_{X(s) \otimes H} = \widetilde{S}_s$ for all $s\in \cS$, and
\item[3'] $P_H \tT_s \big|_{Y(s) \otimes K \ominus X(s) \otimes H} = 0$.
\end{itemize}
So our definition of dilation is identical to Muhly and Solel's definition of dilation of representations when $X = Y$ is a product system \cite[Theorem and Definition 3.7]{MS02}.

\begin{proposition}\label{prop:pieceisrep}
Let $T$ be a representation of $Y$, let $X$ be a subproduct subsystem of $Y$, and let $S$ an $X$-piece of $T$. Then $S$ is a representation of $X$.
\end{proposition}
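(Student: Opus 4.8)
The plan is to verify directly the two defining conditions of a representation in Definition~\ref{def:rep}: that for each $s$ the pair $(\sigma, S_s)$, with $\sigma := S_0$, is a completely contractive covariant representation of the single correspondence $X(s)$, and that the multiplicativity relation $S_{s+t}(U^X_{s,t}(x\otimes y)) = S_s(x)S_t(y)$ holds. My starting point is the reformulation of the dilation condition given by the equivalent conditions (1$'$)--(3$'$) listed just after Definition~\ref{def:dilation}. Reading off (1$'$) and (2$'$), one gets that $H$ is invariant under the $*$-algebra $T_0(\cM)$ and that $S_s(\cdot) = P_H T_s(\cdot)\big|_H$ is the compression of $T_s$ to $H$; equivalently, in the $\widetilde{\,\cdot\,}$ notation, $\widetilde{S}_s = P_H \widetilde{T}_s\big|_{X(s)\otimes H}$, so that $\widetilde{S}_s^* = \widetilde{T}_s^*\big|_H$ with range contained in $X(s)\otimes H$.

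For the first condition, since $T$ is a representation of $Y$, the pair $(T_0, T_s)$ is a c.c. representation of $Y(s) \supseteq X(s)$. As $H$ is invariant under the selfadjoint set $T_0(\cM)$ it reduces $T_0$, so $\sigma = T_0\big|_H$ is a normal, nondegenerate (indeed unital, since $T_0(1) = I_K$) $*$-homomorphism. The covariance identities $S_s(a\cdot x) = \sigma(a)S_s(x)$ and $S_s(x\cdot a) = S_s(x)\sigma(a)$ then follow from the corresponding identities for $T$ by compressing to $H$, using $P_H T_0(a) = P_H T_0(a) P_H$ and $T_0(a)\big|_H = \sigma(a)$; and complete contractivity of $S_s$ is inherited from that of $T_s$ because compression by the isometric inclusion $H \hookrightarrow K$ is completely contractive. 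This also confirms that $\widetilde{S}_s$ is a genuine contraction, justifying the notation used below. These are routine compression arguments.

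The substantive step is multiplicativity, which by Remark~\ref{rem:reptilde} is equivalent to $\widetilde{S}_{s+t}(U^X_{s,t}\otimes I_H) = \widetilde{S}_s(I_{X(s)}\otimes\widetilde{S}_t)$; I would prove instead the adjoint identity $((U^X_{s,t})^*\otimes I_H)\widetilde{S}^*_{s+t} = (I_{X(s)}\otimes\widetilde{S}^*_t)\widetilde{S}^*_s$ as maps $H \to X(s)\otimes X(t)\otimes H$. The key algebraic input, obtained by taking adjoints in the subsystem compatibility \eqref{eq:pU}, is that $(U^Y_{s,t})^*$ and $(U^X_{s,t})^*$ agree on $X(s+t)$ (with the inclusions $X(\cdot)\hookrightarrow Y(\cdot)$ understood). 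Now fix $h \in H$: the vector $\widetilde{S}^*_{s+t}h = \widetilde{T}^*_{s+t}h$ lies in $X(s+t)\otimes H$, so in computing $((U^X_{s,t})^*\otimes I_H)\widetilde{S}^*_{s+t}h$ I may replace $(U^X_{s,t})^*$ by $(U^Y_{s,t})^*$, obtaining $((U^Y_{s,t})^*\otimes I_K)\widetilde{T}^*_{s+t}h$; the representation identity for $T$ (Remark~\ref{rem:reptilde}, adjoint form) rewrites this as $(I_{Y(s)}\otimes\widetilde{T}^*_t)\widetilde{T}^*_s h$. Since $\widetilde{T}^*_s h = \widetilde{S}^*_s h$ lies in $X(s)\otimes H$, and on that subspace $I_{Y(s)}\otimes\widetilde{T}^*_t$ agrees with $I_{X(s)}\otimes\widetilde{S}^*_t$ (because $\widetilde{T}^*_t\big|_H = \widetilde{S}^*_t$ has range in $X(t)\otimes H$), this equals $(I_{X(s)}\otimes\widetilde{S}^*_t)\widetilde{S}^*_s h$. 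Taking adjoints yields condition~(2).

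The main obstacle is bookkeeping of the two different tensor bundles: at each substitution I must check that the vector being acted upon already lies in the appropriate $X$-subspace $X(\cdot)\otimes H \subseteq Y(\cdot)\otimes K$, so that the restricted equalities $\widetilde{T}^*\big|_H = \widetilde{S}^*$ and $(U^Y_{s,t})^*\big|_{X(s+t)} = (U^X_{s,t})^*$ may legitimately be invoked. This range-tracking is exactly where the full subproduct subsystem hypothesis \eqref{eq:pU} is needed, rather than the mere fiberwise inclusions $X(s)\subseteq Y(s)$; once it is carried out carefully the remaining manipulations are the straightforward adjoint computation above.
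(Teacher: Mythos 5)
Your proof is correct and takes essentially the same route as the paper's: both arguments reduce everything to the compression conditions $1'$--$3'$ (equivalently, item (3) of Definition~\ref{def:dilation}) together with the subsystem compatibility \eqref{eq:pU}, the only difference being that you verify multiplicativity in the adjoint form $\bigl((U^X_{s,t})^*\otimes I_H\bigr)\widetilde{S}^*_{s+t}=(I_{X(s)}\otimes\widetilde{S}^*_t)\widetilde{S}^*_s$ by tracking the ranges of the adjoints, whereas the paper computes $S_{s+t}(U^X_{s,t}(x\otimes y))h$ directly by inserting $P_H$ and invoking $3'$. These are dual renditions of the same computation, and your range-tracking (that $\widetilde{T}^*_{s+t}h\in X(s+t)\otimes H$ licenses replacing $(U^X_{s,t})^*$ by $(U^Y_{s,t})^*$, etc.) is exactly the point where the hypothesis enters in the paper's version as well.
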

\begin{proof}
$S$ is a completely contractive linear map as the compression of a completely contractive linear map. Item 1' above together with the coinvariance of $T$ imply that $S$ is coinvariant:
if $a,b \in \cM$ and $x \in X(s)$, then
\begin{align*}
S_s(axb) = P_H T_s(axb) P_H
&= P_H T_0(a) T_s(x) T_0(b) P_H \\
&= P_H T_0(a) P_H T_s(x) P_H T_0(b) P_H \\
&= S_0(a) S_s(x) S_0(b).
\end{align*}
Finally, (using Item 3' above),
\begin{align*}
S_{s+t} (U^X_{s,t}(x \otimes y)) h &= S_{s+t} (p_{s+t} U^Y_{s,t}(x \otimes y)) h \\
&= \widetilde{S}_{s+t} (p_{s+t} U^Y_{s,t}(x \otimes y) \otimes h) \\
&= P_H \widetilde{T}_{s+t} (U^Y_{s,t}(x \otimes y) \otimes h) \\
&= P_H T_s (x) T_t(y) h \\
&= P_H T_s (x) P_H T_t(y) h \\
&= S_s (x) S_t(y) h .
\end{align*}
\end{proof}

\begin{example}\emph{
Let $E$ be a Hilbert space of dimension $d$, and let $X$ be the symmetric subproduct system constructed in Example \ref{expl:symm}. Fix an orthonormal basis $\{e_1, \ldots, e_n\}$ of $E$. There is a one-to-one correspondence between c.c. representations $S$ of $X$ (on some $H$) and commuting row contractions $(S_1, \ldots, S_d)$ (of operators on $H$), given by
\bes
S \leftrightarrow \underline{S} = (S(e_1), \ldots, S(e_d)).
\ees
If $Y$ is the full product system over $E$, then any dilation $(Y,T,K)$ gives rise to a tuple $\underline{T} = (T(e_1), \ldots, T(e_d))$ that is a dilation of $\underline{S}$ in the sense of \cite{BBD03}, and \emph{vice versa}.
Moreover, $\underline{S}$ is then a commuting piece of $\underline{T}$ in the sense of \cite{BBD03}.}
\end{example}

Consider a subproduct system $Y$ and a representation $T$ of $Y$ on $K$. Let $X$ be some subproduct subsystem of $Y$. Define the following set of subspaces of $K$:
\be\label{eq:cPXT}
\cP(X,T) = \{H \subseteq K: \widetilde{T}^*_s H \subseteq X(s) \otimes H\ \textrm{ for all }s \in \cS\}.
\ee
As in \cite{BBD03}, we observe that $\cP(X,T)$ is closed under closed linear spans (and intersections), thus we may define
\bes
K^X(T) = \bigvee_{H \in \cP(X,T)} H.
\ees
$K^X(T)$ is the maximal element of $\cP(X,T)$.
\begin{definition}\label{def:Xpiece}
The representation $T^X$ of $X$ on $K^X(T)$ given by
\bes
T^X(x) h = P_{K^X(T)} T(x) h,
\ees
for $x \in X(s)$ and $h \in K^X(T)$, is called
\emph{the maximal $X$-piece of $T$}.
\end{definition}
By Proposition \ref{prop:pieceisrep}, $T^X$ is indeed a representation of $X$.

\subsection{Consequences in dilation theory of $cp$-semigroups}

\begin{proposition}\label{prop:dil_rep_dil_CP}
Let $X$ and $Y$ be subproduct systems of W$^*$-correspondences (over the same W$^*$-algebra $\cM$) over $\cS$, and let $S$ and $T$ be representations of $X$ on $H$ and
of $Y$ on $K$, respectively.
Assume that $(Y,T,K)$ is a dilation of $(X,S,H)$. Then the $cp$-semigroup $\Theta$ acting on $V_0(\cM)'$, given by
\bes
\Theta_s(a) = \widetilde{T}_s(I_{Y(s)} \otimes a)\widetilde{T}_s^* \,\, ,\,\, a \in V_0(\cM)',
\ees
is a dilation of the $cp$-semigroup $\Phi$ acting on $T_0(\cM)'$ given by
\bes
\Phi_s(a) = \widetilde{S}_s(I_{X(s)} \otimes a)\widetilde{S}_s^* \,\, ,\,\, a \in T_0(\cM)',
\ees
in the sense that for all $b \in V_0(\cM)'$ and all $s \in \cS$,
\bes
\Phi_s(P_H b P_H) = P_H \Theta_s (b) P_H .
\ees
\end{proposition}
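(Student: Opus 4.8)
The plan is to work entirely with the ``tilde'' operators and to reduce the claimed identity $\Phi_s(P_H b P_H) = P_H \Theta_s(b) P_H$ to a single fiberwise compression, so that the semigroup/associativity structure of the subproduct systems never enters (the statement is really a per-$s$ fact). Expanding the right-hand side,
$$P_H \Theta_s(b) P_H = P_H \widetilde{T}_s \,(I_{Y(s)} \otimes b)\, \widetilde{T}_s^* P_H,$$
so the whole argument hinges on controlling $\widetilde{T}_s^* P_H$ together with the middle operator $I_{Y(s)} \otimes b$. Here I write $\sigma = S_0$ and $\tau = T_0$ for the vacuum representations, so $\Theta$ (built from $T$) acts on $T_0(\cM)'$ and $\Phi$ (built from $S$) acts on $S_0(\cM)'$.

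First I would record two preliminaries. From condition $1'$ of the dilation definition, $P_H T_0(a) P_H = S_0(a)$ and $T_0(a) P_H = P_H T_0(a) P_H$ for all $a \in \cM$; the second identity says $H$ is invariant under every $T_0(a)$, hence $H$ reduces the $*$-algebra $T_0(\cM)$ and $P_H \in T_0(\cM)'$. A short computation then gives $P_H b P_H \in S_0(\cM)'$ for $b \in T_0(\cM)'$, so that $\Phi_s(P_H b P_H)$ is legitimately defined. Second, I would check that $X(s) \otimes_\sigma H$ embeds isometrically into $Y(s) \otimes_\tau K$: for $\eta, \zeta \in X(s) \subseteq Y(s)$ and $h, k \in H$ the two inner products of $\eta \otimes h$ and $\zeta \otimes k$ agree, because $\langle \eta, \zeta\rangle_{X(s)} = \langle \eta, \zeta\rangle_{Y(s)} =: c$ and $\langle h, S_0(c) k\rangle = \langle h, P_H T_0(c) P_H k\rangle = \langle h, T_0(c) k\rangle$. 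Writing $\iota$ for this inclusion and $Q_s = \iota^*$ for the orthogonal projection $Y(s) \otimes K \to X(s) \otimes H$, the dilation condition $\widetilde{T}_s^* \big|_H = \widetilde{S}_s^*$ becomes $\widetilde{T}_s^* P_H = \iota \widetilde{S}_s^* P_H$, whose adjoint reads $P_H \widetilde{T}_s = \widetilde{S}_s Q_s$.

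The crux is then the single fiberwise identity
$$Q_s\, (I_{Y(s)} \otimes b)\, \iota = I_{X(s)} \otimes (P_H b P_H)$$
as operators on $X(s) \otimes_\sigma H$. It suffices to verify it on elementary tensors $\eta \otimes h$ with $\eta \in X(s)$, $h \in H$: one has $(I_{Y(s)} \otimes b)\iota(\eta \otimes h) = \eta \otimes bh$ with $bh \in K$, and I would show $Q_s(\eta \otimes bh) = \eta \otimes P_H b h$ by pairing against an arbitrary $\zeta \otimes k \in X(s) \otimes H$ and using $P_H \in T_0(\cM)'$ to slide $P_H$ through $T_0(\langle \eta, \zeta\rangle)$ so that $T_0(\langle\eta,\zeta\rangle)k$ stays inside $H$. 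Since $P_H h = h$, the right-hand side equals $(I_{X(s)} \otimes P_H b P_H)(\eta \otimes h)$. Substituting back, $P_H \widetilde{T}_s = \widetilde{S}_s Q_s$ and $\widetilde{T}_s^* P_H = \iota \widetilde{S}_s^* P_H$ yield
$$P_H \Theta_s(b) P_H = \widetilde{S}_s\, Q_s (I_{Y(s)} \otimes b)\iota\, \widetilde{S}_s^* = \widetilde{S}_s\,(I_{X(s)} \otimes P_H b P_H)\,\widetilde{S}_s^* = \Phi_s(P_H b P_H).$$

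I expect the fiberwise compression identity to be the main obstacle: a priori $I_{Y(s)} \otimes b$ need \emph{not} preserve the subspace $X(s) \otimes H$, since $b$ need not preserve $H$, so the projection $Q_s$ does genuine work; the fact that it produces exactly $I_{X(s)} \otimes (P_H b P_H)$ with no off-diagonal correction is precisely what forces us to use $P_H \in T_0(\cM)'$. Everything else is routine bookkeeping with the balanced tensor products and the definition of the tilde maps.
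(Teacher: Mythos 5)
Your proof is correct, and it is exactly the direct verification that the paper has in mind: the paper's own ``proof'' of this proposition is the single line ``This follows from the definitions,'' so your writeup simply supplies the details the authors suppress. You correctly identify the one nontrivial point --- that the compression of $I_{Y(s)}\otimes b$ to $X(s)\otimes_{\sigma}H$ is $I_{X(s)}\otimes(P_H b P_H)$, which rests on $P_H\in T_0(\cM)'$ coming from condition $1'$ of the dilation definition --- and the rest is the bookkeeping with $\widetilde T_s^*P_H=\iota\,\widetilde S_s^*P_H$ that you carry out.
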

\begin{proof}
This follows from the definitions.
\end{proof}

Although the above proposition follows immediately from the definitions,
we hope that it will prove to be important in the theory of dilations of $cp$-semigroups, because it points to
a conceptually new way of constructing dilations of $cp$-semigroups, as the following proposition and corollary illustrate.

\begin{proposition}\label{prop:exist_isodil}
Let $X=\{X(s)\}_{s\in\cS}$ be a subproduct system, and let $S$ be a fully coisometric representation of $X$ on $H$
such that $S_0$ is unital. If there exists a (full) \emph{product system} $Y = \{Y(s)\}_{s\in\cS}$ such that $X$ is a
subproduct subsystem of $Y$, then $S$ has an isometric and fully coisometric dilation.
\end{proposition}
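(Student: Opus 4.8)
The plan is to reduce the statement to the dilation theory of representations of \emph{genuine} product systems, by first extending $S$ to a representation of $Y$ itself. Let $\{p_s : Y(s) \to X(s)\}_{s\in\cS}$ be the bimodule projections exhibiting $X$ as a subproduct subsystem of $Y$, and define $\bar{S} = \{\bar{S}_s\}_{s\in\cS}$ by $\bar{S}_s = S_s \circ p_s$. I would first check that $\bar{S}$ is a completely contractive covariant representation of the product system $Y$ on $H$ with $\bar{S}_0 = S_0$ unital. Complete contractivity is clear since $\bar{S}_s$ is a compression of $S_s$; covariance follows because each $p_s$ is a bimodule map; and the multiplicativity condition (item \ref{it:sg} of Definition \ref{def:rep}) follows from the intertwining relation $p_{s+t}\circ U^Y_{s,t} = U^X_{s,t}\circ(p_s\otimes p_t)$, which turns $\bar S_{s+t}(U^Y_{s,t}(y\otimes y'))=S_{s+t}(U^X_{s,t}(p_s y\otimes p_t y'))$ into $S_s(p_s y)\,S_t(p_t y')=\bar S_s(y)\bar S_t(y')$.

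The crucial observation is that $\bar{S}$ inherits full coisometry from $S$. In the notation of Remark \ref{rem:reptilde} one has $\widetilde{\bar S}_s = \widetilde{S}_s\,(p_s\otimes I_H)$, hence $\widetilde{\bar S}_s^{\,*} = (p_s\otimes I_H)\widetilde{S}_s^{\,*}$. Since $\widetilde{S}_s^{\,*}$ already maps $H$ into $X(s)\otimes H$, on which $p_s\otimes I_H$ acts as the identity, I obtain the two facts I will need: $\widetilde{\bar S}_s^{\,*} = \widetilde{S}_s^{\,*}$ as maps into $Y(s)\otimes H$, and consequently $\widetilde{\bar S}_s\widetilde{\bar S}_s^{\,*} = \widetilde{S}_s\widetilde{S}_s^{\,*} = I_H$. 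Thus $\bar S$ is a fully coisometric representation of the genuine product system $Y$.

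Because $Y$ is a product system, I would now invoke the dilation theory for representations of product systems of Muhly and Solel \cite{MS02}: the representation $\bar S$ admits a minimal isometric dilation $(Y,T,K)$ on some $K\supseteq H$, and since $\bar S$ is fully coisometric this dilation is fully coisometric as well, so that $T$ is both isometric and fully coisometric. It then remains to verify that $(Y,T,K)$ is in fact a dilation of $(X,S,H)$ in the sense of Definition \ref{def:dilation}. Here the isometric dilation property gives $\widetilde{T}_s^{\,*}\big|_H = \widetilde{\bar S}_s^{\,*}$, which by the previous paragraph equals $\widetilde{S}_s^{\,*}$; and since $\widetilde{S}_s^{\,*}H \subseteq X(s)\otimes H$ automatically (as $S$ is a representation of $X$), the containment $\widetilde{T}_s^{\,*}H \subseteq X(s)\otimes H$ required by Definition \ref{def:dilation} holds for free. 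This completes the reduction.

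The step I expect to carry all the weight is the invocation of the existence of an isometric, fully coisometric dilation of $\bar S$. This is exactly the point at which the hypothesis that $X$ embeds in a \emph{product} system, rather than a mere subproduct system, is indispensable: isometric representations essentially force the ambient system to be a genuine product system (cf. Corollary \ref{cor:onlyproductisorep}), so no such $T$ could be produced were $Y$ not one. The reason the dilation nonetheless exists, despite the Parrot-type obstructions that afflict arbitrary commuting contractions, is precisely the full coisometry of $\bar S$: the adjoints $\widetilde{\bar S}_s^{\,*}$ form a commuting family of isometries of the product system, for which regular (It\^o-type) unitary dilations are available, and checking that the resulting unitaries assemble into a single representation $T$ of $Y$ is the technical heart of the matter.
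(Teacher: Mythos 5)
Your argument is correct and is essentially the paper's own proof: extend $S$ to a representation $T_s = S_s\circ p_s$ of the full product system $Y$, observe that $\widetilde{T}_s^{\,*}=\widetilde{S}_s^{\,*}$ so full coisometry is inherited, and invoke the existence of a minimal isometric and fully coisometric dilation for fully coisometric representations of product systems. The only adjustment needed is the citation: since $\cS$ is a general subsemigroup of $\Rpk$ rather than $\mb{R}_+$, the dilation theorem to quote is \cite[Theorem 5.2]{Shalit08} (whose proof indeed rests on extending the commuting coisometries $\widetilde{T}_s$ as you indicate), not the one-parameter result of \cite{MS02}.
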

\begin{proof}
Define a representation $T$ of $Y$ on $H$ by
\be\label{eq:extend}
T_s = S_s \circ p_s,
\ee
where, as above, $p_s$ is the orthogonal projection $Y(s) \rightarrow X(s)$.
A straightforward verification shows that $T$ is indeed a fully coisometric representation of $Y$ on $H$.
By \cite[Theorem 5.2]{Shalit08}, $(Y,T,H)$ has a minimal isometric and fully coisometric dilation $(Y,V,K)$. $(Y,V,K)$
is also clearly a dilation of $(X,S,H)$.
\end{proof}
\begin{corollary}\label{cor:e0dilwhensub}
Let $\Theta = \{\Theta_s\}_{s\in\cS}$ be a cp$_0$-semigroup and let $(E,T) = \Xi(\Theta)$ be the Arveson-Stinespring
representation of $\Theta$. If there is a (full) \emph{product system} $Y$ such that $E$ is a subproduct subsystem of $Y$, then $\Theta$ has an $e_0$-dilation.
\end{corollary}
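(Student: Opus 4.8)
The plan is to lift the problem from $cp$-semigroups to the level of subproduct-system representations, carry out the dilation there by exploiting the ambient product system $Y$, and then push the result back through the correspondence $\Sigma$, which will automatically turn an isometric-and-fully-coisometric representation into an $e_0$-semigroup. In other words, I would let Proposition \ref{prop:exist_isodil} produce the representation-level dilation and let Proposition \ref{prop:dil_rep_dil_CP} translate it back into a dilation of $\Theta$.

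First I would record the two features of the identity representation $(E,T)=\Xi(\Theta)$ that we need. Since $\Theta$ is a $cp_0$-semigroup, Theorem \ref{thm:reprep} guarantees that $T$ is a \emph{fully coisometric} representation of $E$, and by construction $T_0 = {\bf id}_{\cM'}$ is unital (indeed faithful and nondegenerate). Thus $(E,T)$ is precisely a fully coisometric representation of a subproduct system with unital $T_0$, and by hypothesis $E$ is a subproduct subsystem of the full product system $Y$ (necessarily a product system of $\cM'$-correspondences). This is exactly the hypothesis of Proposition \ref{prop:exist_isodil}, applied with $X=E$ and $S=T$, so it yields an isometric and fully coisometric dilation $(Y,V,K)$ of $(E,T,H)$.

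Next I would form the $cp$-semigroup $\alpha := \Sigma(Y,V)$ on $\cR := V_0(\cM')'$, given by $\alpha_s(b) = \tV_s(I_{Y(s)}\otimes b)\tV_s^*$. Because $V$ is isometric, Proposition \ref{prop:semigroup} makes each $\alpha_s$ a $*$-endomorphism; because $V$ is also fully coisometric, each $\alpha_s$ is unital; hence $\alpha$ is an $e_0$-semigroup on $\cR$. Finally, since $(Y,V,K)$ is a dilation of $(E,T,H)$ in the sense of Definition \ref{def:dilation}, Proposition \ref{prop:dil_rep_dil_CP} supplies the compression identity $\Phi_s(P_H b P_H) = P_H \alpha_s(b) P_H$ for all $b\in\cR$ and $s\in\cS$, where $\Phi = \Sigma(E,T)$. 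But equation (\ref{eq:RepRep}) says exactly that $\Sigma\circ\Xi$ is the identity on $cp$-semigroups, so $\Phi = \Sigma(\Xi(\Theta)) = \Theta$, and the compression identity is the defining relation of an $e$-dilation.

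The one point that I expect to require genuine (though modest) bookkeeping, rather than the two black-box propositions, is matching the output to the formal definition of an $e_0$-dilation: one must verify that $\cM$ sits inside $\cR=V_0(\cM')'$ as the corner $\cM = P_H \cR P_H$. Here I would use item $1'$ of Definition \ref{def:dilation}, which gives $V_0(\cdot)P_H = P_H V_0(\cdot)P_H = T_0(\cdot) = {\bf id}_{\cM'}$; this forces $H$ to reduce $V_0(\cM')$ with $V_0(a)|_H = a$, whence the standard reduction argument yields $P_H V_0(\cM')' P_H = (\cM')' = \cM$. Combined with the compression relation and the unitality of $\alpha$, this exhibits $(\alpha,K,\cR)$ as an $e_0$-dilation of $\Theta$, completing the proof of Corollary \ref{cor:e0dilwhensub}.
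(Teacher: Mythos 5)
Your proposal is correct and follows exactly the paper's route: the paper's proof is the single line ``Combine Propositions \ref{prop:semigroup}, \ref{prop:dil_rep_dil_CP} and \ref{prop:exist_isodil},'' and you have simply unpacked that combination (including the identity $\Sigma\circ\Xi=\mathrm{id}$ from (\ref{eq:RepRep}) and the verification that $\cM = P_H\cR P_H$, which the paper leaves implicit). No discrepancies.
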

\begin{proof}
Combine Propositions \ref{prop:semigroup}, \ref{prop:dil_rep_dil_CP} and \ref{prop:exist_isodil}.
\end{proof}

Thus, the problem of constructing $e_0$-dilations to $cp_0$-semigroups is reduced to the problem of embedding a
subproduct system into a full product system. In the next subsection we give an example of a subproduct system that cannot be embedded into full product system. When this can be done in general is a challenging open question.

\begin{corollary}\label{cor:edilwhensub}
Let $\Theta = \{\Theta_s\}_{s\in\mb{N}^k}$ be a $cp$-semigroup generated by $k$ commuting CP maps
$\theta_1, \ldots, \theta_k$, and let $(E,T) = \Xi(\Theta)$ be the Arveson
representation of $\Theta$. Assume, in addition, that
\bes
\sum_{i=1}^k \|\theta_i\|\leq 1.
\ees
If there is a (full) \emph{product system} $Y$ such that $E$ is a subproduct subsystem of $Y$, then
$\Theta$ has an $e$-dilation.
\end{corollary}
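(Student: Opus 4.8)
The plan is to follow the same line as Corollary \ref{cor:e0dilwhensub}, replacing the single ingredient that fails in the non-unital setting. There the conclusion was assembled from Propositions \ref{prop:semigroup}, \ref{prop:dil_rep_dil_CP} and \ref{prop:exist_isodil}; the only obstruction to repeating that argument here is that Proposition \ref{prop:exist_isodil} produces an isometric dilation only when the given representation is fully coisometric, i.e.\ when $\Theta$ is unital, whereas now $\Theta$ is merely contractive. What the extra hypothesis $\sum_i \|\theta_i\| \le 1$ buys us is a row-contraction condition: writing $\epsilon_i$ for the $i$-th generator of $\mb{N}^k$, we have $\theta_i(1) = \widetilde{T}_{\epsilon_i}\widetilde{T}_{\epsilon_i}^*$, so $\|\theta_i\| = \|\theta_i(1)\|$ and hence $\sum_i \widetilde{T}_{\epsilon_i}\widetilde{T}_{\epsilon_i}^* \le \big(\sum_i \|\theta_i\|\big) I \le I$. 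Thus the generators of $T$ form a row contraction, and this is the feature that lets us dilate without full coisometry.

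First I would extend $T$ to a representation of the product system $Y$ exactly as in the proof of Proposition \ref{prop:exist_isodil}, setting $T'_s = T_s \circ p_s$ where $p_s : Y(s) \rightarrow E(s)$ is the projection. A short computation gives $\widetilde{T'}_{\epsilon_i}(\widetilde{T'}_{\epsilon_i})^* = \widetilde{T}_{\epsilon_i}\widetilde{T}_{\epsilon_i}^* = \theta_i(1)$, so the generators of the product-system representation $T'$ again satisfy $\sum_i \widetilde{T'}_{\epsilon_i}(\widetilde{T'}_{\epsilon_i})^* \le I$. The goal then becomes to produce an isometric dilation $(Y,V,K)$ of the representation $T'$ of the full product system $Y$; since $E$ is a subsystem of $Y$ and $\widetilde{T'}_s{}^* = (\iota_s \otimes I)\widetilde{T}_s^*$ for the inclusion $\iota_s : E(s)\hookrightarrow Y(s)$, such a $(Y,V,K)$ is automatically an isometric dilation of $(E,T,H)$ in the sense of Definition \ref{def:dilation}.

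The heart of the matter, and the step I expect to be the main obstacle, is the construction of this isometric dilation of $T'$ over $\mb{N}^k$. Over a general $\mb{N}^k$ a completely contractive representation of a product system need not admit any isometric dilation — this is precisely the content of the Parrott-type phenomena of Theorems \ref{thm:parrot} and \ref{thm:strongparrot} — so one cannot simply quote an unconditional existence theorem. The point is that a row contraction over $\mb{N}^k$ does admit a \emph{regular} (Brehmer--Popescu type) isometric dilation: the inequality $\sum_i \widetilde{T'}_{\epsilon_i}(\widetilde{T'}_{\epsilon_i})^* \le I$ forces the positivity conditions required for a regular dilation of the commuting tuple determined by $T'$, and one checks that this regular dilation is compatible with the product-system structure of $Y$, hence yields an isometric representation $V$ of $Y$ dilating $T'$. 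This is the only place where the norm hypothesis enters, and it is exactly what distinguishes the situation from the general one.

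Finally I would read off the conclusion as in Corollary \ref{cor:e0dilwhensub}. By Proposition \ref{prop:semigroup} the isometric representation $V$ yields an $e$-semigroup $\Theta^V$, and by Proposition \ref{prop:dil_rep_dil_CP} the fact that $(Y,V,K)$ dilates $(E,T,H)$ shows that $\Theta^V$ is an $e$-dilation of $\Theta = \Sigma(E,T)$. In contrast with Corollary \ref{cor:e0dilwhensub}, we obtain an $e$-dilation rather than an $e_0$-dilation: the representation $V$ is isometric but not fully coisometric, reflecting that the $\theta_i$ are only contractive. It is worth emphasizing that both hypotheses are genuinely needed — Theorem \ref{thm:strongparrot} shows that the norm condition alone, even for arbitrarily small norm, does not suffice — so the embedding of $E$ into a product system $Y$ and the row-contraction condition play complementary roles.
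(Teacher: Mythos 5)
Your proposal is correct and follows essentially the same route as the paper: extend $T$ to a representation of the full product system $Y$ by composing with the projections $p_s$, observe that the hypothesis $\sum_i\|\theta_i\|\le 1$ transfers to the condition $\sum_i \|\widetilde{T}_{\mathbf{e}_i}\widetilde{T}_{\mathbf{e}_i}^*\|\le 1$ on the generators, invoke the regular isometric dilation machinery of \cite{Shalit07a}, and conclude via Propositions \ref{prop:semigroup} and \ref{prop:dil_rep_dil_CP}. The only cosmetic difference is that you pass to the weaker operator inequality $\sum_i \widetilde{T}_{\mathbf{e}_i}\widetilde{T}_{\mathbf{e}_i}^*\le I$ and argue via Brehmer's positivity conditions, whereas the paper keeps the scalar condition and cites the methods of \cite{Shalit07a} directly; both yield the same regular isometric dilation.
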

\begin{proof}
As in (\ref{eq:extend}), we may extend $T$ to a product system representation of $Y$ on $H$, which we also denote by $T$.
Denote by ${\bf e_i}$ the element of $\mb{N}^k$ with $1$ in the $i$th element and zeros elsewhere.
Then
\bes
\sum_{i=1}^k \|\widetilde{T}_{\bf e_i} \widetilde{T}_{\bf e_i}^*\| = \sum_{i=1}^k \|\theta_i\|\leq 1.
\ees
By the methods of \cite{Shalit07a}, one may show that $S$ has a minimal (regular) isometric dilation. This isometric dilation
provides the required $e$-dilation of $\Theta$.
\end{proof}

\begin{theorem}\label{thm:edil_repdil}
Let $\cM \subseteq B(H)$ be a von Neumann algebra, let $X$ be a subproduct system of $\cM'$-correspondences,
and let $R$ be an injective representation of $X$ on a Hilbert space $H$.
Let $\Theta = \Sigma(X,R)$ be the $cp$-semigroup acting on $R_0(\cM')'$ given by (\ref{eq:reprep}). Assume that
$(\alpha,K,\cR)$ is an $e$-dilation of $\Theta$, and let $(Y,V) = \Xi(\alpha)$ be the Arveson-Stinespring subproduct
system of $\alpha$ together with the identity representation. Assume, in addition, that the map
$\cR' \ni b \mapsto P_H b P_H$ is a
$*$-isomorphism of $\cR'$ onto $R_0(\cM')$. Then $(Y,V,K)$ is a dilation of $(X,R,H)$.
\end{theorem}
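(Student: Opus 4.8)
The plan is to pass to the identity representation and then realize the Arveson--Stinespring subproduct system of $\Theta$ explicitly inside that of $\alpha$. First I would reduce to the canonical situation: since $R$ is injective, Theorem \ref{thm:essentially_inverse} provides an isomorphism $W\colon X\to E$ onto the subproduct system of the identity representation $(E,T)=\Xi(\Theta)$ of $\Theta=\Sigma(X,R)$, with $R_s=T_s\circ W_s$. Because an isomorphism of subproduct systems transports the dilation relation, it suffices to prove that $(Y,V,K)$ is a dilation of $(E,T,H)$ and then carry the conclusion back along $W$. Note also that, since $\alpha$ is an $e$-semigroup, Remark \ref{rem:identityproduct} guarantees that $Y$ is a genuine \emph{product} system (so the $U^Y_{s,t}$ are unitaries) and that $V$ is isometric.

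Next I would extract the corner structure from the hypotheses. Write $\cN=R_0(\cM')'$, the algebra on which $\Theta$ acts, and $P=P_H$; the $e$-dilation hypothesis gives $\cN=P\cR P$ and $\cN'=R_0(\cM')$. The extra assumption that $b\mapsto PbP$ be multiplicative on the self-adjoint algebra $\cR'$ forces $Pb_1(I-P)b_2P=0$ for all $b_1,b_2\in\cR'$; taking $b_1=b^*,b_2=b$ yields $(I-P)bP=0$, so $H$ reduces $\cR'$ and hence $P\in(\cR')'=\cR$. In particular $\cN=P\cR P\subseteq\cR$, the given $*$-isomorphism $\cR'\to\cN'$ is simply $b\mapsto b|_H$, and the reduction in Theorem \ref{thm:essentially_inverse} lets me assume $R_0=\mathrm{id}$, with $E$ a subproduct system of $\cN'$-correspondences, $Y$ one of $\cR'$-correspondences, and the two base algebras identified via $b\mapsto b|_H$.

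With $P\in\cR$ in hand I would build the embedding. For each $s$ set $\iota_s\colon \cN\otimes_{\Theta_s}H\to\cR\otimes_{\alpha_s}K$, $a\otimes h\mapsto a\otimes h$, which is legitimate because $\cN\subseteq\cR$; it is isometric since for $a_1,a_2\in\cN$ one has $a_1^*a_2\in\cN$ and $\Theta_s(a_1^*a_2)=P\alpha_s(a_1^*a_2)P$, so both inner products equal $\langle h_1,\alpha_s(a_1^*a_2)h_2\rangle$. Using $\iota_s$ and the left $\cR$-module structure (inducing $\iota_s X$ from $H$ up to an $\cR$-intertwiner on $K$ and extending by $0$ on the $\cR$-reducing subspace $(\overline{\cR H})^\perp$, exactly as in the constructions of \cite{S06,MS07}) I would produce isometric bimodule maps $\Phi_s\colon E(s)\to Y(s)$ with $\Phi_s(X)|_H=\iota_s X$, and identify $E(s)$ with its image in $Y(s)$. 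The step I expect to be the real obstacle is to check that these images form a subproduct \emph{subsystem}, i.e. that the orthogonal projections $p_s\colon Y(s)\to E(s)$ are bimodule maps satisfying $p_{s+t}\circ U^Y_{s,t}=U^{E}_{s,t}\circ(p_s\otimes p_t)$ as in (\ref{eq:pU}); since both families of product maps are assembled from the same $\Gamma$/associativity data of the proof of Theorem \ref{thm:reprep} (for $\alpha$ and for its compression $\Theta$), this should fall out of a diagram chase through those maps, but it is the computational heart of the argument.

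Finally I would verify condition~3 of Definition \ref{def:dilation}. Under the canonical unitaries $E(s)\otimes H\cong\cN\otimes_{\Theta_s}H$ and $Y(s)\otimes K\cong\cR\otimes_{\alpha_s}K$ (given by $\xi\otimes h\mapsto\xi h$), the subsystem inclusion $E(s)\otimes H\hookrightarrow Y(s)\otimes K$ becomes $\iota_s$, while $\widetilde T_s^*g$ corresponds to $I_\cN\otimes_{\Theta_s}g=P\otimes_{\Theta_s}g$ and $\widetilde V_s^*g$ to $I_\cR\otimes_{\alpha_s}g$. These match because $(I-P)\otimes_{\alpha_s}g=0$ for $g\in H$: indeed $\|(I-P)\otimes_{\alpha_s}g\|^2=\langle g,\alpha_s(I-P)g\rangle$ and $P\alpha_s(I-P)P=\Theta_s(P(I-P)P)=\Theta_s(0)=0$. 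This single identity delivers both $\widetilde V_s^*H\subseteq E(s)\otimes H$ and $\widetilde V_s^*|_H=\widetilde T_s^*$, which are precisely the remaining requirements; transporting back along $W$ shows that $(Y,V,K)$ is a dilation of $(X,R,H)$.
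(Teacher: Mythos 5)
Your peripheral reductions are sound: multiplicativity of $b \mapsto P_H b P_H$ on the self-adjoint algebra $\cR'$ does force $P_H \in \cR$, the map $\iota_s$ is isometric for the reason you give, and the closing identity $\|(I-P_H)\otimes_{\alpha_s}g\|^2 = \langle g, \alpha_s(I-P_H)g\rangle = \langle g, \Theta_s(P_H(I-P_H)P_H)g\rangle = 0$ is correct -- indeed it is the same computation the paper's argument turns on. But there is a genuine gap precisely where you flag one: you never construct the maps $\Phi_s : E(s)\to Y(s)$, never establish the boundedness and well-definedness of the induction of $\iota_s X$ to an $\cR$-intertwiner on $K$, and never verify that the images form a subproduct subsystem satisfying (\ref{eq:pU}) with $T_s\circ p_s$ equal to the compression of $V_s$. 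That compatibility with the product maps $U^E_{s,t}$ and $U^Y_{s,t}$ is the entire nontrivial content of the theorem; announcing that it ``should fall out of a diagram chase'' is not a proof. In effect you are proposing to re-derive, by hand and for this particular pair, the conclusion of Theorem \ref{thm:essentially_inverse}.

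The gap can be closed with no new computation by invoking that theorem a second time, which is the paper's route. Set $W_s(y) := P_H V_s(y) P_H$ for $y\in Y(s)$. Your identity $P_H\alpha_s(I-P_H)P_H=0$ yields $P_H\widetilde{V}_s(I\otimes P_H)=P_H\widetilde{V}_s$, hence $W_s(y)=P_HV_s(y)$ and $W$ is multiplicative, i.e.\ a representation of $Y$ on $H$; the dilation property of $\alpha$ then gives $\widetilde{W}_s(I\otimes a)\widetilde{W}_s^* = P_H\widetilde{V}_s(I\otimes a)\widetilde{V}_s^*P_H=\Theta_s(a)$, so $\Sigma(Y,W)=\Theta$. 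The hypothesis that $b\mapsto P_HbP_H$ is a $*$-isomorphism of $\cR'$ onto $R_0(\cM')$ is exactly what makes $W_0$ faithful, so Theorem \ref{thm:essentially_inverse} together with Remark \ref{rem:subsystem_iso} applies to the (possibly non-injective) pair $(Y,W)$ and realizes $E$ as a subproduct subsystem of $Y$ with $T_s\circ p_s = W_s$. From this, $\widetilde{W}_s^*H\subseteq E(s)\otimes H$ and $\widetilde{W}_s^*\big|_H=\widetilde{T}_s^*$ are immediate, and since $(Y,V,K)$ dilates $(Y,W,H)$ it dilates $(E,T,H)$, hence $(X,R,H)$ after transporting along your isomorphism $W\colon X\to E$. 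I recommend replacing your hand-built $\Phi_s$ with this step.
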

\begin{proof}
For every $s \in \cS$, define $W_s : Y(s) \rightarrow B(H)$ by $W_s (y) = P_H V_s(y)P_H$.
We claim that $W = \{W_s\}_{s\in\cS}$ is a representation of $Y$ on $H$. First, note that
$P_H \alpha_s (I-P_H)P_H = \Theta_s(P_H(I-P_H)P_H) = 0$, thus
$P_H \widetilde{V}_s (I \otimes (I-P_H))\widetilde{V}^*_s P_H =0$, and consequently
$P_H \widetilde{V}_s (I \otimes P_H) = P_H \widetilde{V}_s$.
It follows that $W_s (y) = P_H V_s(y) P_H = P_H V_s(y) $. From this it follows that
\begin{align*}
W_s(y_1) W_t(y_2) &= P_HV_s(y_1)P_HV_t(y_2) = P_HV_s(y_1) V_t(y_2) \\
&= P_H V_{s+t}(U^Y_{s,t}(y_1 \otimes y_2)) = W_{s+t}(U^Y_{s,t}(y_1 \otimes y_2)).
\end{align*}

By Theorem \ref{thm:essentially_inverse}, we may assume that $(X,R) = (E,T) = \Xi(\Theta)$ is the
Arveson-Stinespring representation of $\Theta$.
Because $\alpha$ is a dilation of $\Theta$, we have
\bes
\widetilde{W}_s (I \otimes a)\widetilde{W}_s^* = P_H \widetilde{V}_s (I \otimes a)\widetilde{V}^*_s P_H = \Theta_s(a),
\ees
That is, $\Theta = \Sigma(Y,W)$.
Thus, by Theorem \ref{thm:essentially_inverse} and Remark \ref{rem:subsystem_iso}, we may assume that
$E$ is a subproduct subsystem of $Y$, and that $T_s \circ p_s = W_s$, $p_s$ being the projection
of $Y(s)$ onto $E(s)$. In other words, for all $y \in Y$,
\bes
\widetilde{T}_s (p_s \otimes I_H) = P_H \widetilde{W}_s .
\ees
Therefore, $\widetilde{W}_s^* H \subseteq E(s) \otimes H$, and $\widetilde{W}_s^*\big|_H = \widetilde{T}_s^*$.
That is, $(Y,W,H)$ is a dilation of $(E,T,H)$. But $(Y,V,K)$ is a dilation of $(Y,W,H)$,
so it is also a dilation of $(E,T,H)$.
\end{proof}

The assumption that $\cR' \ni b \mapsto P_H b P_H \in \cM'$ is a $*$-isomorphism is satisfied when $\cM = B(H)$ and $\cR = B(K)$. More generally, it is satisfied whenever the central projection of $P_H$ in $\cR$ is $I_K$ (see Propositions 5.5.5 and 5.5.6 in \cite{KRI}).

Let $(\alpha,K,\cR)$ be an $e$-dilation of a semigroup $\Theta$ on $\cM \subseteq B(H)$. $(\alpha,K,\cR)$ is called a \emph{minimal dilation} if the central support of $P_H$ in $\cR$ is $I_K$ and if
\bes
\cR = W^*\left( \bigcup_{s\in\cS} \alpha_s(\cM) \right).
\ees
\begin{corollary}\label{cor:edil_repdil}
Let $\Theta$ be $cp$-semigroup on $\cM \subseteq B(H)$, and let $(\alpha,K,\cR)$ be a minimal dilation of $\Theta$. Then $\Xi(\alpha)$ is an isometric dilation of $\Xi(\Theta)$.
\end{corollary}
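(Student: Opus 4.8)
The plan is to deduce this corollary from Theorem \ref{thm:edil_repdil} by checking that its extra hypothesis is automatically satisfied in the minimal case. Recall that Theorem \ref{thm:edil_repdil} requires the map $\cR' \ni b \mapsto P_H b P_H$ to be a $*$-isomorphism of $\cR'$ onto $R_0(\cM')$; the remark immediately following that theorem points out that this holds whenever the central support of $P_H$ in $\cR$ is $I_K$ (citing Propositions 5.5.5 and 5.5.6 in \cite{KRI}). Since a \emph{minimal} dilation is defined precisely to have central support of $P_H$ equal to $I_K$, this hypothesis is met for free. Thus my first step is simply to invoke the definition of minimality and the cited von Neumann algebra facts to verify that $b \mapsto P_H b P_H$ restricts to a $*$-isomorphism from $\cR'$ onto $\cM'$ (here $R_0 = \mathrm{id}$, so $R_0(\cM') = \cM'$).

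Second, I would set up the notation so that Theorem \ref{thm:edil_repdil} applies directly. Let $(E,T) = \Xi(\Theta)$ be the Arveson-Stinespring representation of $\Theta$; by Theorem \ref{thm:reprep} the representation $T$ is injective and $T_0 = \mathbf{id}_{\cM'}$, so $(E,T)$ is a legitimate pair of the form $(X,R)$ with $R$ injective and $\Theta = \Sigma(E,T)$ (this last identity is the content of $\Sigma \circ \Xi = \mathrm{id}$, established by equation (\ref{eq:RepRep})). With the hypothesis of Theorem \ref{thm:edil_repdil} now verified, I conclude that $\Xi(\alpha) = (Y,V)$ is a dilation of $(E,T) = \Xi(\Theta)$ in the sense of Definition \ref{def:dilation}.

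Finally, I must upgrade ``dilation'' to ``isometric dilation'', which is the one genuine addition beyond Theorem \ref{thm:edil_repdil}. The point is that $\alpha$ is an \emph{$e$-semigroup}, i.e. a semigroup of $*$-endomorphisms. By Remark \ref{rem:identityproduct} (via \cite[Proposition 2.14]{MS02}), the Arveson-Stinespring subproduct system of an $e$-semigroup is in fact a full product system, and correspondingly, by the final assertion of Theorem \ref{thm:reprep}, the identity representation $V = T_\alpha$ of an $e$-semigroup is an \emph{isometric} representation. Hence the dilation $(Y,V,K)$ provided by Theorem \ref{thm:edil_repdil} has $V$ isometric, which by Definition \ref{def:dilation} means exactly that $(Y,V,K)$ is an \emph{isometric} dilation of $(E,T)$. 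Assembling these three steps gives the corollary.

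The main obstacle I anticipate is not in the logic but in confirming that the minimality condition as defined (central support $I_K$ together with the generation condition $\cR = W^*(\bigcup_s \alpha_s(\cM))$) indeed supplies precisely the hypothesis needed — that is, that the central-support clause alone is what feeds the cited $*$-isomorphism fact, while the generation clause is not actually needed for \emph{this} corollary. I would double-check that no circularity or additional smallness assumption is hidden in the passage from ``$\alpha$ is an $e$-semigroup'' to ``$V$ is isometric,'' but both of those follow cleanly from the already-established Theorem \ref{thm:reprep} and Remark \ref{rem:identityproduct}, so I expect this to be essentially a bookkeeping argument rather than a substantial new construction.
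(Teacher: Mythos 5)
Your proposal is correct and follows exactly the route the paper intends: minimality supplies the central-support condition, which (via the cited facts from \cite{KRI}) yields the $*$-isomorphism hypothesis of Theorem \ref{thm:edil_repdil}, and the isometricity of $V$ comes from the final assertion of Theorem \ref{thm:reprep} applied to the $e$-semigroup $\alpha$. The paper states the corollary without proof, but your three steps are precisely the bookkeeping it leaves to the reader.
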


\subsection{$cp$-semigroups with no $e$-dilations. Obstructions of a new nature}

By Parrot's famous example \cite{Parrot}, there exist $3$ commuting contractions that do not have a commuting isometric dilation. In 1998 Bhat asked whether $3$ commuting CP maps necessarily have a commuting $*$-endomorphic dilation \cite{Bhat98}. Note that it is not obvious that the non-existence of an isometric dilation for three commuting contractions would imply the non-existence of a $*$-endomorphic dilation for $3$ commuting CP maps. However, it turns out that this is the case.

\begin{theorem}\label{thm:parrot}
There exists a $cp$-semigroup $\Theta = \{\Theta_n\}_{n \in \mb{N}^3}$ acting on a $B(H)$ for which there is no $e$-dilation $(\alpha,K,B(K))$. In fact, $\Theta$ has no \emph{minimal} $e$-dilation $(\alpha,K,\cR)$ on any von Neumann algebra $\cR$.
\end{theorem}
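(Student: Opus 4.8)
The plan is to construct $\Theta$ explicitly from Parrott's example and then invoke the representation-theoretic machinery developed in the previous subsection to transfer the obstruction from isometric dilations to $e$-dilations. First I would take Parrott's three commuting contractions $A_1, A_2, A_3$ on a Hilbert space $H_0$ that admit no commuting isometric (equivalently, simultaneous regular isometric) dilation, and rescale them so that $\sum_{i=1}^3 \|A_i\|^2 \leq 1$; this rescaling preserves the failure of dilation while placing the tuple in the regime where the correspondence between representations and $cp$-semigroups behaves well. From this tuple I would build a subproduct system $X$ over $\mb{N}^3$ (the symmetric/commutative-type subproduct system encoding the commutation relations $T_i T_j = T_j T_i$, as in the example following Proposition \ref{prop:pieceisrep}) together with an injective representation $R$ whose generators are $A_1, A_2, A_3$, and set $\Theta = \Sigma(X,R)$, a $cp$-semigroup of commuting contractive normal CP maps on $B(H)$.

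Next I would argue by contradiction: suppose $(\alpha, K, B(K))$ is an $e$-dilation of $\Theta$. Since $\cR = B(K)$ and $\cM = B(H)$, the hypothesis of Theorem \ref{thm:edil_repdil} that $b \mapsto P_H b P_H$ is a $*$-isomorphism of $\cR'$ onto $R_0(\cM')$ is automatically satisfied (as noted in the remark after that theorem, it holds whenever $\cM = B(H)$, $\cR = B(K)$). Therefore Theorem \ref{thm:edil_repdil} applies and yields that $(Y,V,K) = \Xi(\alpha)$ is a dilation of $(X,R,H)$, where $Y$ is a genuine \emph{product} system by Remark \ref{rem:identityproduct}, since $\alpha$ is an $e$-semigroup. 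Restricting to the generators ${\bf e}_1, {\bf e}_2, {\bf e}_3$, the operators $V({\bf e}_i)$ form a commuting triple of isometries (isometric because $V$ is an isometric representation of a product system) that dilate $A_1, A_2, A_3$ in precisely the classical sense. For the minimality clause I would instead invoke Corollary \ref{cor:edil_repdil}, which says that a minimal $e$-dilation on any von Neumann algebra $\cR$ produces an isometric dilation $\Xi(\alpha)$ of $\Xi(\Theta) \cong (X,R)$, again over a product system, hence again a commuting isometric dilation of the $A_i$.

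In either case we reach a commuting isometric dilation of the three contractions $A_1, A_2, A_3$, contradicting Parrott's theorem. I would need to be slightly careful that the dilation extracted from the product-system representation is a \emph{regular} commuting isometric dilation in the sense Parrott's example rules out, and that the commutation relations in $Y$ force the dilating isometries to commute; this is where the explicit structure of the symmetric-type subproduct system $X$ over $\mb{N}^3$ does the work, since a representation of a product system $Y$ containing $X$ must respect the image relations, and the generators over distinct $\mb{e}_i$ multiply via the product-system maps to commuting operators.

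The main obstacle I anticipate is not the dilation-transfer itself—Theorem \ref{thm:edil_repdil} and Corollary \ref{cor:edil_repdil} package that cleanly—but rather verifying that the subproduct system $X$ built from Parrott's tuple has an \emph{injective} representation $R$ with the prescribed generators, so that $\Xi(\Sigma(X,R))$ recovers $(X,R)$ and the hypotheses of Theorem \ref{thm:essentially_inverse} hold. One must check that the natural representation of $X$ on $H$ sending the basis of $X({\bf e}_i)$ to $A_i$ is genuinely completely contractive (which is where the normalization $\sum \|A_i\|^2 \leq 1$ enters) and injective on each fiber. Confirming injectivity fiber-by-fiber, and confirming that the resulting $\Theta$ consists of bona fide contractive normal CP maps on $B(H)$ rather than on a proper corner, is the delicate bookkeeping that underlies the whole argument.
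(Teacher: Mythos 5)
Your overall strategy --- build $\Theta$ from Parrott's tuple via $\Sigma(X,R)$, then use Theorem \ref{thm:edil_repdil} (and Corollary \ref{cor:edil_repdil} for the minimality clause) to extract a commuting isometric dilation and contradict Parrott --- is exactly the paper's route. But there is one step that is not merely a gap but actively self-defeating: the rescaling. You propose to normalize Parrott's contractions so that $\sum_{i=1}^3 \|A_i\|^2 \leq 1$ and assert that ``this rescaling preserves the failure of dilation.'' It does not. By \cite[Proposition 9.2]{SzNF70} (quoted in the paper immediately after this theorem), \emph{any} commuting tuple satisfying that norm condition has a commuting regular unitary dilation, hence an isometric dilation. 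So after your rescaling the tuple is no longer a counterexample, and the resulting $cp$-semigroup may well admit an $e$-dilation. The normalization is also unnecessary: since the semigroup is $\mb{N}^3$, the fibers $X(\mb{e}_1), X(\mb{e}_2), X(\mb{e}_3)$ are separate one-dimensional correspondences, and complete contractivity of the representation on each fiber only requires each $A_i$ individually to be a contraction --- no row-contraction condition across the three generators is imposed. You should simply drop the rescaling.

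The second issue is the injectivity of $R$, which you correctly identify as the crux (it is needed so that $\Xi(\Sigma(X,R))$ recovers $(X,R)$ via Theorem \ref{thm:essentially_inverse}, which is what makes Theorem \ref{thm:edil_repdil} applicable) but leave unresolved. Since each fiber $X(n)$ is one-dimensional, injectivity of $R_n$ amounts to $A_1^{n_1}A_2^{n_2}A_3^{n_3} \neq 0$ for every $n \in \mb{N}^3$, and Parrott's operators need not satisfy this. The paper's fix is to replace $R_i$ by $T_i = R_i \oplus 1$: the direct summand $1$ guarantees all monomials are nonzero, while the tuple still has no commuting isometric dilation (an isometric dilation of $T_1,T_2,T_3$ would compress to one of $R_1,R_2,R_3$). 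With the rescaling removed and this modification inserted, your argument matches the paper's proof; your worry about ``regular'' dilations is unnecessary, since Parrott's example excludes every commuting isometric dilation, not only regular ones.
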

\begin{proof}
Let $T_1,T_2,T_3 \in B(H)$ be three commuting contractions that have no isometric dilation and such that $T_1^{n_1} T_2^{n_2} T_3^{n_3} \neq 0$ for all $n  = (n_1, n_2, n_3) \in \mb{N}^3$ (one may take commuting contractions $R_1,R_2,R_3$ with no isometric dilation as in Parrot's example \cite{Parrot}, and define $T_i = R_i \oplus 1$).
For all $n  = (n_1, n_2, n_3) \in \mb{N}^3$, define
\bes
\Theta_n (a) = T_1^{n_1} T_2^{n_2} T_3^{n_3} a  (T_3^{n_3})^*(T_2^{n_2})^* (T_1^{n_1})^* \,\, , \,\, a \in B(H).
\ees
Note that $\Theta = \Sigma(X,R)$, where $X = \{X(n)\}_{n\in\mb{N}^3}$ is the subproduct system given by $X(n) = \mb{C}$ for all $n \in \mb{N}^3$, and $R$ is the (injective) representation that sends $1 \in X(n)$ to $T_1^{n_1} T_2^{n_2} T_3^{n_3}$ (the product in $X$ is simply multiplication of scalars).

Assume, for the sake of obtaining a contradiction, that $\Theta = \{\Theta_n\}_{n\in\mb{N}^3}$ has
an $e$-dilation $(\alpha,K,B(K))$. Let $(Y,V) = \Xi(\alpha)$ be the Arveson-Stinespring subproduct system of $\alpha$ together with the identity representation. By Theorem \ref{thm:edil_repdil}, $(Y,V,K)$ is a dilation of $(X,R,H)$. It follows that $V_1,V_2,V_3$ are a commuting isometric dilation of $T_1,T_2,T_3$ where $V_1 := V(1)$ with $1 \in X(1,0,0)$, $V_2 := V(1)$ with $1 \in X(0,1,0)$, and
$V_3 := V(1)$ with $1 \in X(0,0,1)$. This is a contradiction.

Finally, a standard argument shows that if $(\alpha,K,\cR)$ is a minimal dilation of $\Theta$, then $\cR = B(K)$.
\end{proof}

Until this point, all the results that we have seen in the dilation theory of $cp$-semigroups have been anticipated by the classical theory of isometric dilations. We shall now encounter a phenomena that has no counterpart in the classical theory.

By \cite[Proposition 9.2]{SzNF70}, if $T_1, \ldots, T_k$ is a commuting $k$-tuple of contractions such that
\be\label{eq:normconditionT}
\sum_{i=1}^k \|T_i\|^2 \leq 1 ,
\ee
then $T_1,\ldots, T_k$ has a commuting regular unitary dilation (and, in particular, an isometric dilation). One is tempted to conjecture that if $\theta_1, \ldots, \theta_k$ is a commuting $k$-tuple of CP maps such that
\be\label{eq:normconditiontheta}
\sum_{i=1}^k \|\theta_i\| \leq 1,
\ee
then the tuple $\theta_1, \ldots, \theta_k$ has an $e$-dilation. Indeed, if $\theta_i(a) = T_i a T_i^*$, where $T_1, \ldots, T_k$ is a commuting $k$-tuple satisfying (\ref{eq:normconditionT}), then
it is easy to construct an $e$-dilation of $\theta_1, \ldots, \theta_k$ from the isometric dilation of $T_1, \ldots, T_k$. However, it turns out that (\ref{eq:normconditiontheta}) is far from being sufficient for an $e$-dilation to exist. We need some preparations before exhibiting an example.

\begin{proposition}\label{prop:sprdctcntrexample}
There exists a subproduct system that is not a subsystem of any product system.
\end{proposition}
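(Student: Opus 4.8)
The plan is to argue by producing a specific witness and proving it is non-embeddable by contradiction, using the dilation dictionary already set up. The bridge is Theorem \ref{thm:edil_repdil} together with the argument of Theorem \ref{thm:parrot}: if the Arveson--Stinespring subproduct system $E = \Xi(\Theta)$ of a suitable $cp_0$-semigroup $\Theta$ over $\mb{N}^3$ were a subproduct subsystem of a product system $Y$, then (since an $e$-semigroup has a genuine product system by Remark \ref{rem:identityproduct}) one could realize $Y$ as $\Xi(\alpha)$ for an $e$-dilation $\alpha$ of $\Theta$, and the images of the three generators would furnish a \emph{commuting isometric dilation} of the operators underlying $\Theta$. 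So it suffices to exhibit a $\Theta$ built from three commuting operators that admit no commuting isometric dilation, arranged so that embeddability of $E$ genuinely forces such a dilation. The converse direction --- embeddability $\Rightarrow$ existence of an $e_0$-dilation --- is supplied by Corollary \ref{cor:e0dilwhensub} (via Propositions \ref{prop:exist_isodil} and \ref{prop:dil_rep_dil_CP}), whose contrapositive is exactly what I want.

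Before building the example I would first locate it correctly, to see that it cannot be trivial. Over $\mb{N}$ every subproduct system embeds into a product system (Section \ref{sec:subproductN}), and the analogous Andô-type phenomenon should be expected over $\mb{N}^2$; this is why the example must live over $\mb{N}^3$, pitting Parrot's three commuting contractions \cite{Parrot} against the two-variable isometric dilation theorem. Moreover, if the generating fibers $E(\mathbf{e}_i)$ are one-dimensional, then $E$ is a subsystem of the trivial commutative product system and hence \emph{always} embeds; equivalently, the naive rank-one encoding $\Theta_n(a)=T^n a (T^n)^{*}$ of a commuting tuple has one-dimensional, hence product, Arveson--Stinespring system. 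Therefore the obstruction cannot sit in operator norms but must live in fibers of dimension $\ge 2$, i.e. in genuinely higher-rank CP maps. This is precisely the source of the ``new nature'' of the phenomenon promised in the introduction.

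With this in mind I would take Parrot's three commuting contractions $T_1,T_2,T_3$ carrying no commuting isometric dilation and manufacture from them a \emph{unital} $cp_0$-semigroup $\Theta=\{\Theta_n\}_{n\in\mb{N}^3}$, by thickening each generator into a unital higher-rank CP map along the lines of $\Theta_{\mathbf{e}_i}(a)=T_i a T_i^{*}+(1-T_iT_i^{*})^{1/2}\,\sigma_i(a)\,(1-T_iT_i^{*})^{1/2}$, with auxiliary maps $\sigma_i$ chosen so that $\Theta_{\mathbf{e}_1},\Theta_{\mathbf{e}_2},\Theta_{\mathbf{e}_3}$ still commute. Injectivity of the identity representation (Theorems \ref{thm:reprep} and \ref{thm:essentially_inverse}) guarantees that $E=\Xi(\Theta)$ faithfully records these generators, so that any isometric dilation of the identity representation of $E$ compresses back to the $T_i$. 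If $E$ embedded in a product system, Corollary \ref{cor:e0dilwhensub} would produce an $e_0$-dilation of $\Theta$, and then Theorem \ref{thm:edil_repdil} and the extraction in Theorem \ref{thm:parrot} would deliver the forbidden commuting isometric dilation of $T_1,T_2,T_3$. Hence $E$ embeds in no product system, which is the assertion.

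The main obstacle --- and the reason this is not a one-line corollary of Theorem \ref{thm:parrot} --- is reconciling Parrot's obstruction with the hypotheses needed to run the implication in reverse. Parrot's tuple necessarily has norm near $1$, since by Brehmer's theorem \cite[Proposition 9.2]{SzNF70} any commuting tuple with $\sum_i\|T_i\|^2\le 1$ already dilates; so the small-norm route of Corollary \ref{cor:edilwhensub} is unavailable here, and one is forced onto the \emph{unital} route of Corollary \ref{cor:e0dilwhensub}. But passing to unital CP maps, or perturbing the $T_i$, tends to \emph{dissolve} dilation obstructions, so the delicate point is to choose the defect terms $\sigma_i$ so that the thickened maps simultaneously (i) commute, (ii) are unital, and (iii) still transmit, through the piece/dilation correspondence, the commuting-isometric-dilation obstruction of the original $T_i$ --- in particular so that the commutativity recorded by the fibers $E(\mathbf{e}_i)$ matches up with the product-system braiding of any ambient $Y$ to yield a genuinely \emph{commuting} isometric tuple, not merely an isometric representation of $Y$. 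Verifying (i)--(iii) at once is the technical heart; once it is in place the contradiction with \cite{Parrot} is immediate and $E=\Xi(\Theta)$ is the desired subproduct system embeddable in no product system.
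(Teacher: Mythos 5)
Your strategy runs in the opposite logical direction from the paper's, and the step you yourself flag as ``the technical heart'' is a genuine, unfilled gap. The paper proves Proposition \ref{prop:sprdctcntrexample} by a short, purely algebraic construction: a subproduct system over $\mb{N}^3$ with $X(e_i)=\mb{C}^2$, $X(e_i+e_j)=\mb{C}^2\otimes\mb{C}^2$, $X(n)=0$ for $|n|>2$, all products the identity except $U_{e_3,e_2}=$ the flip. Iterating the associativity identity that any ambient product system $Y$ must satisfy forces $I\otimes U^X_{e_3,e_2}=U^X_{e_3,e_2}\otimes I$, which is false for the flip. No operator theory, no Parrot, no dilation dictionary. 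The paper then \emph{deduces} a new dilation counterexample (Theorem \ref{thm:strongparrot}) from this combinatorial fact. You propose to invert this: derive non-embeddability from a dilation obstruction. But the only dilation counterexample available at this point in the paper is Theorem \ref{thm:parrot}, and, as you correctly observe, its Arveson--Stinespring subproduct system is one-dimensional in every fiber and hence \emph{is} a product system --- so non-dilatability of a $cp$-semigroup does not by itself yield a non-embeddable subproduct system. Your route therefore requires manufacturing a genuinely new, \emph{unital}, higher-rank counterexample to dilation, which is at least as hard as the theorem the paper ultimately derives \emph{from} the proposition you are trying to prove.

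Concretely, two things are missing and neither is routine. First, you never exhibit the auxiliary maps $\sigma_i$: making $a\mapsto T_iaT_i^*+(1-T_iT_i^*)^{1/2}\sigma_i(a)(1-T_iT_i^*)^{1/2}$ unital while keeping the three thickened maps commuting is a strong simultaneous constraint on $T_i$, the defect operators, and the $\sigma_i$, and there is no reason a choice exists for Parrot's tuple. Second, even granting such a $\Theta$, the implication ``$E=\Xi(\Theta)$ embeds in a product system $\Rightarrow$ $T_1,T_2,T_3$ have a commuting isometric dilation'' does not follow from the machinery you cite. In Theorem \ref{thm:parrot} the extraction of commuting isometries $V_i=V(1)$ works because $X(e_i)=\mb{C}$, so each fiber contributes a single isometry. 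Once $\dim E(e_i)\geq 2$, an isometric representation of an ambient product system $Y$ produces row isometries on the fibers $Y(e_i)$; the single vector in $E(e_i)$ corresponding to the Kraus operator $T_i$ need not be sent to an isometry, and $V(x_1)V(x_2)=V(U^Y(x_1\otimes x_2))$ versus $V(x_2)V(x_1)=V(U^Y(x_2\otimes x_1))$ agree only after projecting onto $E(e_1+e_2)$ --- exactly the ``braiding'' mismatch you mention but do not resolve. Absent these two steps the argument does not close, and I would recommend abandoning this route in favor of a direct construction in the spirit of the paper's: the obstruction to embedding is an associativity/braiding obstruction that can be exhibited on finite-dimensional fibers without any reference to dilation theory.
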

\begin{proof}
We construct a counter example over $\mb{N}^3$. Let $e_1,e_2,e_3$ be the standard basis of $\mb{N}^3$. We let $X(e_1) = X(e_2) = X(e_3) = \mb{C}^2$. Let $X(e_i + e_j) = \mb{C}^2 \otimes \mb{C}^2$ for all $i,j=1,2,3$. Put $X(n) = \{0\}$ for all $n\in\mb{N}^k$ such that $|n|>2$. To complete the construction of $X$ we need to define the product maps $U^X_{m,n}$.
Let $U^X_{e_i,e_j}$ be the identity on $\mb{C}^2 \otimes \mb{C}^2$ for all $i,j$ except for $i=3,j=2$, and let $U^X_{e_3,e_2}$ be the flip. Define the rest of the products to be zero maps (except the maps $U^X_{0,n}, U^X_{m,0}$ which are identities). This product is evidently coisometric, and it is also associative, because the product of any three nontrivial elements vanishes.

Let $Y$ be a product system ``dilating" $X$. Then for all $k,m,n \in \mb{N}^k$ we have
\bes
U^Y_{k+m,n}(U^Y_{k,m}\otimes I) =  U^Y_{k,m+n}(I \otimes U^Y_{m,n}),
\ees
or
\bes
U^Y_{k+m,n} =  U^Y_{k,m+n}(I \otimes U^Y_{m,n})(U^Y_{k,m}\otimes I)^*,
\ees
and
\bes
U^Y_{k,m+n} = U^Y_{k+m,n}(U^Y_{k,m}\otimes I)(I \otimes U^Y_{m,n})^*.
\ees
Iterating these identities, we have, on the one hand,
\begin{align*}
U_{e_3,e_1+e_2} &= U^Y_{e_3+e_2,e_1}(U^Y_{e_3,e_2}\otimes I)(I \otimes U^Y_{e_2,e_1})^* \\
&= U^Y_{e_2,e_3+e_1}(I \otimes U^Y_{e_3,e_1})(U^Y_{e_2,e_3}\otimes I)^*(U^Y_{e_3,e_2}\otimes I)(I \otimes U^Y_{e_2,e_1})^* \\
&= U^Y_{e_1+e_2,e_3}(U^Y_{e_2,e_1}\otimes I)(I \otimes U^Y_{e_1,e_3})^*(I \otimes U^Y_{e_3,e_1})(U^Y_{e_2,e_3}\otimes I)^*(U^Y_{e_3,e_2}\otimes I)(I \otimes U^Y_{e_2,e_1})^*, \\
\end{align*}
and on the other hand
\begin{align*}
U_{e_3,e_1+e_2} &= U^Y_{e_3+e_1,e_2}(U^Y_{e_3,e_1}\otimes I)(I \otimes U^Y_{e_1,e_2})^* \\
&= U^Y_{e_1,e_3+e_2}(I \otimes U^Y_{e_3,e_2})(U^Y_{e_1,e_3}\otimes I)^*(U^Y_{e_3,e_1}\otimes I)(I \otimes U^Y_{e_1,e_2})^* \\
&= U^Y_{e_1+e_2,e_3}(U^Y_{e_1,e_2}\otimes I)(I \otimes U^Y_{e_2,e_3})^*(I \otimes U^Y_{e_3,e_2})(U^Y_{e_1,e_3}\otimes I)^*(U^Y_{e_3,e_1}\otimes I)(I \otimes U^Y_{e_1,e_2})^*. \\
\end{align*}
Canceling $U^Y_{e_1+e_2,e_3}$, we must have
\begin{align*}
(U^Y_{e_1,e_2}\otimes I)(I \otimes U^Y_{e_2,e_3})^* & (I \otimes U^Y_{e_3,e_2})(U^Y_{e_1,e_3}\otimes I)^*(U^Y_{e_3,e_1}\otimes I)(I \otimes U^Y_{e_1,e_2})^* \\
&= (U^Y_{e_2,e_1}\otimes I)(I \otimes U^Y_{e_1,e_3})^*(I \otimes U^Y_{e_3,e_1})(U^Y_{e_2,e_3}\otimes I)^*(U^Y_{e_3,e_2}\otimes I)(I \otimes U^Y_{e_2,e_1})^*.
\end{align*}
Now, $U^X_{e_i,e_j}$ were unitary to begin with, so the above identity implies
\begin{align*}
(U^X_{e_1,e_2}\otimes I)(I \otimes U^X_{e_2,e_3})^* & (I \otimes U^X_{e_3,e_2})(U^X_{e_1,e_3}\otimes I)^*(U^X_{e_3,e_1}\otimes I)(I \otimes U^X_{e_1,e_2})^* \\
&= (U^X_{e_2,e_1}\otimes I)(I \otimes U^X_{e_1,e_3})^*(I \otimes U^X_{e_3,e_1})(U^X_{e_2,e_3}\otimes I)^*(U^X_{e_3,e_2}\otimes I)(I \otimes U^X_{e_2,e_1})^*.
\end{align*}
Recalling the definition of the product in $X$ (the product is usually the identity), this reduces to
\begin{align*}
I \otimes U^X_{e_3,e_2} = U^X_{e_3,e_2}\otimes I.
\end{align*}
This is absurd. Thus, $X$ cannot be dilated to a product system.
\end{proof}

We can now strengthen Theorem \ref{thm:parrot}:
\begin{theorem}\label{thm:strongparrot}
There exists a $cp$-semigroup $\Theta = \{\Theta_n\}_{n \in \mb{N}^3}$ acting on a $B(H)$, such that for all $\lambda > 0$, $\lambda \Theta$ has no $e$-dilation $(\alpha,K,B(K))$, and no \emph{minimal} $e$-dilation $(\alpha,K,\cR)$ on any von Neumann algebra $\cR$.
\end{theorem}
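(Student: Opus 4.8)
The plan is to build the same kind of example as in Theorem~\ref{thm:parrot}, but using the subproduct system $X$ over $\mb{N}^3$ from Proposition~\ref{prop:sprdctcntrexample} instead of the scalar subproduct system. Since that $X$ \emph{cannot} be embedded in any product system, the obstruction will persist even after scaling the CP maps, and this is precisely what rescues us from the norm condition (\ref{eq:normconditiontheta}). So first I would exhibit a single injective representation $R$ of $X$ on some Hilbert space $H$, set $\Theta = \Sigma(X,R)$, and observe that $X$ is (isomorphic to) the Arveson--Stinespring subproduct system $E$ of $\Theta$ via Theorem~\ref{thm:essentially_inverse}, which requires $R$ to be injective.

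\textbf{The scaling argument.} The key point is that scaling the semigroup by $\lambda>0$ only rescales the representation, not the underlying subproduct system. Concretely, if $\Theta = \Sigma(X,R)$ then $\lambda\Theta$ is realized as $\Sigma(X,R^{(\lambda)})$ where $R^{(\lambda)}_n = \lambda^{|n|/2} R_n$ on each fiber $X(n)$; this is again a c.c.\ representation of the \emph{same} subproduct system $X$ (possibly after rescaling to keep it completely contractive, which is harmless for $\lambda$ small and can be arranged for all $\lambda$ by absorbing constants into $R$), and it is still injective because $R$ was. Hence $\Xi(\lambda\Theta)$ has the same Arveson--Stinespring subproduct system $E\cong X$ for every $\lambda>0$. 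The suppression of the embeddability obstruction under scaling is exactly what makes this stronger than Theorem~\ref{thm:strongparrot}'s classical analogue, where the norm condition (\ref{eq:normconditionT}) guarantees a dilation.

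\textbf{Deriving the contradiction.} Now suppose, for contradiction, that $\lambda\Theta$ admits an $e$-dilation $(\alpha,K,B(K))$. Let $(Y,V)=\Xi(\alpha)$ be its Arveson--Stinespring subproduct system with the identity representation; by Remark~\ref{rem:identityproduct}, $Y$ is a genuine \emph{product} system. By Theorem~\ref{thm:edil_repdil} (whose hypothesis on $\cR' \ni b \mapsto P_H b P_H$ holds since $\cR = B(K)$ and $\cM = B(H)$), the triple $(Y,V,K)$ is a dilation of $(X,R^{(\lambda)},H)$, and in particular $X$ is isomorphic to a subproduct subsystem of the product system $Y$. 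This contradicts Proposition~\ref{prop:sprdctcntrexample}, which asserts that this particular $X$ embeds in no product system. The case of a \emph{minimal} $e$-dilation $(\alpha,K,\cR)$ on an arbitrary von Neumann algebra $\cR$ is handled by the same standard argument used at the end of Theorem~\ref{thm:parrot}: minimality forces the central support of $P_H$ in $\cR$ to be $I_K$, which is exactly the condition under which Theorem~\ref{thm:edil_repdil} applies, so the embedding obstruction is reached again.

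\textbf{Main obstacle.} The routine part is the contradiction machinery, which is essentially a verbatim reprise of Theorem~\ref{thm:parrot}. The one genuine point requiring care is the construction of an \emph{injective} representation $R$ of the exotic subproduct system $X$ from Proposition~\ref{prop:sprdctcntrexample}: the shift representation $S^X$ on the Fock space $\mathfrak{F}_X$ (Definition~\ref{def:shiftrep}) is always injective and c.c., so I would take $R = S^X$, giving a concrete $\Theta=\Sigma(X,S^X)$ to which the whole argument applies for every $\lambda>0$. Verifying that the scaled representations $R^{(\lambda)}$ remain completely contractive for all $\lambda>0$ (rather than only small $\lambda$) is the only place where a small estimate is needed, but since the nonexistence of the dilation is the desired conclusion and holds for \emph{every} $\lambda>0$, it suffices to arrange the normalization so that each $R^{(\lambda)}$ is a legitimate representation.
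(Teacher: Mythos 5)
Your proposal is correct and follows essentially the same route as the paper: take the exotic subproduct system $X$ of Proposition \ref{prop:sprdctcntrexample}, let $\Theta = \Sigma(X,S^X)$ with $S^X$ the (injective) shift representation, observe that scaling $\Theta$ only rescales the representation and leaves the Arveson--Stinespring subproduct system unchanged, and then invoke Theorem \ref{thm:edil_repdil} together with Remark \ref{rem:identityproduct} to contradict the non-embeddability of $X$ into a product system. The paper's proof is just a terser version of exactly this argument, including the same treatment of the minimal-dilation case.
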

\begin{proof}
Let $X$ be as in Proposition \ref{prop:sprdctcntrexample}. Let $\Theta$ be the $cp$-semigroup generated by the $X$-shift, as in Section \ref{subsec:shift} of the paper. Of course, $\Theta$, as a semigroup over $\mb{N}^3$, can be generated by three commuting CP maps $\theta_1,\theta_2,\theta_3$. $X$ cannot be embedded into a full product system, so by Theorem \ref{thm:edil_repdil}, $\Theta$ has no minimal $e$-dilation, nor does it have an $e$-dilation acting on a $B(K)$. Note that if $\Theta$ is scaled \emph{its product system is left unchanged} (this follows from Theorem \ref{thm:essentially_inverse}: if you take $X$ and scale the representation $S^X$ you get a scaled version of $\Theta$). So no matter how small you take $\lambda  > 0$, $\lambda \theta_1,\lambda \theta_2,\lambda \theta_3$ cannot be dilated to three commuting $*$-endomorphisms on $B(K)$, nor to a minimal three-tuple on any von Neumann algebra.
\end{proof}

Note that the obstruction here seems to be of a completely different nature from the one in the example given in Theorem \ref{thm:parrot}. The subproduct system arising there is already a product system, and, indeed, the $cp$-semigroup arising there can be dilated once it is multiplied by a small enough scalar.

\newpage
\part{Subproduct systems over $\mb{N}$}

\section{Subproduct systems of Hilbert spaces over $\mb{N}$}\label{sec:subproductN}

We now specialize to subproduct systems of Hilbert W$^*$-correspondences over the semigroup $\mb{N}$, so from now on any subproduct system is to be understood as such (soon we will specialize even further to subproduct systems of Hilbert spaces).

\subsection{Standard and maximal subproduct systems}

If $X$ is a subproduct system over $\mb{N}$, then $X(0) = \cM$ (some von Neumann algebra), $X(1)$ equals some W$^*$-correspondence $E$, and $X(n)$ can be regarded as a subspace of $E^{\otimes n}$. The following lemma allows us to consider $X(m+n)$ as a subspace of $X(m) \otimes X(n)$.

\begin{lemma}\label{lem:projection_subproduct}
Let $X = \{X(n)\}_{n \in \mb{N}}$ be a subproduct system. $X$ is isomorphic to a subproduct system $Y = \{Y(n)\}_{n\in\mb{N}}$ with coisometries $\{U_{m,n}^Y\}_{m,n \in \mb{N}}$ that satisfies
\bes
Y(1) = X(1)
\ees
and
\be\label{eq:p_subset}
Y(m+n) \subseteq Y(m) \otimes Y(n).
\ee
Moreover, if $p_{m+n}$ is the orthogonal projection of $Y(1)^{\otimes (m + n)}$ onto $Y(m+n)$, then
\be\label{eq:p_maps}
U_{m,n}^Y = p_{m+n}\Big|_{Y(m) \otimes Y(n)}
\ee
and the projections $\{p_n\}_{n \in \mb{N}}$ satisfy
\be\label{eq:p_assoc}
p_{k+m+n} = p_{k+m+n}(I_{E^{\otimes k}} \otimes p_{m+n}) = p_{k+m+n}(p_{k+m} \otimes I_{E^{\otimes n}}).
\ee
\end{lemma}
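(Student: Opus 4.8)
The plan is to realize $X$ concretely inside the full product system over $E := X(1)$. Setting $V_1 = I_E$, I would define isometries $V_n : X(n) \to E^{\otimes n}$ recursively by $V_{n+1} = (V_n \otimes I_E)(U^X_{n,1})^*$. Since $U^X_{n,1}$ is coisometric, its adjoint $(U^X_{n,1})^* : X(n+1) \to X(n) \otimes X(1)$ is an isometric correspondence map, and tensoring such a map with $I_E$ preserves isometry and the bimodule structure; hence each $V_n$ is an isometric correspondence map. I then put $Y(n) := V_n X(n) \subseteq E^{\otimes n}$ (with $Y(0) = \cM$, $Y(1) = X(1)$), let $p_n := V_n V_n^*$ be the orthogonal projection of $E^{\otimes n} = Y(1)^{\otimes n}$ onto $Y(n)$, and define $U^Y_{m,n} := V_{m+n} U^X_{m,n}(V_m^* \otimes V_n^*)$. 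Because $V_m^* V_m = I_{X(m)}$, the family $V = \{V_n\}$ satisfies $V_{m+n} U^X_{m,n} = U^Y_{m,n}(V_m \otimes V_n)$, i.e. (\ref{eq:iso}); one checks routinely that $Y$ is a subproduct system and that $V$ is an isomorphism onto it, so in particular $Y(1) = X(1)$.

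The heart of the proof is the compatibility identity
\bes
V_{m+n} = (V_m \otimes V_n)(U^X_{m,n})^* \qquad (m,n \in \mb{N}),
\ees
which I would establish by induction on $n$ (for all $m$ at once). The base case $n=1$ is exactly the recursive definition, using $V_1 = I_E$. For the inductive step I expand $V_{m+n+1} = (V_{m+n} \otimes I_E)(U^X_{m+n,1})^*$, substitute the inductive hypothesis for $V_{m+n}$, and apply the adjoint of the associativity law (\ref{eq:assoc_prod}) with $s=m,\,t=n,\,r=1$, namely $((U^X_{m,n})^* \otimes I_E)(U^X_{m+n,1})^* = (I_{X(m)} \otimes (U^X_{n,1})^*)(U^X_{m,n+1})^*$; regrouping the tensor factors and recognizing $(V_n \otimes I_E)(U^X_{n,1})^* = V_{n+1}$ then yields $V_{m+n+1} = (V_m \otimes V_{n+1})(U^X_{m,n+1})^*$. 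This bookkeeping with the co-associativity of the adjoint maps is the one genuinely delicate point; the remaining steps are formal consequences.

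Granting the identity, (\ref{eq:p_subset}) and (\ref{eq:p_maps}) follow immediately. Indeed $Y(m+n) = V_{m+n} X(m+n) = (V_m \otimes V_n)(U^X_{m,n})^* X(m+n) \subseteq (V_m \otimes V_n)(X(m) \otimes X(n)) = Y(m) \otimes Y(n)$, which is (\ref{eq:p_subset}). Moreover $p_{m+n} = V_{m+n} V_{m+n}^* = (V_m \otimes V_n)(U^X_{m,n})^* U^X_{m,n}(V_m^* \otimes V_n^*)$, so evaluating on an arbitrary vector $(V_m \otimes V_n)\xi$ of $Y(m) \otimes Y(n)$ (with $\xi \in X(m) \otimes X(n)$) and using $V_m^* V_m = I$, $V_n^* V_n = I$ gives $p_{m+n}(V_m \otimes V_n)\xi = (V_m \otimes V_n)(U^X_{m,n})^* U^X_{m,n}\xi = V_{m+n} U^X_{m,n}\xi = U^Y_{m,n}(V_m \otimes V_n)\xi$; hence $U^Y_{m,n} = p_{m+n}\big|_{Y(m) \otimes Y(n)}$.

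Finally, (\ref{eq:p_assoc}) needs only the subspace inclusions provided by (\ref{eq:p_subset}) together with $Y(k) \subseteq E^{\otimes k}$ and $Y(n) \subseteq E^{\otimes n}$: applying (\ref{eq:p_subset}) in the two groupings $k+(m+n)$ and $(k+m)+n$ gives
\bes
Y(k+m+n) \subseteq Y(k) \otimes Y(m+n) \subseteq E^{\otimes k} \otimes Y(m+n), \qquad Y(k+m+n) \subseteq Y(k+m) \otimes Y(n) \subseteq Y(k+m) \otimes E^{\otimes n}.
\ees
Since the range of $p_{k+m+n}$ lies inside the range of $I_{E^{\otimes k}} \otimes p_{m+n}$ (respectively $p_{k+m} \otimes I_{E^{\otimes n}}$), composing these projections fixes $p_{k+m+n}$; taking adjoints of both self-adjoint projections gives the two stated equalities, completing the proof.
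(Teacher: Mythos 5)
Your proof is correct and is essentially the paper's argument: the paper likewise embeds $X(n)$ into $E^{\otimes n}$ via the adjoints of the iterated coisometries (your $V_n$ is exactly $U_n^*$ for the composite map $U_n:E^{\otimes n}\to X(n)$), sets $Y(n)$ equal to the image, $p_n = U_n^*U_n$, and $U^Y_{m,n} = p_{m+n}\big|_{Y(m)\otimes Y(n)}$. Your inductive verification of $V_{m+n}=(V_m\otimes V_n)(U^X_{m,n})^*$ just makes explicit the associativity bookkeeping that the paper compresses into the assertion that $U_n$ is well defined ``by composing in any way.''
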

\begin{proof}
Denote by $U^X_{m,n}$ the subproduct system maps $X(s) \otimes X(t) \rightarrow X(s+t)$. Denote $E = X(1)$. We first note that for every $n$ there is a well defined coisometry $U_n : E^{\otimes n} \rightarrow X(n)$ given by composing in any way a sequence of maps $U^X_{k,m}$ (for example, one can take $U_3 = U^X_{2,1}(U^X_{1,1} \otimes I_{E})$ and so on). We define $Y(n) = {\rm Ker}(U_n)^\perp$, and we let $p_n$ be the orthogonal projection from $E^{\otimes n}$ onto $Y(n)$.
$p_n = U_n^* U_n$, so, in particular, $p_n$ is a bimodule map. For all $m,n \in \mb{N}$ we have that
\bes
E^{\otimes (m)} \otimes {\rm Ker}(U_n) \subseteq {\rm Ker}(U_{m+n}) .
\ees
Thus $E^{\otimes (m)} \otimes {\rm Ker}(U_n)^\perp \supseteq {\rm Ker}(U_{m+n})^\perp$, so
$p_{m+n} \leq I_{E^{\otimes m}} \otimes p_n$. This means that (\ref{eq:p_assoc}) holds. In addition, defining $U_{m,n}^Y$ to be $p_{m+n}$ restricted to $Y(m) \otimes Y(n) \subseteq E^{\otimes (m+n)}$ gives $Y$ the associative multiplication of a subproduct system.

It remains to show that $X$ is isomorphic to $Y$. For all $n$, $X(n)$ is spanned by elements of the form $U_n(x_1 \otimes \cdots \otimes x_n)$, with $x_1, \ldots, x_n \in E$. We define a map $V_n : X(n) \rightarrow Y(n)$ by
\bes
V_n \big( U_n(x_1 \otimes \cdots \otimes x_n) \big) = p_n (x_1 \otimes \cdots \otimes x_n).
\ees
It is immediate that $V_n$ preserves inner products (thus it is well defined) and that it maps $X(n)$ onto $Y(n)$. Finally, for all $m,n \in \mb{N}$ and $x \in E^{\otimes m}, y \in E^{\otimes n}$,
\begin{align*}
V_{m+n}\big(U_{m,n}^X (U_m(x) \otimes U_n(y)) \big)
&= V_{m+n}\big(U_{m+n}(x\otimes y) \big) \\
&= p_{m+n} (x \otimes y) \\
&= p_{m+n} (p_m x \otimes p_n y) \\
&= p_{m+n} \big((V_m U_m (x)) \otimes (V_n U_n (y))\big) \\
&= U^Y_{m+n} \big((V_m U_m (x)) \otimes (V_n U_n (y))\big) ,
\end{align*}
and (\ref{eq:iso}) holds.
\end{proof}

\begin{definition}
A subproduct system $Y$ satisfying (\ref{eq:p_subset}), (\ref{eq:p_maps}) and (\ref{eq:p_assoc}) above will be called a \emph{standard} subproduct system.
\end{definition}

Note that a standard subproduct system is a subproduct subsystem of the full product system $\{E^{\otimes n}\}_{n\in\mb{N}}$.
\begin{corollary}\label{cor:discrete_bhat}
Every $cp$-semigroup over $\mb{N}$ has an $e$-dilation.
\end{corollary}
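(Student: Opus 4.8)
The plan is to show that over $\mb{N}$ the single genuine obstruction to dilation---the impossibility of embedding the Arveson-Stinespring subproduct system into a full product system, which was exploited in Proposition \ref{prop:sprdctcntrexample} and Theorem \ref{thm:strongparrot}---simply never arises, and then to feed the resulting embedding into the dilation machinery already assembled. First I would observe that a $cp$-semigroup $\Theta = \{\Theta_n\}_{n\in\mb{N}}$ over $\mb{N}$ is nothing but the semigroup $\{\theta_1^{\,n}\}_{n\in\mb{N}}$ generated by the single contractive CP map $\theta_1 := \Theta_1$. Forming its identity representation $(E,T) = \Xi(\Theta)$ as in Theorem \ref{thm:reprep}, we obtain a subproduct system $E = \{E(n)\}_{n\in\mb{N}}$ of $\cM'$-correspondences together with an injective representation $T$.

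The key step is the embedding. By Lemma \ref{lem:projection_subproduct}, $E$ is isomorphic to a \emph{standard} subproduct system, and a standard subproduct system is by definition a subproduct subsystem of the full product system $\{E(1)^{\otimes n}\}_{n\in\mb{N}}$. Thus, up to isomorphism, $E$ is a subproduct subsystem of a genuine product system $Y$. This is precisely the hypothesis that fails over $\mb{N}^3$ (compare Proposition \ref{prop:sprdctcntrexample}), and it is the only place where the structure of the semigroup $\mb{N}$ enters: because $\mb{N}$ is singly generated, the iterated products $E(1)^{\otimes n} \to E(n)$ are unambiguous, and the associativity clash that blocks the embedding in the three-generator case cannot occur.

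Having the embedding, I would invoke Corollary \ref{cor:edilwhensub} with $k=1$. Its norm hypothesis $\sum_{i=1}^{k}\|\theta_i\| \le 1$ reduces, when $k=1$, to $\|\theta_1\| \le 1$, which holds automatically since every map in a $cp$-semigroup is contractive; hence the hypothesis is satisfied for free and the corollary produces an $e$-dilation of $\Theta$. (In the special case that $\Theta$ is a $cp_0$-semigroup one may instead quote Corollary \ref{cor:e0dilwhensub} and obtain an $e_0$-dilation.)

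The only point that needs a word of care---and the closest thing here to an obstacle---is that Lemma \ref{lem:projection_subproduct} delivers the product-system embedding only after replacing $E$ by its isomorphic standard copy $E'$. This is harmless because the assignments $\Sigma$ and $\Xi$ of Section \ref{sec:subncp} respect isomorphism of subproduct systems: transporting $T$ along the isomorphism $E \cong E'$ yields a representation $T'$ with $\Sigma(E',T') = \Sigma(E,T) = \Theta$, so $(E',T')$ is again an identity-type representation of $\Theta$ to which Corollary \ref{cor:edilwhensub} applies verbatim. Thus no genuinely new work is required; the corollary is essentially a bookkeeping combination of Lemma \ref{lem:projection_subproduct} with the dilation results of Section \ref{sec:dil}.
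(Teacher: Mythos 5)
Your proof is correct and follows essentially the same route as the paper: Lemma \ref{lem:projection_subproduct} supplies the embedding of the Arveson--Stinespring subproduct system into a full product system, and the dilation corollaries of Section \ref{sec:dil} do the rest. The only (harmless) difference in packaging is that you invoke Corollary \ref{cor:edilwhensub} with $k=1$, whose norm hypothesis is automatic, to cover the unital and nonunital cases at once, whereas the paper quotes Corollary \ref{cor:e0dilwhensub} for the unital case and handles the nonunital case by the analogous construction with a non-fully-coisometric isometric dilation.
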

\begin{proof}
The unital case follows from Corollary \ref{cor:e0dilwhensub} together with the above lemma. The nonunital case follows from a similar construction (where the dilation of a non-fully-coisometric representation is obtained by adapting \cite[Theorem 4.2]{Shalit07a} instead of \cite[Theorem 5.2]{Shalit08}).
\end{proof}

Let $k \in \mb{N}$, and let $E = X(1), X(2), \ldots, X(k)$ be subspaces of $E, E^{\otimes 2}, \ldots, E^{\otimes k}$, respectively, such that the orthogonal projections $p_n : E^{\otimes n} \rightarrow X(n)$ satisfy
$$p_n \leq I_{E^{\otimes i}} \otimes p_j$$
and
$$p_n \leq p_i \otimes I_{E^{\otimes j}}$$
for all $i,j,n \in \mb{N}_+$ satisfying $i+j = n \leq k$. In this case one can define a maximal standard subproduct system $X$ with the prescribed fibers $X(1), \ldots, X(k)$ by defining inductively for $n>k$
$$X(n) = \left(\bigcap_{i+j=n} E^{\otimes i} \otimes X(j)\right) \bigcap \left(\bigcap_{i+j=n} X(i) \otimes E^{\otimes j}\right).$$
It is easy to see that
$$X(n) = \bigcap_{n_1 + \ldots + n_m=n} X(n_1) \otimes \cdots \otimes X(n_m) = \bigcap_{i+j=n} X(i) \otimes X( j).$$
We then have obvious formulas for the projections $\{p_n\}_{n\in \mb{N}}$ as well, for example
$$p_n = \bigwedge_{i+j = n}p_i \otimes p_j \,\, , \,\, (n > k).$$

\subsection{Examples}\label{subsec:examples}

\begin{example}\label{expl:symmmax}
\emph{
In the case $k = 1$, the maximal standard subproduct system with prescribed fiber $X(1) = E$, with $E$ a Hilbert space, is the full product system $F_E$ of Example \ref{expl:full}. If $\dim E = d$, we think of this subproduct system as the product system representing a (row-contractive) $d$-tuple $(T_1, \ldots, T_d)$ of non commuting operators, that is, $d$ operators that are not assumed to satisfy any relations (the idea behind this last remark  must be rather vague at this point, but it shall become clearer as we proceed).
In the case $k = 2$, if $X(2)$ is the symmetric tensor product $E$ with itself then the maximal standard subproduct system with prescribed fibers $X(1), X(2)$ is the symmetric subproduct system $SSP_E$ of Example \ref{expl:symm}. We think of $SSP$ as the subproduct system representing a commuting $d$-tuple.}
\end{example}

\begin{example}\label{expl:dimexp}
\emph{
Let $E$ be a two dimensional Hilbert space with basis $\{e_1,e_2\}$. Let $X(2)$ be the space spanned by $e_1 \otimes e_1, e_1 \otimes e_2$, and $e_2 \otimes e_1$. In other words, $X(2)$ is what remains of $E^{\otimes 2}$ after we declare that $e_2 \otimes e_2=0$. We think of the maximal standard subproduct system $X$ with prescribed fibers $X(1) = E, X(2)$ as the subproduct system representing pairs $(T_1, T_2)$ of operators subject only to the condition $T_2^2 = 0$. $E^{\otimes n}$ has a basis consisting of all vectors of the form $e_\alpha = e_{\alpha_1} \otimes \cdots \otimes e_{\alpha_n}$ where $\alpha = \alpha_1 \cdots \alpha_n$ is a word of length $n$ in ``1" and ``2". $X(n)$ then has a basis consisting of all vectors $e_\alpha$ where $\alpha$ is a word of length $n$ not containing ``22" as a subword.
Let us compute $\dim X(n)$, that is, the number of such words.}

\emph{Let $A_n$ denote the number of words not containing ``22" that have leftmost letter ``1", and let $B_n$ denote the
number of words not containing ``22" that have leftmost letter ``2".
Then we have the recursive relation $A_n = A_{n-1} + B_{n-1}$ and $B_n = A_{n-1}$. The solution of this recursion gives}
$$\dim X(n) = A_n + B_n \approx \left(\frac{1+\sqrt{5}}{2}\right)^n .$$
\emph{As one might expect, the dimension of $X(n)$ grows exponentially fast.}
\end{example}

\begin{example}\label{expl:dimconst}
\emph{Suppose that we want a ``subproduct system that will represent a pair of operators $(T_1, T_2)$ such that
$T_i T_2 = 0$ for $i=1,2$". Although we have not yet made clear what we mean by this, let us proceed heuristically along the lines of the preceding examples. We let $E$ be as above, but now we declare $e_1 \otimes e_2 = e_2 \otimes e_2 = 0$. In other words, we define $X(2) = \{e_1 \otimes e_2, e_2 \otimes e_2\}^\perp$. One checks that the maximal standard subproduct system $X$ with prescribed fibers $X(1) = E, X(2)$ is given by $X(n) = \textrm{span}\{e_1 \otimes e_1 \otimes \cdots \otimes e_1, e_2 \otimes e_1 \otimes \cdots \otimes e_1\}$. This is an example of a subproduct system with two dimensional fibers.}
\end{example}

At this point two natural questions might come to mind. First, \emph{is every standard subproduct system $X$ the maximal subproduct system with prescribed fibers $X(1), \ldots, X(k)$ for some $k \in \mb{N}$?} Second, \emph{does $\dim X(n)$ grow exponentially fast (or remain a constant) for every subproduct system $X$?} The next example answers both questions negatively.

\begin{example}\label{expl:notmax}
\emph{Let $E$ be as in the preceding examples, and let $X(n)$ be a subspace of $E^{\otimes n}$ having basis the set}
$$\{e_\alpha: |\alpha|=n, \alpha \textrm{ does not contain the words } 22, 212, 2112, 21112, \ldots\} .$$
\emph{Then $X = \{X(n)\}_{n \in \mb{N}}$ is a standard subproduct system, but it is smaller than the maximal subproduct system defined by any initial $k$ fibers. Also, $X(n)$ is the span of $e_{\alpha}$ with $\alpha = 11\cdots 11, 21\cdots 11, 121\cdots 11, \ldots, 11 \cdots 12$, thus}
$$\dim X(n) = n+1 ,$$
\emph{so this is an example of a subproduct system with fibers that have a linearly growing dimension.}
\end{example}

Of course, one did not have to go far to find an example of a subproduct system with linearly growing dimension: indeed, the dimension of the fibers of the symmetric subproduct system $SSP_{\mb{C}^d}$ is known to be
\bes
\dim SSP_{\mb{C}^d}(n) = \left(
\begin{array}{c}
 n+d-1 \\
 n
\end{array} \right).
\ees
Taking $d=2$ we get the same dimension as in Example \ref{expl:notmax}. However, $SSP := SSP_{\mb{C}^2}$ and the subproduct system $X$ of Example \ref{expl:notmax} are not isomorphic: for any nonzero $x \in SSP(1)$, the ``square" $U^{SSP}_{1,1}(x \otimes x) \in SSP(2)$ is never zero, while $U^X_{1,1}(e_2 \otimes e_2) = 0$.

Here is an interesting question that we do not know the  answer to: \emph{given a solution $f:\mb{N}\rightarrow \mb{N}$ to the functional inequality}
\bes
f(m+n) \leq f(m) f(n) \,\, , \,\, m,n \in \mb{N},
\ees
\emph{does there exists a subproduct system $X$ such that $\dim X(n) = f(n)$ for all $n \in \mb{N}$?}

\begin{remark}
\emph{One can cook up simple examples of subproduct systems that are not standard. We will not write these examples down, as we already know that such a subproduct system is isomorphic to a standard one.}
\end{remark}

\subsection{Representations of subproduct systems}\label{subsec:representations}

Fix a W$^*$-correspondence $E$. Every completely contractive linear map $T_1:E\rightarrow B(H)$ gives rise to a c.c. representation $T^n$ of the full product system $F_E=\{E^{\otimes n}\}_{n \in \mb{N}}$ by defining
for all $x \in E^{\otimes n}$ and $h \in H$
\be\label{eq:alotofT}
T^n(x)h = \tT_1\big(I_{E} \otimes \tT_1\big) \cdots \big(I_{E^{\otimes (n-1)}}\otimes \tT_1\big) (x \otimes h) ,
\ee
where $\tT_1 : E \otimes H \rightarrow H$ is given by $\tT_1(e \otimes h) = T_1(e) h$. We will denote the operator acting on $x \otimes h$ in the right hand side of (\ref{eq:alotofT}) as $\tT^n$, so as not to confuse with $\tT_n$, which sometimes has a different meaning (namely: if $T$ denotes a c.c. representation of a subproduct system $X$ then
$$\tT_n: X(n) \otimes H \rightarrow H$$
is given by
$$\tT_n (x \otimes h) = T(x) h $$
for all $x \in X(n), h \in H$. Of course, when $X = F_E$, $T$ is a representation of $F_E$ and $T_1$ is the restriction of $T$ to $E$, then $\tT^n = \tT_n$ for all $n$). If $X$ is a standard subproduct system and $X(1) = E$, we obtain a completely contractive representation of $X(n)$ by restricting $T^n$ to $X(n)$. Let us denote this restriction by $T_n$, and denote the family $\{T_n\}_{n \in \mb{N}}$ by $T$.
\begin{proposition}\label{prop:representation}
Let $X$ be a standard subproduct system with projections $\{p_n\}_{n\in \mb{N}}$, and let $T_1:E \rightarrow B(H)$ be a completely contractive map. Construct the family of maps $T = \{T_n\}_{n\in \mb{N}}$, with $T_n:X(n) \rightarrow B(H)$ as in the preceding paragraph. Then the following are equivalent:
\begin{enumerate}
\item\label{it:Trep} $T$ is a representation of $X$.
\item\label{it:tT} For all $m,n \in \mb{N}$,
\be\label{eq:tTrep}
\tT_{m}(I_{X(m)} \otimes \tT_n)(p_m \otimes p_n \otimes I_H)(p^\perp_{m+n} \otimes I_H) = 0 .
\ee
\item\label{it:tTn} For all $n \in \mb{N}$,
\be\label{eq:tTn}
\tT^n (p_n^\perp \otimes I_H) = 0.
\ee
\end{enumerate}
\end{proposition}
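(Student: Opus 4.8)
The plan is to route everything through the full product system $F_E = \{E^{\otimes n}\}_{n\in\mb{N}}$, using that the family $T^{\bullet}=\{T^n\}$ built from $T_1$ by (\ref{eq:alotofT}) is already a genuine representation of $F_E$. First I would record two structural facts. From (\ref{eq:alotofT}) one reads off the factorization $\tT^{m+n} = \tT^m(I_{E^{\otimes m}} \otimes \tT^n)$ and the multiplicativity $\tT^{m+n}(x\otimes y\otimes h) = T^m(x)T^n(y)h$ for $x\in E^{\otimes m}$, $y\in E^{\otimes n}$. Secondly, since $X$ is standard, Lemma \ref{lem:projection_subproduct} gives $p_{m+n}\le I_{E^{\otimes m}}\otimes p_n$ and $p_{m+n}\le p_m\otimes I_{E^{\otimes n}}$; as these two projections commute with product $p_m\otimes p_n$, the domination $p_{m+n}\le p_m\otimes p_n$ follows, i.e. $(p_m\otimes p_n)p_{m+n} = p_{m+n} = p_{m+n}(p_m\otimes p_n)$.

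Because $T_n$ is by construction the restriction of the $F_E$-representation $T^n$ to the sub-correspondence $X(n)=p_nE^{\otimes n}$ (and $p_n$ is a bimodule map), each $T_n$ is automatically a c.c.\ covariant representation of $X(n)$; hence \ref{it:Trep} holds if and only if the sole remaining axiom, $T_{m+n}(U^X_{m,n}(x\otimes y)) = T_m(x)T_n(y)$ on $X(m)\otimes X(n)$, holds. Using $U^X_{m,n}=p_{m+n}|_{X(m)\otimes X(n)}$, $\tT_{m+n}=\tT^{m+n}|_{X(m+n)\otimes H}$, the multiplicativity of $T^{\bullet}$, and $p_{m+n}(p_m\otimes p_n)=p_{m+n}$, this rewrites as the single operator identity
\begin{equation*}
\tT^{m+n}(p_{m+n}\otimes I_H) = \tT^{m+n}(p_m\otimes p_n\otimes I_H) \tag{$\ast$}
\end{equation*}
for all $m,n\in\mb{N}$. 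For \ref{it:tT}, the same computation gives $\tT_m(I_{X(m)}\otimes\tT_n)(p_m\otimes p_n\otimes I_H) = \tT^{m+n}(p_m\otimes p_n\otimes I_H)$, so (\ref{eq:tTrep}) reads $\tT^{m+n}(p_m\otimes p_n\otimes I_H)(p_{m+n}^{\perp}\otimes I_H)=0$; since $(p_m\otimes p_n)p_{m+n}^{\perp} = p_m\otimes p_n - p_{m+n}$ by the domination, this is again precisely $(\ast)$. Thus \ref{it:Trep} $\iff$ \ref{it:tT} is immediate, and the only real work is relating these to \ref{it:tTn}.

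Finally I would establish \ref{it:tTn} $\iff (\ast)$. For the forward direction assume \ref{it:tTn}, i.e.\ $\tT^k(p_k\otimes I_H)=\tT^k$ for all $k$. From $\tT^{m+n}=\tT^m(I_{E^{\otimes m}}\otimes\tT^n)$ I derive the two absorption identities
\begin{equation*}
\tT^{m+n}(I_{E^{\otimes m}}\otimes p_n\otimes I_H)=\tT^{m+n}, \qquad \tT^{m+n}(p_m\otimes I_{E^{\otimes n}}\otimes I_H)=\tT^{m+n},
\end{equation*}
the first directly, the second after commuting $p_m$ past $I_{E^{\otimes m}}\otimes\tT^n$ and invoking \ref{it:tTn} at level $m$. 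Multiplying them yields $\tT^{m+n}(p_m\otimes p_n\otimes I_H)=\tT^{m+n}$, while \ref{it:tTn} at level $m+n$ gives $\tT^{m+n}(p_{m+n}\otimes I_H)=\tT^{m+n}$; together these are $(\ast)$. For the converse I would prove \ref{it:tTn} by induction on $k$: the case $k=1$ is trivial since $p_1=I_E$, and for the step, $(\ast)$ with $(m,n)=(k-1,1)$ gives $\tT^k(p_k\otimes I_H)=\tT^k(p_{k-1}\otimes I_E\otimes I_H)$, whose right-hand side collapses to $\tT^k$ by the induction hypothesis $\tT^{k-1}(p_{k-1}\otimes I_H)=\tT^{k-1}$ together with the second absorption identity.

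The conceptual heart of the argument is the observation that the domination $p_{m+n}\le p_m\otimes p_n$ forces the two a priori distinct composition orders appearing in \ref{it:Trep} and \ref{it:tT} to collapse onto the single identity $(\ast)$. The remaining genuine content is the inductive bootstrap $(\ast)\Rightarrow$\ref{it:tTn}, in which the lower-level vanishing must be fed back in order to absorb $p_{k-1}$, and it is here that standardness (the associativity (\ref{eq:p_assoc}) of the projections) is used essentially; everything else is careful bookkeeping of which tensor leg each projection and each $\tT$ acts on.
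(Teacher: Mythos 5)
Your proof is correct and follows essentially the same route as the paper's: both arguments rest on the factorization $\tT^{m+n}=\tT^m(I\otimes\tT^n)$, the domination $p_{m+n}\leq p_m\otimes p_n$ coming from standardness, and an induction that lets $\tT^k$ absorb the projection $p_k$. Your reorganization through the single hub identity $(\ast)$ (rather than the paper's cycle \ref{it:Trep}$\Rightarrow$\ref{it:tT}$\Rightarrow$\ref{it:tTn}$\Rightarrow$\ref{it:Trep}, whose induction splits off the first tensor factor instead of the last) is a cosmetic difference, not a different method.
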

\begin{proof}
If $T$ is a representation, then
\bes
\tT_{m}(I_{X(m)} \otimes \tT_n)(p_m \otimes p_n \otimes I_H)(p^\perp_{m+n} \otimes I_H)
= \tT_{m+n}(p_{m+n} \otimes I_H)(p^\perp_{m+n} \otimes I_H)
= 0 ,
\ees
so \ref{it:Trep} $\Rightarrow$ \ref{it:tT}. To prove \ref{it:tT} $\Rightarrow$ \ref{it:tTn} note first that (\ref{eq:tTn}) is clear for $n=1$. Assuming that (\ref{eq:tTn}) holds for $n=1,2, \ldots, k-1$, we will show that it holds
for $n=k$.
\begin{align*}
\tT^k (p_k^\perp \otimes I_H)
&= \tT^1(I \otimes \tT^{k-1}) (p_k^\perp \otimes I_H) \\
&= \tT^1(I \otimes \tT^{k-1})(I_{E} \otimes p_{k-1}^\perp \otimes I_H + I_{E} \otimes p_{k-1} \otimes I_H) (p_k^\perp  \otimes I_H)\\
(*)&= \tT^1(I \otimes \tT^{k-1}(p_{k-1} \otimes I_H)) (p_k^\perp  \otimes I_H)\\
&= \tT_1(I \otimes \tT_{k-1}(p_{k-1} \otimes I_H)) (p_k^\perp  \otimes I_H)\\
(**)&= 0 .
\end{align*}
The equality marked by (*) is true by the inductive hypothesis, and the one marked by (**) follows from (\ref{eq:tTrep}).

Finally, \ref{it:tTn} $\Rightarrow$ \ref{it:Trep}: by (\ref{eq:tTn}) we have $\tT^{n} (p_{n} \otimes I_H) = \tT^{n}$. On the other hand, $\tT^{n} (p_{n} \otimes I_H) = \tT_{n} (p_{n} \otimes I_H)$. Thus
\begin{align*}
\tT_{m+n} (p_{m+n} \otimes I_H)
&= \tT^{m+n} (p_{m+n} \otimes I_H) \\
&= \tT^{m+n} \\
&= \tT^{m}(I_{X(m)} \otimes \tT^n) \\
&= \tT_{m}(I_{X(m)} \otimes \tT_n)(p_m \otimes p_n \otimes I_H) ,
\end{align*}
which shows that $T$ is a representation.
\end{proof}

\begin{proposition}
Let $X$ be the maximal standard subproduct system with prescribed fibers $X(1), \ldots, X(k)$, and let $T_1:E \rightarrow B(H)$ be a completely contractive map. Construct $T$ as in Proposition \ref{prop:representation}. Then $T$ is a representation of $X$ if and only if
\be\label{eq:tTnk}
\tT^n (p_n^\perp \otimes I_H) = 0 \quad \textrm{for all} \quad n=1,2,\ldots, k .
\ee
\end{proposition}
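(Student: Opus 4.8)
The plan is to deduce everything from Proposition \ref{prop:representation}, whose third condition already characterizes representations by the family of identities $\tT^n(p_n^\perp\otimes I_H)=0$ (for all $n\in\mb{N}$); the content of the present statement is that for a \emph{maximal} standard subproduct system it suffices to impose these identities for $n\le k$. The ``only if'' direction is then immediate: if $T$ is a representation, Proposition \ref{prop:representation} gives $\tT^n(p_n^\perp\otimes I_H)=0$ for every $n$, in particular for $n=1,\dots,k$, which is \eqref{eq:tTnk}. So the entire problem lies in the converse.

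For the ``if'' direction I would assume \eqref{eq:tTnk} and prove that $\tT^n(p_n^\perp\otimes I_H)=0$ holds for \emph{all} $n\in\mb{N}$; once this is established, Proposition \ref{prop:representation} (condition (3) $\Rightarrow$ condition (1)) finishes the proof. I proceed by strong induction on $n$. For $n\le k$ the identity is exactly the hypothesis \eqref{eq:tTnk}, and for $n>k$ I may assume $\tT^m(p_m^\perp\otimes I_H)=0$ for all $m<n$.

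The engine for the inductive step is the maximality of $X$: for $n>k$ one has $p_n=\bigwedge_{i+j=n}p_i\otimes p_j$, so that $p_n^\perp=\bigvee_{i+j=n}(I_{E^{\otimes n}}-p_i\otimes p_j)$. Combined with the factorization $\tT^n=\tT^i(I_{E^{\otimes i}}\otimes\tT^j)$ (valid for any $i+j=n$, and already used in the proof of Proposition \ref{prop:representation}), it is enough to show that $\tT^n$ annihilates $(I_{E^{\otimes n}}-p_i\otimes p_j)\otimes I_H$ for each decomposition $i+j=n$ with $1\le i,j<n$. Writing $I_{E^{\otimes n}}-p_i\otimes p_j=p_i^\perp\otimes I_{E^{\otimes j}}+p_i\otimes p_j^\perp$ and commuting each projection past $I_{E^{\otimes i}}\otimes\tT^j$, the first summand reduces to $\tT^i(p_i^\perp\otimes I_H)(I_{E^{\otimes i}}\otimes\tT^j)$ and the second to $\tT^i(p_i\otimes I_H)(I_{E^{\otimes i}}\otimes(\tT^j(p_j^\perp\otimes I_H)))$; both vanish by the induction hypothesis, since $i,j<n$.

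The step I expect to require the most care is not this algebra but the passage from ``$\tT^n$ annihilates each $(I_{E^{\otimes n}}-p_i\otimes p_j)\otimes I_H$'' to ``$\tT^n(p_n^\perp\otimes I_H)=0$''. Here I would argue that $\ker\tT^n$ is a closed subspace of $E^{\otimes n}\otimes H$ containing $\mathrm{range}(I_{E^{\otimes n}}-p_i\otimes p_j)\otimes H$ for every $i+j=n$; since $\mathrm{range}(p_n^\perp)$ is by definition the closed linear span of the subspaces $\mathrm{range}(I_{E^{\otimes n}}-p_i\otimes p_j)$ (this is precisely the meaning of the join $p_n^\perp=\bigvee(I_{E^{\otimes n}}-p_i\otimes p_j)$), and since taking closed linear span commutes with $(\cdot)\otimes H$, the kernel contains $\mathrm{range}(p_n^\perp)\otimes H=\mathrm{range}(p_n^\perp\otimes I_H)$. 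This yields $\tT^n(p_n^\perp\otimes I_H)=0$, completing the induction and hence the proof.
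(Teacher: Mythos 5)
Your proof is correct and follows essentially the same route as the paper's: strong induction on $n$, using the maximality of $X$ to write $p_n^\perp$ as a join of projections, each of which factors through some $p_i^\perp$ or $p_j^\perp$ with $i,j<n$ and is therefore annihilated by $\tT^n=\tT^i(I_{E^{\otimes i}}\otimes\tT^j)$. The only (immaterial) difference is that you start from $p_n=\bigwedge_{i+j=n}p_i\otimes p_j$ and split $I-p_i\otimes p_j$ into two orthogonal pieces, whereas the paper takes the meet directly over the projections $I_{E^{\otimes i}}\otimes p_j$ and $p_i\otimes I_{E^{\otimes j}}$; your write-up also spells out the span/kernel argument that the paper leaves implicit.
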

\begin{proof}
The necessity of (\ref{eq:tTnk}) follows from Proposition \ref{prop:representation}. By the same proposition, to show that the condition is sufficient it is enough to show that (\ref{eq:tTnk}) holds for all $n \in \mb{N}$. Given $m \in \mb{N}$, we have $p_m = \bigwedge_q q$, where $q$ runs over all
projections of the form $q = I_{X(i)} \otimes p_j$ or $q = p_i \otimes I_{X(j)}$, with $i,j \in \mb{N}_+$ and $i+j = m$. But then $p_m^\perp = \bigvee_{q} q^\perp$, thus if (\ref{eq:tTnk}) holds for all $n<m$ then it also holds for $n=m$.
\end{proof}

\subsection{Fock spaces and standard shifts}

\begin{definition}
Let $X$ be a subproduct system of Hilbert spaces. Fix an orthonormal basis $\{e_i\}_{i \in \cI}$ of $E =X(1)$.
$X(n)$, when considered as a subspace of $\mathfrak{F}_X$, is called \emph{the $n$ particle space}.
The \emph{standard $X$-shift (related to $\{e_i\}_{i \in \cI}$) on $\mathfrak{F}_X$} is the tuple of operators $\underline{S}^X = \left(S^X_i\right)_{i \in \cI}$ in $B(\mathfrak{F}_X)$ given by
\bes
S^X_i(x) = U_{1,n}(e_i \otimes x) ,
\ees
for all $i \in \cI$, $n \in \mb{N}$ and $x \in X(n)$.
\end{definition}
It is clear that $S^X_i = S^X(e_i)$, where $S^X$ is the shift representation given by Definition \ref{def:shiftrep}.

If $F$ denotes the usual full product system (Example \ref{expl:full}) then $\mathfrak{F}_F$
is the usual Fock space and the tuple $(S^F_i)_{i \in \cI}$ is the standard shift (the $\cI$ orthogonal shift of \cite{Popescu89}). We shall denote $\mathfrak{F}_F$ as $\mathfrak{F}$ and $(S^F_i)_{i \in \cI}$ as $(S_i)_{i \in \cI}$. It is then obvious that the tuple $\left(S^X_i\right)_{i \in \cI}$ is a row contraction, as it is the compression of the row contraction $(S_i)_{i \in \cI}$. Indeed, assuming (as we may, thanks to Lemma \ref{lem:projection_subproduct}) that $U_{m,n}$ is an orthogonal projection $p_{m+n}:X(m) \otimes X(n) \rightarrow X(m+n)$, and denoting $p = \oplus_{n}p_n$, we have for all $i$ that $S^X_i = p S_i \big|{\mathfrak{F}_X}$.

\begin{example}\label{expl:qcommuting}
\emph{
The $q$-commuting Fock space of \cite{Dey07} also fits into this framework.
Indeed, let (as in \cite{Dey07}) $\Gamma(\mb{C}^d)$ be the full Fock space, let $\Gamma_q(\mb{C}^d)$ denote the $q$-commuting Fock space, and
let $Y(n)$ be the ``$n$ particle $q$-commuting space" with orthogonal projection $p_n : (\mb{C}^d)^n \rightarrow Y(n)$. Then a straightforward calculation shows that the projections $\{p_n\}_{n\in\mb{N}}$ satisfy
equation (\ref{eq:p_assoc}) of Lemma  \ref{lem:projection_subproduct}, thus $Y = \{Y(n)\}_{n\in\mb{N}}$ is a subproduct system (satisfying (\ref{eq:p_subset}) and (\ref{eq:p_maps})).
With our notation from above we have that $\mathfrak{F}_Y = \Gamma_q(\mb{C}^d)$ and that
the tuple $(S^Y_i, \ldots, S^Y_d)$ is the standard $q$-commuting shift.
}
\end{example}

$S^F$, the standard shift of the full product system on the full Fock space, will be denoted by $S$, and will be called simply \emph{the standard shift}.

By the notation introduced in Definition \ref{def:Xpiece}, the symbol $S^X$ is also used to denote the maximal $X$-piece of the standard shift $S$. The following proposition -- which is a generalization of \cite[Proposition 6]{BBD03}, \cite[Proposition 11]{Dey07} and \cite[Proposition 2.9]{Popescu06} -- shows that this is consistent.

\begin{proposition}\label{prop:Xshift}
Let $X$ subproduct subsystem of a subproduct system $Y$. Then the maximal $X$-piece of the standard $Y$-shift is the standard $X$-shift.
\end{proposition}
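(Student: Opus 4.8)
The plan is to unwind the definitions and prove two statements: that $K^X(S^Y)=\mathfrak{F}_X$, and that the compression of the standard $Y$-shift to $\mathfrak{F}_X$ is exactly the standard $X$-shift $S^X$. Granting the first, the second is a one-line computation: for $x\in X(s)$ and a homogeneous $y\in X(t)\subseteq\mathfrak{F}_X$, the projection $P_{\mathfrak{F}_X}$ acts on $Y(s+t)$ as $p_{s+t}$, so
\[
P_{\mathfrak{F}_X}S^Y(x)y = p_{s+t}U^Y_{s,t}(x\otimes y) = U^X_{s,t}(p_s x\otimes p_t y) = U^X_{s,t}(x\otimes y) = S^X(x)y,
\]
using \eqref{eq:pU} together with $p_s x=x$ and $p_t y=y$. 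By linearity this identifies the maximal $X$-piece $(S^Y)^X$ with $S^X$, so everything reduces to the equality $K^X(S^Y)=\mathfrak{F}_X$, which I would prove by two inclusions.

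For $\mathfrak{F}_X\subseteq K^X(S^Y)$ I would verify $\mathfrak{F}_X\in\cP(X,S^Y)$. The starting observation is that, since $S^Y$ is the shift, for a homogeneous $z\in Y(n)$ one has $(\widetilde{S}^Y_s)^{*}z=(U^Y_{s,n-s})^{*}z\in Y(s)\otimes Y(n-s)$ when $n\geq s$, and $(\widetilde{S}^Y_s)^{*}z=0$ otherwise. Taking adjoints in \eqref{eq:pU} gives $(U^Y_{s,t})^{*}p_{s+t}=(p_s\otimes p_t)(U^X_{s,t})^{*}$; evaluating on $z\in X(s+t)$ (so that $p_{s+t}z=z$ and $(U^X_{s,t})^{*}z\in X(s)\otimes X(t)$ is fixed by $p_s\otimes p_t$) yields $(U^Y_{s,t})^{*}z=(U^X_{s,t})^{*}z\in X(s)\otimes X(t)$. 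Hence for $z\in X(n)\subseteq\mathfrak{F}_X$ we get $(\widetilde{S}^Y_s)^{*}z\in X(s)\otimes X(n-s)\subseteq X(s)\otimes\mathfrak{F}_X$, which is precisely the defining condition for $\mathfrak{F}_X\in\cP(X,S^Y)$.

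The main point is the reverse inclusion $K^X(S^Y)\subseteq\mathfrak{F}_X$, i.e. every $H\in\cP(X,S^Y)$ is contained in $\mathfrak{F}_X$, and here the decisive trick is to project onto the vacuum in the second tensor factor. Fix $h\in H$ and write $h=\sum_n h_n$ with $h_n\in Y(n)$. For each $n$, the only summand of $(\widetilde{S}^Y_n)^{*}h=\sum_{m\geq n}(U^Y_{n,m-n})^{*}h_m$ lying in $Y(n)\otimes Y(0)$ is the $m=n$ term $(U^Y_{n,0})^{*}h_n=h_n\otimes\Omega$; that is, $(I_{Y(n)}\otimes P_0)(\widetilde{S}^Y_n)^{*}h=h_n\otimes\Omega$, where $P_0$ is the orthogonal projection of $\mathfrak{F}_Y$ onto $Y(0)=\mb{C}\Omega$. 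On the other hand $H\in\cP(X,S^Y)$ gives $(\widetilde{S}^Y_n)^{*}h\in X(n)\otimes H$, and applying $I\otimes P_0$ sends this into $X(n)\otimes P_0H\subseteq X(n)\otimes Y(0)=X(n)\otimes\mb{C}\Omega$. Comparing the two expressions yields $h_n\otimes\Omega\in X(n)\otimes\mb{C}\Omega$, which forces $h_n\in X(n)$. Since $n$ and $h\in H$ were arbitrary, $H\subseteq\mathfrak{F}_X$.

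I expect the only real subtlety to be the careful bookkeeping of the graded decomposition of $(\widetilde{S}^Y_s)^{*}$ and the justification of the vacuum-slice identity $(I\otimes P_0)(\widetilde{S}^Y_n)^{*}h=h_n\otimes\Omega$; everything else follows directly from the definition of a subproduct subsystem, namely \eqref{eq:pU}, and from the definition of the shift. It is worth noting that the argument is uniform in $H$ and needs no case analysis on $P_0H$: the containment $h_n\otimes\Omega\in X(n)\otimes\mb{C}\Omega$ already gives $h_n\in X(n)$ regardless of whether $P_0H$ is $\{0\}$ or all of $Y(0)$.
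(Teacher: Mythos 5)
Your proof is correct and follows essentially the same route as the paper: the inclusion $\mathfrak{F}_X\in\cP(X,S^Y)$ comes from the adjoint of \eqref{eq:pU}, and the decisive maximality step — projecting $(\widetilde{S}^Y_n)^*h$ onto the $Y(n)\otimes Y(0)$ slice to get $h_n\otimes\Omega\in X(n)\otimes\mb{C}\Omega$ — is exactly the paper's $h\otimes\Omega$ argument. Your version is in fact slightly more careful, since it handles an arbitrary $H\in\cP(X,S^Y)$ componentwise rather than assuming one may reduce to a homogeneous $h\in Y(n)\ominus X(n)$.
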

\begin{proof}
Let $E = Y(1)$, and let $F = F_E$ be the full product system. Viewing
$F(n)\otimes\mathfrak{F}_F$ as direct sum of $|\cI|^n$ copies of $\mathfrak{F}_F$, $(\widetilde{S})_n$ is just the row isometry
$(S_{i_1}\circ\cdots\circ S_{i_n})_{i_1, \ldots, i_n \in \cI}$ from the space of columns $\mathfrak{F}_F \oplus \mathfrak{F}_F \oplus \cdots$ into $\mathfrak{F}_F$. In other words, for $h \in \mathfrak{F}_F$ and $i_1, \ldots, i_n \in I$,
$$(\widetilde{S})_n \big((e_{i_1} \otimes \cdots \otimes e_{i_n}) \otimes h\big) = S_{i_1}\circ\cdots\circ S_{i_n}h = (e_{i_1} \otimes \cdots \otimes e_{i_n}) \otimes h.$$
This is an isometry, and the adjoint works by sending $(e_{i_1} \otimes \cdots \otimes e_{i_n}) \otimes h \in \mathfrak{F}_F$ back to $(e_{i_1} \otimes \cdots \otimes e_{i_n}) \otimes h \in F(n) \otimes \mathfrak{F}_F$, and by sending the $0,1, \ldots, n-1$ particle spaces to $0$.

Now, if $Z$ is any standard subproduct subsystem of $F$, then
\bes
\left(\widetilde{S^Z}\right)_n = P_{\mathfrak{F}_Z}\left(\widetilde{S}\right)_n \big|_{Z(n) \otimes \mathfrak{F}_Z},
\ees
thus
\be\label{eq:SZ*}
\left(\widetilde{S^Z}\right)_n^* = P_{Z(n) \otimes \mathfrak{F}_Z}\left(\widetilde{S}\right)_n^* \big|_{\mathfrak{F}_Z}.
\ee
Now if $h$ is in the $k$ particle space of $\mathfrak{F}_F$ with $k<n$, then $(\widetilde{S^Z})_n^* h = 0$. If $k\geq n$, then since $Z(k) \subseteq Z(n) \otimes Z(k-n)$ we may write $h = \sum \xi_i \otimes \eta_i$, where $\xi_i \in Z(n)$ and $\eta_i \in Z(k-n)$. Thus by (\ref{eq:SZ*}) we find that
\be\label{eq:tilde*}
(\widetilde{S^Z})_n^* \left(\sum \xi_i \otimes \eta_i \right) = \sum p^Z_n \xi_i \otimes p^Z_{k-n}\eta_i = \sum \xi_i \otimes \eta_i.
\ee
From these considerations it follows that the standard $X$-shift is in fact an $X$-piece of the standard $Y$ shift,
as $(\widetilde{S^Y})_n^* \big|_{\mathfrak{F}_X} = (\widetilde{S^X})_n^*$. It remains to show that the $X$-shift is maximal.

Assume that there is a Hilbert space $H$, $\mathfrak{F}_X \subseteq H \subseteq \mathfrak{F}_Y$, such that the compression of $S^Y$ to $H$ is an $X$-piece of $Y$, that is, $H \in \cP(X,S^Y)$ (see equation (\ref{eq:cPXT})). Let $h \in H \ominus \mathfrak{F}_X$. We shall prove that $h = 0$. Being orthogonal to all of $\mathfrak{F}_X$, $p^Y_n h$ must be orthogonal to $X(n)$ for all $n$. Thus, we may assume that $h\in Y(n) \ominus X(n)$ for some $n$. But then by (\ref{eq:tilde*})
\bes
(\widetilde{S^Y})_n^* h = h \otimes \Omega.
\ees
But since $H \in \cP(X,S^Y)$, we must have $h \otimes \Omega \in X(n) \otimes H$, and this, together with $h \in Y(n) \ominus X(n)$, forces $h=0$.
\end{proof}

\section{Zeros of homogeneous polynomials in noncommutative variables}\label{sec:projective}

In the next section we will describe a model theory for representations of subproduct systems. But before that we dedicate this section to build a precise connection between subproduct systems together with their representations and tuples of operators that are the zeros of homogeneous polynomials in non commuting variables.

\begin{remark}
\emph{The notions that we are developing give a framework for studying tuples of operators satisfying relations given by homogeneous polynomials. One can go much further by considering subspaces of Fock spaces
and ``representations", i.e., maps of the Fock space into $B(H)$, that give a framework for studying tuples of operators satisfying arbitrary (not-necessarily homogeneous) polynomial and even analytic identities. Gelu Popescu \cite{Popescu06} has already begun developing such a theory.}
\end{remark}

We begin by setting up the usual notation. Let $\cI$ be a fixed set of indices, and let $\mb{C}\langle (x_i)_{i \in \cI}\rangle$ be the algebra of complex polynomials in the non commuting variables $(x_i)_{i \in \cI}$. We denote $\underline{x} = (x_i)_{i \in \cI}$, and we consider $\underline{x}$ as a ``tuple variable". We shall sometimes write $\mb{C}\langle \underline{x}\rangle$ for $\mb{C}\langle (x_i)_{i \in \cI}\rangle$. The set of all words in $\cI$ is denoted by $\mb{F}_\cI^+$. For a word $\alpha \in \mb{F}_\cI^+$, let $|\alpha|$ denote the length of $\alpha$, i.e., the number of letters in $\alpha$.

For every word $\alpha = \alpha_1 \cdots \alpha_k$ in $\cI$ denote $\underline{x}^\alpha = x_{\alpha_1} \cdots x_{\alpha_k}$. If $\alpha = 0$ is the empty word, then this is to be understood as $1$. $k$ is also referred to in this context as the \emph{degree} of the monomial $x^\alpha$. $\mb{C}\langle \underline{x}\rangle$ is by definition the linear span over $\mb{C}$ of all such monomials, and every element in $\mb{C}\langle \underline{x}\rangle$ is called a polynomial. 
A polynomial is called \emph{homogeneous} if it is the sum of monomials of equal degree. A \emph{homogeneous ideal} is a two-sided ideal that is generated by homogeneous polynomials.

If $\underline{T} = (T_i)_{i \in \cI}$ is a tuple of operators on a Hilbert space $H$ and $\alpha = \alpha_1 \cdots \alpha_k$ is a word with letters in $\cI$, we define
$$\underline{T}^\alpha = T_{\alpha_1} T_{\alpha_2} \cdots T_{\alpha_k} .$$
We consider the empty word $0$ as a legitimate word, and define $\underline{T}^0 = I_H$. If $p(x) = \sum_\alpha c_\alpha \underline{x}^\alpha \in \mb{C}\langle \underline{x}\rangle$, we define $p(\underline{T}) = \sum_\alpha c_\alpha \underline{T}^\alpha$.

If $E$ is a Hilbert space with orthonormal basis $\{e_i\}_{i \in \cI}$, An element $e_{\alpha_1} \otimes \cdots \otimes e_{\alpha_k} \in E^{\otimes k}$ will be written in short form as $e_{\alpha}$, where $\alpha = \alpha_1 \cdots \alpha_k$. If $p(x) = \sum_\alpha c_\alpha \underline{x}^\alpha \in \mb{C}\langle \underline{x}\rangle$, we define $p(e) = \sum_\alpha c_\alpha e_\alpha$. Here $e_0$ ($0$ the empty word) is understood as the vacuum vector $\Omega$.

\begin{proposition}\label{prop:XandI}
Let $E$ be a Hilbert space with orthonormal basis $\{e_i\}_{i \in \cI}$. There is an inclusion reversing correspondence between proper homogeneous ideals $I \triangleleft \mb{C}\langle \underline{x}\rangle$ and standard subproduct systems $X = \{X(n)\}_{n \in \mb{N}}$ with $X(1) \subseteq E$. When $|\cI| < \infty$ this correspondence is bijective.
\end{proposition}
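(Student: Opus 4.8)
The plan is to realise both objects as graded families of subspaces and to show that the single algebraic condition ``two-sided ideal'' matches, term by term, the single geometric axiom of a ``standard subproduct system''. First I would fix, for each $n$, the linear identification of the homogeneous component $\mb{C}\langle\underline{x}\rangle_n=\mathrm{span}\{\underline{x}^\alpha:|\alpha|=n\}$ with the algebraic subspace $\mathcal{E}_n=\mathrm{span}\{e_\alpha:|\alpha|=n\}\subseteq E^{\otimes n}$ via $\underline{x}^\alpha\leftrightarrow e_\alpha$, noting that when $|\cI|<\infty$ this is an isomorphism onto all of $E^{\otimes n}$. Given a proper homogeneous ideal $I$, homogeneity lets me write $I=\bigoplus_n I_n$ with $I_n=I\cap\mb{C}\langle\underline{x}\rangle_n$ and, by properness, $I_0=0$; I then set $W(n)=\overline{I_n}\subseteq E^{\otimes n}$, define $X_I(n)=W(n)^\perp$, and let $p_n$ be the orthogonal projection of $E^{\otimes n}$ onto $X_I(n)$. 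In the reverse direction, given a standard subproduct system $X$ with $X(1)\subseteq E$, I set $W(n)=X(n)^\perp\subseteq E^{\otimes n}$ and let $I(X)=\bigoplus_n\big(W(n)\cap\mathcal{E}_n\big)$.

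The core step is to verify that these assignments land in the right category, and this is where the two conditions meet. For projections one has $p_{k+m+n}=p_{k+m+n}(I_{E^{\otimes k}}\otimes p_{m+n})$ if and only if $X(k+m+n)\subseteq E^{\otimes k}\otimes X(m+n)$, which upon taking orthogonal complements in $E^{\otimes k}\otimes E^{\otimes(m+n)}$ reads $W(k+m+n)\supseteq E^{\otimes k}\otimes W(m+n)$. Translated through the identification above, $E^{\otimes k}\otimes W(m+n)$ is spanned by the products $\underline{x}^\beta q$ with $|\beta|=k$ and $q\in I_{m+n}$, so this inclusion says precisely that $I$ is stable under left multiplication by monomials. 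Symmetrically, the second equality in (\ref{eq:p_assoc}) corresponds to stability under right multiplication. Hence a graded family $\{X(n)\}$ with $X(0)=\mb{C}$ satisfies the substantive axiom (\ref{eq:p_assoc}) of a standard subproduct system exactly when the complementary graded subspace $\bigoplus_n W(n)$ is a proper two-sided (homogeneous) ideal; this simultaneously shows that $X_I$ is a standard subproduct system and that $I(X)$ is a proper homogeneous ideal.

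With both maps well defined, inclusion-reversal is immediate: $I\subseteq J$ forces $W_I(n)\subseteq W_J(n)$ and hence $X_I(n)=W_I(n)^\perp\supseteq W_J(n)^\perp=X_J(n)$. To finish the bijectivity claim I would specialise to $|\cI|<\infty$, where every $E^{\otimes n}$ is finite-dimensional: then $\mathcal{E}_n=E^{\otimes n}$ and every subspace is already closed, so $\overline{I_n}=I_n$ and $W(n)\cap\mathcal{E}_n=W(n)$. Consequently the two constructions are mutually inverse — the round trip $X\mapsto I(X)\mapsto X_{I(X)}$ returns $W(n)^\perp=X(n)$, and $I\mapsto X_I\mapsto I(X_I)$ returns $I_n$ — giving the asserted bijection.

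The step I expect to be the real content is the translation in the middle paragraph: recognising that the two halves of the projection identity (\ref{eq:p_assoc}) are nothing but the left- and right-ideal axioms after passing to orthogonal complements and reading tensor factorisations as concatenations of words. The only genuine subtlety, and the reason bijectivity is restricted to finite $\cI$, is the gap between an algebraic component $I_n$ and its Hilbert-space closure $\overline{I_n}$: for infinite $\cI$ a closed relation space $W(n)$ need not equal the closure of its algebraic part $W(n)\cap\mathcal{E}_n$, so the round trip $X\mapsto I(X)\mapsto X_{I(X)}$ can fail to be the identity, which is exactly what the finiteness hypothesis rules out.
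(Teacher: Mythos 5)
Your proposal is correct and follows essentially the same route as the paper: both directions of the correspondence are obtained by taking orthogonal complements of the homogeneous components inside $E^{\otimes n}$, the standardness condition (\ref{eq:p_assoc}) is translated into closure of the complementary graded subspaces under left and right tensoring (i.e.\ the two-sided ideal axioms), and bijectivity for $|\cI|<\infty$ is reduced to the fact that the algebraic and closed pictures coincide in finite dimensions. Your explicit identification of the closure gap $\overline{I_n}$ versus $I_n$ as the obstruction for infinite $\cI$ is a nice touch, but the argument is the same as the paper's.
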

\begin{proof}
Let $X$ be such a subproduct system. We define an ideal
\be\label{eq:I^X}
I^X := \textrm{span}\{p\in \mb{C}\langle\underline{x} \rangle : \exists n > 0 .  p(e) \in E^{\otimes n} \ominus X(n)\}.
\ee
Once it is established that $I^X$ is a two-sided ideal the fact that it is homogeneous will follow from the definition. Let $p\in \mb{C}\langle\underline{x} \rangle$ be such that $p(e) \in E^{\otimes n} \ominus X(n)$ for some $n > 0$. It suffices to show that for every monomial $\underline{x}^\alpha$ we have that $\underline{x}^\alpha p(\underline{x}) \in I^X$, that is,
\bes
e_\alpha \otimes p(e) \in E^{\otimes |\alpha| + n} \ominus X(|\alpha| + n).
\ees
But since $X$ is standard, $X(|\alpha| + n) \subseteq X(|\alpha|) \otimes X(n)$, thus
\bes
E^{\otimes |\alpha|} \otimes (E^{\otimes n} \ominus X(n)) \subseteq E^{\otimes |\alpha| + n} \ominus X(|\alpha| + n).
\ees
It follows that $I^X$ is a homogeneous ideal.

Conversely, let $I$ be a homogeneous ideal. We construct a subproduct system $X_I$ as follows. Let $I^{(n)}$ be the set of all homogeneous polynomials of degree $n$ in $I$. Define
\be\label{eq:X_I}
X_I(n) = E^{\otimes n} \ominus \{p(e) : p \in I^{(n)}\}.
\ee
Denote by $p_n$ the orthogonal projection of $E^{\otimes n}$ onto $X_I(n)$. To show that $X_I$ is a subproduct system it is enough (by symmetry) to prove that for all $m,n \in \mb{N}$
\bes
p_{m+n} \leq I_{E^{\otimes m}} \otimes p_n,
\ees
or, in other words, that
\be\label{eq:X_IinEX_I}
X_I(m+n) \subseteq E^{\otimes m} \otimes X_I(n).
\ee
Let $x \in X_I(m+n)$, let $\alpha \in \cI^m$, and let $q \in I^{(n)}$. Since $I$ is an ideal, $\underline{x}^\alpha q(\underline{x})$ is in $I^{(m+n)}$, thus $\langle x, e_\alpha \otimes q(e) \rangle = 0$. This proves (\ref{eq:X_IinEX_I}).

Assume now that $|\cI| < \infty$. We will show that the maps $X \mapsto I^X$ and $I \mapsto X_I$ are inverses of each other. Let $J$ be a homogeneous ideal in $\mb{C}\langle \underline{x}\rangle$. Then
\begin{align*}
I^{X_J} &= \textrm{span}\{p\in \mb{C}\langle\underline{x} \rangle : \exists n > 0 .  p(e) \in E^{\otimes n} \ominus X_J(n)\} \\
(*)&= \textrm{span}\{p\in \mb{C}\langle\underline{x} \rangle :  \exists n > 0 . p(e) \in \{q(e): q \in J^{(n)}\}\} \\
&= \textrm{span}\{p : \exists n > 0 . p\in  J^{(n)}\} \\
(**)&= J,
\end{align*}
where (*) follows from the definition of $X_J$, and (**) from the fact that $J$ is a homogeneous ideal.

For the other direction, let $Y$ be a standard subproduct subsystem of $F_E = \{E^{\otimes n}\}_{n \in \mb{N}}$. Clearly, $(I^Y)^{(n)} = \{p\in \mb{C}\langle\underline{x} \rangle : p(e) \in E^{\otimes n} \ominus Y(n)\}$.
Thus
\begin{align*}
X_{I^Y}(n) &= E^{\otimes n} \ominus \{p(e) : p \in (I^Y)^{(n)}\} \\
&= E^{\otimes n} \ominus  \{p(e) : p \in \{q\in \mb{C}\langle\underline{x} \rangle : q(e) \in E^{\otimes n} \ominus Y(n)\} \\
&= E^{\otimes n} \ominus  (E^{\otimes n} \ominus Y(n)) \\
&= Y(n).
\end{align*}
\end{proof}

We record the definitions of $I^X$ and $X_I$ from the above theorem for later use:

\begin{definition}
Let $E$ be a Hilbert space with orthonormal basis $\{e_i\}_{i \in \cI}$ ($|\cI|$ is not assumed finite). Given a homogeneous ideal $I \triangleleft \mb{C}\langle \underline{x} \rangle$, the subproduct system $X_I$ defined by (\ref{eq:X_I}) will be called the \emph{subproduct system associated with $I$}. If $X$ is a given subproduct subsystem of $F_E$, then the ideal $I^X$ of $\mb{C}\langle \underline{x} \rangle$ defined by (\ref{eq:I^X}) will be called the \emph{ideal associated with $X$}.
\end{definition}

We note that $X_I$ depends on the choice of the space $E$ and basis $\{e_i\}_{i \in \cI}$, but different choices will give rise to isomorphic subproduct systems.

\begin{proposition}\label{prop:change_of_variables}
Let $X$ and $Y$ be standard subproduct systems with $\dim X(1) = \dim Y(1) = d < \infty$. Then $X$ is isomorphic to $Y$ if and only if there is a unitary linear change of variables in $\mb{C}\langle x_1, \ldots, x_d \rangle$ that sends $I^X$ onto $I^Y$.
\end{proposition}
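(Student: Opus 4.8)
The plan is to derive the statement from the bijection of Proposition~\ref{prop:XandI} together with one rigidity fact: every isomorphism of standard subproduct systems is the tensor-power extension of a unitary on its first fibre. After choosing orthonormal bases of $X(1)$ and $Y(1)$ (a change of basis alters $I^X$ and $I^Y$ only by a unitary change of variables, so it does not affect the statement), I may assume that $X$ and $Y$ are standard subproduct subsystems of the full product system $F_E=\{E^{\otimes n}\}_{n\in\mb{N}}$ with $X(1)=Y(1)=E$ and a fixed orthonormal basis $\{e_i\}_{i=1}^d$ of $E$.

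First I would set up the dictionary between unitaries on $E$ and unitary changes of variables. A unitary $U=(u_{ij})\in M_d(\mb{C})$ gives the unitary $u$ on $E$ with $u(e_j)=\sum_i u_{ij}e_i$ and the change of variables $\Psi_u(x_j)=\sum_i u_{ij}x_i$; a direct computation on monomials yields, for every homogeneous $p$ of degree $n$, the identity
\[
\Psi_u(p)(e)=u^{\otimes n}\bigl(p(e)\bigr).
\]
Writing $X'$ for the system with $X'(n)=u^{\otimes n}(X(n))$ — again standard, with projections $u^{\otimes n}p_n(u^{\otimes n})^{*}$ — this identity together with the unitarity of $u^{\otimes n}$ shows, using the description $(I^X)^{(n)}=\{p:\ p(e)\perp X(n)\}$ from the proof of Proposition~\ref{prop:XandI}, that $p\in I^X$ iff $\Psi_u(p)\in I^{X'}$; since $\Psi_u$ is graded and bijective this gives $\Psi_u(I^X)=I^{X'}$. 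Moreover $\{u^{\otimes n}|_{X(n)}\}$ is an isomorphism $X\to X'$: since both are standard, their product maps are compressions of the identity, and $u^{\otimes(m+n)}$ intertwines the relevant projections, so (\ref{eq:iso}) holds.

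Granting this, the backward implication is immediate: if a unitary change of variables $\Psi=\Psi_u$ sends $I^X$ to $I^Y$, then $I^{X'}=\Psi_u(I^X)=I^Y$, so the bijectivity in Proposition~\ref{prop:XandI} forces $X'=Y$, whence $X\cong X'=Y$. The forward implication is where the rigidity lemma does the work. Let $V=\{V_n\}\colon X\to Y$ be an isomorphism and set $u:=V_1$, a unitary on $E$. For a standard system the iterated product $E^{\otimes n}\to X(n)$ is the orthogonal projection $p_n$, and from (\ref{eq:p_maps}) and (\ref{eq:p_assoc}) one has $p_{n+1}=U^X_{n,1}(p_n\otimes I_E)$, and similarly for $Y$. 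Using (\ref{eq:iso}) I would prove by induction that
\[
V_n\circ p^X_n=p^Y_n\circ u^{\otimes n}\qquad(n\ge 1).
\]
Restricting to $x\in X(n)$, where $p^X_n x=x$, gives $V_n x=p^Y_n(u^{\otimes n}x)$; since $V_n$ is isometric and $u^{\otimes n}$ unitary, $\|p^Y_n(u^{\otimes n}x)\|=\|u^{\otimes n}x\|$, which forces $u^{\otimes n}x\in Y(n)$ and $V_n x=u^{\otimes n}x$. Thus $u^{\otimes n}(X(n))\subseteq Y(n)$, and equality follows by comparing dimensions (or by running the same argument for $V^{-1}$). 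Hence $Y=X'$, so $\Psi_u(I^X)=I^{X'}=I^Y$ exhibits the required unitary change of variables.

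The main obstacle is the rigidity lemma — proving that the abstract isomorphism $V$ must agree with $u^{\otimes n}$ on each fibre rather than merely matching dimensions — and the single delicate point inside it is the implication that an orthogonal projection which preserves the norm of $u^{\otimes n}x$ must fix it, placing $u^{\otimes n}x$ in its range.
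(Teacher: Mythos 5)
The paper states Proposition \ref{prop:change_of_variables} without giving a proof, so there is nothing to compare against line by line; judged on its own, your argument is correct and complete. Both directions check out: the dictionary $\Psi_u(p)(e)=u^{\otimes n}(p(e))$ combined with the bijection of Proposition \ref{prop:XandI} handles the ``if'' direction, and your rigidity lemma $V_n\circ p^X_n=p^Y_n\circ u^{\otimes n}$ (which follows by induction from (\ref{eq:iso}) and (\ref{eq:p_assoc})), together with the observation that a norm-preserving orthogonal projection fixes the vector, correctly forces $V_n=u^{\otimes n}|_{X(n)}$ and hence $Y(n)=u^{\otimes n}(X(n))$. This is exactly the technique the authors deploy later in Proposition \ref{prop:classifyX_A} and Theorem \ref{thm:subproduct_iso_q}, so your proof is in the spirit of the paper as well as valid.
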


Fix some infinite dimensional separable Hilbert space $H$. As in classical algebraic geometry, given a homogeneous ideal $I \triangleleft \mb{C}\langle \underline{x}\rangle$, it is natural to introduce and to study the \emph{zero set of $I$}
\bes
Z(I) := \{\underline{T} = (T_i)_{i \in \cI} \in B(H)^\cI : \forall p \in I. p(\underline{T}) = 0\}.
\ees
Also, given a set $Z \subseteq B(H)^\cI$, one may form the following two-sided ideal in $\mb{C}\langle \underline{x}\rangle$
\bes
I(Z) := \{p \in \mb{C}\langle \underline{x}\rangle : \forall \underline{T} \in Z . p(\underline{T}) = 0\}.
\ees

In the following theorem we shall use the notation of \ref{subsec:representations}. This simple result is the justification for viewing subproduct systems as a framework for studying tuples of operators satisfying certain homogeneous polynomial relations.
\begin{theorem}\label{thm:repZ(I)}
Let $E$ be a Hilbert space with orthonormal basis $\{e_i\}_{i \in \cI}$ (not necessarily with $|\cI| < \infty$), and let $I$ be a proper homogeneous ideal in $\mb{C}\langle (x_i)_{i \in \cI}\rangle$. Let $X_I$ be the associated subproduct system. Let $T_1 : E \rightarrow B(H)$ be a given representation of $E$. Define a tuple $\underline{T} = (T(e_i))_{i \in \cI}$. Construct the family of maps $T = \{T_n\}_{n\in \mb{N}}$, with $T_n:X(n) \rightarrow B(H)$ as in the paragraphs before Proposition \ref{prop:representation}. Then $T$ is a representation of $X$ if and only if $\underline{T} \in Z(I)$.
\end{theorem}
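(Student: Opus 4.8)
The plan is to reduce the statement to the characterization of representations supplied by Proposition \ref{prop:representation} and then to translate the analytic condition appearing there into the algebraic condition defining $Z(I)$. Recall that $X_I$ is a standard subproduct system (by the construction in Proposition \ref{prop:XandI}), so Proposition \ref{prop:representation} applies: the family $T = \{T_n\}_{n \in \mb{N}}$ is a representation of $X_I$ if and only if condition (\ref{eq:tTn}) holds, i.e.
\[
\tT^n(p_n^\perp \otimes I_H) = 0 \quad \text{for all } n \in \mb{N},
\]
where $p_n$ is the orthogonal projection of $E^{\otimes n}$ onto $X_I(n)$. Thus it suffices to prove that this family of identities is equivalent to $\underline{T} \in Z(I)$.

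The key computation is to identify $T^n$ on basis vectors. Writing $T_i := T_1(e_i)$ and unwinding the definition (\ref{eq:alotofT}) from right to left, one obtains $T^n(e_\alpha) = \underline{T}^\alpha = T_{\alpha_1}\cdots T_{\alpha_n}$ for every word $\alpha = \alpha_1 \cdots \alpha_n$. Hence, by linearity, for every homogeneous polynomial $p$ of degree $n$ we get $T^n(p(e)) = p(\underline{T})$, and therefore $\tT^n(p(e) \otimes h) = p(\underline{T})h$ for all $h \in H$.

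Next I would invoke the defining formula (\ref{eq:X_I}) for $X_I(n)$: the range of $p_n^\perp$ is exactly the closed linear span of $\{p(e) : p \in I^{(n)}\}$, where $I^{(n)}$ is the homogeneous degree-$n$ part of $I$. Since $T^n$ is bounded (it is completely contractive), the identity $\tT^n(p_n^\perp \otimes I_H) = 0$ holds if and only if $T^n$ annihilates each vector $p(e)$ with $p \in I^{(n)}$, that is, if and only if $p(\underline{T}) = 0$ for every $p \in I^{(n)}$. Combining this over all $n$, we conclude that $T$ is a representation of $X_I$ if and only if $p(\underline{T}) = 0$ for every \emph{homogeneous} $p \in I$.

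Finally I would bridge the gap between "homogeneous polynomials in $I$" and "all of $I$" using that $I$ is a homogeneous ideal: every $q \in I$ decomposes as a finite sum $q = \sum_n q^{(n)}$ of its homogeneous components, each of which again lies in $I$. Consequently $p(\underline{T}) = 0$ for all homogeneous $p \in I$ if and only if $q(\underline{T}) = 0$ for all $q \in I$, which is precisely the condition $\underline{T} \in Z(I)$. I do not expect a genuine obstacle here; the only steps needing a little care are the passage from a spanning set to its closed span in the third step (handled by boundedness of $T^n$) and the use of the graded structure of the homogeneous ideal $I$ in the last step.
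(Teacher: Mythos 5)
Your proposal is correct and follows essentially the same route as the paper's proof: reduce to condition (\ref{eq:tTn}) of Proposition \ref{prop:representation}, identify the range of $p_n^\perp$ with $\overline{\textrm{span}}\{p(e) : p \in I^{(n)}\}$, and use $\widetilde{T}^n(p(e)\otimes h) = p(\underline{T})h$. The only difference is that you spell out the closure and graded-decomposition steps that the paper leaves implicit.
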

\begin{proof}
On the one hand, $E^{\otimes n} \ominus X_I(n) = \overline{\textrm{span}}\{p(e) : p \in I^{(n)}\}$. On the other hand, for every $p \in I^{(n)}$ and every $h \in H$,
\bes
\widetilde{T}^n (p(e) \otimes h) = p(\underline{T})h.
\ees
Hence, the Theorem follows from Proposition \ref{prop:representation}.
\end{proof}

\begin{lemma}\label{lem:shiftpoly}
Let $J \triangleleft \mb{C}\langle (x_i)_{i \in \cI}\rangle$, $|\cI| < \infty$, be a proper homogeneous ideal. Let $S^{X_J}$ be the $X_J$-shift representation, and define $\underline{T} = (T_i)_{i \in \cI}$ by $T_i = S^{X_J}(e_i)$, $i \in \cI$. If $p \in \mb{C} \langle \underline{x} \rangle$ is a homogeneous polynomial, then $p(\underline{T}) = 0$ if and only if $p \in J$.
\end{lemma}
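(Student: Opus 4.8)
The plan is to reduce everything to a single computation of how the words $\underline{T}^\alpha$ act on the Fock space $\mathfrak{F}_{X_J}$, and then read off both implications. Throughout I work in the standard picture (Lemma \ref{lem:projection_subproduct}), so that $U_{m,n}^{X_J}$ is the orthogonal projection $p_{m+n}$ of $E^{\otimes(m+n)}$ onto $X_J(m+n)$ restricted to $X_J(m)\otimes X_J(n)$, and hence $T_i y = S^{X_J}(e_i)y = p_{m+1}(e_i\otimes y)$ for $y\in X_J(m)$.

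First I would prove, by induction on $n$, that for a word $\alpha=\alpha_1\cdots\alpha_n$ and $y \in X_J(m)$,
\be
\underline{T}^\alpha y = p_{n+m}(e_\alpha \otimes y).
\ee
Indeed, applying $T_{\alpha_k}$ prepends $e_{\alpha_k}$ and reprojects, and the associativity relation (\ref{eq:p_assoc}) in the form $p_N = p_N(I_{E^{\otimes j}}\otimes p_{N-j})$ absorbs the intermediate projection. Summing against coefficients, for a homogeneous polynomial $p=\sum_{|\alpha|=n}c_\alpha \underline{x}^\alpha$ I obtain $p(\underline{T})y = p_{n+m}(p(e)\otimes y)$ for every $y\in X_J(m)$, and in particular (taking $y=\Omega$, i.e. $m=0$)
\be
p(\underline{T})\Omega = p_n(p(e)).
\ee

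For the ``if'' direction, suppose $p\in J$; being homogeneous it lies in $J^{(n)}$, so by the definition (\ref{eq:X_I}) of $X_J$ we have $p(e)\in E^{\otimes n}\ominus X_J(n)$, that is $p_n\,p(e)=0$. Using (\ref{eq:p_assoc}) in the form $p_{n+m}=p_{n+m}(p_n\otimes I_{E^{\otimes m}})$ gives $p(\underline{T})y=p_{n+m}(p_n p(e)\otimes y)=0$ for every $y$ in every fiber, hence $p(\underline{T})=0$. (Alternatively, this direction is immediate from Theorem \ref{thm:repZ(I)}: the shift $S^{X_J}$ is a representation of $X_J=X_I$ for $I=J$, so the associated tuple $\underline{T}$ lies in $Z(J)$.)

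For the ``only if'' direction, suppose $p$ is homogeneous of degree $n$ and $p(\underline{T})=0$; then $p(\underline{T})\Omega=p_n(p(e))=0$, so $p(e)\in E^{\otimes n}\ominus X_J(n)$. Since $|\cI|<\infty$, the map $q\mapsto q(e)$ is a linear bijection from the homogeneous polynomials of degree $n$ onto $E^{\otimes n}$ carrying $J^{(n)}$ onto $\{q(e):q\in J^{(n)}\}=E^{\otimes n}\ominus X_J(n)$; hence $p(e)=q(e)$ for some $q\in J^{(n)}$, forcing $p=q\in J$. The finiteness of $\cI$ is used only here, to guarantee that $\{q(e):q\in J^{(n)}\}$ is a closed subspace that is exactly the orthogonal complement of $X_J(n)$. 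I expect the main (and only nonroutine) point to be the ``if'' direction, namely passing from vanishing on the vacuum to vanishing on all of $\mathfrak{F}_{X_J}$; this is precisely what the standardness relation (\ref{eq:p_assoc}) delivers, and it is what makes the lemma a genuine statement about the operator $p(\underline{T})$ rather than merely about $p(\underline{T})\Omega$.
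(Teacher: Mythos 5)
Your proof is correct and follows essentially the same route as the paper's: the ``only if'' direction is exactly the paper's argument (evaluate $p(\underline{T})$ at the vacuum to get $p_n(p(e))$, then use that $E^{\otimes n}\ominus X_J(n)=\{q(e):q\in J^{(n)}\}$ and that $q\mapsto q(e)$ is injective), and the ``if'' direction is the paper's citation of Theorem \ref{thm:repZ(I)}, which you both invoke and also reprove directly via the standardness identity (\ref{eq:p_assoc}). The extra detail you supply — the formula $\underline{T}^\alpha y=p_{n+m}(e_\alpha\otimes y)$ — is just the computation implicit in the paper's one-line assertion $p(\underline{T})\Omega=Pp(e)$.
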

\begin{proof}
The ``if" part follows from Theorem \ref{thm:repZ(I)}. For the ``only if" part, let $p \notin J$ be a homogeneous polynomial of degree $n$. Applying $p(\underline{T})$ to the vacuum vector $\Omega$, we have
\bes
p(\underline{T}) \Omega = P p(e),
\ees
where $P$ is the orthogonal projection of $E^{\otimes n}$ onto $X_J(n)$. But as $p \notin J$, $p(e)$ is not in $E^{\otimes n} \ominus X_J(n) = \ker P$, thus $P p(e) \neq 0$. In particular, $p(\underline{T}) \neq 0$.
\end{proof}

We have the following noncommutative projective Nullstellansatz.
\begin{theorem}
Let $H$ be a fixed infinite dimensional separable Hilbert space.
Let $J$ be a homogeneous ideal in $\mb{C}\langle (x_i)_{i \in \cI}\rangle$, with $|\cI| < \infty$. Then 
\bes
I(Z(J)) = J.
\ees
In particular,
$Z(J) = \{\underline{0} = (0,0,\ldots)\}$ if and only if $J$ is the ideal generated by all the $x_i, i \in \cI$.
\end{theorem}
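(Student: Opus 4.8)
The plan is to prove the two displayed identities of the final theorem by leveraging the machinery already established, principally Lemma \ref{lem:shiftpoly} and the correspondence between homogeneous ideals and subproduct systems from Proposition \ref{prop:XandI}. The main statement $I(Z(J)) = J$ splits into two inclusions. The inclusion $J \subseteq I(Z(J))$ is immediate from the definitions: if $p \in J$ and $\underline{T} \in Z(J)$, then by the very definition of $Z(J)$ we have $p(\underline{T}) = 0$, so $p \in I(Z(J))$. This direction requires no homogeneity and no representation theory.

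The reverse inclusion $I(Z(J)) \subseteq J$ is the substantive part. First I would reduce to homogeneous polynomials. Since $J$ is a homogeneous ideal, it is graded: a polynomial $p$ lies in $J$ if and only if each of its homogeneous components lies in $J$. The key observation is that $Z(J)$ is invariant under the scaling $\underline{T} \mapsto \lambda \underline{T}$ for $\lambda \in \mb{C}$ (because $J$ is generated by homogeneous elements, so $p(\lambda \underline{T}) = \lambda^{\deg p} p(\underline{T}) = 0$ whenever $p \in J$ is homogeneous and $\underline{T} \in Z(J)$). Consequently, if $p \in I(Z(J))$ then for every $\underline{T} \in Z(J)$ and every $\lambda$ we have $\sum_n \lambda^n p_n(\underline{T}) = p(\lambda \underline{T}) = 0$, where $p_n$ is the degree-$n$ homogeneous component of $p$; viewing this as a polynomial identity in $\lambda$ forces each $p_n(\underline{T}) = 0$. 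Hence each homogeneous component $p_n$ of $p$ also lies in $I(Z(J))$, and it suffices to show $I(Z(J))^{(n)} \subseteq J^{(n)}$ for every $n$.

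For the homogeneous case I would exploit the distinguished element $\underline{T} = (S^{X_J}(e_i))_{i \in \cI}$, the $X_J$-shift tuple acting on $\mathfrak{F}_{X_J}$, which is a concrete tuple of operators on an infinite dimensional separable Hilbert space. By Theorem \ref{thm:repZ(I)}, the shift representation $S^{X_J}$ is a representation of $X_J$, so this tuple indeed lies in $Z(J)$. Now if $p$ is a homogeneous polynomial in $I(Z(J))$, then in particular $p$ vanishes on this distinguished tuple, that is $p(\underline{T}) = 0$. By Lemma \ref{lem:shiftpoly}, a homogeneous polynomial annihilates the $X_J$-shift tuple if and only if it belongs to $J$; therefore $p \in J$. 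This completes the inclusion $I(Z(J)) \subseteq J$ and hence the equality. Here the hypothesis that $H$ is a fixed infinite dimensional separable Hilbert space is used to ensure $\mathfrak{F}_{X_J}$ (or a subspace thereof) can be identified with $H$, so that the shift tuple is genuinely an element of $B(H)^\cI = Z(J)$; this is the one point requiring a little care, and I regard it as the main (modest) obstacle.

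For the final ``in particular'' clause, I would argue as follows. If $J$ is the ideal generated by all the $x_i$, then any tuple $\underline{T} \in Z(J)$ must satisfy $T_i = x_i(\underline{T}) = 0$ for all $i$, so $Z(J) = \{\underline{0}\}$. Conversely, suppose $Z(J) = \{\underline{0}\}$. Let $\mathfrak{m}$ denote the ideal generated by all the $x_i$, i.e.\ the homogeneous maximal ideal of polynomials with zero constant term. Since $J$ is a proper homogeneous ideal it contains no nonzero scalars, so $J \subseteq \mathfrak{m}$. On the other hand, the zero tuple annihilates precisely the polynomials with zero constant term, so $I(\{\underline{0}\}) = \mathfrak{m}$; using the main equality $J = I(Z(J)) = I(\{\underline{0}\}) = \mathfrak{m}$, we conclude $J = \mathfrak{m}$. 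This yields the stated equivalence.
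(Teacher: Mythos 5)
Your proof is correct and follows essentially the same route as the paper's: the trivial inclusion, the reduction to homogeneous components via scale-invariance of $Z(J)$, and the use of the shift tuple $(S^{X_J}(e_i))_{i\in\cI}$ together with Lemma \ref{lem:shiftpoly} and Theorem \ref{thm:repZ(I)} to conclude. The only (negligible) omission is the degenerate case $J = \mb{C}\langle \underline{x}\rangle$, where $Z(J)=\emptyset$ and the equality holds trivially; the paper disposes of this in one line before assuming $J$ proper, which is needed for Lemma \ref{lem:shiftpoly}.
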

\begin{proof}
%
$I(Z(J)) \supseteq J$ is immediate. To see the converse, first note that equality is obvious when $J = \mb{C}\langle \underline{x} \rangle$, so we may assume that $J$ is proper. Also note that since $J$ is homogeneous $Z(J)$ is scale invariant. From this it follows that $I(Z(J))$ is also a homogeneous ideal. Indeed, if $h,g \in H$, and $p(\underline{x}) = \sum_\alpha c_\alpha \underline{x}^\alpha \in I(Z(J))$, then for all $\lambda \in \mb{C}$ one has for every tuple $\underline{T} = (T_i)_{i \in \cI} \in Z(I)$,
\bes
0 = \langle p(\lambda\underline{T})h,g \rangle = \sum_k \left(\sum_{|\alpha| = k} c_\alpha \langle \underline{T}^\alpha h,g\rangle\right) \lambda^{k},
\ees
and since a nonzero univariate polynomial has only finitely many zeros, it follows the homogeneous components of $p$ are all in $I(Z(J))$.

Assume now that $p$ is a homogeneous polynomial not in $J$. Let $S^{X_J}$ be the $X_J$-shift representation, and define $\underline{T} = (T_i)_{i \in \cI}$ by $T_i = S^{X_J}(e_i)$, $i \in \cI$. It is clear that $B(H)^\cI$ contains some unitarily equivalent copy of $\underline{T}$, which we also denote by $\underline{T}$. By Theorem \ref{thm:repZ(I)}, $\underline{T} \in Z(J)$. But by Lemma \ref{lem:shiftpoly}, $p(\underline{T}) \neq 0$, so $p \notin I(Z(J))$. This completes the proof.
\end{proof}

\section{Universality of the shift: universal algebras and models}\label{sec:universal}
In \cite{Arv98}, Arveson established a model for commuting, row-contractive tuples.
Using an idea from that paper that appeared also in \cite{BBD03} and \cite{Dey07} -- an idea that rests upon Popescu's ``Poisson Transform" introduced in \cite{Popescu99} (and pushed forward in \cite{MS08} and \cite{Popescu06}) -- we construct below a model for representations of subproduct systems. Roughly speaking, we will show that every representation of a subproduct system $X$ is a piece of a scaled inflation of the shift. Our model should be compared with a similar model obtained by Popescu in \cite{Popescu06}.
We will also see below that the operator algebra generated by the shift $S^X$ is the universal operator algebra generated by a representation of $X$.

\subsection{Notation for this section}\label{subsec:notation}

We continue to use the notation set in the previous section.
Let $X$ be a standard subproduct system of Hilbert spaces over $\mb{N}$, to be fixed throughout this section. Let $p_n:E^{\otimes n} \rightarrow X(n)$ be the projections. Denote $E = X(1)$. Let $\{e_i\}_{i \in \cI}$ be an orthonormal basis for $E$, fixed once and for all.

We denote the standard $X$-shift tuple by $\underline{S^X} = (S^X_i)_{i \in \cI}$ , and we denote the standard $X$-shift representation of $X$ on $\mathfrak{F}_X$ by $S^X$. We consider $\mathfrak{F}_X$ to be a subspace of the full Fock space $\mathfrak{F}$, we denote the full shift by
$\underline{S} = (S_i)_{i\in \cI}$, and we denote the full shift representation of $F$ on $\mathfrak{F} := \mathfrak{F}_F$ by $S$.

Given a representation $T: X \rightarrow B(H)$, we will write $\underline{T} = (T_i)_{i \in \cI}$ for the tuple $(T(e_i))_{i\in \cI}$.

We denote by $\cA_X$ the unital algebra
$$\cA_X := \overline{\textrm{span}} \{\underline{S^X}^\alpha : \alpha \in  \mb{F}_\cI^+\} .$$

We denote by $\cE_X$ the operator system
$$\cE_X := \overline{\textrm{span}}\cA_X \cA_X^*,$$
and by $\cT_X = C^*(\underline{S^X})$ the C$^*$-algebra generated by $S^X_i$, $i\in \cI$ and $I_{\mathfrak{F}_X}$.
We denote by $\cK(\mathfrak{F}_X)$ the algebra of compact operators on $\mathfrak{F}_X$

If $T$ and $U$ are two representations of $X$ on Hilbert spaces $H$ and $K$, respectively, then we define
\bes
T \oplus U
\ees
to be the representation of $X$ on $H \oplus K$ given by $(T \oplus U)(x) = T(x) \oplus U(x)$.
We also define
\bes
T \otimes I_K
\ees
to be the representation of $X$ on $H \otimes K$ given by $(T \otimes I_K)(x) = T(x) \otimes I_K$.

\subsection{Popescu's ``Poisson Transform"}

After obtaining the results of this section, we discovered that they were obtained earlier by Popescu \cite{Popescu06}. We are presenting them here since they are important for the rest of this paper.

\begin{proposition}\label{prop:KinE}
$\cK(\mathfrak{F}_X) \subseteq \cE_X$.
\end{proposition}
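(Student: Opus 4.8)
The plan is to prove the inclusion in two stages: first exhibit the rank-one projection $P_\Omega$ onto the vacuum vector $\Omega$ as an element of $\cE_X$, and then manufacture every finite-rank operator from it by multiplying on the left and right by elements of $\cA_X$. Since $\cE_X = \overline{\textrm{span}}\,\cA_X\cA_X^*$ is norm-closed and the finite-rank operators are norm-dense in $\cK(\mathfrak{F}_X)$, these two stages suffice.

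The crux is the Cuntz--Toeplitz-type identity
\[
\sum_{i \in \cI} S^X_i (S^X_i)^* = I_{\mathfrak{F}_X} - P_\Omega .
\]
To establish it I would compute both sides on each $n$-particle space $X(n)$. For $n=0$ we have $(S^X_i)^*\Omega = 0$, so the sum annihilates $\Omega$. For $n \geq 1$, the standardness of $X$ gives $X(n) \subseteq E \otimes X(n-1)$ (this is exactly \eqref{eq:p_subset} with $m=1$), so any $y \in X(n)$ decomposes as $y = \sum_i e_i \otimes y_i$ with $y_i \in X(n-1)$. A short computation using $S^X_i = p\,S_i|_{\mathfrak{F}_X}$ and self-adjointness of the $p_n$ shows $(S^X_i)^* y = y_i$, whence
\[
\sum_i S^X_i (S^X_i)^* y = \sum_i p_n(e_i \otimes y_i) = p_n y = y .
\]
Thus the left-hand side is the identity on $\bigoplus_{n\geq 1} X(n)$ and zero on $\mb{C}\Omega$, which is the claimed identity. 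When $|\cI| < \infty$ the sum is finite, and since $I = (S^X)^0((S^X)^0)^* \in \cA_X\cA_X^*$ and each $S^X_i(S^X_i)^* \in \cA_X\cA_X^*$, we conclude $P_\Omega \in \cE_X$ directly. (For infinite $\cI$ the sum only converges strongly, so this step requires an additional approximation argument; I expect this to be the one genuinely delicate point, and would note it explicitly.)

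For the second stage I would use that $(S^X)^\alpha\Omega = p_{|\alpha|} e_\alpha$ (by the projection-compatibility \eqref{eq:p_assoc}), so that for words $\alpha,\beta \in \mb{F}_\cI^+$ the operator $(S^X)^\alpha P_\Omega ((S^X)^\beta)^*$ is the rank-one operator $\zeta \mapsto \langle p_{|\beta|}e_\beta,\zeta\rangle\, p_{|\alpha|}e_\alpha$. Since $\cA_X$ is an algebra we have $\cA_X \cE_X \cA_X^* \subseteq \overline{\textrm{span}}\,\cA_X\cA_X^* = \cE_X$, so all these rank-one operators lie in $\cE_X$. Finally, because $\{p_n e_\alpha : |\alpha| = n\}$ spans $X(n)$, taking linear combinations over $\alpha,\beta$ produces every rank-one operator $\zeta \mapsto \langle y,\zeta\rangle x$ with $x \in X(m)$, $y \in X(n)$, hence every finite-rank operator on $\mathfrak{F}_X$. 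Norm-closedness of $\cE_X$ then yields $\cK(\mathfrak{F}_X) \subseteq \cE_X$, completing the argument.
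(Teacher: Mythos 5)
Your proof is correct and follows essentially the same route as the paper's: both establish $I - \sum_{i\in\cI} S^X_i(S^X_i)^* = P_\Omega$ by computing on each particle space $X(n) \subseteq X(1)\otimes X(n-1)$, and then conjugate by $\underline{S^X}^{\beta}(\cdot)\underline{S^X}^{\alpha *}$ to produce a family of rank-one operators whose ranges and sources span $\mathfrak{F}_X$. Your caveat about infinite $\cI$ is well taken --- in that case the identity for $P_\Omega$ is only a strong limit and the inclusion can genuinely fail (e.g.\ for $X(n)=0$, $n\geq 2$, with $\dim X(1)=\infty$, one checks $\langle K\Omega,\Omega\rangle = 0$ for every compact $K\in\cE_X$) --- whereas the paper's proof passes over this finiteness assumption silently.
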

\begin{proof}
By the definition of representation, we have that $S(e_\alpha) = \underline{S}^\alpha$. By Definition \ref{def:dilation} and the remarks following it, we have that $\underline{S}^{\alpha *}\big|_{\mathfrak{F}_X} = \left(\underline{S^X}\right)^{\alpha *}$ for all $\alpha$. Let $x \in X(n)$. Let $|\alpha|=k$. If $n<k$ then $\left(\underline{S^X}\right)^{\alpha *} x = 0$. If $n \geq k$, then since $X(n) \subseteq X(k) \otimes X(n-k)$, we may write $x = \sum_i x_k^i \otimes x_m^i$, where $x_k^i \in  X(k)$, $x_m^i \in X(m)$, and $m = n-k$. We have then
\be\label{eq:SXstar}
\underline{S^X}^{\alpha *} x = \underline{S}^{\alpha *} \sum_i x_k^i \otimes x_m^i = \sum_i \langle e_\alpha , x_k^i \rangle x_m^i \in X(m).
\ee
We then have for $x \in X(n)$:
\bes
\left(I - \sum_{|\alpha|=k}\underline{S^X}^{\alpha}\underline{S^X}^{\alpha *} \right)x =
\begin{cases}
x , & n < k \cr
x - \sum_{|\alpha|=k}\underline{S^X}^{\alpha}\sum_i \langle e_\alpha , x_k^i \rangle x_m^i  , & n \geq k
\end{cases} .
\ees
But
\begin{align*}
\sum_{|\alpha|=k}\underline{S^X}^{\alpha}\sum_i \langle e_\alpha , x_k^i \rangle x_m^i
&= \sum_{|\alpha|=k} p_n\left(\sum_i \langle e_\alpha , x_k^i \rangle e_\alpha \otimes x_m^i \right) \\
&= p_n\left(\sum_i \sum_{|\alpha|=k} \langle e_\alpha , x_k^i \rangle e_\alpha \otimes x_m^i \right) \\
&= p_n\left(\sum_i x_k^i \otimes x_m^i \right) \\
&= x.
\end{align*}
We thus conclude that $I - \sum_{|\alpha|=k}\underline{S^X}^{\alpha}\underline{S^X}^{\alpha *} = P_{W}$, where $W = \mb{C} \oplus X(1) \oplus \cdots \oplus X(k-1)$. In particular,
\be\label{eq:PC}
I - \sum_{i\in \cI}S^X_{i} \left(S^X\right)^{*}_i = P_{\mb{C}}.
\ee
Equations (\ref{eq:SXstar}) and (\ref{eq:PC}) give
\bes
(\underline{S^X})^\beta \left(I - \sum_{i\in \cI}S^X_{i} \left(S^X\right)^{*}_i \right)\underline{S^X}^{\alpha *} x
= p_{|\beta|} \langle e_\alpha, x  \rangle e_\beta.
\ees
As the elements $p_{|\beta|}e_\beta$ span $\mathfrak{F}_X$, it follows that $\cK(\mathfrak{F}_X) \subseteq \cE_X$.
\end{proof}

Given a representation $T$ of $X$ on a Hilbert space $H$ and given an integer $m \in \mb{N}$, we denote by $m \cdot T$ the representation
\bes
m \cdot T : X \rightarrow B(\underbrace{H \oplus H \oplus \cdots \oplus H}_{m \textrm{ times}})
\ees
given by $m \cdot T(x) = \underbrace{T(x) \oplus T(x) \oplus \cdots \oplus T(x)}_{m \textrm{ times}}$.
$\underline{T}$ is a row contraction (i.e., $\sum_{i \in \cI} T_i T_i^* \leq I_H$) if and only if $T$ is completely contractive.
When $\underline{T}$ is a row contraction the \emph{defect operator} $\Delta (\underline{T})$ is defined as
\bes
\Delta(\underline{T}) = I - \sum_{i \in \cI} T_i T_i^*,
\ees
and the \emph{Poisson Kernel} \cite{Popescu99} associated with $\underline{T}$ is the family of isometries $\{K_r\left(\underline{T}\right)\}_{0\leq r < 1}$
\bes
K_r\left(\underline{T}\right):H \rightarrow \mathfrak{F} \otimes H,
\ees
given by
\bes
K_r\left(\underline{T}\right) h = \sum_{\alpha \in \mb{F}_\cI^+} e_\alpha \otimes \left(r^{|\alpha|}\Delta(r\underline{T})^{1/2} \underline{T}^{\alpha *} h \right).
\ees
(See the beginning of \cite[Section 8]{Popescu99} for the remark that $\underline{T}$ has ``property (P)", and \cite[Lemma 3.2]{Popescu99} for the fact that these are isometries). When it makes sense, we also define $K_1\left(\underline{T}\right)$ by the same formula with $r=1$. The \emph{Poisson transform} is then defined as a map
\bes
\Phi = \Phi_{\underline{T}}: C^*(\underline{S}) \rightarrow B(H)
\ees
\bes
\Phi(a) = \Phi_{\underline{T}}(a) = \lim_{r \nearrow 1}K_r\left(\underline{T}\right)^* (a \otimes I) K_r\left(\underline{T}\right).
\ees
By \cite[Theorem 3.8]{Popescu99}, $\Phi$ is a unital, completely positive, completely contractive, satisfies
\bes
\Phi(\underline{S}^\alpha \underline{S}^{\beta *}) = \underline{T}^\alpha \underline{T}^{\beta *},
\ees
and is multiplicative on $Alg(\underline{S},I_{\mathfrak{F}})$, the algebra generated by $\underline{S}$ and $I_{\mathfrak{F}}$ ($\Phi$ is in fact an $Alg(\underline{S},I_{\mathfrak{F}})$-morphism).

\begin{theorem}\label{thm:CP1}
Let $T$ be a c.c. representation of $X$ on $H$. There exists a unital, completely positive, completely contractive map
\bes
\Psi: \cE_X \rightarrow B(H)
\ees
that satisfies
\bes
\Psi\left((\underline{S^X})^\alpha (\underline{S^X})^{\beta *}\right) = \underline{T}^\alpha \underline{T}^{\beta *} \,\, , \,\, \alpha, \beta \in \mb{F}_\cI^+
\ees
and
\be\label{eq:Amorphism}
\Psi(ab) = \Psi(a) \Psi(b) \,\, , \,\, a \in \cA_X, b \in \cE_X.
\ee
\end{theorem}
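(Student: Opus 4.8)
The plan is to obtain $\Psi$ by pushing Popescu's Poisson transform $\Phi = \Phi_{\underline{T}}$ down from $C^*(\underline{S})$ to the smaller operator system $\cE_X$, exploiting the fact that $X$ is standard so that $\mathfrak{F}_X$ is a co-invariant subspace of $\mathfrak{F}$ for the full shift $\underline{S}$ (that is, $\underline{S}^{\alpha *}\big|_{\mathfrak{F}_X} = (\underline{S^X})^{\alpha *}$, as recorded in the proof of Proposition \ref{prop:KinE}). The single nontrivial ingredient — and the step I expect to be the main obstacle — is the observation that the Poisson kernels $K_r(\underline{T})$ actually map $H$ into the \emph{smaller} space $\mathfrak{F}_X \otimes H$, rather than into all of $\mathfrak{F}\otimes H$. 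Granting this, everything else is a transfer of Popescu's properties through the compression $a \mapsto P_{\mathfrak{F}_X}\, a\, P_{\mathfrak{F}_X}$.

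To prove the key fact I would apply $(I-P_{\mathfrak{F}_X})\otimes I_H$ to $K_r(\underline{T})h$ and group the result by homogeneous degree. The degree-$n$ component involves, for each basis vector $f_j$ of $E^{\otimes n}\ominus X(n)$, the operator $\sum_\alpha \langle e_\alpha, f_j\rangle \underline{T}^{\alpha *}$, which is precisely $q_j(\underline{T})^*$ for the homogeneous polynomial $q_j$ with $q_j(e) = f_j$. Since $q_j(e)\perp X(n)$ and $T$ is a representation of $X$, Theorem \ref{thm:repZ(I)} gives $q_j(\underline{T}) = 0$, so every degree-$n$ component vanishes and $(I-P_{\mathfrak{F}_X})\otimes I_H$ annihilates the range of $K_r(\underline{T})$. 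Hence $(P_{\mathfrak{F}_X}\otimes I_H)K_r(\underline{T}) = K_r(\underline{T})$.

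With this in hand I would define, for $a \in \cE_X \subseteq B(\mathfrak{F}_X)$,
\[
\Psi(a) = \lim_{r\nearrow 1} K_r(\underline{T})^*(a\otimes I_H)K_r(\underline{T}),
\]
each map $a\mapsto K_r(\underline{T})^*(a\otimes I_H)K_r(\underline{T})$ being unital (as $K_r(\underline{T})$ is an isometry), completely positive and completely contractive. Convergence holds on the dense span of the $(\underline{S^X})^\alpha(\underline{S^X})^{\beta *}$ and extends to all of $\cE_X$ by the uniform bound $\leq 1$ and an $\varepsilon/3$ argument; as a pointwise limit of unital completely positive contractions, $\Psi$ is again one. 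The crucial compatibility is the compression identity $(\underline{S^X})^\alpha(\underline{S^X})^{\beta *} = P_{\mathfrak{F}_X}\,\underline{S}^\alpha\underline{S}^{\beta *}\,P_{\mathfrak{F}_X}$ (immediate from co-invariance), which together with $(P_{\mathfrak{F}_X}\otimes I_H)K_r(\underline{T}) = K_r(\underline{T})$ shows
\[
K_r(\underline{T})^*\big((\underline{S^X})^\alpha(\underline{S^X})^{\beta *}\otimes I_H\big)K_r(\underline{T}) = K_r(\underline{T})^*\big(\underline{S}^\alpha\underline{S}^{\beta *}\otimes I_H\big)K_r(\underline{T}),
\]
the right-hand side being exactly $\Phi_{\underline{T}}$ evaluated at $\underline{S}^\alpha\underline{S}^{\beta *}$; letting $r\nearrow 1$ and invoking Popescu's formula gives $\Psi\big((\underline{S^X})^\alpha(\underline{S^X})^{\beta *}\big) = \underline{T}^\alpha\underline{T}^{\beta *}$. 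The same computation shows, more generally, that $\Psi(P_{\mathfrak{F}_X}\, c\, P_{\mathfrak{F}_X}) = \Phi(c)$ for every $c \in C^*(\underline{S})$.

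Finally, for the module property I would verify $\Psi(ab) = \Psi(a)\Psi(b)$ on generators $a = (\underline{S^X})^\gamma \in \cA_X$ and $b = (\underline{S^X})^\alpha(\underline{S^X})^{\beta *}\in\cE_X$ (noting $\cA_X\,\cE_X\subseteq\cE_X$), and then extend by continuity. Lifting to $\hat a = \underline{S}^\gamma$ and $\hat b = \underline{S}^\alpha\underline{S}^{\beta *}$, co-invariance of $\mathfrak{F}_X$ for $\underline{S}$ makes $\hat a$ leave $\mathfrak{F}_X^\perp$ invariant, so $P_{\mathfrak{F}_X}\hat a = P_{\mathfrak{F}_X}\hat a P_{\mathfrak{F}_X}$ and hence $P_{\mathfrak{F}_X}\,\hat a\hat b\,P_{\mathfrak{F}_X} = ab$. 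Applying the identity $\Psi(P_{\mathfrak{F}_X}\, c\, P_{\mathfrak{F}_X}) = \Phi(c)$ with $c = \hat a\hat b$, together with the fact (quoted just before the theorem) that $\Phi$ is an $Alg(\underline{S},I_{\mathfrak{F}})$-morphism, yields $\Psi(ab) = \Phi(\hat a\hat b) = \Phi(\hat a)\Phi(\hat b) = \Psi(a)\Psi(b)$. Density of such $a$ in $\cA_X$ and such $b$ in $\cE_X$, together with boundedness of all the maps involved, then gives (\ref{eq:Amorphism}) in general and completes the proof.
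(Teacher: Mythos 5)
Your proposal is correct and follows essentially the same route as the paper: the heart of the matter is exactly the lemma that $K_r(\underline{T})H \subseteq \mathfrak{F}_X \otimes H$, which the paper proves by the same degree-by-degree orthogonality computation (pairing the $n$-th component against $E^{\otimes n}\ominus X(n)$ and invoking Proposition \ref{prop:representation}, of which your appeal to Theorem \ref{thm:repZ(I)} is just a polynomial-language reformulation), after which the compression and $r\nearrow 1$ limit proceed as you describe. Your explicit verification of the module property (\ref{eq:Amorphism}) is a welcome elaboration of a step the paper leaves as "obvious."
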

\begin{proof}
By the lemma below, the range of $K_r\left(\underline{T}\right)$ is contained in $\mathfrak{F}_X \otimes H$ for all $0 \leq r < 1$, thus
\bes
(P_{\mathfrak{F}_X} \otimes I_H) K_r\left(\underline{T}\right) = K_r\left(\underline{T}\right).
\ees
We may then define
\begin{align*}
\Psi(\underline{T})(\left((\underline{S^X})^\alpha (\underline{S^X})^{\beta *}\right))
&= \lim_{r \nearrow 1}K_r\left(\underline{T}\right)^* \left(\left((\underline{S^X})^\alpha (\underline{S^X})^{\beta *}\right) \otimes I\right) K_r\left(\underline{T}\right) \\
(*)&= \lim_{r \nearrow 1}K_r\left(\underline{T}\right)^* \left(\left(\underline{S}^\alpha \underline{S}^{\beta *}\right) \otimes I\right) K_r\left(\underline{T}\right) \\
&= \underline{T}^\alpha \underline{T}^{\beta *} ,
\end{align*}
where in (*) we have made use of the coinvariance of $\mathfrak{F}_X$ under $\underline{S}$.
This obviously extends to the desired map on $\cE_X$.
\end{proof}

\begin{lemma}
$K_r\left(\underline{T}\right)H \subseteq \mathfrak{F}_X \otimes H$.
\end{lemma}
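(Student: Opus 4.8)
The plan is to verify the containment fiber by fiber. Since $\mathfrak{F}\otimes H=\bigoplus_{n}E^{\otimes n}\otimes H$ while $\mathfrak{F}_X\otimes H=\bigoplus_{n}X(n)\otimes H$, it suffices to show that for every $n$ the $n$-particle component of $K_r(\underline{T})h$ lies in $X(n)\otimes H$ (convergence of the series is automatic, as $K_r(\underline{T})$ is bounded). Collecting the terms with $|\alpha|=n$ in the defining series and using $r^{|\alpha|}=r^{n}$, this component is
\bes
v_n=r^{n}\big(I_{E^{\otimes n}}\otimes\Delta(r\underline{T})^{1/2}\big)\sum_{|\alpha|=n}e_\alpha\otimes\underline{T}^{\alpha*}h.
\ees
The first step is to record the adjoint formula $(\tT^{n})^{*}h=\sum_{|\alpha|=n}e_\alpha\otimes\underline{T}^{\alpha*}h$, a one-line computation against the defining relation $\tT^{n}(e_\alpha\otimes h)=\underline{T}^{\alpha}h$ from \eqref{eq:alotofT}. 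With it, $v_n=r^{n}\big(I_{E^{\otimes n}}\otimes\Delta(r\underline{T})^{1/2}\big)(\tT^{n})^{*}h$, so everything reduces to controlling the range of $(\tT^{n})^{*}$.

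The crux is the claim $(\tT^{n})^{*}H\subseteq X(n)\otimes H$. Because $T$ is a representation of the standard subproduct system $X$, Proposition \ref{prop:representation} (equivalence of items \ref{it:Trep} and \ref{it:tTn}) gives \eqref{eq:tTn}, namely $\tT^{n}(p_n^\perp\otimes I_H)=0$, where $p_n:E^{\otimes n}\to X(n)$ is the orthogonal projection and $p_n^\perp=I_{E^{\otimes n}}-p_n$. Taking adjoints yields $(p_n^\perp\otimes I_H)(\tT^{n})^{*}=0$, i.e.\ the range of $(\tT^{n})^{*}$ is contained in $\ker(p_n^\perp\otimes I_H)=X(n)\otimes H$. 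Since $I_{E^{\otimes n}}\otimes\Delta(r\underline{T})^{1/2}$ maps $X(n)\otimes H$ into itself (it acts as the identity on the first tensor factor), I conclude $v_n\in X(n)\otimes H$; summing over $n$ gives $K_r(\underline{T})h\in\mathfrak{F}_X\otimes H$.

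The only point that requires recognition, rather than a genuine obstacle, is seeing that condition \eqref{eq:tTn} is precisely the coinvariance of $\mathfrak{F}_X$ under the adjoints $(\tT^{n})^{*}$; this is the representation-level analogue of the identity $\underline{S}^{\alpha*}\big|_{\mathfrak{F}_X}=(\underline{S^X})^{\alpha*}$ already exploited for the shift. I would deliberately avoid the tempting alternative of expanding $\langle v_n,\eta\otimes g\rangle$ for $\eta\in E^{\otimes n}\ominus X(n)$ and invoking $p(\underline{T})=0$ through Theorem \ref{thm:repZ(I)}: although correct in spirit, writing $\eta=p(e)$ for a single homogeneous polynomial $p$ is immediate only when $\eta$ is a finite combination of the $e_\alpha$, so for $|\cI|=\infty$ one would need an extra density argument to cover arbitrary $\eta$. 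The adjoint/invariance route above sidesteps this entirely and is valid verbatim for infinite $\cI$.
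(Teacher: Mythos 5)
Your proof is correct and follows essentially the same route as the paper: both reduce to showing that the $n$-particle component $\sum_{|\alpha|=n} e_\alpha \otimes \underline{T}^{\alpha*}h$ lies in $X(n)\otimes H$ and both derive this from condition (\ref{eq:tTn}) of Proposition \ref{prop:representation}. The only difference is presentational — you take the adjoint of $\tT^{n}(p_n^\perp\otimes I_H)=0$ directly, whereas the paper verifies the same fact by computing $\langle \xi, x\otimes g\rangle=\langle h,\tT^{n}(x\otimes g)\rangle=0$ for $x\in E^{\otimes n}\ominus X(n)$.
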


\begin{proof}
Let $h \in H$. It suffices to show that for all $n \in \mb{N}$, the element
\bes
\sum_{|\alpha| = n} e_\alpha \otimes \left(r^n\Delta(r\underline{T})^{1/2} \underline{T}^{\alpha *} h \right) = (I \otimes r^n\Delta(r\underline{T})^{1/2}) \sum_{|\alpha| = n} e_\alpha \otimes (\underline{T}^{\alpha *} h)
\ees
is in $X(n) \otimes H$. However, $X(n) \otimes H$ (considered as a subspace of $E^{\otimes n} \otimes H$) is reduced by $(I \otimes r^n\Delta(r\underline{T})^{1/2})$, so it is enough to show that
\bes
\xi := \sum_{|\alpha| = n} e_\alpha \otimes (\underline{T}^{\alpha *} h) \in X(n) \otimes H.
\ees
Let $x \in E^{\otimes n} \ominus X(n)$ and $g \in H$. The proof will be completed by showing that
$\langle \xi, x \otimes g \rangle = 0$.
\begin{align*}
\langle \xi, x \otimes g \rangle
&= \sum_{|\alpha|=n} \langle e_\alpha \otimes T(e_\alpha)^* h, x \otimes g \rangle \\
&= \sum_{|\alpha|=n} \langle e_\alpha, x \rangle \langle h, T(e_\alpha)g \rangle \\
&= \left\langle h, T\left(\sum_{|\alpha|=n} \langle e_\alpha, x \rangle e_\alpha \right) g \right\rangle \\
&= \langle h, \widetilde{T}^n(x \otimes g) \rangle ,
\end{align*}
and by Proposition \ref{prop:representation}, the last expression in this chain of equalities is $0$.
\end{proof}

\subsection{The universal algebra generated by a tuple subject to homogeneous polynomial identities}

\begin{theorem}\label{thm:universal}
$J \triangleleft \mb{C}\langle (x_i)_{i \in \cI}\rangle$, be a homogeneous ideal.
Then $\cA_{X_J}$ is the universal unital operator algebra generated by a row contraction in $Z(J)$, that is: $\cA_{X_J}$ is a norm closed unital operator algebra generated by a tuple in $Z(J)$, (namely, $(S^{X_J}_i)_{i\in \cI}$), and
if $\mathcal{B} \subseteq B(H)$ is another norm closed unital operator algebra generated by a row contraction $(T_i)_{i\in\cI} \in Z(J)$, then there is a unique unital and completely contractive homomorphism $\varphi$ of $\cA_{X_J}$ onto $\mathcal{B}$, such that $\varphi(S^{X_J}_i) = T_i$ for all $i \in \cI$.
\end{theorem}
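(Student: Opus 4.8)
The plan is to read the universal property directly off the Poisson-transform machinery already packaged in Theorem \ref{thm:CP1}, so that almost all of the analytic work is done before we start. The two things to prove are that $\cA_{X_J}$ is generated by a tuple that genuinely lies in $Z(J)$, and that it has the required universal mapping property.

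First I would record that the generating tuple $(S^{X_J}_i)_{i \in \cI}$ itself is a row contraction in $Z(J)$. It is a row contraction because, as observed just before Proposition \ref{prop:Xshift}, $S^{X_J}_i = p\, S_i \big|_{\mathfrak{F}_{X_J}}$ is a compression of the full row isometry $(S_i)_{i\in\cI}$; and it lies in $Z(J)$ because $S^{X_J}$ is the canonical shift representation of $X_J$ (Definition \ref{def:shiftrep}), so Theorem \ref{thm:repZ(I)} forces $\underline{S^{X_J}} \in Z(J)$. Hence $\cA_{X_J} = \overline{\textrm{span}}\{\underline{S^{X_J}}^\alpha : \alpha \in \mb{F}_\cI^+\}$ is a norm-closed unital operator algebra generated by a tuple in $Z(J)$, as required.

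For the universal property, let $\mathcal{B} \subseteq B(H)$ be generated by a row contraction $(T_i)_{i\in\cI} \in Z(J)$. Since $(T_i)$ is a row contraction, the map $T_1 : E \rightarrow B(H)$, $e_i \mapsto T_i$, is completely contractive, and since $\underline{T} \in Z(J)$, Theorem \ref{thm:repZ(I)} guarantees that the induced family $T = \{T_n\}_{n\in\mb{N}}$ is a c.c. representation of $X_J$ on $H$. Theorem \ref{thm:CP1} then supplies a unital, completely contractive map $\Psi : \cE_{X_J} \rightarrow B(H)$ with $\Psi\big((\underline{S^{X_J}})^\alpha (\underline{S^{X_J}})^{\beta *}\big) = \underline{T}^\alpha \underline{T}^{\beta *}$, and with $\Psi(ab) = \Psi(a)\Psi(b)$ whenever $a \in \cA_{X_J}$ and $b \in \cE_{X_J}$. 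I would then set $\varphi := \Psi\big|_{\cA_{X_J}}$ and check its properties: it is unital and completely contractive as a restriction of $\Psi$; it is multiplicative because for $a,b \in \cA_{X_J} \subseteq \cE_{X_J}$ the hypothesis of (\ref{eq:Amorphism}) is met (the first factor lies in $\cA_{X_J}$), giving $\varphi(ab) = \varphi(a)\varphi(b)$; on generators $\varphi(S^{X_J}_i) = \Psi\big((\underline{S^{X_J}})^{i}\big) = T_i$ (here $i$ is the length-one word), whence $\varphi(\underline{S^{X_J}}^\alpha) = \underline{T}^\alpha$ for every $\alpha$; and therefore $\varphi(\cA_{X_J})$ is the closed span of the words $\underline{T}^\alpha$, which is exactly $\mathcal{B}$, so $\varphi$ is onto. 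Uniqueness is immediate, since any unital completely contractive homomorphism carrying $S^{X_J}_i$ to $T_i$ agrees with $\varphi$ on the unital algebra generated by the $S^{X_J}_i$ and hence, by continuity, on its closure $\cA_{X_J}$.

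I do not expect a genuine obstacle at this level: the entire weight of the argument is carried by Theorem \ref{thm:CP1}, itself a consequence of Popescu's Poisson transform, whose one nontrivial input is the containment $K_r(\underline{T})H \subseteq \mathfrak{F}_{X_J} \otimes H$ (the Poisson kernel of a tuple in $Z(J)$ lands in the $X_J$-Fock space). Once that lemma is in hand the present statement is bookkeeping; the only point requiring care is that the multiplicativity relation (\ref{eq:Amorphism}) is \emph{one-sided} ($a \in \cA_{X_J}$, $b \in \cE_{X_J}$), which is nevertheless exactly what is needed, since on $\cA_{X_J} \times \cA_{X_J}$ the first factor always lies in $\cA_{X_J}$.
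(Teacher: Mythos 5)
Your proposal is correct and follows exactly the route of the paper, whose entire proof is the one-line remark that the result "follows from Theorems \ref{thm:repZ(I)} and \ref{thm:CP1}"; you have simply spelled out the bookkeeping (identifying tuples in $Z(J)$ with c.c.\ representations of $X_J$, restricting the Poisson-transform map $\Psi$ to $\cA_{X_J}$, and using the one-sided multiplicativity (\ref{eq:Amorphism})) that the authors leave implicit. No gaps.
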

\begin{proof}
This follows from Theorems \ref{thm:repZ(I)} and \ref{thm:CP1}.
\end{proof}

\subsection{A model for representations: every completely bounded representation of $X$ is a piece of an inflation of $S^X$}

We will now construct a model for representations of subproduct systems. 
In \cite[Section 2]{Popescu06}, a similar but different model -- that includes also a fully coisometric part and not only the shift -- has been obtained.

\begin{theorem}\label{thm:model2}
Let $\underline{T}$ be a completely bounded representation of the subproduct system $X$ on a separable Hilbert space $H$, and let $K$ be an infinite dimensional, separable Hilbert space. Then for all $r > \|T\|_{cb}$, $T$ is unitarily equivalent to a piece of
\be\label{eq:shift_r}
S^X \otimes rI_K.
\ee
Moreover, $\|T\|_{cb}$ is equal the infimum of $r$ such that $T$ is a piece of an operator as in  (\ref{eq:shift_r}).
\end{theorem}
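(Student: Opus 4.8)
The plan is to realize $T$ via Popescu's Poisson kernel, exactly along the lines set up in the previous subsection (Theorem \ref{thm:CP1} and the Lemma following it), after first rescaling to reduce to a strict row contraction. Recall that for a representation of a subproduct system the cb-norm is the norm of the row operator, $\|T\|_{cb}=\|\widetilde{T}_1\|=\|\underline{T}\|_{\mathrm{row}}$. Fix $r>\|T\|_{cb}$ and set $\underline{A}=\frac1r\underline{T}$, with associated representation $A=\{A_n\}$ of $X$ (rescaling the generating tuple and forming $A_n=r^{-n}T_n$ gives a representation of the \emph{same} $X$, since $A_{m+n}(U_{m,n}(x\otimes y))=r^{-(m+n)}T_s(x)T_t(y)=A_m(x)A_n(y)$). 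Because $\|\underline{A}\|_{\mathrm{row}}=r^{-1}\|\widetilde{T}_1\|<1$, the row contraction $\underline{A}$ is pure, so the telescoping identity $K^*K=I-\lim_n\sum_{|\alpha|=n}\underline{A}^{\alpha}\underline{A}^{\alpha*}=I$ (see \cite{Popescu99}) shows that the Poisson kernel $K:=K_1(\underline{A})\colon H\to\mathfrak{F}\otimes H$ is a well-defined isometry; and since $A$ is a representation of $X$, the Lemma following Theorem \ref{thm:CP1} applies verbatim to give $KH\subseteq\mathfrak{F}_X\otimes H$.

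The key computation is the intertwining relation. Using $S_i^*e_\alpha=e_{\alpha'}$ when $\alpha=i\alpha'$ and $0$ otherwise, together with $\underline{A}^{\alpha*}=\underline{A}^{\alpha'*}A_i^*$, one evaluates directly
\[
(S_i^*\otimes I_H)K h=\sum_{\alpha=i\alpha'}e_{\alpha'}\otimes\bigl(\Delta(\underline{A})^{1/2}\underline{A}^{\alpha*}h\bigr)=K(A_i^*h),
\]
so $(S_i^*\otimes I_H)K=KA_i^*=\tfrac1r K T_i^*$. As $\mathfrak{F}_X$ is coinvariant for $\underline{S}$, restricting to $\mathfrak{F}_X$ reads this as $((S^X_i)^*\otimes I_H)K=\tfrac1r K T_i^*$. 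Now embed $H$ isometrically into $K$ (possible since $K$ is infinite-dimensional separable), put $\hat{T}=S^X\otimes rI_K$, and regard $K$ as an isometry onto $\hat{H}:=KH\subseteq\mathfrak{F}_X\otimes K$. Then $\hat{T}(e_i)^*=r((S^X_i)^*\otimes I_K)$, whence $\widetilde{\hat{T}}_1^*(Kh)=\sum_i e_i\otimes\hat{T}(e_i)^*Kh=\sum_i e_i\otimes KT_i^*h=(I_{X(1)}\otimes K)\widetilde{T}_1^*h\in X(1)\otimes\hat{H}$. This is precisely the level-$1$ instance of Definition \ref{def:dilation}(3): $\hat{H}$ is coinvariant at level $1$, and $\widetilde{\hat{T}}_1^*|_{\hat{H}}$ is, via $K$, unitarily equivalent to $\widetilde{T}_1^*$. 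Since $\hat{T}$ and $T$ are representations of the same subproduct system, each adjoint is assembled from level $1$ by the identical recursion $\widetilde{(\cdot)}_{m+n}^*=(U_{m,n}\otimes I)(I_{X(m)}\otimes\widetilde{(\cdot)}_n^*)\widetilde{(\cdot)}_m^*$; feeding the level-$1$ relation into this recursion yields $\widetilde{\hat{T}}_n^*(Kh)=(I_{X(n)}\otimes K)\widetilde{T}_n^*h$ for all $n$ by induction. Hence $\hat{H}\in\cP(X,\hat{T})$ and $T$ is unitarily equivalent to the piece of $S^X\otimes rI_K$ on $\hat{H}$, proving the first assertion.

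For the norm formula, let $m(T)$ denote the infimum of those $r$ for which $T$ is a piece of $S^X\otimes rI_K$. The model just built shows every $r>\|T\|_{cb}$ is admissible, so $m(T)\le\|T\|_{cb}$. Conversely, if $T$ is a piece of $\hat{T}=S^X\otimes rI_K$, then by Definition \ref{def:dilation}(3) the map $\widetilde{T}_1^*$ is the restriction of $\widetilde{\hat{T}}_1^*$ to a subspace, so
\[
\|T\|_{cb}=\|\widetilde{T}_1^*\|\le\|\widetilde{\hat{T}}_1^*\|=\|\widetilde{\hat{T}}_1\|=r\,\|\underline{S^X}\|_{\mathrm{row}}=r,
\]
where $\|\underline{S^X}\|_{\mathrm{row}}=1$ follows from $\sum_i S^X_i(S^X_i)^*=I-P_{\mb{C}}$ in (\ref{eq:PC}). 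Taking the infimum over admissible $r$ gives $\|T\|_{cb}\le m(T)$, and with the previous inequality, $m(T)=\|T\|_{cb}$.

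The main obstacle, and the only genuinely new ingredient beyond the cited Poisson-transform machinery, is checking that the coinvariance established at the generating level $1$ propagates to every fiber $X(n)$ and that the compression reproduces $T_n$ rather than merely $T_1$. This is exactly what the shared recursion for $\widetilde{\hat{T}}_n^*$ and $\widetilde{T}_n^*$ secures, and it relies on $X$ being standard, so that $X(n)\subseteq X(1)^{\otimes n}$ with the $U_{m,n}$ the honest partial isometries onto the fibers (the same structure underlying Proposition \ref{prop:Xshift} and the arguments of \cite{BBD03,Dey07}). A secondary point to monitor is the justification that $K_1(\underline{A})$ is an isometry: here one must invoke purity of the strict row contraction $\underline{A}$ rather than the $r<1$ estimate used for $K_r$ in the body of the paper.
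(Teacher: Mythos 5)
Your proof is correct, and it reaches the conclusion by a more hands-on route than the paper's. Both arguments begin identically: rescale to $\underline{A}=r^{-1}\underline{T}$ and observe that the Poisson kernel $K_1(\underline{A})$ is an isometry with range in $\mathfrak{F}_X\otimes H$ (the paper justifies the isometry by writing $K_1(\underline{W})=K_{r_0/r}(\tfrac{r}{r_0}\underline{W})$ and quoting the $r<1$ case of \cite{Popescu99}, whereas you invoke purity of the strict row contraction and the telescoping identity -- both are fine). The two proofs then diverge. The paper compresses through the completely positive map $\Psi(a)=K_1(\underline{W})^*(a\otimes I)K_1(\underline{W})$, takes its minimal \emph{normal} Stinespring dilation, identifies the dilating representation of $B(\mathfrak{F}_X)$ with a multiple of the identity representation, and then establishes coinvariance of $H$ indirectly, via the $\cA_X$-morphism property (\ref{eq:Amorphism}) together with the fact that $\cE_X$ is strongly dense in $B(\mathfrak{F}_X)$ (Proposition \ref{prop:KinE}) and the minimality/normality of the dilation. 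You bypass Stinespring entirely: the Poisson kernel itself is your intertwining isometry, the relation $((S^X_i)^*\otimes I)K=KA_i^*$ gives coinvariance at level $1$ by direct computation, and you propagate it to every fiber $X(n)$ by induction through the adjointed recursion $\tT_{m+n}^*=(U_{m,n}\otimes I)(I_{X(m)}\otimes\tT_n^*)\tT_m^*$ (valid because $U_{m,n}U_{m,n}^*=I$), which is exactly the mechanism underlying Proposition \ref{prop:Xshift}. Your route is more elementary and makes the identification of $H$ with a coinvariant subspace completely explicit; the paper's route is shorter at the cost of invoking the structure theory of normal representations of $B(\mathfrak{F}_X)$, and as a byproduct its dimension count shows $\dim K\geq\dim H$ is forced, a point you do not need since you embed the separable $H$ into the given infinite-dimensional $K$ from the start. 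Your closing argument for the ``moreover'' clause ($\|\widetilde{T}_1^*\|\leq\|\widetilde{\hat T}_1^*\|=r$ because a piece restricts the adjoint to a subspace, with $\|\underline{S^X}\|_{\mathrm{row}}=1$ from (\ref{eq:PC})) supplies the detail the paper leaves as ``clear.'' The only cosmetic flaw is the collision of the letter $K$ for both the Poisson kernel and the ambient Hilbert space; rename one of them.
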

\begin{proof}
It is known that $\|T\|_{cb} = \|(T_i)_{i \in \cI}\|_{row}$, where $T_i = T(e_i)$. Thus if $r > r_0 = \|T\|_{cb}$, then
$\sum_{i \in \cI} T_i T_i^* \leq r_0^2 I < r^2 I$. Put $W_i = r^{-1}T_i$, so $\sum_{i\in\cI}W_i W_i^* \leq r_0^2 /r^2 I$. Then $K_1\left(\underline{W}\right)$ is an isometry (it is equal to $K_{r_0/r}(r/r_0 \underline{W} )$, and $r/r_0 \underline{W} $ is a row contraction). Thus we may define a map (as in the proof of Theorem \ref{thm:CP1})
\bes
\Psi: B(\mathfrak{F}_X) \rightarrow B(H)
\ees
by
\bes
\Psi(a) = K_1\left(\underline{W}\right)^* \left(a \otimes I\right) K_1\left(\underline{W}\right).
\ees
$\Psi$ is a normal completely positive unital map that satisfies
\bes
\Psi\left((\underline{S^X})^\alpha (\underline{S^X})^{\beta *}\right) = \underline{W}^\alpha \underline{W}^{\beta *} \,\, , \,\, \alpha, \beta \in \mb{F}_\cI^+ .
\ees
Since $\Psi$ is normal it has a \emph{normal} minimal Stinespring dilation $\Psi(a) = V^* \pi(a) V$, with $\pi : B(\mathfrak{F}_X) \rightarrow B(L)$ a normal $*$-homomorphism and $V : H \rightarrow L$ an isometry. It is well known that $\pi$ is equivalent to a multiple of the identity representation.
Thus we obtain, up to unitary equivalence and after identifying $H$ with $VH$, that $r^{-1} T_i = P_H \pi(S^X_i) P_H = P_H (S^X_i \otimes I_G) P_H$, for some Hilbert space $G$. To see that $T$ is a piece of $S^X \otimes I_G$ we need to show that $(S^X_i \otimes I_G)^* \big|_H = T^*_i$ for all $i \in \cI$.
In other words, we need to show that $P_H \pi(S^X_i) = P_H \pi(S^X_i) P_H$.
But, for all $b \in \cE_X$,
\begin{align*}
P_H \pi(S^X_i) \pi(b) P_H &= P_H \pi(S^X_i b) P_H \\
&= \Psi(S^X_i b) \\
(*)&= \Psi(S^X_i) \Psi (b) \\
&= P_H \pi(S^X_i) P_H \pi(b) P_H,
\end{align*}
where (*) follows from (\ref{eq:Amorphism}). By Proposition \ref{prop:KinE}, the strong operator closure of $\cE_X$ is $B(\mathfrak{F}_X)$. $P_H \pi(S^X_i) = P_H \pi(S^X_i) P_H$ now follows from the minimality and normality of the dilation.

It is clear that $r^{-1}T$ is a also piece of $S^X \otimes I_K$ for every $K$ with $\dim K \geq \dim G$, so we may choose $K$ to be infinite dimensional.

We want to show that necessarily $\dim K \geq \dim H$. Since $S^X \otimes I_K$ is a dilation of $r^{-1}T$, $I_L - \sum_{i \in \cI} S^X_i (S^X_i)^* \otimes I_K$ is a dilation of $I_H - \sum_{i \in \cI} r^{-2}T_i T_i^*$. But the latter operator is invertible so it has rank $\dim H$. Thus the rank of $P_\mb{C} \otimes I_K = I_L - \sum_{i \in \cI} S^X_i(S^X_i)^* \otimes I_K$, which is $\dim K$, must be greater.

Now the final assertion is clear.
\end{proof}

We can now obtain a general von Neumann inequality.
\begin{theorem}\label{thm:vNinequality}
Let $X$ be a subproduct system, and let $T$ be a c.c. representation of $X$ on a Hilbert space $H$. Let $\{e_1, \ldots, e_d\}$ be an orthonormal set in $X(1)$, and define $T_i = T(e_i)$ and $S^X_i = S^X(e_i)$ for $i=1,\ldots,d$. Then for every polynomials $p$ and $q$ in $d$ non commuting variables,
\bes
\|p(T_1, \ldots, T_d) q(T_1, \ldots, T_d)^*\| \leq \|p(S^X_1, \ldots, S^X_d) q(S^X_1, \ldots, S^X_d)^*\|.
\ees
\end{theorem}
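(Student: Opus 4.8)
The plan is to deduce this inequality immediately from the completely contractive map $\Psi$ furnished by Theorem \ref{thm:CP1}, so that essentially no new analytic work is required beyond the Poisson-transform machinery already in place. First I would extend the given orthonormal set $\{e_1, \ldots, e_d\}$ to an orthonormal basis $\{e_i\}_{i \in \cI}$ of $E = X(1)$, and use \emph{this} basis to form $\cA_X$ and the operator system $\cE_X = \overline{\textrm{span}}\,\cA_X \cA_X^*$. Since $T$ is a c.c. representation, Theorem \ref{thm:CP1} then supplies a unital, completely positive, completely contractive map $\Psi : \cE_X \rightarrow B(H)$ satisfying
\[
\Psi\big((\underline{S^X})^\alpha (\underline{S^X})^{\beta *}\big) = \underline{T}^\alpha \underline{T}^{\beta *}, \qquad \alpha, \beta \in \mb{F}_\cI^+ .
\]

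Next I would observe that $p(S^X_1, \ldots, S^X_d)$ is a finite linear combination of monomials $(\underline{S^X})^\alpha$ with $\alpha$ a word in the letters $\{1, \ldots, d\}$, hence lies in $\cA_X$; likewise $q(S^X_1, \ldots, S^X_d)^* \in \cA_X^*$. Consequently the product $p(S^X_1, \ldots, S^X_d)\,q(S^X_1, \ldots, S^X_d)^*$ is a finite linear combination of elements of the form $(\underline{S^X})^\alpha (\underline{S^X})^{\beta *}$ and therefore belongs to the domain $\cE_X$ of $\Psi$. Applying $\Psi$ and using its linearity together with the intertwining identity above, I obtain
\[
\Psi\big(p(S^X_1, \ldots, S^X_d)\, q(S^X_1, \ldots, S^X_d)^*\big) = p(T_1, \ldots, T_d)\, q(T_1, \ldots, T_d)^* .
\]

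Finally, because $\Psi$ is completely contractive and in particular contractive, the estimate follows at once:
\[
\|p(T_1, \ldots, T_d)\, q(T_1, \ldots, T_d)^*\| = \big\|\Psi\big(p(S^X_1, \ldots, S^X_d)\, q(S^X_1, \ldots, S^X_d)^*\big)\big\| \leq \|p(S^X_1, \ldots, S^X_d)\, q(S^X_1, \ldots, S^X_d)^*\| .
\]
I do not expect a genuine obstacle in this argument, as all the substance was already absorbed into the construction of the Poisson transform and Theorem \ref{thm:CP1}. The only points requiring mild care are that the given orthonormal set be extended to the \emph{same} basis used to define $\cE_X$ and $\Psi$ (so that the monomials $(\underline{S^X})^\alpha$ with letters in $\{1,\ldots,d\}$ genuinely occur), and the routine verification that $p(S^X)q(S^X)^*$ indeed lies in $\cE_X$ rather than merely in its closure.
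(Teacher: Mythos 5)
Your proof is correct, but it takes a different route from the paper's. The paper deduces the inequality from Theorem \ref{thm:model2}: since $T$ is a piece of $S^X \otimes rI_K$ for every $r>1$ (as $\|T\|_{cb}\leq 1$), one writes $p(T)q(T)^* = P\big(p(rS^X)q(rS^X)^*\otimes I_K\big)P$ for a projection $P$ and lets $r\searrow 1$. You instead apply the unital completely positive map $\Psi:\cE_X\rightarrow B(H)$ of Theorem \ref{thm:CP1} directly to the element $p(\underline{S^X})\,q(\underline{S^X})^*$, which visibly lies in the algebraic span $\cA_X\cA_X^*\subseteq\cE_X$, and invoke contractivity. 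Both arguments ultimately rest on the same Poisson-transform machinery (Theorem \ref{thm:model2} is itself proved from Theorem \ref{thm:CP1}), but yours is shorter and avoids both the Stinespring dilation and the limit $r\searrow 1$; the paper's version has the mild expository advantage of reusing the model it has just established. Your two points of care -- extending $\{e_1,\ldots,e_d\}$ to the orthonormal basis used to define $\cE_X$ and $\Psi$, and checking that $p(\underline{S^X})q(\underline{S^X})^*$ is a finite linear combination of the elements $(\underline{S^X})^\alpha(\underline{S^X})^{\beta*}$ on which $\Psi$ is prescribed -- are exactly the right ones, and both are satisfied.
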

\begin{proof}
Since $T$ is a piece of $S^X \otimes r I_K$ for all $r>1$, we have
\bes
p(T_1, \ldots, T_d) q(T_1, \ldots, T_d)^*  = P \Big( p(rS_1, \ldots, rS_d) q(rS_1, \ldots, rS_d)^* \otimes I_K \Big) P
\ees
for some projection $P$, and the result follows by taking $r\searrow 1$.
\end{proof}

\section{The operator algebra associated to a subproduct system}\label{sec:operator_algebra}

\subsection{}
Let $X$ be a subproduct system. Recall the definitions of $\cA_X$ and $\cE_X$  from \ref{subsec:notation}. If $\{e_i\}_{i \in \cI}$ is an orthonormal basis for $X(1)$, then $\cA_X$ is the unital operator algebra generated by $(S^X_i)_{i \in \cI}$ with $S^X_i = S^X(e_i)$. If $\{f_i\}_{i \in \cI}$ is another orthonormal basis then the tuple $(S^X(f_i))_{i \in \cI}$ is not necessarily unitarily equivalent to $(S^X_i)_{i \in \cI}$. For instance (with the above notation), if $X$ and $\{e_1, e_2\}$ are as in Example \ref{expl:notmax}, and
\bes
f_1 = \frac{1}{\sqrt{2}}(e_1 + e_2) \quad, \quad f_2 = \frac{1}{\sqrt{2}}(e_1 - e_2),
\ees
then $S^X_1, S^X_2$ are partial isometries, whereas $T_1 = S^X(f_1)$ and $T_2 = S^X(f_2)$ are not.
Thus, the unitary equivalence of the row $(\underline{S}^X_i)$ does not determine the isomorphism class of the subproduct system $X$.
\begin{proposition}\label{prop:UE_ISO}
Let $X$ and $Y$ be two subproduct systems with $X(1) = E$ and $Y(1) = F$. Assume that $\{e_i\}_{i \in \cI}$ is an orthonormal basis for $E$ and that $\{f_i\}_{i\in\cI}$ is an orthonormal basis for $F$. Then the shifts $(S^X_i)_{i \in \cI}$ and $(S^Y_i)_{i \in \cI}$ are unitarily equivalent as rows (i.e., there exists a unitary $V: \mathfrak{F}_X \rightarrow \mathfrak{F}_Y$ such that $V S^X_i = S^Y_i V$ for all $i \in \cI$), if and only if there is an isomorphism of subproduct systems $W: X \rightarrow Y$ such that $We_i = f_i$ for all $i \in \cI$.
\end{proposition}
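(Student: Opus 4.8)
The plan is to route all the information through the vacuum vector. By Lemma \ref{lem:projection_subproduct} we may assume throughout that $X$ and $Y$ are standard, so that $X(n) = p^X_n E^{\otimes n} \subseteq E^{\otimes n}$ and $Y(n) = p^Y_n F^{\otimes n} \subseteq F^{\otimes n}$, with $\mathfrak{F}_X = \bigoplus_n X(n)$ sitting inside the full Fock space over $E$ (and similarly for $Y$). The single computational fact I will use repeatedly is $\underline{S^X}^\alpha \Omega = p^X_{|\alpha|} e_\alpha$, which follows by induction from $S^X_i x = U^X_{1,n}(e_i \otimes x) = p^X_{n+1}(e_i \otimes x)$ together with $p^X_{n+1} \leq I_E \otimes p^X_n$ (from \eqref{eq:p_assoc}); since $\{e_\alpha : |\alpha| = n\}$ spans $E^{\otimes n}$, the vectors $\{\underline{S^X}^\alpha \Omega : |\alpha| = n\}$ span $X(n)$, and likewise for $Y$.

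For the direction ($\Leftarrow$), given an isomorphism $W = \{W_n\}$ with $W_1 e_i = f_i$, I would set $V = \bigoplus_n W_n$. Each $W_n$ is unitary, hence so is $V$, and the morphism identity $W_{n+1}\circ U^X_{1,n} = U^Y_{1,n}\circ(W_1 \otimes W_n)$ gives, for $x \in X(n)$, $V S^X_i x = W_{n+1}U^X_{1,n}(e_i \otimes x) = U^Y_{1,n}(f_i \otimes W_n x) = S^Y_i V x$, so $V S^X_i = S^Y_i V$ for all $i$.

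For the direction ($\Rightarrow$), suppose $V : \mathfrak{F}_X \to \mathfrak{F}_Y$ is unitary with $V S^X_i = S^Y_i V$; taking adjoints (via $V^{-1} = V^*$) yields also $V (S^X_i)^* = (S^Y_i)^* V$. The crucial step is to show $V$ preserves both the vacuum and the grading. By \eqref{eq:SXstar}, $(S^X_i)^*$ is the restriction to $\mathfrak{F}_X$ of the full annihilation operator $(S_i)^*$, and since the joint kernel of the annihilation operators on the full Fock space is $\mathbb{C}\Omega$, the joint kernel of $\{(S^X_i)^*\}_i$ inside $\mathfrak{F}_X$ is exactly $\mathbb{C}\Omega$. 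As $V$ intertwines the adjoint shifts, it carries $\mathbb{C}\Omega$ onto $\mathbb{C}\Omega$; rescaling $V$ by a unimodular scalar (which leaves the intertwining relations intact) we may assume $V\Omega = \Omega$. Then $V(\underline{S^X}^\alpha \Omega) = \underline{S^Y}^\alpha V\Omega = \underline{S^Y}^\alpha \Omega$, i.e. $V(p^X_{|\alpha|} e_\alpha) = p^Y_{|\alpha|} f_\alpha$, so $V$ sends the spanning set of $X(n)$ onto that of $Y(n)$ and hence $V(X(n)) = Y(n)$ for every $n$. I then define $W_n := V|_{X(n)}$, a unitary $X(n) \to Y(n)$, with $W_0 = \mathrm{id}_{\mathbb{C}}$ and $W_1 e_i = V(S^X_i \Omega) = S^Y_i \Omega = f_i$. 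Finally I would verify the morphism identity $W_{m+n}\circ U^X_{m,n} = U^Y_{m,n}\circ(W_m \otimes W_n)$ on the total set $\{p^X_m e_\alpha \otimes p^X_n e_\beta : |\alpha| = m,\ |\beta| = n\}$: using $p^X_{m+n} \leq p^X_m \otimes p^X_n$ one gets $U^X_{m,n}(p^X_m e_\alpha \otimes p^X_n e_\beta) = p^X_{m+n} e_{\alpha\beta} = \underline{S^X}^{\alpha\beta}\Omega$, so both sides evaluate to $\underline{S^Y}^{\alpha\beta}\Omega = p^Y_{m+n} f_{\alpha\beta}$; continuity and linearity then give the identity on all of $X(m)\otimes X(n)$, and the cases $m=0$ or $n=0$ are immediate from $W_0 = \mathrm{id}$.

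I expect the main obstacle to be exactly the structural step in the forward direction, namely proving that an arbitrary intertwining unitary must fix the vacuum (up to a phase) and respect the particle-number grading. Everything else is either symmetric bookkeeping or a direct computation on the monomials $\underline{S^X}^\alpha \Omega$; the content is concentrated in the characterization of $\Omega$ as the joint kernel of the adjoint shifts, which is what forces $V$ to be graded and hence to descend to an isomorphism of subproduct systems.
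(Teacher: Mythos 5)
Your proof is correct and follows essentially the same route as the paper's: establish that an intertwining unitary must fix the vacuum (up to a phase), deduce that it maps $\underline{S^X}^\alpha\Omega$ to $\underline{S^Y}^\alpha\Omega$ and hence restricts to unitaries $X(n)\to Y(n)$, and check the morphism identity on these spanning vectors. The only cosmetic difference is that you characterize $\mathbb{C}\Omega$ as the joint kernel of the adjoint shifts while the paper uses the equivalent dual statement $\{\Omega\}^\perp=\vee_i\operatorname{Im}S^X_i$; your handling of the unimodular phase is, if anything, slightly more careful than the paper's.
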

\begin{proof}
If $X$ and $Y$ are isomorphic with the isomorphism $W$ sending $e_i$ to $f_i$, then define a unitary $V: \mathfrak{F}_X \rightarrow \mathfrak{F}_Y$ by
\bes
V = \bigoplus_{n \in \mb{N}} W\big|_{X(n)}.
\ees
$V S^X_i = S^Y_i V$ follows from the properties of $W$. Conversely, a unitary $V$ intertwining $\underline{S}^X$ and $\underline{S}^Y$ must send $\Omega_X$ to $\Omega_Y$. Indeed, such a unitary must send $\{\Omega_X\}^\perp$ (which is equal to $\vee_i \textrm{Im}S^X_i$) onto a subspace of $\{\Omega_Y\}^\perp$ that has codimension $1$ in $\mathfrak{F}_Y$, thus it must send $\{\Omega_X\}^\perp$ onto $\{\Omega_Y\}^\perp$. It follows that $V \Omega_X = \Omega_Y$. Thus, given a unitary $V$ intertwining $\underline{S}^X$ and $\underline{S}^Y$, we may define $W\big|{X(n)} : X(n) \rightarrow Y(n)$ by
\bes
W S^X_\alpha \Omega = V S^X_\alpha \Omega = S^Y_\alpha \Omega ,
\ees
for all $|\alpha| = n$, and it is easy to see that the maps $W\big|_{X(n)}$ assemble to form an isomorphism of subproduct systems.\end{proof}

In the example preceding the proposition, we saw how the shift ``tuple" $(S^X_1,S^X_2)$ depends essentially on the choice of basis in $E$. However, the closed unital algebra generated by $(S^X_1, S^X_2)$ is isomorphic to the one generated by $(T_1, T_2)$. Similar remarks hold for $\cE_X$ and $\cT_X$.


\begin{example}
\emph{Let $X$ be the subproduct system given by $X(0) = \mb{C}, X(1) = \mb{C}^2$ and $X(n) = 0$ for all $n \geq 2$. Let $Y$ be the subproduct system given by $Y(0) = Y(1) = Y(2) = \mb{C}$ and $Y(n) = 0$ for all $n \geq 3$. Then since $\cE_X$ and $\cE_Y$ contain the compact operators on $\cF_X$ and $\cF_Y$ (the Fock spaces), we have $\cE_X = \cT_X \cong M_3(\mb{C}) \cong \cT_Y = \cE_Y$.}

\emph{On the other hand, let $\{e_1, e_2\}$ be an orthonormal basis for $X(1)$. Then if $\Omega$ is the vacuum vector, then $\cA_X$ is generated by $S^X(\Omega) = I,S^X(e_1),S^X(e_2)$. In the base $\{\Omega,e_1,e_2\}$ for $\cF_X$, these operators have the form}
\bes
\begin{pmatrix}
  1 & 0 & 0  \\
  0 & 1 & 0  \\
  0 & 0 & 1  \\
\end{pmatrix},
\begin{pmatrix}
  0 & 0 & 0  \\
  1 & 0 & 0  \\
  0 & 0 & 0  \\
\end{pmatrix},
\begin{pmatrix}
  0 & 0 & 0  \\
  0 & 0 & 0  \\
  1 & 0 & 0  \\
\end{pmatrix}.
\ees
\emph{Thus,}
\bes \cA_X \cong \left\{
\begin{pmatrix}
  a & 0 & 0  \\
  b & a & 0  \\
  c & 0 & a  \\
\end{pmatrix} \Big| a,b,c \in \mb{C} \right\}.
\ees
\emph{On the other hand, $\cA_Y$ is generated by}
\bes
I =
\begin{pmatrix}
  1 & 0 & 0  \\
  0 & 1 & 0  \\
  0 & 0 & 1  \\
\end{pmatrix},
S^Y(f_1) = \begin{pmatrix}
  0 & 0 & 0  \\
  1 & 0 & 0  \\
  0 & 1 & 0  \\
\end{pmatrix},
\big(S^Y(f_1)\big)^2 = \begin{pmatrix}
  0 & 0 & 0  \\
  0 & 0 & 0  \\
  1 & 0 & 0  \\
\end{pmatrix},
\ees
\emph{where $\{f_1\}$ is an orthonormal basis for $Y(1)$. Thus}

\bes \cA_Y \cong \left\{
\begin{pmatrix}
  a & 0 & 0  \\
  b & a & 0  \\
  c & b & a  \\
\end{pmatrix} \Big| a,b,c \in \mb{C} \right\}.
\ees
\emph{So $\cA_X \ncong \cA_Y$ (in $\cA_X$ the solutions of $T^2 = 0$ form a two dimensional subspace, and in $\cA_Y$ they form
a one dimensional subspace). }
\end{example}

\subsection{$\cA_X$ as a graded algebra.}
For every subproduct system $X$ there exists a unique completely contractive multiplicative linear functional
$\rho_0 : \cE_X \rightarrow \mb{C}$ that sends $\lambda I$ to $\lambda$ and $S^X_\alpha$ to $0$ when $|\alpha| > 0$.
The existence of $\rho_0$ follows from Theorem \ref{thm:CP1} (using the Poisson Transform),
but it is also clear that $\rho_0$ is just the vector state associated with the vacuum vector $\Omega_X$:
\bes
\rho_0(T) = \langle T \Omega_X, \Omega_X \rangle \,\, , \,\, T \in \cA_X.
\ees
$\rho_0$ can be considered also as a conditional expectation $\rho_0 : \cA_X \rightarrow \mb{C} \cdot \Omega_X$, inducing a direct sum
\be\label{eq:direct_sum}
\cA_X = \rho_0 \cA_X \oplus \ker \rho_0 = \mb{C}\cdot I \oplus \sum_i S_i^X \cA_X .
\ee
$\cA_X$ contains a dense graded subalgebra, with the homogeneous elements of degree $n$ being $S^X(\xi)$, where $\xi \in X(n)$. To be precise, we have the following proposition.

\begin{proposition}
Every $T \in \cA_X$ can be written in a unique way as
\bes
T = \sum_{n = 0}^\infty T_n,
\ees
where $T_n \in \overline{\textrm{span}}\{S^X(\xi) : \xi \in X(n)\}$ and the sum is Cesaro convergent in the norm topology.
\end{proposition}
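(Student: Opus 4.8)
The plan is to exploit the natural \emph{gauge action} on the Fock space and then run a Fej\'er-type Ces\`aro summation argument. For each $\lambda \in \mb{T}$ I define a unitary $W_\lambda$ on $\mathfrak{F}_X = \bigoplus_n X(n)$ by letting $W_\lambda$ act as the scalar $\lambda^n$ on the $n$-particle space $X(n)$, and I set $\gamma_\lambda = \mathrm{Ad}\,W_\lambda$. Since $S^X(\xi)$ maps $X(m)$ into $X(m+n)$ for $\xi \in X(n)$, a direct computation gives $\gamma_\lambda(S^X(\xi)) = \lambda^n S^X(\xi)$. Thus $\gamma_\lambda$ is a $*$-automorphism of $B(\mathfrak{F}_X)$ that scales the generators $S^X_i$ by $\lambda$; being isometric and carrying generators of $\cA_X$ back into $\cA_X$, it restricts to an isometric automorphism of $\cA_X$, and $\{\gamma_\lambda\}_{\lambda\in\mb{T}}$ is point-norm continuous on the dense subalgebra of polynomials in the $S^X_i$.

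Next I introduce the Fourier-coefficient projections
\bes
\Phi_n(T) = \frac{1}{2\pi}\int_0^{2\pi} e^{-in\theta}\,\gamma_{e^{i\theta}}(T)\,d\theta ,
\ees
defined as a norm-convergent integral of the continuous $\cA_X$-valued function $\theta \mapsto \gamma_{e^{i\theta}}(T)$. Each $\Phi_n$ is contractive with range in the closed subspace $\cA_X$, and $\Phi_n(T) = 0$ for $n<0$ because every generator has nonnegative degree. Setting $T_n := \Phi_n(T)$, I check that $T_n \in \overline{\textrm{span}}\{S^X(\xi):\xi \in X(n)\}$: on a monomial $\underline{S^X}^\alpha = S^X(\xi)$ with $\xi \in X(|\alpha|)$ one has $\gamma_{e^{i\theta}}(\underline{S^X}^\alpha) = e^{i|\alpha|\theta}\,\underline{S^X}^\alpha$, so $\Phi_n$ picks out exactly the degree-$n$ part; by linearity this holds on all polynomials, and by contractivity of $\Phi_n$ it passes to the closure. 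Uniqueness is then immediate: if $T = \sum_n T_n$ Ces\`aro-convergently with $\gamma_\lambda(T_n) = \lambda^n T_n$, then applying the continuous map $\Phi_m$ and using $\Phi_m(T_n) = \delta_{mn}T_n$ recovers $T_m = \Phi_m(T)$.

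For convergence I use the Fej\'er kernel $K_N(\theta) = \sum_{n=-N}^{N}\left(1-\tfrac{|n|}{N+1}\right)e^{in\theta} \ge 0$, which satisfies $\frac{1}{2\pi}\int_0^{2\pi}K_N = 1$. Since $\Phi_m(T) = 0$ for $m<0$, the negative-frequency terms contribute nothing and integrating $K_N$ against $\gamma_{e^{i\theta}}(T)$ collapses to the Ces\`aro means
\bes
\sigma_N(T) := \sum_{n=0}^N \left(1-\frac{n}{N+1}\right)T_n = \frac{1}{2\pi}\int_0^{2\pi} K_N(\theta)\,\gamma_{e^{i\theta}}(T)\,d\theta .
\ees
Then, because $K_N\ge0$ has total mass $1$ and $\gamma_{e^{i\theta}}$ is isometric,
\bes
\|\sigma_N(T) - T\| \le \frac{1}{2\pi}\int_0^{2\pi} K_N(\theta)\,\bigl\|\gamma_{e^{i\theta}}(T) - T\bigr\|\,d\theta ,
\ees
and the standard positive-summability-kernel estimate (split the integral at $|\theta|=\delta$, using that $K_N\to 0$ uniformly off any neighborhood of $0$) forces $\sigma_N(T) \to T$ in norm.

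The main obstacle, and the only point requiring genuine care, is establishing the norm-continuity $\|\gamma_{e^{i\theta}}(T) - T\| \to 0$ as $\theta \to 0$ for \emph{every} $T \in \cA_X$, since the final estimate depends on it. This is obvious on the dense polynomial subalgebra, where $\gamma_{e^{i\theta}}$ acts by the explicit phases $e^{i|\alpha|\theta}$, and it extends to all of $\cA_X$ by an $\varepsilon/3$ argument: approximating $T$ in norm by a polynomial $P$ and using the isometry of $\gamma_{e^{i\theta}}$, one has $\|\gamma_{e^{i\theta}}(T)-T\| \le 2\|T-P\| + \|\gamma_{e^{i\theta}}(P)-P\|$, where the outer terms are controlled uniformly in $\theta$. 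Once this continuity and the kernel-folding identity are in place, the remainder is routine Fej\'er theory.
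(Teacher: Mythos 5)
Your proof is correct and follows essentially the same route as the paper: the gauge action $\gamma_t = \mathrm{Ad}\,W_t$, the Fourier-coefficient maps $\Phi_n$, and Fej\'er--Ces\`aro summation. The only (immaterial) difference is that you establish norm continuity of $t \mapsto \gamma_t(T)$ on all of $\cA_X$ and invoke the standard positive-summability-kernel estimate, whereas the paper interprets the integrals in the strong operator sense and instead deduces norm convergence from the contractivity of the Ces\`aro mean operators together with density of the finite sums; both arguments hinge on the same $\varepsilon/3$ approximation by polynomials.
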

\begin{proof}
The proof uses a familiar gadget in operator algebra theory, the \emph{gauge action of the torus}.
For every $t \in [-\pi,\pi]$, let $W_t : X \rightarrow X$ be the subproduct system automorphism given by
\bes
X(n) \ni \xi \mapsto e^{i nt} \xi \in X(n).
\ees
The \emph{gauge action} on $\cA_X$ is given by
\bes
\gamma_t(T) = W_t T W_t^* \,\, , \,\, T \in \cA_X.
\ees
Note that if $\alpha \in \cI^n$, then
\bes
\gamma_t(S^X_\alpha) = e^{i nt}S^X_\alpha ,
\ees
and it follows also that for all $T \in \overline{\textrm{span}}\{S^X(\xi) : \xi \in X(n)\}$,
\bes
\gamma_t(T) = e^{i nt}T .
\ees
Moreover, for all $T \in \cA_X$, the path $t \mapsto \gamma_t(T)$ is strong operator continuous.
Given $T \in \cA_X$, we define
\bes
\Phi_n(T) = \frac{1}{2 \pi}\int_{-\pi}^{\pi} \gamma_t(T) e^{-i nt} dt,
\ees
where this is interpreted in the strong operator sense. In more detail,
$\frac{1}{2 \pi}\int_{-\pi}^{\pi} \gamma_t(T) e^{- i nt} dt$ is the operator that for all $h \in H$ acts as
\bes
\left(\frac{1}{2 \pi}\int_{-\pi}^{\pi} \gamma_t(T) e^{- i nt} dt\right) h = \frac{1}{2 \pi}\int_{-\pi}^{\pi}  \gamma_t(T)
e^{- i nt} h dt,
\ees
where the integral on the right is usual vector valued integration.
It follows that if $T \in \overline{\textrm{span}}\{S^X(\xi) : \xi \in X(m)\}$ then
\bes
\Phi_n(T) =
\begin{cases}
T , & \textrm{if } m=n \cr
0  , & \textrm{else.}
\end{cases}
\ees
$\Phi_n$ is a completely contractive linear map. For a finite sum $\sum_{m=0}^N T_m$ with
$T_m \in \overline{\textrm{span}}\{S^X(\xi) : \xi \in X(m)\}$ and $N \geq n$,
\bes
\Phi_n\left(\sum_{m=0}^N T_m \right) = T_n.
\ees
As such finite sums are dense in $\cA_X$, it follows that the range of $\Phi_n$ is equal to
$\overline{\textrm{span}}\{S^X(\xi) : \xi \in X(n)\}$.

Define linear maps on $\cA_X$ by
\bes
\Psi_N(T) = \sum_{n=0}^{N}\left(1 - \frac{n}{N}\right)\Phi_n(T).
\ees
Our goal is to prove that $\sum_n \Phi_n(T)$ is Cesaro convergent to $T$ in the norm topology, that is, to show that for
all $T \in \cA_X$,
\bes
\left\|\Psi_N(T) - T \right\| \stackrel{N \rightarrow \infty}{\longrightarrow} 0.
\ees
But
\begin{align*}
\sum_{n=0}^N \left(1 - \frac{n}{N} \right)  \Phi_n(T) h
&= \frac{1}{2 \pi}\int_{-\pi}^{\pi} \sum_{n=0}^N \left(1 - \frac{n}{N} \right) e^{-int} \gamma_t (T) h dt  \\
&= \frac{1}{2 \pi}\int_{-\pi}^{\pi} \sum_{n=-N}^N \left(1 - \frac{n}{N} \right) e^{-int} \gamma_t (T) h  dt \\
&= \frac{1}{2 \pi}\int_{-\pi}^{\pi} F_N(t) \gamma_t (T) h dt ,
\end{align*}
where $F_N(t)$ is the Fej\'er  Kernel (see page 12 in \cite{Katznelson})
\bes
F_N(t) = \frac{1}{N+1}\left(\frac{\sin\frac{(N+1)t}{2} }{\sin\frac{t}{2}} \right)^2.
\ees
From the above integral representation and the fact that $\int |F_N(t)|dt = 2\pi$, it follows that
$\Psi_N$ is a contraction for all $N$.

Let $T \in \cA_X$ and fix $\epsilon > 0$. Then there is a $T_\epsilon = \sum_{m=0}^N T_m$ such that $\|T - T_\epsilon\| < \epsilon$. $\sum_n \Phi_n(T_\epsilon)$ converges to $T_\epsilon$, so
$\Psi_n(T_\epsilon)$ converges to $T_\epsilon$. There is some $M$ such that for $n > M$, $\|T_\epsilon - \Psi_n(T_\epsilon)\|<\epsilon$, so for $n > M$
\begin{align*}
\|T - \Psi_n(T)\| &\leq \|T - T_\epsilon\| + \|T_\epsilon - \Psi_n(T_\epsilon)\| + \|\Psi_n(T_\epsilon - T)\| \\
&< \epsilon + \epsilon + \epsilon .
\end{align*}
That shows that $\sum_n \Phi_n(T)$ is Cesaro convergent to $T$, and it remains to prove the uniqueness assertion. Assume that $T = \sum_n T_n$, where the sum is Cesaro convergent to $T$, and $T_n \in \overline{\textrm{span}}\{S^X(\xi) : \xi \in X(n)\}$. Then for all $N > n$,
\bes
\Phi_n\left(\sum_{m=0}^N  \left(1 - \frac{m}{N} \right) T_m\right) = \left(1 - \frac{n}{N} \right)T_n \stackrel{N \rightarrow \infty}{\longrightarrow} T_n.
\ees
On the other hand,
\bes
\Phi_n\left(\sum_{m=0}^N  \left(1 - \frac{m}{N} \right) T_m\right) \stackrel{N \rightarrow \infty}{\longrightarrow} \Phi_n(T),
\ees
whence $T_n = \Phi_n(T)$.
\end{proof}

\subsection{Vacuum state preserving isometric isomorphisms of $\cA_X$.}

\begin{lemma}\label{lem:iso_vacuum}
Let $\varphi : \cA_X \rightarrow \cA_Y$ be an isometric isomorphism. Then $\varphi$ is unital.
\end{lemma}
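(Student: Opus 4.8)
The plan is to exploit the fact that $\varphi$ is a bijective algebra homomorphism together with the uniqueness of the identity element in a unital algebra; I expect the isometric hypothesis to play no essential role for this particular conclusion. Throughout, write $I$ for the identity operator, which belongs to both $\cA_X$ and $\cA_Y$ (it is $S^X(\Omega)$, the image of the empty word, in each case, so each algebra is genuinely unital with $I$ as its identity).

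First I would observe that $e := \varphi(I)$ is idempotent: by multiplicativity, $e^2 = \varphi(I)\varphi(I) = \varphi(I\cdot I) = \varphi(I) = e$. Next, using that $\varphi$ maps $\cA_X$ \emph{onto} $\cA_Y$, I would check that $e$ is a two-sided identity for $\cA_Y$. Indeed, given any $b \in \cA_Y$, choose $a \in \cA_X$ with $b = \varphi(a)$; then
\bes
eb = \varphi(I)\varphi(a) = \varphi(Ia) = \varphi(a) = b,
\ees
and symmetrically $be = b$. Finally I would invoke the uniqueness of a two-sided identity in an associative algebra: since the identity operator $I \in \cA_Y$ is itself a two-sided identity, comparing it with $e$ gives $e = eI = I$, that is, $\varphi(I) = I$.

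Since the argument is purely algebraic, I do not anticipate any genuine obstacle; the only points that require care are that $\varphi$ is multiplicative and surjective (both encoded in the word ``isomorphism'') and that $\cA_Y$ is truly unital with $I$ as its identity, so that uniqueness of two-sided identities applies. In particular the isometric hypothesis is stronger than what this lemma needs. I would retain it only because it is the relevant notion in the surrounding development concerning vacuum-state-preserving isomorphisms of $\cA_X$, for which the unitality of $\varphi$ established here is the natural preparatory step.
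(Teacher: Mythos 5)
Your proof is correct, and it takes a genuinely different and more elementary route than the paper. The paper invokes a theorem of Arazy and Solel on isometries of non-self-adjoint operator algebras to conclude that $\varphi(I)$ must be an isometry lying in the diagonal $\cA_Y \cap \cA_Y^*$, hence of the form $cI$ with $|c|=1$, and only then uses multiplicativity to force $c=1$. You instead use nothing but the algebra: a surjective homomorphism of unital algebras carries the identity to a two-sided identity of the target, and since $I = \underline{S^Y}^{0}$ already lies in $\cA_Y$ and two-sided identities are unique, $\varphi(I) = I$. Your remark that the isometric hypothesis is inessential here is accurate; the statement holds for any surjective algebra homomorphism onto $\cA_Y$, and all the ingredients you use (multiplicativity and surjectivity of $\varphi$, unitality of $\cA_Y$) are indeed part of the paper's standing conventions, as its own proof confirms by appealing to ``$\varphi$ is a homomorphism.'' What the paper's heavier route buys is information that survives if multiplicativity is dropped --- an isometric linear surjection still sends $I$ to a unimodular scalar multiple of $I$ --- but for the lemma as stated your argument is complete and shorter.
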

\begin{proof}
A theorem of Arazy and Solel \cite{ArazySolel} implies that an isometric map between $\cA_X$ and $\cA_Y$ must send $I \in \cA_X$ to an isometry in $\cA_X \cap \cA_X^*$. It follows that $\varphi(I) = cI$, $|c| = 1$. But since $\varphi$ is a homomorphism, then $c = 1$.
\end{proof}

\begin{lemma}\label{lem:vacuum_norm}
For all $n \in \mb{N}$, $\xi \in X(n)$
\bes
\|S^X(\xi)\| = \|S^X(\xi) \Omega_X\| = \|\xi\|.
\ees
\end{lemma}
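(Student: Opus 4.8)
The claim is the chain of equalities $\|S^X(\xi)\| = \|S^X(\xi)\Omega_X\| = \|\xi\|$ for $\xi \in X(n)$. The plan is to prove the rightmost equality first, then use it together with general norm inequalities to squeeze the operator norm between $\|\xi\|$ and $\|S^X(\xi)\Omega_X\|$.

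First I would compute $S^X(\xi)\Omega_X$ directly. By the definition of the $X$-shift representation (Definition \ref{def:shiftrep}), for $x \in X(t)$ we have $S^X(\xi) x = U^X_{n,t}(\xi \otimes x)$, and $\Omega_X = 1 \in \cN = X(0)$. Since the maps $U^X_{s,0}$ are given by the right action of $\cN$ on $X(s)$ (item (3) of Definition \ref{def:subproduct_system}), applying $S^X(\xi)$ to $\Omega_X$ yields $S^X(\xi)\Omega_X = U^X_{n,0}(\xi \otimes 1) = \xi$. This immediately gives $\|S^X(\xi)\Omega_X\| = \|\xi\|$, establishing the second equality.

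Next I would bound the operator norm from above. Since $S^X$ is a completely contractive representation (as noted in the discussion after Definition \ref{def:shiftrep}, $\widetilde{S}^X_s$ is a contraction and $S^X$ is a c.c.\ representation), and since for a c.c.\ representation the restriction $S^X_n$ of $S^X$ to $X(n)$ is a contractive linear map, we have $\|S^X(\xi)\| \leq \|\xi\|$. On the other hand, $\Omega_X$ is a unit vector (it is $1 \in X(0) = \cN$, with $\|\Omega_X\| = 1$), so
\[
\|S^X(\xi)\| \geq \|S^X(\xi)\Omega_X\| = \|\xi\|.
\]
Combining these two inequalities forces $\|S^X(\xi)\| = \|\xi\| = \|S^X(\xi)\Omega_X\|$, which is exactly the asserted chain of equalities.

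I do not expect any serious obstacle here: the entire argument rests on the vacuum-creating property $S^X(\xi)\Omega_X = \xi$ and on complete contractivity of the shift, both of which are already available from Section \ref{subsec:shift}. The only point requiring minor care is the normalization $\|\Omega_X\| = 1$ and the identification of $U^X_{n,0}$ with the right $\cN$-action; once these are in place the computation is routine. If one wishes to be scrupulous about the Hilbert-space realization (recall $S^X$ acts on the $C^*$-correspondence $\mathfrak{F}_X$ and must be composed with a faithful representation of $\cL(\mathfrak{F}_X)$), one notes that a faithful representation is isometric on $\cL(\mathfrak{F}_X)$, so the operator norm is computed faithfully and all three quantities are preserved under this composition.
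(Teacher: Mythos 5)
Your proof is correct and follows essentially the same route as the paper: the lower bound $\|S^X(\xi)\|\geq\|\xi\|$ comes from $S^X(\xi)\Omega_X=\xi$ with $\|\Omega_X\|=1$, and the upper bound comes from contractivity of the product maps. The only cosmetic difference is that you cite the already-established complete contractivity of $S^X$ for the upper bound, while the paper re-derives it in place via $\|S^X(\xi)\eta\|=\|p_{n+k}(\xi\otimes\eta)\|\leq\|\xi\|\,\|\eta\|$ together with the observation that $S^X(\xi)$ maps the orthogonal summand $X(k)$ into $X(k+n)$; both rest on the same underlying fact.
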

\begin{proof}
Because $S^X(\xi)$ maps the orthogonal summands $X(k)$ of $\mathfrak{F}_X$ into the orthogonal summands $X(k+n)$, it suffices to show that for all $\eta \in X(k)$, $\|S^X(\xi)\eta\| \leq \|\xi\|\|\eta\|$
(because $S^X(\xi) \Omega_X = \xi$). Now, $S^X(\xi) \eta = p^X_{n+k}(\xi \otimes \eta)$, thus
\bes
\|S^X(\xi) \eta\|^2 \leq \|\xi \otimes \eta\|^2 = \|\xi\|^2 \| \eta\|^2.
\ees
\end{proof}

\begin{lemma}\label{lem:iso_grading}
Let $\varphi : \cA_X \rightarrow \cA_Y$ be an isometric isomorphism that preserves the direct sum decomposition (\ref{eq:direct_sum}). Then $\varphi$ preserves the grading: if $ \xi \in X(n)$ then $\varphi(S^X(\xi))$ is in the norm closure of $\textrm{span}\{S^Y(\eta) : \eta \in Y(n)\}$.
\end{lemma}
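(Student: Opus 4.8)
The plan is to prove that $\varphi$ takes gauge eigenvectors to gauge eigenvectors of the same weight: for $a=S^X(\xi)$ with $\xi\in X(n)$ I will show that $\gamma_t^Y(\varphi(a))=e^{int}\varphi(a)$ for all $t$, where $\gamma_t^X,\gamma_t^Y$ are the gauge automorphisms introduced above (implemented by the diagonal unitaries $W_t$, hence isometric). Granting this, reading off the $m$-th Fourier coefficient gives $\Phi_m^Y(\varphi(a))=\frac{1}{2\pi}\int_{-\pi}^{\pi}\gamma_t^Y(\varphi(a))e^{-imt}\,dt=\delta_{mn}\varphi(a)$, so that $\varphi(a)=\Phi_n^Y(\varphi(a))\in\overline{\textrm{span}}\{S^Y(\eta):\eta\in Y(n)\}$, which is exactly the assertion. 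Here $\Phi_n^X,\Phi_n^Y$ denote the Cesàro projections onto the homogeneous components for $\cA_X,\cA_Y$.

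First I would make the filtration intrinsic. Writing $J_X=\ker\rho_0=\sum_iS_i^X\cA_X$ for the augmentation ideal appearing in (\ref{eq:direct_sum}), a short computation using surjectivity of the product maps $U_{m,k}$ identifies the closed ideal powers as $J_X^n=\overline{\textrm{span}}\{S^X(\zeta):\zeta\in X(m),\ m\ge n\}=\overline{\bigoplus_{m\ge n}}\cA_X^{(n)}$, writing $\cA_X^{(m)}:=\overline{\textrm{span}}\{S^X(\zeta):\zeta\in X(m)\}$, and likewise for $Y$. Since $\varphi$ is unital (Lemma \ref{lem:iso_vacuum}) and preserves the decomposition (\ref{eq:direct_sum}), it is a homomorphism with $\varphi(J_X)=J_Y$, whence $\varphi(J_X^n)=J_Y^n$ for all $n$ (and the same for $\varphi^{-1}$). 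Thus $\varphi$ respects the filtration, even though a priori it need not respect the finer grading.

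Next I would set $\psi_t:=\varphi^{-1}\circ\gamma_t^Y\circ\varphi$ and $\theta_t:=\gamma_{-t}^X\circ\psi_t$, both isometric automorphisms of $\cA_X$, and show that $\theta_t$ acts trivially modulo one step of the filtration: for $a\in\cA_X^{(n)}$ one has $\theta_t(a)-a\in J_X^{n+1}$. To see this I would split $\varphi(a)=\beta_n+\beta'$ with $\beta_n=\Phi_n^Y(\varphi(a))\in\cA_Y^{(n)}$ and $\beta'\in J_Y^{n+1}$, apply $\gamma_t^Y$ (which multiplies $\beta_n$ by $e^{int}$ and preserves $J_Y^{n+1}$), then $\varphi^{-1}$, and finally $\gamma_{-t}^X$; evaluating at $t=0$, where $\psi_0=\mathrm{id}$, pins down the degree-$n$ part as $\Phi_n^X(\varphi^{-1}(\beta_n))=a$, so that all of the $t$-dependent remainder lands in $J_X^{n+1}$.

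The crux, and the step I expect to be the main obstacle, is upgrading ``trivial modulo $J_X^{n+1}$'' to ``trivial''; this is where the isometry of $\theta_t$ must be combined with Lemma \ref{lem:vacuum_norm} and the orthogonality of the graded pieces of $\mathfrak{F}_X$. Writing $\theta_t(a)=a+r$ with $r\in J_X^{n+1}$ and evaluating at the vacuum gives the orthogonal decomposition $\theta_t(a)\Omega_X=a\Omega_X+r\Omega_X$ with $a\Omega_X\in X(n)$ and $r\Omega_X\in\bigoplus_{m\ge n+1}X(m)$, so that
\[
\|a\|^2=\|\theta_t(a)\|^2\ge\|\theta_t(a)\Omega_X\|^2=\|a\Omega_X\|^2+\|r\Omega_X\|^2=\|a\|^2+\|r\Omega_X\|^2,
\]
forcing $r\Omega_X=0$; since each homogeneous component $\Phi_m^X(r)=S^X(\zeta_m)$ satisfies $\|\Phi_m^X(r)\|=\|\zeta_m\|=\|\Phi_m^X(r)\Omega_X\|$ by Lemma \ref{lem:vacuum_norm}, the vanishing of $r\Omega_X$ yields $\zeta_m=0$ for all $m$, hence $r=0$. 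Therefore $\theta_t(a)=a$, i.e. $\gamma_t^Y(\varphi(a))=e^{int}\varphi(a)$, and the Fourier-coefficient computation of the opening paragraph finishes the proof. The only genuinely delicate bookkeeping is the Cesàro-convergence of the homogeneous expansions when the bounded maps $\varphi^{\pm1}$ and $\Phi_m$ are pulled through them, and the verification that preservation of (\ref{eq:direct_sum}) really gives $\varphi(J_X)=J_Y$.
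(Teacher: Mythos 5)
Your proof is correct, but it takes a genuinely different route from the paper's. The paper reduces everything to degree one: since $\varphi$ is multiplicative and the product maps are coisometric (so $X(n)$ is spanned by products of elements of $X(1)$), it suffices to show $\varphi(S^X(\xi))$ has degree one for $\xi \in X(1)$; writing $\varphi(S^X_1) = \sum_i a_i S^Y_i + T$ with $T$ of degree $\geq 2$ and $B = -\varphi^{-1}(T)$ of degree $\geq 2$, it derives the contradiction $1 < 1$ from two applications of the strict inequality $\|c\| = \|c\,\Omega\| < \|(c+d)\Omega\| \leq \|c+d\|$, valid whenever $c$ is homogeneous and $d \neq 0$ lives in strictly higher degrees. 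You instead prove the stronger statement that $\varphi$ exactly intertwines the gauge actions, $\gamma_t^Y \circ \varphi = \varphi \circ \gamma_t^X$, by first checking that preservation of (\ref{eq:direct_sum}) forces preservation of the filtration by powers of the augmentation ideal, then killing the error term in $J_X^{n+1}$ with the same vacuum-norm rigidity (Lemma \ref{lem:vacuum_norm} plus orthogonality of the graded pieces of $\mathfrak{F}_X$), and finally reading off homogeneity from Fourier coefficients of $t \mapsto \gamma_t^Y(\varphi(a))$. Both arguments ultimately rest on the same two facts --- the filtration is intrinsic to $(\cA_X, \rho_0)$, and the vacuum is norming on homogeneous components while the degrees are mutually orthogonal in $\mathfrak{F}_X$ --- but yours avoids the reduction to degree one and works uniformly in each degree $n$, at the cost of more bookkeeping with the Ces\`aro projections $\Phi_n$; it also yields gauge-equivariance of $\varphi$ as a byproduct, which is slightly more than the lemma asserts. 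The two delicate points you flag are both fine: continuity of $\Phi_m$ lets you pass it through norm limits of finite sums of homogeneous terms, and $\varphi(J_X) = J_Y$ follows from surjectivity of $\varphi$ together with $\varphi(\ker\rho_0^X) \subseteq \ker\rho_0^Y$ and the directness of the sum (\ref{eq:direct_sum}).
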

\begin{proof}
Since $\varphi$ is a homomorphism, it suffices to show, say, that $\varphi(S^X_1)$ has ``degree one", that is, it is in
the norm closure of $\textrm{span}\{S^Y(\eta) : \eta \in Y(1)\}$. By assumption, we may write $\varphi(S^X_1) = \sum_i a_i S^Y_i + T$, with $T$ in the closure of $\textrm{span}\{S^Y(\eta) : \eta \in Y(n), n \geq 2\}$. But $\varphi^{-1}(\sum_i a_i S^Y_i + T) = S^X_1$, and $\varphi^{-1}(T)$ is in the norm closure of $\textrm{span}\{S^X(\xi) : \eta \in X(n), n \geq 2\}$, so $\varphi^{-1}(\sum_i a_i S^Y_i) = S^X_1 + B$, with $B = -\varphi^{-1}(T)$ (note that $\varphi^{-1}$ also preserves the direct sum decomposition (\ref{eq:direct_sum})).

If $T = 0$ then we are done, so assume $T \neq 0$. Then $B \neq 0$, also. But
\bes
1 = \|S^X_1\| = \|S^X_1 \Omega_X\| < \|(S^X_1 + B)\Omega_X\| \leq \|S^X_1 + B\| = \|\sum_i a_i S^Y_i\|,
\ees
and at the same time
\bes
\|\sum_i a_i S^Y_i\| = \|\sum_i a_i S^Y_i \Omega_Y\| < \|(\sum_i a_i S^Y_i + T) \Omega_Y\| \leq \|\sum_i a_i S^Y_i + T\| = \|S^X_1\| = 1.
\ees
From $T \neq 0$ we arrived at $1<1$, thus $T=0$.
\end{proof}

\begin{theorem}\label{thm:algebra_iso}
$X \cong Y$ if and only if $\cA_X$ and $\cA_Y$ are isometrically isomorphic with an isomorphism that preserves the direct sum decomposition (\ref{eq:direct_sum}), and this happens if and only if $\cA_X$ and $\cA_Y$ are isometrically isomorphic with a grading preserving isomorphism. In fact, if $\varphi : \cA_X \rightarrow \cA_Y$ is a grading preserving isometric isomorphism then there is an isomorphism $V: X \rightarrow Y$ such that for all $T\in \cA_X$, $\varphi(T) = V T V^*$.
\end{theorem}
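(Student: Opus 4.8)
The plan is to prove the three conditions equivalent by running the cycle $(1)\Rightarrow(3)\Rightarrow(2)\Rightarrow(3)\Rightarrow(1)$, where $(1)$ is $X\cong Y$, $(2)$ is the existence of an isometric isomorphism $\cA_X\cong\cA_Y$ preserving (\ref{eq:direct_sum}), and $(3)$ is the existence of a grading-preserving one; the final ``in fact'' clause will drop out of the proof of $(3)\Rightarrow(1)$. Two of the edges are easy. For $(1)\Rightarrow(3)$, given an isomorphism $V=\{V_n\}:X\to Y$ I assemble the Fock-space unitary $\hat{V}=\bigoplus_n V_n:\mathfrak{F}_X\to\mathfrak{F}_Y$; using (\ref{eq:iso}) one checks $\hat{V}S^X(\xi)\hat{V}^*=S^Y(V_n\xi)$ for $\xi\in X(n)$, so $\varphi(T):=\hat{V}T\hat{V}^*$ is an isometric isomorphism carrying degree-$n$ generators to degree-$n$ generators, hence grading preserving. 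For $(3)\Rightarrow(2)$, a grading-preserving map sends $\mb{C}\cdot I$ to $\mb{C}\cdot I$ and the degree-$\geq 1$ part to the degree-$\geq 1$ part, so it respects (\ref{eq:direct_sum}); and $(2)\Rightarrow(3)$ is precisely Lemma \ref{lem:iso_grading}.

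The substance is $(3)\Rightarrow(1)$. Here I may assume, by Lemma \ref{lem:projection_subproduct}, that $X$ and $Y$ are standard, so $X(n)\subseteq E^{\otimes n}$ and $Y(n)\subseteq F^{\otimes n}$ with $E=X(1)$, $F=Y(1)$ and $U^X_{m,n}=p^X_{m+n}|_{X(m)\otimes X(n)}$, and likewise for $Y$. By Lemma \ref{lem:vacuum_norm} the map $S^X:X(n)\to\cA_X$ is a linear isometry onto the (therefore closed) degree-$n$ subspace, and similarly for $Y$; since $\varphi$ and $\varphi^{-1}$ are both grading preserving, $\varphi$ carries the degree-$n$ subspace isometrically \emph{onto} the degree-$n$ subspace. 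Transporting through these identifications yields, for each $n$, a surjective linear isometry $\psi_n:X(n)\to Y(n)$ characterized by $\varphi(S^X(\xi))=S^Y(\psi_n\xi)$. For $n=1$ this is a unitary $V_1:E\to F$. Since $\varphi$ is multiplicative and $S^X(\xi_1)\cdots S^X(\xi_n)=S^X(p^X_n(\xi_1\otimes\cdots\otimes\xi_n))$, the computation $\varphi(S^X(p^X_n(\xi_1\otimes\cdots\otimes\xi_n)))=S^Y(p^Y_n\hat{V}_1^{\otimes n}(\xi_1\otimes\cdots\otimes\xi_n))$ shows $\psi_n=p^Y_n\hat{V}_1^{\otimes n}|_{X(n)}$, where $\hat{V}_1^{\otimes n}:E^{\otimes n}\to F^{\otimes n}$ is the $n$-th tensor power of $V_1$.

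The key step, and the one I expect to be the main obstacle, is upgrading this to the clean statement $\hat{V}_1^{\otimes n}(X(n))=Y(n)$. This comes from a norm squeeze: for $\xi\in X(n)$, isometry of $\psi_n$ gives $\|p^Y_n\hat{V}_1^{\otimes n}\xi\|=\|\xi\|=\|\hat{V}_1^{\otimes n}\xi\|$, the last equality because $\hat{V}_1^{\otimes n}$ is unitary on $E^{\otimes n}$; applying a projection without shrinking the norm forces $p^Y_n\hat{V}_1^{\otimes n}\xi=\hat{V}_1^{\otimes n}\xi$, i.e. $\hat{V}_1^{\otimes n}\xi\in Y(n)$, and surjectivity of $\psi_n$ then gives equality. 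Hence $V_n:=\hat{V}_1^{\otimes n}|_{X(n)}$ is a unitary onto $Y(n)$, the global unitary $\hat{V}_1^{\otimes n}$ maps $X(n)$ onto $Y(n)$ and its orthocomplement onto that of $Y(n)$, so it intertwines the projections, $\hat{V}_1^{\otimes(m+n)}p^X_{m+n}=p^Y_{m+n}\hat{V}_1^{\otimes(m+n)}$. But (\ref{eq:iso}) for $V=\{V_n\}$ reduces exactly to this intertwining, so $V$ is an isomorphism of subproduct systems, with $V_0=\mathrm{id}$ since $\varphi$ is unital by Lemma \ref{lem:iso_vacuum}; this establishes $(1)$. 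Finally $\hat{V}=\bigoplus_n V_n$ satisfies $\hat{V}S^X(\xi)\hat{V}^*=S^Y(\psi_n\xi)=\varphi(S^X(\xi))$, so $\varphi$ and $T\mapsto\hat{V}T\hat{V}^*$ are continuous homomorphisms agreeing on the generators of $\cA_X$ and hence on all of $\cA_X$, which is the last assertion of the theorem.
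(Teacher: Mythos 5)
Your proof is correct, and its skeleton coincides with the paper's: both reduce to standard subproduct systems via Lemma \ref{lem:projection_subproduct}, define the candidate fiber maps by $\xi \mapsto \varphi(S^X(\xi))\Omega_Y$, and get isometry from Lemma \ref{lem:vacuum_norm} and the correct target fiber from Lemma \ref{lem:iso_grading}. The one place where you genuinely diverge is the verification that these fiber maps respect the subproduct structure. The paper does this by a direct computation: apply $\varphi$ to $S^X(p^X_{m,n}(\xi\otimes\eta)) = S^X(\xi)S^X(\eta)$, use multiplicativity, and evaluate at the vacuum to land on $p^Y_{m,n}(V\xi\otimes V\eta)$ — two lines, no extra structure needed. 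You instead first identify $\psi_n$ as $p^Y_n\hat{V}_1^{\otimes n}\big|_{X(n)}$ and then run the norm squeeze $\|p^Y_n\hat{V}_1^{\otimes n}\xi\| = \|\xi\| = \|\hat{V}_1^{\otimes n}\xi\|$ to conclude $\hat{V}_1^{\otimes n}(X(n)) = Y(n)$, from which (\ref{eq:iso}) follows as a projection-intertwining identity. This is longer, but it buys something the paper's computation does not give directly: the isomorphism $V$ is implemented by the tensor powers of a single unitary $V_1 : X(1)\to Y(1)$, which is exactly the content of Proposition \ref{prop:change_of_variables} (stated without proof in the paper); so your argument proves that proposition en route. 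Two small points you assert without proof but which do need a word: that $\varphi^{-1}$ is again grading preserving (hence $\psi_n$ is onto) follows from the uniqueness of the Cesaro-graded decomposition established in the preceding proposition, and the paper is equally terse about the corresponding surjectivity of its $V$. Neither is a gap.
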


\begin{proof}
$X \cong Y$ implies $\cA_X \cong \cA_Y$ because these algebras are then generated by unitarily equivalent tuples.

For the converse, we will assume that X and Y are standard subproduct systems. The isomorphism $V : X \rightarrow Y$ is defined on the fiber $X(n)$ by
\bes
V (\xi) = V(S^X(\xi)\Omega_X) = \varphi(S^X(\xi))\Omega_Y \,\, , \,\, \xi \in X(n).
\ees
If it is well defined, then it is onto. Lemma \ref{lem:vacuum_norm} shows that $V$ is an isometry on the fibers:
\bes
\|S^X(\xi)\Omega_X\| = \|S^X(\xi)\| = \|\varphi(S^X(\xi))\| = \|\varphi(S^X(\xi))\Omega_Y\|.
\ees
Lemma \ref{lem:iso_grading} implies that $V(\xi)$ sits in $Y(n)$. $V$ respects the subproduct structure: if $m,n \in \mb{N}$, $\xi \in X(n), \eta \in X(m)$, then
\begin{align*}
V p^X_{m,n}(\xi \otimes \eta) &= V S^X(p^X_{m,n}(\xi \otimes \eta)) \Omega_X \\
&= \varphi(S^X(p^X_{m,n}(\xi \otimes \eta))) \Omega_Y \\
&= \varphi(S^X(\xi) S^X(\eta))\Omega_Y \\
&= \varphi(S^X(\xi)) \varphi(S^X(\eta))\Omega_Y \\
(*)&= p_{m,n}^Y \left( \varphi(S^X(\xi)) \Omega_Y \otimes \varphi(S^X(\eta))\Omega_Y \right) \\
&= p_{m,n}^Y(V(\xi) \otimes V(\eta)).
\end{align*}
(*) follows from the facts $S^Y(y) \Omega_Y = y$ and $S^Y(y_1) S^Y(y_2) \Omega_Y = S^Y(p_{m,n}^Y (y_1 \otimes y_2)) \Omega_Y = p_{m,n}^Y(y_1 \otimes y_2) = p_{m,n}^Y(S^Y(y_1) \Omega_Y \otimes S^Y(y_2) \Omega_Y)$.

Finally, let us show that for all $T\in \cA_X$, $\varphi(T) = V T V^*$.
What we mean by this is that for all $\xi \in X$, $\varphi(S^X(\xi)) = V S^X(\xi) V^*$. Let $\varphi(S^X(\eta))\Omega_Y = V(\eta)$ be a typical element in $\mathfrak{F}_Y$.
\begin{align*}
V S^X(\xi) V^* \varphi(S^X(\eta))\Omega_Y &= V S^X(\xi) \eta \\
&= V p^X(\xi \otimes \eta) \\
&= \varphi(S^X(p^X(\xi \otimes \eta))) \Omega_Y \\
&= \varphi(S^X(\xi) S^X(\eta)) \Omega_Y \\
&= \varphi(S^X(\xi)) \varphi(S^X(\eta)) \Omega_Y ,
\end{align*}
This completes the proof.
\end{proof}

\section{Classification of the universal algebras of $q$-commuting tuples}\label{sec:qcommuting}

\begin{definition}
A matrix $q$ is called \emph{admissible} if $q_{ii}=0$ and $0 \neq q_{ij}=q_{ji}^{-1}$ for all $i \neq j$.
\end{definition}

\subsection{The $q$-commuting algebras $\cA_q$ and their universality}
Let $\{e_1, \ldots,  e_d\}$ be an orthonormal basis for $E := \mb{C}^d$, to be fixed (together with $d$) throughout this section. Let $q \in M_d(\mb{C})$ be an admissible matrix, and let $X_q$ be the maximal standard subproduct system with fibers
\bes
X_q(1) = E \,\, , \,\, X_q(2) = E \otimes E \ominus \textrm{span}\{e_i \otimes e_j - q_{ij} e_j \otimes e_i : 1\leq i, j \leq d, i \neq j \}.
\ees
When $q_{ij} = 1$ for all $i<j$, then $X_{q}$ is the symmetric subproduct system $SSP$. The Fock spaces $\mathfrak{F}_{X_q}$ have been studied in \cite{Dey07}.

For brevity, we shall write $S^q_i$ instead of $S^{X_q}_i$.
We denote by $\cA_q$ the algebra $\cA_{X_q}$. By Theorem \ref{thm:universal}, the algebra $\cA_q$ is the universal
norm closed unital operator algebra generated by a row contraction $(T_1, \ldots, T_d)$ satisfying the relations
\bes
T_i T_j = q_{ij}T_j T_i \,\, , \,\, 1 \leq i < j \leq d.
\ees


\subsection{The character space of $\cA_q$}\label{subsec:charA_q}

Let $\cM_q$ be the space of all (contractive) multiplicative and unital linear functionals on $\cA_q$, endowed with the weak-$*$ topology. We shall call $\cM_q$ \emph{the character space of $\cA_q$}. Every $\rho \in \cM_q$ is uniquely determined by the $d$-tuple of complex numbers $(x_1, \ldots, x_d)$, where $x_i = \rho(S^q_i)$ for $i = 1, \ldots, d$. Since a contractive linear functional is completely contractive, $(x_1, \ldots, x_d)$ must be a row contraction, that is, $|x_1|^2 + \ldots + |x_d|^2 \leq 1$. In other words, $(x_1, \ldots, x_d)$ is in the unit ball $B_d$ of $\mb{C}^d$. The multiplicativity of $\rho$ implies that $(x_1, \ldots, x_d)$ must lie inside the set
\bes
Z_q := \{(z_1, \ldots, z_d)\in B_d: (1-q_{ij})z_iz_j = 0, 1 \leq i < j \leq d\} .
\ees

Conversely, Theorem \ref{thm:universal} implies that every $(x_1, \ldots, x_d) \in Z_q$ gives rise to a character $\rho \in \cM_q$ that sends $S^q_i$ to $x_i$. Thus the map
\bes
\cM_q \ni \rho \mapsto (\rho(S^q_1), \ldots, \rho(S^q_d)) \in Z_q
\ees
is injective and surjective. It is also obviously continuous (with respect to the weak-$*$ and standard topologies). Since $\cM_q$ is compact, we have the homeomorphism
\be
\cM_q \cong Z_q.
\ee

Note that the vacuum state $\rho_0$ corresponds to the point $0 \in Z_q \subset \mb{C}^d$.

When $q_{ij} = 1$, the condition $(1-q_{ij})z_iz_j = 0$ is trivially satisfied, so when $q_{i,j}=1$ for all $i,j$, then $Z_q$ is the unit ball $B_d$. When $q_{ij} \neq 1$, the condition is that either $z_i = 0$ or $z_j = 0$. Thus, if for all $i,j$, $q_{ij}\neq 1$, then $Z_q$ is the union of $d$ discs glued together at their origins.

\subsection{Classification of the $\cA_q$, $q_{ij}\neq 1$}

Given a permutation $\sigma$ (on a set with $d$ elements), let $U_\sigma$ be the matrix that induces the same permutation on the standard basis of $\mb{C}^d$.

\begin{proposition}\label{prop:similarity}
Let $q$ and $r$ be two admissible $d \times d$ matrices. Assume that there is a permutation $\sigma \in S_d$ such that $r = U_\sigma q U_\sigma^{-1}$, and let $\lambda_1, \ldots, \lambda_d$ be any complex numbers on the unit circle. Then the map
\be\label{eq:sigma}
e_i \mapsto \lambda_i e_{\sigma(i)}
\ee
extends to an isomorphism of $X_q$ onto $X_r$, and thus the map
\bes
S_i^q \mapsto \lambda_i S_{\sigma(i)}^r
\ees
extends to a completely isometric isomorphism between $\cA_q$ and $\cA_r$.
\end{proposition}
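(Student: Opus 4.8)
The plan is to produce the isomorphism $V:X_q\to X_r$ fiber-by-fiber from the prescribed action on $X_q(1)=E$, verify that it respects the subproduct structure, and then invoke the classification Theorem~\ref{thm:algebra_iso} (or directly Proposition~\ref{prop:UE_ISO}) to conclude that the induced map on the shift tuples extends to a completely isometric isomorphism $\cA_q\cong\cA_r$. The map $V_1:E\to E$ defined by $V_1(e_i)=\lambda_i e_{\sigma(i)}$ is a unitary because each $\lambda_i$ has modulus one and $\sigma$ permutes the orthonormal basis. The whole content of the proposition is that this unitary on the first fiber is compatible with the two subproduct-system structures, i.e.\ that it carries $X_q(2)$ onto $X_r(2)$, since both $X_q$ and $X_r$ are \emph{maximal} standard subproduct systems determined by their first two fibers, and an isomorphism of the first two fibers extends uniquely to all higher fibers by the intersection formula for maximal systems.

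First I would compute the image of the defining relation subspace under $V_1\otimes V_1$. Write $K_q=\operatorname{span}\{e_i\otimes e_j-q_{ij}e_j\otimes e_i: i\neq j\}$, so that $X_q(2)=E^{\otimes 2}\ominus K_q$, and similarly for $r$. Applying $V_1\otimes V_1$ to a generator gives
\begin{equation*}
(V_1\otimes V_1)(e_i\otimes e_j-q_{ij}e_j\otimes e_i)
=\lambda_i\lambda_j\bigl(e_{\sigma(i)}\otimes e_{\sigma(j)}-q_{ij}\,e_{\sigma(j)}\otimes e_{\sigma(i)}\bigr).
\end{equation*}
Setting $k=\sigma(i),\ l=\sigma(j)$ and using the hypothesis $r=U_\sigma q U_\sigma^{-1}$, which unwinds to $r_{kl}=q_{\sigma^{-1}(k)\,\sigma^{-1}(l)}=q_{ij}$, the right-hand side is exactly $\lambda_i\lambda_j(e_k\otimes e_l-r_{kl}\,e_l\otimes e_k)$, a generator of $K_r$ (up to the unimodular scalar $\lambda_i\lambda_j$). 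Hence $(V_1\otimes V_1)K_q=K_r$, and since $V_1\otimes V_1$ is unitary it also carries the orthogonal complement onto the orthogonal complement, giving $(V_1\otimes V_1)X_q(2)=X_r(2)$, i.e.\ $V_2:=(V_1\otimes V_1)|_{X_q(2)}$ is a unitary onto $X_r(2)$.

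Next I would promote this to a genuine morphism of subproduct systems. Since $X_q$ and $X_r$ are the \emph{maximal} standard subproduct systems with the prescribed fibers in degrees $1$ and $2$, the higher fibers are given by the intersection formula $X_q(n)=\bigcap_{i+j=n}X_q(i)\otimes X_q(j)$ from Subsection~\ref{subsec:examples}. Because $V_1^{\otimes n}:E^{\otimes n}\to E^{\otimes n}$ is unitary and carries $X_q(1)\otimes X_q(1)=X_q(2)$ onto $X_r(2)$ (by the previous step) while trivially carrying $E^{\otimes i}\otimes X_q(2)\otimes E^{\otimes j}$ onto $E^{\otimes i}\otimes X_r(2)\otimes E^{\otimes j}$, it maps each intersecting subspace to the corresponding one for $r$, and therefore $V_n:=V_1^{\otimes n}|_{X_q(n)}$ is a unitary of $X_q(n)$ onto $X_r(n)$. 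The compatibility condition~(\ref{eq:iso}) with the coisometric structure maps $U^X_{m,n}=p^X_{m+n}|_{X(m)\otimes X(n)}$ follows because the projections $p^X_{m+n}$ are built from $X(2)$ via the same intersection, and $V_1^{\otimes(m+n)}$ intertwines $p^q_{m+n}$ with $p^r_{m+n}$; this is a direct but routine verification. Thus $V=\{V_n\}$ is an isomorphism $X_q\to X_r$.

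Finally, having an isomorphism of subproduct systems sending $e_i\mapsto\lambda_i e_{\sigma(i)}$, Proposition~\ref{prop:UE_ISO} (or Theorem~\ref{thm:algebra_iso}) yields that the shift tuples are unitarily equivalent as rows via $\bigoplus_n V_n$, and consequently the assignment $S_i^q\mapsto\lambda_i S_{\sigma(i)}^r$ extends to a completely isometric isomorphism $\cA_q\to\cA_r$ implemented by conjugation with that unitary. \textbf{The main obstacle} is the verification that $V_1^{\otimes n}$ respects the intersection defining the higher fibers, equivalently that it intertwines the projections $p^q_n$ and $p^r_n$; this is where maximality of the subproduct systems is essential, and one should be careful that the argument would \emph{fail} for a non-maximal standard system whose higher fibers are not forced by the first two.
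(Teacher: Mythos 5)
Your proposal is correct and follows essentially the same route as the paper: check that $V_1\otimes V_1$ carries the relation space of $X_q(2)$ onto that of $X_r(2)$ via the identity $r_{\sigma(i)\sigma(j)}=q_{ij}$, extend to all fibers by induction/maximality, and conclude with Proposition~\ref{prop:UE_ISO}. Your explicit use of the intersection formula for maximal standard subproduct systems merely spells out the induction the paper leaves terse.
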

\begin{proof}
For all $n$, the map (\ref{eq:sigma}) extends to a unitary $V_n$ of $E^{\otimes n}$. For $n=2$, this unitary sends $e_i \otimes e_j - q_{ij}e_j \otimes e_i$ to $\lambda_i \lambda_j e_{\sigma(i)} \otimes e_{\sigma(j)} - \lambda_i \lambda_j q_{ij}e_{\sigma(j)} \otimes e_{\sigma(i)}$. But $r = U_\sigma q U_\sigma^{-1}$ implies $r_{\sigma(i)\sigma(j)} = q_{ij}$, thus
\bes
V_2: e_i \otimes e_j - q_{ij}e_j \otimes e_i \mapsto \lambda_i \lambda_j e_{\sigma(i)} \otimes e_{\sigma(j)} - \lambda_i \lambda_j r_{\sigma(i)\sigma(j)}e_{\sigma(j)} \otimes e_{\sigma(i)},
\ees
so $V_2$ is a unitary between $X_q(2)$ and $X_r(2)$ that respects the product. By induction, it follows that $V = \{V_n\big|_{X_q(n)}\}_n$ is an isomorphism of subproduct systems. The final assertion follows from Proposition \ref{prop:UE_ISO}.
\end{proof}

\begin{theorem}\label{thm:subproduct_iso_q}
Let $q$ and $r$ be two admissible $d \times d$ matrices such that $q_{ij},r_{ij}\neq 1$ for all $i,j$. Then $X_q$ is isomorphic to $X_r$ if and only if there is a permutation $\sigma \in S_d$ such that $r = U_\sigma q U_\sigma^{-1}$. In this case the isomorphisms are precisely those of the form
\bes
e_i \mapsto \lambda_i e_{\sigma(i)},
\ees
where $\lambda_1, \ldots, \lambda_d$ are any complex numbers on the unit circle, and $\sigma$ is such that $r = U_\sigma q U_\sigma^{-1}$.
\end{theorem}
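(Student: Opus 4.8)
The ``if'' direction is exactly Proposition \ref{prop:similarity}, so the work is in the ``only if'' direction together with the precise description of all isomorphisms. The plan is to reduce everything to a statement about the single unitary $W:=V_1$ on $E=\mb{C}^d$ attached to an isomorphism $V:X_q\to X_r$. Since $X_q$ and $X_r$ are the \emph{maximal} standard subproduct systems with the prescribed first two fibers, $V$ is governed by its first two components, and the morphism condition (\ref{eq:iso}) at $(s,t)=(1,1)$ reads $V_2\circ p^q_2=p^r_2\circ(W\otimes W)$, where $p^q_2,p^r_2$ denote the orthogonal projections of $E^{\otimes 2}$ onto $X_q(2)$, $X_r(2)$. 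First I would observe that, because $V_2$ is a surjective isometry of $X_q(2)$ onto $X_r(2)$ and $W\otimes W$ is unitary, a norm comparison in this identity forces $(W\otimes W)\big(X_q(2)\big)=X_r(2)$, equivalently $(W\otimes W)N_q=N_r$, where $N_q:=\ker p^q_2=\textrm{span}\{e_i\otimes e_j-q_{ij}e_j\otimes e_i:i\neq j\}$ (and likewise for $r$). Thus the entire problem becomes: classify the unitaries $W$ on $\mb{C}^d$ with $(W\otimes W)N_q=N_r$, and show they are exactly the scaled permutations.

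The crux, and the place where the hypothesis $q_{ij}\neq1$ is indispensable, is the following rigidity lemma. For a subproduct system $Z\in\{X_q,X_r\}$ set $S_Z:=\{v\in E: v\otimes v\in Z(2)\}$. Writing $v=\sum_i c_ie_i$ one computes, for $k\neq l$, $\langle v\otimes v,\,e_k\otimes e_l-q_{kl}e_l\otimes e_k\rangle=c_kc_l(1-\overline{q_{kl}})$. Since $q_{kl}\neq1$, this vanishes for every $k\neq l$ precisely when $c_kc_l=0$ for all $k\neq l$, i.e.\ when at most one coordinate of $v$ is nonzero; hence $S_{X_q}=\bigcup_i\mb{C}e_i=S_{X_r}$, the union of the $d$ coordinate axes. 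Now because $(W\otimes W)\big(X_q(2)\big)=X_r(2)$ and $(W\otimes W)(v\otimes v)=(Wv)\otimes(Wv)$, the map $W$ carries $S_{X_q}$ onto $S_{X_r}$; that is, the unitary $W$ maps the union of coordinate axes bijectively onto itself. A linear bijection with this property must send each $e_i$ into a single axis, so there is a permutation $\sigma\in S_d$ and unimodular scalars $\lambda_i$ with $We_i=\lambda_i e_{\sigma(i)}$. I expect this lemma to be the main obstacle: everything downstream is bookkeeping, whereas here one must extract the discrete data $(\sigma,\lambda)$ from the continuous datum $W$, and this extraction simply fails when some $q_{ij}=1$ (the symmetric case, where $S_Z$ is all of $E$ and $W$ may be arbitrary).

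Finally I would recover the relation between $q$ and $r$. Applying $W\otimes W$ to the generator $e_i\otimes e_j-q_{ij}e_j\otimes e_i\in N_q$ gives $\lambda_i\lambda_j\big(e_{\sigma(i)}\otimes e_{\sigma(j)}-q_{ij}e_{\sigma(j)}\otimes e_{\sigma(i)}\big)$, which must lie in $N_r$; since $N_r$ meets the $\{\sigma(i),\sigma(j)\}$--block in the one-dimensional span of $e_{\sigma(i)}\otimes e_{\sigma(j)}-r_{\sigma(i)\sigma(j)}e_{\sigma(j)}\otimes e_{\sigma(i)}$, comparing coefficients yields $r_{\sigma(i)\sigma(j)}=q_{ij}$, i.e.\ $r=U_\sigma q U_\sigma^{-1}$. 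This proves the ``only if'' direction and shows every isomorphism has the asserted form with $\sigma$ constrained by $r=U_\sigma qU_\sigma^{-1}$; conversely, Proposition \ref{prop:similarity} (via Proposition \ref{prop:UE_ISO}) shows each such map does extend to an isomorphism of $X_q$ onto $X_r$, completing the exact characterization.
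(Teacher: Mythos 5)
Your proof is correct, and while it follows the same overall skeleton as the paper's (reduce to the level-$2$ data, show $V_1$ is a scaled permutation, then read off $r=U_\sigma qU_\sigma^{-1}$ by matching generators block by block), the key step is carried out by a genuinely different mechanism. The paper writes $V_1^{-1}$ as a unitary matrix $U=(u_{ij})$, notes that every element of $N_q$ has vanishing $e_k\otimes e_k$ coefficients, and extracts $u_{ik}u_{jk}(1-r_{ij})=0$ directly, so that $U$ has one nonzero entry per column; you instead introduce the coordinate-free invariant $S_Z=\{v:v\otimes v\in Z(2)\}$, show (using $q_{kl}\neq 1$) that it equals the union of the coordinate axes, and conclude that $W=V_1$ permutes the axes. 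The two computations are dual to one another --- the paper tests elements \emph{of} $N_q$ against the diagonal vectors $e_k\otimes e_k$, while you test the rank-one vectors $v\otimes v$ against the generators of $N_q$ --- and both invoke $q_{ij}\neq1$ at exactly the analogous point, but your version isolates a geometric invariant of the subproduct system that makes transparent why the symmetric case ($q_{ij}=1$) is genuinely different, at the cost of the small extra argument that a unitary preserving a union of lines must be a scaled permutation. One minor point worth making explicit: your passage from $V_2\circ p^q_2=p^r_2\circ(W\otimes W)$ to $(W\otimes W)N_q=N_r$ needs either the equality $\dim N_q=\dim N_r=\binom{d}{2}$ (each unordered pair contributes a one-dimensional piece, by admissibility) or an appeal to the same identity for $V^{-1}$; the norm comparison alone gives only the inclusion $(W\otimes W)X_q(2)\subseteq X_r(2)$.
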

\begin{proof}
One direction is Proposition \ref{prop:similarity}, so assume that there is an isomorphism of subproduct systems $V: X_q \rightarrow X_r$. Let $f_i := V^{-1}e_i$. There is a $d \times d$ unitary matrix $U = (u_{ij})$ such that $f_i = \sum_j u_{ij}e_j$. As $V$ is an isomorphism of subproduct systems, we have for all $i \neq j$
\bes
Vp_2^{X_q}(f_i \otimes f_j - r_{ij}f_j \otimes f_i) = p_2^{X_r}(e_i \otimes e_j - r_{ij}e_j \otimes e_i) = 0,
\ees
thus
\bes
(\sum_k u_{ik} e_k)\otimes(\sum_l u_{jl} e_l) - r_{ij} (\sum_k u_{jk} e_k)\otimes(\sum_l u_{il} e_l) \in \textrm{span} \{e_m \otimes e_n - q_{mn} e_n \otimes e_m : m \neq n\},
\ees
or
\be\label{eq:sum_in}
\sum_{k,l}(u_{ik}u_{jl} - r_{ij}u_{jk}u_{il})e_k \otimes e_l \in \textrm{span} \{e_m \otimes e_n - q_{mn} e_n \otimes e_m : m \neq n\}.
\ee
The coefficients of the vectors $e_k \otimes e_k$ in the sum above must vanish, thus $u_{ik}u_{jk} - r_{ij}u_{jk}u_{ik} = 0$ for all $i \neq j$. Since $r_{ij}\neq 1$, we must have $u_{jk}u_{ik} = 0$ for all $k$ and all $i \neq j$. Thus the unitary matrix $U$ has precisely one nonzero element in each column, and it therefore must be of the form $U_\sigma^{-1} D$, where $D$ is a diagonal unitary matrix.

Equation (\ref{eq:sum_in}) becomes
\bes
u_{i\sigma(i)}u_{j\sigma(j)}e_{\sigma(i)} \otimes e_{\sigma(j)} - r_{ij}u_{j\sigma(j)}u_{i\sigma(i)}e_{\sigma(j)} \otimes e_{\sigma(i)} \in \textrm{span} \{e_m \otimes e_n - q_{mn} e_n \otimes e_m : m \neq n\},
\ees
but this can only happen if

\bes
u_{i\sigma(i)}u_{j\sigma(j)}e_{\sigma(i)} \otimes e_{\sigma(j)} - r_{ij}u_{j\sigma(j)}u_{i\sigma(i)}e_{\sigma(j)} \otimes e_{\sigma(i)}
\ees
is proportional to
\bes
e_{\sigma(i)} \otimes e_{\sigma(j)} - q_{\sigma(i)\sigma(j)}e_{\sigma(j)} \otimes e_{\sigma(i)} ,
\ees
that is $u_{i\sigma(i)}u_{j\sigma(j)}q_{\sigma(i)\sigma(j)} = u_{j\sigma(j)}u_{i\sigma(i)}r_{ij}$, or $r_{ij} = q_{\sigma(i)\sigma(j)}$. Replacing $\sigma$ with $\sigma^{-1}$, the proof is complete.
\end{proof}

\begin{corollary}\label{cor:auto}
Let $q$ be an admissible $d \times d$ matrix such that there is no permutation $\sigma \in S_d$ such that $q = U_\sigma q U_\sigma^{-1}$. Assume that $q_{ij} \neq 1$ for all $i,j$. Then the only automorphisms of $X_q$ are unitary scalings of the basis $\{e_1, \ldots, e_d\}$.
\end{corollary}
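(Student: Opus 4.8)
The plan is to obtain this as an immediate specialization of Theorem \ref{thm:subproduct_iso_q} to the case $r = q$. An automorphism of $X_q$ is by definition an isomorphism of subproduct systems $V : X_q \to X_q$, so it falls directly within the scope of that theorem, and no new machinery is needed.

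First I would invoke Theorem \ref{thm:subproduct_iso_q} with $r = q$; this is legitimate because $q$ is admissible and, by hypothesis, satisfies $q_{ij} \neq 1$ for all $i,j$. The theorem asserts that every isomorphism $X_q \to X_q$ has the form $e_i \mapsto \lambda_i e_{\sigma(i)}$, where the $\lambda_i$ lie on the unit circle and $\sigma \in S_d$ is a permutation satisfying $q = U_\sigma q U_\sigma^{-1}$. Thus the permutation attached to any automorphism of $X_q$ must conjugate $q$ to itself.

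Next I would feed in the hypothesis. The assumption is precisely that the only permutation $\sigma$ with $q = U_\sigma q U_\sigma^{-1}$ is the identity, so the description above forces $\sigma = \mathrm{id}$. Substituting $\sigma = \mathrm{id}$, every automorphism of $X_q$ takes the form $e_i \mapsto \lambda_i e_i$ with $|\lambda_i| = 1$, that is, it is a unitary scaling of the basis $\{e_1, \ldots, e_d\}$. Conversely, each such scaling is indeed an automorphism by Proposition \ref{prop:similarity} (taken with $\sigma = \mathrm{id}$), so these are exactly the automorphisms.

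The only point requiring care — and the closest thing to an obstacle — is the correct reading of the hypothesis: literally the identity permutation always conjugates $q$ to itself, so the phrase ``there is no permutation $\sigma \in S_d$ such that $q = U_\sigma q U_\sigma^{-1}$'' must be understood as excluding all \emph{nontrivial} $\sigma$. Once this is settled, the corollary is a direct consequence of Theorem \ref{thm:subproduct_iso_q} and involves no further computation.
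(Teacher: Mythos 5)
Your proposal is correct and is exactly the argument the paper intends: the corollary is stated as an immediate specialization of Theorem \ref{thm:subproduct_iso_q} to $r=q$, with the hypothesis forcing $\sigma=\mathrm{id}$ and Proposition \ref{prop:similarity} supplying the converse. Your observation that the hypothesis must be read as excluding only \emph{nontrivial} permutations (since the identity always conjugates $q$ to itself) is a fair and correct reading of the paper's slightly loose phrasing.
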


\begin{theorem}\label{thm:algebra_iso_q}
Let $q$ and $r$ be two admissible $d \times d$ matrices such that $q_{ij},r_{ij}\neq 1$ for all $i,j$. Then $\cA_q$ is isometrically isomorphic to $\cA_r$ if and only if there is a permutation $\sigma \in S_d$ such that $r = U_\sigma q U_\sigma^{-1}$. In this case the isometric isomorphisms between $\cA_q$ and $\cA_r$ are precisely those of the form
\bes
S^q_i \mapsto \lambda_i S^r_{\sigma(i)},
\ees
where $\lambda_1, \ldots, \lambda_d$ are any complex numbers on the unit circle.
\end{theorem}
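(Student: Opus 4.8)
The plan is to establish the easy direction from an earlier result and to reduce the hard direction to the already-proved classification of the subproduct systems $X_q$ themselves; the crux is showing that every isometric isomorphism automatically preserves the vacuum state. The forward implication—that $r = U_\sigma q U_\sigma^{-1}$ produces an isometric isomorphism of the stated form $S^q_i \mapsto \lambda_i S^r_{\sigma(i)}$—is precisely Proposition \ref{prop:similarity}, so nothing new is needed there. We may also assume $d \geq 2$, since for $d=1$ there is only one admissible matrix and the statement is vacuous.

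For the converse, suppose $\varphi : \cA_q \rightarrow \cA_r$ is an isometric isomorphism. First I would invoke Lemma \ref{lem:iso_vacuum} to see that $\varphi$ is unital. The goal is then to show that $\varphi$ preserves the direct sum decomposition (\ref{eq:direct_sum}), which is equivalent to the statement $\rho_0 \circ \varphi = \rho_0$, i.e. that $\varphi$ preserves the vacuum state. To prove this I pass to character spaces: since $\varphi$ is a unital isometric isomorphism, composition with $\varphi$ sends contractive multiplicative unital functionals to such functionals, and it therefore induces a weak-$*$ homeomorphism $\varphi^* : \cM_r \rightarrow \cM_q$ (a continuous bijection between compact Hausdorff spaces, with continuous inverse induced by $\varphi^{-1}$). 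Under the identifications $\cM_q \cong Z_q$ and $\cM_r \cong Z_r$ of Section \ref{subsec:charA_q}, this becomes a homeomorphism $Z_r \rightarrow Z_q$.

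The main obstacle—and the place where the hypothesis $q_{ij}, r_{ij} \neq 1$ is essential—is forcing this homeomorphism to fix the origin. Because $q_{ij}, r_{ij} \neq 1$ for all $i,j$, both $Z_q$ and $Z_r$ are unions of $d$ closed unit discs glued together at the origin. For $d \geq 2$ the origin is topologically distinguished: removing it leaves exactly $d$ connected components, whereas removing any other point leaves the space connected (a closed disc minus a non-central point is still connected, and it remains attached to the other discs at the origin). Since homeomorphisms preserve the number of connected components of punctured spaces, any homeomorphism $Z_r \rightarrow Z_q$ must map the origin to the origin. The origin corresponds exactly to the vacuum state (the unique character sending every $S_i$ to $0$), so $\varphi^*(\rho_0) = \rho_0$, that is, $\rho_0 \circ \varphi = \rho_0$.

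With the vacuum state preserved, the remaining steps are routine. Unitality gives $\varphi(\mb{C} I) = \mb{C} I$, and from $\ker \rho_0 = \ker(\rho_0 \circ \varphi) = \varphi^{-1}(\ker \rho_0)$ we get $\varphi(\ker \rho_0) = \ker \rho_0$, so $\varphi$ preserves the decomposition (\ref{eq:direct_sum}). Theorem \ref{thm:algebra_iso} (through Lemma \ref{lem:iso_grading}) then supplies an isomorphism of subproduct systems $V : X_q \rightarrow X_r$ with $\varphi(T) = V T V^*$ for all $T \in \cA_q$. Applying Theorem \ref{thm:subproduct_iso_q} to $V$ yields a permutation $\sigma \in S_d$ with $r = U_\sigma q U_\sigma^{-1}$ and unit-modulus scalars $\lambda_1, \ldots, \lambda_d$ with $V e_i = \lambda_i e_{\sigma(i)}$. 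Finally $\varphi(S^q_i) = V S^q_i V^* = S^r(V e_i) = \lambda_i S^r_{\sigma(i)}$, which simultaneously gives the existence of $\sigma$ and the complete description of all isometric isomorphisms between $\cA_q$ and $\cA_r$.
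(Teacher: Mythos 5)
Your proof is correct and follows essentially the same route as the paper's: pass to the character spaces, observe that for $d\geq 2$ the origin of the "$d$ discs glued at a point" is topologically distinguished so the induced homeomorphism fixes the vacuum state, and then reduce to Theorem \ref{thm:algebra_iso} and Theorem \ref{thm:subproduct_iso_q}. You merely spell out in more detail the point-set argument that the paper states in one sentence (removing the origin leaves $d$ components, removing any other point leaves the space connected).
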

\begin{proof}
If $r = U_\sigma q U_\sigma^{-1}$, then by Proposition \ref{prop:similarity} and Theorem \ref{thm:algebra_iso} $\cA_q$ and $\cA_r$ are isomorphic (with an isomorphism that preserves the direct sum decomposition (\ref{eq:direct_sum})).

Conversely, assume that $\varphi: \cA_q \rightarrow \cA_r$ is a completely isometric isomorphism. Then $\varphi$ induces a homeomorphism between $\cM_r$ and $\cM_q$ by $\rho \mapsto \rho \circ \varphi$.
Recall that $\cM_q$ and $M_r$ are both homeomorphic to $d$ discs glued together at the origin. Thus the homeomorphism $\rho \mapsto \rho \circ \varphi$ must take $\rho_0$ of ${X_r}$ to $\rho_0$ of ${X_q}$, because these are the unique points in $\cM_r$ and $\cM_q$, respectively, that when removed from $M_r$ and $\cM_q$ leave $d$ disconnected punctured discs. Thus $\varphi$ sends the vacuum state of $\cA_r$ to the vacuum state of $\cA_q$, and must therefore preserve the direct sum decomposition (\ref{eq:direct_sum}). By Theorem \ref{thm:algebra_iso}, there is an isomorphism of subproduct systems $V: X_q \rightarrow X_r$ such that $\varphi(\bullet) = V \bullet V^*$. By Theorem \ref{thm:subproduct_iso_q} we conclude that there is a permutation $\sigma \in S_d$ such that $r = U_\sigma q U_\sigma^{-1}$. It also follows that
$\varphi(S_i^q) = \lambda_i S_{\sigma(i)}^r$.
\end{proof}

\begin{corollary}\label{cor:iso_auto}
Let $q$ be an admissible $d \times d$ matrix such that there is no permutation $\sigma \in S_d$ such that $q = U_\sigma q U_\sigma^{-1}$. Then the only isometric automorphisms of $\cA_q$ are unitary scalings of the shift $\{S^q_1, \ldots, S^q_d\}$.
\end{corollary}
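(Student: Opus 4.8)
The plan is to obtain this as an immediate specialization of Theorem \ref{thm:algebra_iso_q} to the case $r = q$. First I would take an arbitrary isometric automorphism $\varphi : \cA_q \rightarrow \cA_q$. Since $\varphi$ is in particular a completely isometric isomorphism of $\cA_q$ onto $\cA_q$, and since Theorem \ref{thm:algebra_iso_q} requires $q_{ij} \neq 1$ for all $i,j$ (which we assume to be in force here), that theorem applies with $r = q$: it produces a permutation $\sigma \in S_d$ satisfying $q = U_\sigma q U_\sigma^{-1}$ together with unit-modulus scalars $\lambda_1, \ldots, \lambda_d$ such that $\varphi(S^q_i) = \lambda_i S^q_{\sigma(i)}$ for all $i$.

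The second step is to invoke the hypothesis on $q$. The phrase ``there is no permutation $\sigma \in S_d$ such that $q = U_\sigma q U_\sigma^{-1}$'' must be read as ``no \emph{non-identity} permutation'', since $\sigma = \mathrm{id}$ always satisfies this relation; equivalently, the stabilizer of $q$ under the conjugation action of $S_d$ is trivial. Hence the permutation $\sigma$ produced in the first step can only be the identity, and the formula for $\varphi$ collapses to $\varphi(S^q_i) = \lambda_i S^q_i$, which is precisely a unitary scaling of the shift $\{S^q_1, \ldots, S^q_d\}$.

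For the reverse inclusion I would note that every such scaling is indeed an isometric automorphism: applying Proposition \ref{prop:similarity} with $r = q$ and $\sigma = \mathrm{id}$ shows that $S^q_i \mapsto \lambda_i S^q_i$ extends to a completely isometric automorphism of $\cA_q$ whenever $|\lambda_i| = 1$. There is essentially no genuine obstacle in this argument, since all of the substance is already carried by Theorems \ref{thm:subproduct_iso_q} and \ref{thm:algebra_iso_q}; the only point requiring care is the purely linguistic one of interpreting the hypothesis correctly, namely that the identity permutation is to be excluded from the phrase ``no permutation $\sigma$'', and that the assumption $q_{ij}\neq 1$ (needed to invoke Theorem \ref{thm:algebra_iso_q}) is understood to hold.
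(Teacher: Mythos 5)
Your proposal is correct and is exactly the argument the paper intends: the corollary is an immediate specialization of Theorem \ref{thm:algebra_iso_q} to $r=q$, with the hypothesis forcing $\sigma=\mathrm{id}$, and Proposition \ref{prop:similarity} supplying the converse inclusion. You also correctly identify the two points where the statement must be read charitably (excluding the identity permutation, and carrying over the standing assumption $q_{ij}\neq 1$ from Theorem \ref{thm:algebra_iso_q}).
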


As a corollary of the above discussion we have:
\begin{corollary}
Let $q$ and $r$ be two admissible $d \times d$ matrices such that $q_{ij},r_{ij}\neq 1$ for all $i,j$. Then $\cA_q$ is isometrically isomorphic to $\cA_r$ if and only if $X_q \cong X_r$.
\end{corollary}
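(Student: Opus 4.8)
The plan is to observe that both sides of the claimed equivalence have already been characterized, in the two immediately preceding theorems, by one and the same combinatorial condition on the matrices $q$ and $r$, so that the corollary follows by transitivity with essentially no further computation.

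Concretely, first I would invoke Theorem \ref{thm:subproduct_iso_q}, which under the standing hypothesis $q_{ij}, r_{ij} \neq 1$ asserts that $X_q \cong X_r$ holds if and only if there exists a permutation $\sigma \in S_d$ with $r = U_\sigma q U_\sigma^{-1}$. Next I would invoke Theorem \ref{thm:algebra_iso_q}, which under the same hypothesis asserts that $\cA_q$ is isometrically isomorphic to $\cA_r$ if and only if there exists a permutation $\sigma \in S_d$ with $r = U_\sigma q U_\sigma^{-1}$. Since the two isomorphism problems are governed by the identical criterion, chaining the two equivalences gives that $\cA_q$ is isometrically isomorphic to $\cA_r$ precisely when there exists $\sigma \in S_d$ with $r = U_\sigma q U_\sigma^{-1}$, which in turn holds precisely when $X_q \cong X_r$; this is exactly the assertion.

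It is worth noting that one direction can be seen without the full strength of Theorem \ref{thm:algebra_iso_q}: if $X_q \cong X_r$, then by Proposition \ref{prop:UE_ISO} the shifts are unitarily equivalent as rows, and so the general Theorem \ref{thm:algebra_iso} already yields a (completely) isometric isomorphism $\cA_q \cong \cA_r$. The substantive content therefore lies entirely in the reverse direction, i.e.\ in deducing from a bare isometric isomorphism of the algebras that the underlying subproduct systems are isomorphic.

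That substantive content, however, has already been discharged in the proof of Theorem \ref{thm:algebra_iso_q}, and I expect it to be the only genuine difficulty underlying the present statement: the main obstacle there is to show that an arbitrary isometric isomorphism $\varphi : \cA_q \to \cA_r$ must preserve the vacuum state (equivalently, must respect the decomposition \eqref{eq:direct_sum}). This is exactly where the hypothesis $q_{ij}, r_{ij} \neq 1$ enters, since it forces each character space $\cM_q \cong Z_q$ to be a wedge of $d$ discs joined only at the origin, so that the induced homeomorphism $\rho \mapsto \rho \circ \varphi$ must carry the unique cut point (the vacuum state) to the cut point, whence Theorem \ref{thm:algebra_iso} applies and produces the subproduct system isomorphism. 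Since all of this is already available, the corollary itself needs only the transitivity argument above.
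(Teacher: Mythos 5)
Your proposal is correct and matches the paper exactly: the corollary is stated there without proof as an immediate consequence of Theorems \ref{thm:subproduct_iso_q} and \ref{thm:algebra_iso_q}, both of which characterize their respective isomorphism relations by the existence of a permutation $\sigma$ with $r = U_\sigma q U_\sigma^{-1}$, so chaining the two equivalences is precisely the intended argument. Your added remarks correctly locate the real content (the vacuum-state-preservation step in the proof of Theorem \ref{thm:algebra_iso_q}), but that work is indeed already done and need not be repeated here.
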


\subsection{$X_q$ and $\cA_q$, $d = 2$}
In the particular case $d=2$, we let a complex number $q$ parameterize the spaces $X_q$ (we may allow also $q = 0$) defined to be the maximal standard subproduct system with fibers
\bes
X_q(1) = \mb{C}^2 \,\, , \,\, X_q(2) = \mb{C}^2 \otimes \mb{C}^2 \ominus \textrm{span}\{e_1 \otimes e_2 - q e_2 \otimes e_1 \}.
\ees
Since $\cM_1 \cong B_2$, $\cA_1$ is not isomorphic to any $\cA_q$ with $q \neq 1$ (recall that when $q \neq 1$, $\cM_q$ is homeomorphic to two discs glued together at the origin). Thus Theorem \ref{thm:algebra_iso_q} gives:
\begin{corollary}
Assume that $d=2$. Then $X_q \cong X_r$ if and only if $\cA_q$ is isometrically isomorphic to $\cA_r$, and either one of these happens if and only if either $r = q$ or $r = q^{-1}$.
\end{corollary}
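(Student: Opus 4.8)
The plan is to read the corollary off from the two classification results for admissible matrices, Theorem~\ref{thm:subproduct_iso_q} and Theorem~\ref{thm:algebra_iso_q}, specialized to $d=2$, after separately treating the value $q=1$ that is excluded from their hypotheses. One implication is immediate and costs nothing: if $X_q\cong X_r$ then $\cA_q$ and $\cA_r$ are generated by unitarily equivalent shift tuples (Proposition~\ref{prop:UE_ISO}), hence are isometrically isomorphic. So the real substance is the reverse direction together with the determination of the admissible parameter pairs.

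First I would make the permutation condition of the cited theorems explicit in the $d=2$ language. Here $S_2=\{\mathrm{id},\tau\}$, and writing the admissible matrix by its single free entry $q_{12}=q$ (so that $q_{21}=q^{-1}$ and $q_{11}=q_{22}=0$), conjugation by $U_\tau$ replaces the parameter by $q_{\tau(1)\tau(2)}=q_{21}=q^{-1}$, while the identity fixes it. Thus ``$r=U_\sigma q U_\sigma^{-1}$ for some $\sigma\in S_2$'' means exactly ``$r=q$ or $r=q^{-1}$''. For parameters $q,r\notin\{0,1\}$ the matrices are admissible with off-diagonal entries different from $1$, so Theorem~\ref{thm:subproduct_iso_q} gives $X_q\cong X_r\iff r\in\{q,q^{-1}\}$ and Theorem~\ref{thm:algebra_iso_q} gives $\cA_q\cong\cA_r\iff r\in\{q,q^{-1}\}$, which already yields the full three-way equivalence on this range.

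The main obstacle is that $q=1$ (and symmetrically $r=1$) falls outside the cited theorems, since there $X_1=SSP_{\mb{C}^2}$ and the off-diagonal entry equals $1$. I would dispose of it through the character-space computation of Subsection~\ref{subsec:charA_q}: an isometric isomorphism $\varphi:\cA_q\to\cA_r$ is unital (Lemma~\ref{lem:iso_vacuum}), so $\rho\mapsto\rho\circ\varphi$ is a homeomorphism $\cM_r\cong\cM_q$, and $\cM_q\cong Z_q$. For $q=1$ one has $Z_1=B_2$, while for every $r\neq1$ the set $Z_r$ is two discs glued at the origin; these are not homeomorphic, since no single point disconnects $B_2$ whereas deleting the origin disconnects $Z_r$ into two pieces. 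Hence $\cA_1\not\cong\cA_r$, and by the easy implication $X_1\not\cong X_r$, for all $r\neq1$; as $r\in\{1,1^{-1}\}$ forces $r=1$, all three conditions fail together in this case and hold trivially when $q=r=1$. Finally, for the non-admissible value $q=0$ (where $q^{-1}$ is undefined) the character space is again two discs glued at the origin, so as above any isometric isomorphism $\cA_0\cong\cA_r$ preserves the vacuum and Theorem~\ref{thm:algebra_iso} reduces matters to $X_0\cong X_r$; this forces $r=0$ because the relation vector $e_1\otimes e_2$ defining $X_0(2)$ has Schmidt rank one, whereas $e_1\otimes e_2-r\,e_2\otimes e_1$ has rank two for $r\neq0$, a feature preserved by the local unitary $V_1\otimes V_1$ implementing any subproduct-system isomorphism.
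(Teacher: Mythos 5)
Your proof is correct and takes essentially the same route as the paper: the forward implication via Theorem~\ref{thm:algebra_iso} (unitarily equivalent shift tuples), the case $q,r\neq 1$ by reading the permutation condition of Theorems~\ref{thm:subproduct_iso_q} and~\ref{thm:algebra_iso_q} as $r\in\{q,q^{-1}\}$, and the exclusion of $q=1$ via the homeomorphism type of the character space $\cM_q\cong Z_q$ ($B_2$ versus two discs glued at the origin). Your separate treatment of $q=0$, where admissibility fails and $q^{-1}$ is undefined, is a small piece of extra care that the paper leaves implicit, and your rank argument for it is sound.
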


Elias Katsoulis has pointed out to us that the above corollary also follows from the techniques of \cite{DavidsonKatsoulis}.

The above result is reminiscent to the fact that two rotation algebras $A_\theta$ and $A_{\theta'}$ are isomorphic if and only if either $e^{2\pi i \theta} = e^{2\pi i \theta'}$ or $(e^{2\pi i \theta})^{-1} = e^{2\pi i \theta'}$. One cannot help but wonder whether one can draw a deeper connection between these results then the superficial one, in particular, can the classification of  rotation algebras be deduced from the classification of the algebras $\cA_q$?

By Corollaries \ref{cor:auto} and \ref{cor:iso_auto} we have the following.
\begin{corollary}
Let $d=2$ and let $q \neq 1$. Then subproduct system $X_q$ has no automorphisms aside form the unitary scalings of the basis. The algebra $\cA_q$ has no isometric automorphisms other than unitary scalings of the generators.
\end{corollary}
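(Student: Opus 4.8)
The plan is to derive both assertions directly from Corollaries \ref{cor:auto} and \ref{cor:iso_auto} by verifying their hypotheses in the case $d=2$. First I would observe that for $d=2$ an admissible matrix is completely determined by the single scalar $q=q_{12}$: admissibility forces $q_{11}=q_{22}=0$ and $q_{21}=q_{12}^{-1}=q^{-1}$. Consequently the standing hypothesis ``$q_{ij}\neq 1$ for all $i,j$'' appearing in both corollaries amounts to the two conditions $q\neq 1$ and $q^{-1}\neq 1$, which are jointly equivalent to $q\neq 1$ --- exactly the hypothesis of the present corollary.

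It remains to check the other hypothesis of the two corollaries, namely that no nontrivial permutation conjugates $q$ to itself. Since $S_2=\{\mathrm{id},\tau\}$ with $\tau$ the transposition, the only candidate is $\tau$, whose matrix $U_\tau$ satisfies $U_\tau e_1=e_2$ and $U_\tau e_2=e_1$. Conjugation by $U_\tau$ permutes indices, so using the relation $r_{\sigma(i)\sigma(j)}=q_{ij}$ from the proof of Proposition \ref{prop:similarity} one computes $\left(U_\tau q U_\tau^{-1}\right)_{12}=q_{\tau(1)\tau(2)}=q_{21}=q^{-1}$. Hence $q=U_\tau q U_\tau^{-1}$ holds if and only if $q=q^{-1}$, i.e. $q^2=1$. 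Therefore, whenever $q\neq 1$ and $q\neq -1$, the identity is the only permutation fixing $q$, so Corollaries \ref{cor:auto} and \ref{cor:iso_auto} apply verbatim and yield precisely the two conclusions: every automorphism of $X_q$ is a unitary scaling $e_i\mapsto\lambda_i e_i$ of the basis, and every isometric automorphism of $\cA_q$ is the corresponding scaling $S^q_i\mapsto\lambda_i S^q_i$ of the generators.

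The single genuine obstacle is the boundary value $q=-1$, the only $q\neq 1$ with $q^2=1$, at which the reduction above breaks down. Here the transposition really does fix $q$: the relation space $\mathrm{span}\{e_1\otimes e_2+e_2\otimes e_1\}$ is invariant under the flip $e_1\leftrightarrow e_2$, so $X_{-1}(2)$ is flip-invariant and the swap $e_1\mapsto e_2,\ e_2\mapsto e_1$ is an honest automorphism that is not a scaling. I would therefore either add the hypothesis $q\neq -1$ to the statement, or record $q=-1$ as an explicit exception, noting that by Theorem \ref{thm:subproduct_iso_q} the automorphism group of $X_{-1}$ (and the isometric automorphism group of $\cA_{-1}$) additionally contains the index-swapping maps $e_i\mapsto\lambda_i e_{\tau(i)}$ (respectively $S^{-1}_i\mapsto\lambda_i S^{-1}_{\tau(i)}$) with $|\lambda_i|=1$.
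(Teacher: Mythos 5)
Your argument is exactly the paper's: the corollary appears there with the one-line justification ``By Corollaries \ref{cor:auto} and \ref{cor:iso_auto} we have the following,'' so the entire content is the verification of their hypotheses, which is what you carry out. That verification is correct, and in doing it carefully you have caught a genuine oversight in the paper. For $d=2$ the only nontrivial permutation is the transposition $\tau$, and $\left(U_\tau q U_\tau^{-1}\right)_{12}=q_{21}=q^{-1}$, so the hypothesis of the two cited corollaries (no permutation conjugating $q$ to itself) fails precisely when $q=q^{-1}$, i.e.\ at $q=-1$ given $q\neq 1$. At $q=-1$ the conclusion really is false: the relation vector $e_1\otimes e_2+e_2\otimes e_1$ is invariant under the flip $e_1\leftrightarrow e_2$, so by Proposition \ref{prop:similarity} the maps $e_i\mapsto\lambda_i e_{\tau(i)}$ are automorphisms of $X_{-1}$ that are not scalings of the basis, and by Proposition \ref{prop:UE_ISO} conjugation by the induced unitary of $\mathfrak{F}_{X_{-1}}$ is an isometric automorphism of $\cA_{-1}$ interchanging the two generators. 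The corollary as printed therefore needs the additional hypothesis $q\neq -1$, or an explicit listing of the extra automorphisms at $q=-1$; with that amendment your proof is complete and coincides with the paper's intended argument.
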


On the other hand, a direct calculation shows that every unitary on $\mb{C}^2$ extends to an automorphism of $X_1$, and thus induces a non-obvious automorphism of $\cA_1$.

\section{Standard maximal subproduct systems with $\dim X(1) = 2$ and $\dim X(2) = 3$}\label{sec:A}

Again, let $\{e_1, \ldots,  e_d\}$ be an orthonormal basis for $E := \mb{C}^d$. We will soon turn attention to the case $d=2$. For a matrix $A \in M_d(\mb{C})$, we define the \emph{symmetric part} of $A$ to be $A^s := (A + A^t)/2$ and the \emph{antisymmetric part} of $A$ to be $A^a := (A - A^t)/2$. Denote by $X_A$ the maximal standard subproduct system with fibers
\bes
X_A(1) = E \,\, , \,\, X_A(2) = E \otimes E \ominus \textrm{span}\left\{\sum_{i,j=1}^d a_{ij}e_i \otimes e_j \right\}.
\ees
We will write $S^A$ for the shift $S^{X_A}$. We will also write $\cA_A$ for $\cA_{X_A}$.

\begin{proposition}\label{prop:classifyX_A}
Let $A, B \in M_d(\mb{C})$. Then there is an isomorphism $V: X_A \rightarrow X_B$ if and only if there exists $\lambda \in \mb{C}$ and a unitary $d \times d$ matrix $U$ such that $B = \lambda U^t A U$. In this case, $U$ extends to the isomorphism $V$ between $X_A$ and $X_B$ by $V_1 = U$.
\end{proposition}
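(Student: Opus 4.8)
The plan is to reduce everything to the single defining vector of each system and to propagate the level-$2$ information to all levels via maximality. Write $v_A = \sum_{i,j} a_{ij}\, e_i \otimes e_j \in E \otimes E$, so that $X_A(2) = (E\otimes E) \ominus \mb{C} v_A$ with orthogonal projection $p^A_2$ onto $X_A(2)$, and similarly for $B$. I may assume $A,B \neq 0$ (equivalently $v_A, v_B \neq 0$): otherwise $\dim X_A(2)$ or $\dim X_B(2)$ equals $d^2$ rather than $d^2-1$, so the two systems can be isomorphic only when both $A$ and $B$ vanish, in which case both equal the full product system $F_E$ and the claim is trivial. The one computation I will need is that for any unitary $U$ on $E$ a direct coordinate calculation (with the paper's index convention) gives $(U\otimes U) v_A = v_{U^t A U}$; consequently $(U\otimes U)$ carries $\mb{C} v_A$ onto $\mb{C} v_{U^t A U}$ and its orthogonal complement onto the orthogonal complement of $\mb{C} v_{U^t A U}$.

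For the direction ($\Leftarrow$), suppose $B = \lambda U^t A U$ with $U$ unitary and $\lambda \in \mb{C}$. Then $v_{U^t A U} = \lambda^{-1} v_B$, so $(U\otimes U)v_A = \lambda^{-1} v_B$ and $(U\otimes U)$ maps $X_A(2)$ unitarily onto $X_B(2)$. I then claim $U^{\otimes n}$ maps $X_A(n)$ onto $X_B(n)$ for every $n$, proved by induction (the cases $n \le 2$ being handled directly, and $n\ge 3$ via the maximal-fiber formula $X_A(n) = \bigcap_{i+j=n} X_A(i)\otimes X_A(j)$): since $U^{\otimes n} = U^{\otimes i}\otimes U^{\otimes j}$ sends each $X_A(i)\otimes X_A(j)$ onto $X_B(i)\otimes X_B(j)$ by the inductive hypothesis, and a bijective linear map carries an intersection of subspaces onto the intersection of the images, it carries $X_A(n)$ onto $X_B(n)$. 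Defining $V_n := U^{\otimes n}\big|_{X_A(n)}$ (so $V_1 = U$ and $V_0 = \mathrm{id}_{\mb{C}}$) gives unitaries onto the fibers of $X_B$. Because $U^{\otimes n}$ is unitary and intertwines the range projections, $U^{\otimes n} p^A_n = p^B_n U^{\otimes n}$; restricting this identity to $X_A(m)\otimes X_A(n)$ and recalling that for standard systems $U^{X}_{m,n} = p_{m+n}\big|_{X(m)\otimes X(n)}$ yields exactly \eqref{eq:iso}, so $V=\{V_n\}$ is the asserted isomorphism with $V_1 = U$.

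For the direction ($\Rightarrow$), let $V\colon X_A \to X_B$ be an isomorphism and put $U := V_1$, a unitary on $E$. Specializing \eqref{eq:iso} to $s=t=1$ and using $U^{X_A}_{1,1}=p^A_2$, $U^{X_B}_{1,1}=p^B_2$ gives $V_2 \circ p^A_2 = p^B_2 \circ (U\otimes U)$. Applying both sides to $v_A$ and using $p^A_2 v_A = 0$ shows $p^B_2\big((U\otimes U)v_A\big)=0$, i.e. $(U\otimes U)v_A \in \ker p^B_2 = \mb{C} v_B$. Hence $(U\otimes U)v_A = \mu v_B$ for some scalar $\mu$, which by the coordinate computation reads $v_{U^t A U} = \mu\, v_B$, and therefore $U^t A U = \mu B$, that is $B = \mu^{-1} U^t A U$. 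Taking $\lambda = \mu^{-1}$ completes this direction.

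The main obstacle — really the only nonroutine point — is the inductive step in ($\Leftarrow$) showing that $U^{\otimes n}$ preserves the fibers of the maximal system; this rests on the intersection description of the fibers and on the elementary fact that a bijective linear map commutes with the formation of intersections of subspaces. The coordinate identity $(U\otimes U)v_A = v_{U^t A U}$ and the matching of the convention $V_1=U$ with the transpose placement in $B = \lambda U^t A U$ are bookkeeping that must be pinned down but present no real difficulty.
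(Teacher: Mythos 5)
Your proof is correct and follows essentially the same route as the paper: the forward direction rests on the same level-$2$ computation (specializing the morphism identity to $s=t=1$ and observing that $(U\otimes U)v_A$ must land in $\mb{C}v_B$, which in coordinates is exactly $B=\lambda U^tAU$), and your converse is the paper's "read the argument backwards" made explicit via the intersection formula for the maximal fibers. The only added value is that you spell out the inductive step $U^{\otimes n}(X_A(n))=X_B(n)$ and the degenerate case $A=0$ or $B=0$, both of which the paper leaves implicit.
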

\begin{proof}
Let $V: X_A \rightarrow X_B$ be an isomorphism of subproduct systems. There is a $d\times d$ unitary matrix $U = (u_{ij})$ such that
\bes
f_i := V_1 (e_i) = \sum_{j=1}^d u_{ij}e_j   .
\ees
Then
\begin{align*}
0 &= V_1 (p_2^X(\sum_{i,j}a_{ij}e_i \otimes e_j)) \\
&= p_2^Y (\sum_{i,j}a_{ij} f_i \otimes f_j),
\end{align*}
so $\sum_{i,j}a_{ij} f_i \otimes f_j$ must be a spanning vector of $\textrm{span}\left\{\sum_{i,j} b_{ij}e_i \otimes e_j \right\}$. Writing out fully what this means,
\bes
\lambda \sum_{i,j}a_{ij} \sum_{k,l} u_{ik}u_{jl}  e_k \otimes e_l =  \sum_{k,l} b_{kl}e_k \otimes e_l
\ees
for some $\lambda \in \mb{C}$, so
\bes
b_{kl} = \lambda \sum_{i,j}a_{ij}u_{ik}u_{jl} .
\ees
But the right hand side is precisely the $kl$-th element of $\lambda U^t A U$.

Conversely, assuming $B = \lambda U^t A U$, one can read the above argument from finish to start to obtain an isomorphism $V: X_A \rightarrow X_B$.
\end{proof}

We see that for $X_A$ and $X_B$ to be isomorphic the ranks of $A$ and $B$ must be the same, as well as the ranks of their symmetric and anti-symmetric parts. For example, if $A$ is symmetric and $B$ is not then $X_A \ncong X_B$, a result which may not seem obvious at first glance.

\begin{theorem}\label{thm:classifyA_A}
Assume that $d=2$. Let $A,B \in M_2(\mb{C})$ be any two matrices. Then $\cA_{A}$ is isometrically isomorphic to $\cA_B$ if and only if $X_A \cong X_B$, and this happens if and only if there exists $\lambda \in \mb{C}$ and a unitary $2 \times 2$ matrix $U$ such that $B = \lambda U^t A U$.
\end{theorem}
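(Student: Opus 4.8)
The plan is to reduce everything to the already-established Proposition~\ref{prop:classifyX_A} and Theorem~\ref{thm:algebra_iso}. The implication ``$B=\lambda U^t A U\Rightarrow \cA_A\cong\cA_B$ isometrically'' is immediate: such a $U$ gives an isomorphism $X_A\to X_B$ by Proposition~\ref{prop:classifyX_A}, and isomorphic subproduct systems have completely isometrically isomorphic operator algebras by the easy direction of Theorem~\ref{thm:algebra_iso}. Conversely, suppose $\varphi:\cA_A\to\cA_B$ is an isometric isomorphism; by Lemma~\ref{lem:iso_vacuum} it is unital. If I can show that $\varphi$ carries the vacuum state of $\cA_B$ to that of $\cA_A$ — equivalently, that it preserves the decomposition~(\ref{eq:direct_sum}) — then Theorem~\ref{thm:algebra_iso} yields $X_A\cong X_B$, whence $B=\lambda U^t A U$ by Proposition~\ref{prop:classifyX_A}. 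Thus the whole theorem comes down to the single claim: \emph{every isometric isomorphism $\cA_A\to\cA_B$ preserves the vacuum state.}

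To attack this I would use the character space, exactly as in \S\ref{subsec:charA_q}. A character of $\cA_A$ is determined by the pair $(x_1,x_2)$ it assigns to $(S^A_1,S^A_2)$, and multiplicativity together with complete contractivity identify the character space $\cM_A$ with $Z_A=\{z\in B_2:q_A(z)=0\}$, where $q_A(z)=\sum_{i,j}a_{ij}z_iz_j$; note that $q_A$ depends only on the symmetric part $A^s$, and that the vacuum corresponds to $0\in Z_A$. An isometric isomorphism induces a homeomorphism $\cM_B\cong\cM_A$, so the homeomorphism type of $Z_A$ is an invariant. Using the normalization of Proposition~\ref{prop:classifyX_A} (unitary congruence plus scaling diagonalizes $A^s$), $Z_A$ is the whole ball $B_2$ when $\operatorname{rank}A^s=0$, a single closed disc when $\operatorname{rank}A^s=1$, and two discs glued at the origin when $\operatorname{rank}A^s=2$. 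These three types are pairwise non-homeomorphic (by real dimension, and by the presence of a non-manifold point), so $\operatorname{rank}A^s=\operatorname{rank}B^s$ and I may treat the cases separately.

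In the two-disc case the vacuum is the unique gluing point, i.e.\ the only point of $\cM_A$ whose removal disconnects the space, so $\varphi$ preserves it automatically and the argument of Theorem~\ref{thm:algebra_iso_q} applies. In the ball case $A$ and $B$ are nonzero antisymmetric $2\times2$ matrices; since $M^tJM=(\det M)J$ for every $2\times2$ matrix $M$, any two such are related by $B=\lambda U^t A U$, so the conclusion holds with no reference to the vacuum at all (indeed each $X_A\cong SSP$).

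The remaining single-disc case is where I expect the main obstacle to be. Now $\cM_A$ is a closed disc whose interior is topologically homogeneous, so topology alone cannot locate the vacuum; moreover the family is genuinely non-rigid, since normalizing $A^s=\operatorname{diag}(1,0)$ leaves the modulus $|c|\ge 0$, with $A=\begin{pmatrix}1&c\\-c&0\end{pmatrix}$, and distinct values of $|c|$ give non-isomorphic $X_A$. To separate these I would combine two ingredients. First, the manifold boundary of the disc, the circle $\{z\in Z_A:\|z\|=1\}$, \emph{is} a topological invariant, and the Gelfand transforms of elements of $\cA_A$ are holomorphic on the interior, which constrains $\varphi^*$ to be compatible with the holomorphic structures and to map boundary to boundary. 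Second, to single out the center among interior points I would produce an intrinsic quantity adapted to the vacuum: a direct computation gives $\min_{\|\xi\|=1}\|S^A(\xi)^2\|=\bigl(2|c|^2/(1+2|c|^2)\bigr)^{1/2}$, a complete modulus for this subcase, while the growth $\dim X_A(3)=5$ for $c=0$ versus $\dim X_A(3)=4$ for $c\ne0$ already distinguishes the symmetric from the non-symmetric situation. The technical heart is to show that such quantities — or, equivalently, the grading that defines them — are recovered from the isometric structure of $\cA_A$ alone, forcing $\varphi$ to preserve the vacuum; once that is in place, Theorem~\ref{thm:algebra_iso} and Proposition~\ref{prop:classifyX_A} close the argument as in the other cases.
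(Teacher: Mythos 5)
Your overall architecture --- reduction to Theorem~\ref{thm:algebra_iso} and Proposition~\ref{prop:classifyX_A}, the case division by $r(A^s)$ via the homeomorphism type of the character space, and the handling of the two-disc case by locating the vacuum at the unique cut point --- coincides with the paper's proof. Two issues remain. The minor one: in the ball case you declare $A$ and $B$ to be \emph{nonzero} antisymmetric, but $A^s=0$ also allows $A=0$, and you must still rule out an isometric isomorphism between $\cA_0$ (the algebra of the full shift) and $\cA_{\mu\left(\begin{smallmatrix}0&1\\-1&0\end{smallmatrix}\right)}$ (the algebra of the symmetric shift); the character space is the ball $B_2$ in both cases, so it does not separate them, and a separate (if easy) argument is needed, as the paper briefly notes.

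The serious gap is the rank-one case, which you yourself flag as unfinished. Your proposed invariant $\min_{\|\xi\|=1}\|S^A(\xi)^2\|$ is defined via the degree-one part of $\cA_A$, but an isometric isomorphism is only known to respect the grading \emph{after} one knows it preserves the decomposition~(\ref{eq:direct_sum}) (that is the hypothesis of Lemma~\ref{lem:iso_grading}); so the argument as sketched is circular, and the ``technical heart'' you defer is precisely the content of the theorem here. The paper closes this case by a different, non-circular device: it normalizes to $A_q=\left(\begin{smallmatrix}1&q\\-q&0\end{smallmatrix}\right)$, $q\ge 0$ (Lemmas~\ref{lem:symrank1} and~\ref{lem:notsymrank1}), and rather than showing that a given $\varphi$ fixes the vacuum, it studies the orbit $\cO(0;q,r)$ of the vacuum character under \emph{all} isometric isomorphisms. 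Lemma~\ref{lem:nozero} shows $0\notin\cO(0;q,r)$ when $q\ne r$ (if some $\varphi$ preserved the vacuum, Theorem~\ref{thm:algebra_iso} and Proposition~\ref{prop:classifyX_A} would force $A_q=\lambda U^tA_rU$, impossible for $q\ne r$); Lemma~\ref{lem:rotinv} shows the orbit is rotation-invariant; and a short topological argument (the circle through $F_\varphi(0)$, its preimage through $0$, and rotation-invariance of the filled interiors) then forces $0\in\cO(0;q,r)$, a contradiction. To complete your proof you would need either to supply this orbit argument or to establish, independently of the grading, that your metric quantity is an isometric-isomorphism invariant; as written, neither is done.
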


The proof of Theorem \ref{thm:classifyA_A} will occupy the rest of this section. Denote by $\cM_A$ the character space of $\cA_A$, that is, the topological space of contractive multiplicative and unital linear functionals on $\cA_A$, endowed with the weak-$*$ topology.

\begin{lemma}
The topology of $\cM_A$ depends on the rank $r(A^s)$ of the symmetric part $A^s$ of $A$:
\begin{enumerate}
	\item If $r(A^s) = 0$ then $\cM_A \cong B_2$, the unit ball in $\mb{C}^2$.
	\item If $r(A^s) = 1$ then $\cM_A \cong D$, the unit disc in $\mb{C}$.
	\item If $r(A^s) = 2$ then $\cM_A$ is homeomorphic to two discs pasted together at the origin.
\end{enumerate}
\end{lemma}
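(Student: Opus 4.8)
The plan is to identify the character space $\cM_A$ with an explicit subset of the unit ball $B_2\subseteq\mb{C}^2$, exactly as was done for $\cA_q$ in Subsection \ref{subsec:charA_q}, and then to read off its homeomorphism type from the rank of $A^s$. First I would note that, writing $x_i=\rho(S^A_i)$, the map $\rho\mapsto(x_1,x_2)$ is injective (a character is determined by its values on the generators $S^A_1,S^A_2$) and weak-$*$-to-Euclidean continuous. Since $\rho$ is contractive, hence completely contractive, the row $(x_1,x_2)$ is a row contraction, so $|x_1|^2+|x_2|^2\le1$, i.e. $(x_1,x_2)\in B_2$. The essential point is the multiplicativity constraint: the maximal subproduct system $X_A$ is defined by the single degree-two relation coming from the spanning vector $v_A:=\sum_{i,j}a_{ij}\,e_i\otimes e_j$ of $E^{\otimes 2}\ominus X_A(2)$, so by Theorem \ref{thm:repZ(I)} (equivalently Proposition \ref{prop:representation}) a one-dimensional representation $(x_1,x_2)$ lies in the relevant zero set if and only if $p_A(x_1,x_2)=0$, where $p_A(\underline{x})=\sum_{i,j}a_{ij}x_ix_j$.

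The key observation is that on scalars the variables commute, so
\[
p_A(x_1,x_2)=a_{11}x_1^2+(a_{12}+a_{21})x_1x_2+a_{22}x_2^2=\sum_{i,j}a^s_{ij}x_ix_j=:q_{A^s}(x_1,x_2),
\]
which depends only on the symmetric part $A^s$; the antisymmetric part $A^a$ is invisible to characters. Conversely, by the universal property (Theorem \ref{thm:universal}), every $(x_1,x_2)\in B_2$ with $q_{A^s}(x_1,x_2)=0$ is a row contraction in the zero set and hence extends to a character of $\cA_A$. Thus $\rho\mapsto(x_1,x_2)$ is a continuous bijection of the (weak-$*$ compact) space $\cM_A$ onto
\[
Z_A:=\{(x_1,x_2)\in B_2:q_{A^s}(x_1,x_2)=0\},
\]
and therefore a homeomorphism. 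It then remains only to describe $Z_A$ according to $r(A^s)$.

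This final step is a routine analysis of the zero locus of a binary quadratic form. If $r(A^s)=0$ then $A^s=0$, so $q_{A^s}\equiv0$ and $Z_A=B_2$, giving (1). If $r(A^s)=1$ then the nonzero singular symmetric form is a perfect square, $q_{A^s}=c\,\ell^2$ with $c\neq0$ and $\ell$ a nonzero linear form (its discriminant vanishes); its zero set is the complex line $\{\ell=0\}$, whose intersection with $B_2$ is a single disc $D$, giving (2). If $r(A^s)=2$ then $q_{A^s}$ is nondegenerate, so $\det A^s\neq0$, its discriminant is nonzero, and it factors as a product of two \emph{distinct} linear forms $\ell_1\ell_2$; the zero set is a union of two distinct complex lines $L_1\cup L_2$ meeting only at the origin, so $Z_A=(L_1\cap B_2)\cup(L_2\cap B_2)$ is two discs pasted together at the origin, giving (3).

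The only genuinely soft points are the two ingredients feeding the homeomorphism: that each $\rho$ really yields a point of $Z_A$ (using contractivity $\Rightarrow$ complete contractivity $\Rightarrow$ row contraction, together with the reduction of $p_A$ to $q_{A^s}$), and conversely that every point of $Z_A$ arises from a character via universality. Both are immediate from the cited results, so I expect the main — and only mild — obstacle to be bookkeeping the linear-algebra factorization of $q_{A^s}$ and confirming in the rank-two case that the two factors are non-proportional.
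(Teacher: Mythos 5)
Your proposal is correct and follows essentially the same route as the paper: both identify $\cM_A$ homeomorphically with the zero locus $\{(\lambda_1,\lambda_2)\in B_2 : \sum_{i,j}a_{ij}\lambda_i\lambda_j=0\}$, observe that this depends only on $A^s$, and then reduce to elementary linear algebra on binary quadratic forms. The only (cosmetic) difference is that the paper passes to the canonical congruence representatives $D_0,D_1,D_2$ of $A^s$, whereas you factor $q_{A^s}$ directly into linear forms -- an equivalent computation that makes the final verification slightly more explicit.
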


\begin{proof}
We proceed similarly to the lines of \ref{subsec:charA_q}.
Every character $\rho \in \cM_A$ is uniquely determined by $\lambda_1 = \rho(S^A_1)$ and $\lambda_2 = \rho(S^A_2)$, which lie in $B_2$. Conversely, every $(\lambda_1, \lambda_2) \in B_2$ that satisfies
\bes
\sum_{i,j}a_{ij}\lambda_i \lambda_j = 0
\ees
gives rise to a character $\rho$ by defining $\lambda_1 = \rho(S^A_1)$ and $\lambda_2 = \rho(S^A_2)$. Thus,
\bes
\cM_A \cong V_A := \left\{(\lambda_i, \lambda_j) \in B_2 : \sum_{i,j}a_{ij} \lambda_i \lambda_j = 0 \right\}.
\ees
Clearly, $V_A = V_{A^s}$. However, every symmetric $2 \times 2$ matrix is complex congruent to one of the following:
\bes
D_0 = \begin{pmatrix}
  0 & 0 \\
  0 & 0 \\
\end{pmatrix},
D_1 = \begin{pmatrix}
  1 & 0 \\
  0 & 0 \\
\end{pmatrix}
\,\, \textrm{or} \,\,
D_2 = \begin{pmatrix}
  1 & 0 \\
  0 & 1 \\
\end{pmatrix},
\ees
i.e., there exists a nonsingular matrix $T$ such that $A^s = T^t D_i T$, for $i = r(A^s)$. But then $V_{A^s} = T^{-1}V_{D_i} \cong V_{D_i}$, so it remains to verify that $V_{D_i}$ is homeomorphic to the spaces listed in the statement of the lemma.
\end{proof}

\begin{corollary}
If $r(A^s) \neq r(B^s)$ then $\cA_A \ncong \cA_B$.
\end{corollary}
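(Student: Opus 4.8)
The plan is to turn the isometric isomorphism problem into a topological one about the character spaces, and then to read off the answer from the preceding Lemma. First I would record, exactly as in the proof of Theorem \ref{thm:algebra_iso_q}, that an isometric isomorphism $\varphi : \cA_A \rightarrow \cA_B$ induces a map $\Phi : \cM_B \rightarrow \cM_A$ by $\Phi(\rho) = \rho \circ \varphi$. Because $\varphi$ is unital and multiplicative, $\rho \circ \varphi$ is again a character, so $\Phi$ is well defined, and it is a bijection whose inverse is induced by $\varphi^{-1}$. In the weak-$*$ topologies both $\Phi$ and $\Phi^{-1}$ are continuous, since for each fixed $a$ the evaluation $\rho \mapsto \rho(\varphi(a))$ is weak-$*$ continuous. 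Hence $\Phi$ is a homeomorphism, and in particular $\cA_A \cong \cA_B$ forces $\cM_A \cong \cM_B$.

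Next I would apply the Lemma above, which identifies $\cM_A$ up to homeomorphism with one of three model spaces according to $r(A^s)$: the ball $B_2 \subseteq \mb{C}^2$ when $r(A^s) = 0$, the disc $D \subseteq \mb{C}$ when $r(A^s) = 1$, and the space $G$ obtained by gluing two discs at a single point when $r(A^s) = 2$. The corollary therefore reduces to the purely topological statement that $B_2$, $D$ and $G$ are pairwise non-homeomorphic.

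To prove this I would invoke two homeomorphism invariants. The space $B_2$ has topological dimension $4$, whereas $D$ and $G$ both have dimension $2$; since (covering) dimension is preserved under homeomorphism --- equivalently, by invariance of domain a nonempty open subset of $\mb{R}^4$ cannot be embedded in a $2$-dimensional space --- we conclude $B_2 \ncong D$ and $B_2 \ncong G$. To separate the two remaining $2$-dimensional spaces I would use the notion of a local cut point: the gluing point of $G$ has arbitrarily small connected neighborhoods $U$ with $U \setminus \{0\}$ disconnected, while in $D$ every point, interior or boundary, has a neighborhood basis of connected sets each of which stays connected after deletion of the point. The presence of such a local cut point is a homeomorphism invariant, so $D \ncong G$. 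Assembling these facts, $r(A^s) \neq r(B^s)$ yields $\cM_A \ncong \cM_B$, and hence $\cA_A \ncong \cA_B$.

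The one genuinely topological obstacle is this last step, and within it the delicate part is the local-cut-point argument distinguishing $D$ from $G$; separating $B_2$ by dimension is routine. Everything else is bookkeeping with the induced map on character spaces together with a direct appeal to the Lemma.
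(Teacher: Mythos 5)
Your argument is correct and is exactly the one the paper intends: the corollary is stated as an immediate consequence of the preceding lemma, with the induced homeomorphism of character spaces and the pairwise non-homeomorphy of $B_2$, $D$, and the wedge of two discs left implicit. You have simply supplied the routine topological details (dimension and the (local) cut point at the gluing point) that the paper omits.
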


We can use this corollary to break down the classification of the algebras $\cA_A$ to the classification of the algebras $\cA_A$ with fixed $r(A^s)$. The easiest case is $r(A^s) = 0$, because then $A$ is either the zero matrix or a multiple of $\begin{pmatrix}
  0 & 1 \\
  -1 & 0 \\
\end{pmatrix}$, and these two matrices give rise to non isomorphic algebras (these are the algebras generated by the full and symmetric shifts, respectively).

The next easiest case is $r(A^s) = 2$.
\begin{lemma}
If $A,B \in M_2(\mb{C})$ and $r(A^s) = r(B^s) = 2$, then $\cA_{A}$ is isometrically isomorphic to $\cA_B$ if and only if $X_A \cong X_B$, and this happens if and only if there exists $\lambda \in \mb{C}$ and a unitary $2 \times 2$ matrix $U$ such that $B = \lambda U^t A U$. Any isometric isomorphism between $\cA_A$ and $\cA_B$ arises as conjugation by the subproduct system isomorphism arising from $U$.
\end{lemma}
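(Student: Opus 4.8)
The plan is to prove the two asserted equivalences by a short chain linking three conditions: (a) $\cA_A$ and $\cA_B$ are isometrically isomorphic; (b) $X_A \cong X_B$; and (c) there exist $\lambda \in \mb{C}$ and a unitary $U$ with $B = \lambda U^t A U$. The equivalence (b)$\Leftrightarrow$(c), together with the assertion that any such subproduct isomorphism has $V_1 = U$, is exactly Proposition~\ref{prop:classifyX_A}, so nothing new is needed there. The implication (b)$\Rightarrow$(a) is the easy direction of Theorem~\ref{thm:algebra_iso}: an isomorphism $V$ of subproduct systems transports the $X_A$-shift to the $X_B$-shift (Proposition~\ref{prop:UE_ISO}), so conjugation by the induced unitary $\bigoplus_n V\big|_{X_A(n)}$ is a grading-preserving completely isometric isomorphism of $\cA_A$ onto $\cA_B$. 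Thus everything reduces to the single nontrivial implication (a)$\Rightarrow$(b), for which I will follow the template of Theorem~\ref{thm:algebra_iso_q}.

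So let $\varphi : \cA_A \rightarrow \cA_B$ be an isometric isomorphism. It induces a homeomorphism of character spaces $\cM_B \rightarrow \cM_A$ via $\rho \mapsto \rho \circ \varphi$ (contractivity of $\rho$ and isometry of $\varphi$ keep the image contractive, and $\varphi^{-1}$ induces the inverse map). Since $r(A^s) = r(B^s) = 2$, the preceding lemma identifies both $\cM_A$ and $\cM_B$ with two discs pasted together at the origin, and under $\cM_A \cong V_A$ the origin is precisely the vacuum state $\rho_0$, which sends every $S^A_i$ to $0$. The first key step is to note that the origin is a topological invariant of this space: it is the unique cut point, i.e. the unique point whose removal disconnects the space into two (punctured-disc) components, while removal of any other point leaves a connected space. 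Consequently the homeomorphism $\rho \mapsto \rho \circ \varphi$ must carry the cut point of $\cM_B$ to the cut point of $\cM_A$, giving $\rho_0^B \circ \varphi = \rho_0^A$.

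This vacuum-preservation is equivalent to $\varphi$ preserving the direct sum decomposition (\ref{eq:direct_sum}): $\varphi$ is unital by Lemma~\ref{lem:iso_vacuum}, and for $a \in \ker \rho_0^A$ one has $\rho_0^B(\varphi(a)) = \rho_0^A(a) = 0$, so $\varphi(\ker \rho_0^A) \subseteq \ker \rho_0^B$. By Lemma~\ref{lem:iso_grading} such a $\varphi$ is automatically grading preserving, whence Theorem~\ref{thm:algebra_iso} furnishes an isomorphism of subproduct systems $V : X_A \rightarrow X_B$ with $\varphi(T) = V T V^*$ for all $T \in \cA_A$. This yields $X_A \cong X_B$, and feeding $V$ into Proposition~\ref{prop:classifyX_A} produces $\lambda$ and $U = V_1$ with $B = \lambda U^t A U$; since $\varphi$ was arbitrary, every isometric isomorphism arises as conjugation by the subproduct system isomorphism attached to $U$. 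The only delicate point — and the one I expect to be the main obstacle — is the topological claim that the origin is the unique cut point of $\cM_A$. This should be settled by recalling that $V_A = V_{A^s}$ is the image under a linear isomorphism $T^{-1}$ of the variety $\{\lambda_1^2 + \lambda_2^2 = 0\} \cap B_2$, a union of two complex lines through the origin intersected with the ball; since $T^{-1}$ fixes the origin and carries complex lines through the origin to complex lines through the origin, the origin remains the unique point at which the two disc-components meet.
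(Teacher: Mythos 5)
Your proposal is correct and follows essentially the same route as the paper: reduce via Theorem \ref{thm:algebra_iso} and Proposition \ref{prop:classifyX_A} to showing that any isometric isomorphism preserves the vacuum state, and then obtain this from the induced homeomorphism of character spaces together with the fact that the vacuum corresponds to the point where the two discs are glued. The paper states the topological step more tersely, while you spell out the cut-point argument and the passage from vacuum preservation to preservation of the decomposition (\ref{eq:direct_sum}); these added details are accurate but not a different method.
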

\begin{proof}
In light of Theorem \ref{thm:algebra_iso} and Proposition \ref{prop:classifyX_A}, it suffices to show that any isometric isomorphism $\varphi: \cA_A \rightarrow \cA_B$ sends the vacuum state to the vacuum state. But the vacuum state in $\cM_A$ and in $\cM_B$ corresponds to the point where the two discs are glued together. Since $\varphi$ induces a homeomorphism between $\cM_B$ and $\cM_A$, it must send the vacuum state to the vacuum state.
\end{proof}

\begin{remark}\emph{
In the previous section we have seen already that there is a continuum of non-(completely isometrically)-isomorphic algebras $\cA_{A}$ and subproduct systems $X_{A}$ with $r(A^s) = 2$, namely the algebras $\cA_q$. One can see that these algebras $\cA_A$ are not exhausted by the algebras $\cA_q$ of the previous section. For example, all the algebras $\cA_A$ with $A = \begin{pmatrix}
  1 & 0 \\
  0 & q \\
\end{pmatrix}$, with $q>0$, are non-isomorphic, and only for $q = 1$ is this algebra isomorphic to an $\cA_q$ (in this case $q = -1$).}
\end{remark}

We now come to the trickiest case, $r(A^s) = 1$.


\begin{lemma}\label{lem:symrank1}
If $A,B \in M_2(\mb{C})$ are two symmetric matrices of rank $1$, then there exists $\lambda \in \mb{C}$ and a unitary $2 \times 2$ matrix $U$ such that $B = \lambda U^t A U$, and consequently $X_A \cong X_B$ and $\cA_{A}$ is isometrically isomorphic to $\cA_B$.
\end{lemma}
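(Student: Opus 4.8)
The plan is to exploit the fact that any rank-one complex symmetric $2\times 2$ matrix is a scalar multiple of $v v^t$ for a single vector $v$, and then to carry one such generating vector onto the other by a unitary.

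First I would record the normal form for a rank-one symmetric matrix. Given a symmetric $A$ with $\operatorname{rank}(A)=1$, its column space is spanned by a single nonzero vector $w$, so every column of $A$ is a multiple of $w$ and we may write $A = w u^t$ for some $u \in \mathbb{C}^2$. The symmetry $A = A^t$ forces $w_i u_j = u_i w_j$ for all $i,j$; choosing an index $i$ with $w_i \neq 0$ gives $u = \alpha w$ with $\alpha = u_i/w_i \neq 0$, hence $A = \alpha\, w w^t$. Rescaling $w$ to be a unit vector in the Hermitian norm $\|w\|^2 = \sum_k |w_k|^2$, I obtain $A = \alpha_A\, w_A w_A^t$ and likewise $B = \alpha_B\, w_B w_B^t$ with $w_A, w_B$ unit vectors and $\alpha_A, \alpha_B \neq 0$.

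The core step is then entirely about these two unit vectors. Since the unitary group acts transitively on the unit sphere of $\mathbb{C}^2$, there is a unitary $V$ with $V w_A = w_B$. Setting $U := V^t$, I use that the transpose of a unitary is again unitary (indeed $(U^t)^* U^t = \overline{U}\, U^t = \overline{U U^*} = I$), so $U$ is unitary and $U^t = V$. The identity $U^t (w w^t) U = (U^t w)(U^t w)^t$, valid for any $w$, then gives
\[
U^t A U = \alpha_A (U^t w_A)(U^t w_A)^t = \alpha_A\, w_B w_B^t = \frac{\alpha_A}{\alpha_B}\, B ,
\]
so that $B = \lambda\, U^t A U$ with $\lambda = \alpha_B / \alpha_A$. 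With this $U$ and $\lambda$ in hand, the isomorphism $X_A \cong X_B$ follows at once from Proposition \ref{prop:classifyX_A} (with $V_1 = U$), and the isometric isomorphism $\mathcal{A}_A \cong \mathcal{A}_B$ then follows from Theorem \ref{thm:algebra_iso}, since isomorphic subproduct systems generate isometrically isomorphic operator algebras.

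The only delicate point — and the step I would be most careful about — is the mismatch between the Hermitian structure governing unitaries and the bilinear (transpose) structure appearing in the congruence $B = \lambda U^t A U$. Transitivity of the unitary group is a statement about the Hermitian inner product, while the required conjugation involves $U^t$ rather than $U^*$; the reconciliation is precisely the observation that $U \mapsto U^t$ preserves unitarity, which lets me solve the transitivity problem in the Hermitian world via $V$ and then transport the solution back through $U = V^t$. Everything else — the rank-one normal form and the bookkeeping of the scalars $\alpha_A,\alpha_B$ — is routine linear algebra.
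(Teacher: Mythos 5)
Your proof is correct, and at bottom it is the same strategy as the paper's: reduce each rank-one symmetric matrix to the normal form $\alpha\, w w^t$ with $w$ a unit vector, then realize the congruence by a unitary. The difference is in execution. The paper normalizes $B$ to $\begin{pmatrix} 1 & 0 \\ 0 & 0 \end{pmatrix}$ and writes down an explicit matrix $U$ built from the entries of $v$ (where $A = vv^t$), whereas you invoke transitivity of the unitary group on the unit sphere of $\mb{C}^2$ and then transport the solution through $U = V^t$. The point you flag as delicate --- that unitarity is a Hermitian notion while the congruence $B = \lambda U^t A U$ is bilinear, reconciled by the fact that the transpose of a unitary is unitary --- is exactly the right thing to worry about, and your handling of it is in fact more robust than the paper's: the explicit matrix $U = \begin{pmatrix} \overline{v_1} & \overline{v_2} \\ \overline{v_2} & -\overline{v_1} \end{pmatrix}$ used there has column inner product $v_1\overline{v_2} - v_2\overline{v_1}$, so it is unitary (and the displayed computation $U^t v v^t U = \mathrm{diag}(1,0)$ holds) only when $v_1\overline{v_2}$ is real; for general complex $v$ one needs precisely your transpose device, e.g. $U = V^t$ with $V = \begin{pmatrix} \overline{v_1} & \overline{v_2} \\ -v_2 & v_1 \end{pmatrix}$, which satisfies $Vv = e_1$ and hence $U^t A U = (Vv)(Vv)^t = \mathrm{diag}(1,0)$. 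Your derivation of the normal form $A = \alpha\, w w^t$ from symmetry plus rank one is also spelled out completely, where the paper simply asserts it. The deduction of $X_A \cong X_B$ and the isometric isomorphism of the algebras from Proposition \ref{prop:classifyX_A} and Theorem \ref{thm:algebra_iso} is exactly as intended.
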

\begin{proof}
We only have to prove the first assertion, and we may assume that $B = \begin{pmatrix}
  1 & 0 \\
  0 & 0 \\
\end{pmatrix}$. We may also assume that there is a unit vector $v = (v_1, v_2)^t$ such that $A = v v^t$. Now let
\bes
U = \begin{pmatrix}
  \overline{v_1} & \overline{v_2} \\
  \overline{v_2} & -\overline{v_1} \\
\end{pmatrix}.
\ees
Then
\bes
U^t A U = \begin{pmatrix}
  \overline{v_1} & \overline{v_2} \\
  \overline{v_2} & -\overline{v_1} \\
\end{pmatrix}^t v v^t \begin{pmatrix}
  \overline{v_1} & \overline{v_2} \\
  \overline{v_2} & -\overline{v_1} \\
\end{pmatrix}
= \begin{pmatrix}
  \overline{v_1} & \overline{v_2} \\
  \overline{v_2} & -\overline{v_1} \\
\end{pmatrix}^t \begin{pmatrix}
  v_1 & 0 \\
  v_2 & 0 \\
\end{pmatrix} = \begin{pmatrix}
  1 & 0 \\
  0 & 0 \\
\end{pmatrix}.
\ees
\end{proof}


Below we will also need the following lemma.
\begin{lemma}\label{lem:notsymrank1}
Let $A$ be a $2\times2$ matrix for which $r(A^s) = 1$. Then there exists one and only one $q\geq 0$ for which there is a $\lambda \in \mb{C}$ and a unitary $U$ such that
\bes
\begin{pmatrix}
1 & q \\ -q & 0
\end{pmatrix} = \lambda U^t A U.
\ees
Furthermore, if $A$ is non-symmetric then $A$ is congruent to the matrix
\bes
\begin{pmatrix}
1 & 1 \\ -1 & 0
\end{pmatrix}.
\ees
\end{lemma}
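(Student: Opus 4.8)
The plan is to exploit the fact that both operations allowed in the equivalence $A \mapsto \lambda U^t A U$ respect the splitting $A = A^s + A^a$ of a $2\times 2$ matrix into its symmetric and antisymmetric parts. Indeed, since $(\lambda U^t A U)^t = \lambda U^t A^t U$, one has $(\lambda U^t A U)^s = \lambda U^t A^s U$ and $(\lambda U^t A U)^a = \lambda U^t A^a U$. Writing the antisymmetric part as $A^a = cJ$ with $J = \left(\begin{smallmatrix} 0 & 1 \\ -1 & 0\end{smallmatrix}\right)$, a direct computation gives $U^t J U = (\det U)\, J$, and $|\det U| = 1$ for unitary $U$. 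Thus, under the equivalence, the symmetric part transforms by the rank-preserving congruence $A^s \mapsto \lambda U^t A^s U$, while the scalar $c$ is merely multiplied by $\lambda \det U$. These two remarks reduce the whole problem to bookkeeping on the two summands.

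For existence I would first apply Lemma \ref{lem:symrank1} to the two rank-one symmetric matrices $A^s$ and $D_1 := \left(\begin{smallmatrix} 1 & 0 \\ 0 & 0\end{smallmatrix}\right)$ to obtain $\lambda_0 \in \mb{C}$ and a unitary $U_0$ with $\lambda_0 U_0^t A^s U_0 = D_1$. Setting $A_0 = \lambda_0 U_0^t A U_0$, the observations above give $A_0 = \left(\begin{smallmatrix} 1 & c_0 \\ -c_0 & 0\end{smallmatrix}\right)$ for some $c_0 \in \mb{C}$, so it only remains to normalize the phase of $c_0$. The key computation is to determine the stabilizer of $D_1$: since $U^t D_1 U = w w^t$ where $w$ is the first row of $U$ written as a column, the relation $\mu U^t D_1 U = D_1$ holds precisely when $w$ is proportional to $(1,0)$, i.e. when $U = \mathrm{diag}(e^{i\alpha}, e^{i\beta})$ and $\mu = e^{-2i\alpha}$. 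For such $U$ one has $\det U = e^{i(\alpha+\beta)}$, whence the antisymmetric coefficient transforms as $c_0 \mapsto e^{i(\beta-\alpha)} c_0$. Choosing $\beta - \alpha$ to cancel the argument of $c_0$ brings $A_0$ to the form $\left(\begin{smallmatrix} 1 & q \\ -q & 0\end{smallmatrix}\right)$ with $q = |c_0| \geq 0$.

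Uniqueness follows from exactly the same stabilizer computation: if $\left(\begin{smallmatrix} 1 & q_1 \\ -q_1 & 0\end{smallmatrix}\right)$ and $\left(\begin{smallmatrix} 1 & q_2 \\ -q_2 & 0\end{smallmatrix}\right)$ are related by $\lambda U^t\,\cdot\, U$, then comparing symmetric parts forces $\lambda U^t D_1 U = D_1$, hence $U$ diagonal and $\lambda = e^{-2i\alpha}$; comparing antisymmetric parts then yields $q_2 = e^{i(\beta-\alpha)} q_1$, and taking moduli (recall $q_1, q_2 \geq 0$) gives $q_1 = q_2$. For the final clause I would note that $A$ non-symmetric means $A^a \neq 0$, so $c_0 \neq 0$ and $q = |c_0| > 0$; then the (non-unitary) congruence $T = \mathrm{diag}(1, 1/q)$ applied to the unitary normal form gives $T^t \left(\begin{smallmatrix} 1 & q \\ -q & 0\end{smallmatrix}\right) T = \left(\begin{smallmatrix} 1 & 1 \\ -1 & 0\end{smallmatrix}\right)$, the asserted general congruence (consistent with uniqueness because this last step uses the weaker, non-unitary, congruence employed earlier for symmetric normal forms). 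The one point demanding care — and really the heart of the argument — is the stabilizer computation, since it simultaneously supplies the phase normalization needed for existence and the modulus invariant responsible for uniqueness; the rest is routine tracking of the symmetric/antisymmetric decomposition.
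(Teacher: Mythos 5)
Your argument is correct and is precisely the elaboration of the paper's one-line proof, which invokes Lemma \ref{lem:symrank1} together with the observation that the operation $A \mapsto \lambda U^t A U$ acts on the antisymmetric part only by the scalar $\lambda \det U$. The stabilizer computation for $D_1$ that you isolate is exactly the ``direct verification'' the paper leaves to the reader, and it correctly delivers both the phase normalization for existence and the modulus invariant for uniqueness.
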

\begin{proof}
Direct verification, using Lemma \ref{lem:symrank1} and the fact that congruations preserves, up to a scalar, the anti-symmetric part.
\end{proof}

Let us write $A_q$ for the matrix
\bes
A_q = \begin{pmatrix}
1 & q \\ -q & 0
\end{pmatrix}.
\ees
By the above lemma, we may restrict attention only to the algebras $\cA_{A_q}$ with $q \geq 0$.

Recall that the character space $\cM_{A_q}$ of $\cA_{A_q}$ is identified with the closed unit disc $\overline{\mb{D}}$
by
\bes
\cM_{A_q} \ni \rho \longleftrightarrow \rho(S^{A_q}_2) \in \overline{\mb{D}} .
\ees
We write $\rho_z$ for the character that sends $S^{A_q}_2$ to $z \in \overline{\mb{D}}$.
This identifies the vacuum vector $\rho_0$ with the point $0$.
Recall also that if $\varphi : \cA_{A_q} \rightarrow \cA_{A_r}$ is an isometric isomorphism, then it induces a
homeomorphism $\varphi_* : \cM_{A_r} \rightarrow \cM_{A_q}$ given by $\varphi_* \rho = \rho \circ \varphi$.
We write $F_\varphi$ for the homeomorphism $\overline{\mb{D}} \rightarrow \overline{\mb{D}}$ induced by $\varphi$,
that is, $F_\varphi$ is the unique self map of $\overline{\mb{D}}$ that satisfies
\bes
\varphi_* \rho_z = \rho_{F_\varphi(z)} \,\, , \,\, z \in \overline{\mb{D}}.
\ees

Let us introduce the notation
\bes
\cO(0;q,r) = \{F_\varphi(0) \big| \varphi : \cA_{A_q} \rightarrow \cA_{A_r} \textrm{ is an isometric isomorphism}\},
\ees
and
\bes
\cO(0;q) = \cO(0;q,q).
\ees
\begin{lemma}\label{lem:nozero}
Let $q,r \geq 0$. If $q \neq r$ then $0$ does not lie in $\cO(0;q,r)$.
\end{lemma}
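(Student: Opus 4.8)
The plan is to argue by contradiction, leveraging the rigidity results already in place. Suppose $q \neq r$ but $0 \in \cO(0;q,r)$; that is, suppose there is an isometric isomorphism $\varphi : \cA_{A_q} \rightarrow \cA_{A_r}$ with $F_\varphi(0) = 0$. Unwinding the definition of $F_\varphi$, we have $F_\varphi(0) = \rho_0(\varphi(S^{A_q}_2))$, where $\rho_0$ denotes the vacuum character of $\cA_{A_r}$. Since a character of $\cA_{A_q}$ is completely determined by its value on $S^{A_q}_2$ (this is precisely the identification $\cM_{A_q} \cong \overline{\mb{D}}$), the hypothesis $F_\varphi(0) = 0$ says exactly that $\rho_0 \circ \varphi$ is the vacuum character of $\cA_{A_q}$.

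The first key step, which I expect to be the conceptual heart of the argument, is to convert this topological datum into the algebraic statement that $\varphi$ preserves the direct sum decomposition (\ref{eq:direct_sum}). Writing $\rho_0^{(q)}$ and $\rho_0^{(r)}$ for the two vacuum characters, we have just shown $\rho_0^{(r)} \circ \varphi = \rho_0^{(q)}$. Then $\varphi$ is unital, so it fixes $\mb{C}\cdot I$; and for $a \in \cA_{A_q}$ one has $a \in \ker \rho_0^{(q)}$ if and only if $\rho_0^{(r)}(\varphi(a)) = 0$, i.e. $\varphi(a) \in \ker \rho_0^{(r)}$. Hence $\varphi$ carries the decomposition $\mb{C}\cdot I \oplus \ker\rho_0^{(q)}$ of $\cA_{A_q}$ onto the corresponding decomposition of $\cA_{A_r}$, which is exactly the hypothesis of Theorem \ref{thm:algebra_iso}.

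With this bridge built, the remaining deductions are immediate citations. Theorem \ref{thm:algebra_iso} yields an isomorphism of subproduct systems $X_{A_q} \cong X_{A_r}$; Proposition \ref{prop:classifyX_A} then produces a scalar $\lambda \in \mb{C}$ and a unitary $2 \times 2$ matrix $U$ with $A_r = \lambda U^t A_q U$. Since $A_q$ and $A_r$ both have symmetric part of rank one, Lemma \ref{lem:notsymrank1} asserts that each is congruent, in this scaled-unitary sense, to a unique matrix of the form $A_{q'}$ with $q' \geq 0$. As the relation ``$B = \lambda U^t A U$ for some scalar $\lambda$ and unitary $U$'' is an equivalence relation, $A_q$ and $A_r$ can lie in the same class only when $q = r$, contradicting $q \neq r$. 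Therefore no such $\varphi$ exists, and $0 \notin \cO(0;q,r)$. The only step demanding genuine care is the verification in the second paragraph that $F_\varphi(0) = 0$ forces vacuum-preservation; everything downstream is a direct appeal to the classification machinery.
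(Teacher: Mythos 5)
Your proof is correct and follows essentially the same route as the paper's: a vacuum-preserving isometric isomorphism forces (via Theorem \ref{thm:algebra_iso} and Proposition \ref{prop:classifyX_A}) a scaled unitary congruence $A_r = \lambda U^t A_q U$, which Lemma \ref{lem:notsymrank1} rules out for $q \neq r$. The only difference is that you spell out the (correct, and implicitly assumed in the paper) step that $\rho_0 \circ \varphi = \rho_0$ implies $\varphi$ preserves the decomposition (\ref{eq:direct_sum}).
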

\begin{proof}
Assume that $0 \in \cO(0;q,r)$. Then there is some isometric isomorphism $\varphi:\cA_{A_q} \rightarrow \cA_{A_q}$ that preserves the character $\rho_0$. It follows from Theorem \ref{thm:algebra_iso} and Proposition \ref{prop:classifyX_A} that, for some unitary $2 \times 2$ matrix $U$ and some $\lambda \in \mb{C}$, $A_q = \lambda U^t A_r U$. But, as noted in Lemma \ref{lem:notsymrank1}, this is impossible if $r \neq q$.
\end{proof}

\begin{lemma}\label{lem:rotinv}
The sets $\cO(0;q,r)$ are invariant under rotations around $0$.
\end{lemma}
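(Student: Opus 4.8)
The plan is to exploit the gauge action, which provides a supply of isometric automorphisms of $\cA_{A_q}$ that implement rotations of the character disc. Recall from Section~\ref{sec:operator_algebra} that for each $t$ the map $\gamma_t(T)=W_tTW_t^*$, where $W_t$ scales $X(n)$ by $e^{int}$, is an automorphism of $\cA_{A_q}$ satisfying $\gamma_t(S^{A_q}_\alpha)=e^{i|\alpha|t}S^{A_q}_\alpha$; in particular $\gamma_t(S^{A_q}_2)=e^{it}S^{A_q}_2$. Since each scalar $e^{int}$ has modulus one, $W_t$ is a unitary on $\mathfrak{F}_{X_{A_q}}$, so $\gamma_t$ is in fact a (completely) isometric automorphism of $\cA_{A_q}$.

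First I would fix $w\in\cO(0;q,r)$ together with an isometric isomorphism $\varphi:\cA_{A_q}\to\cA_{A_r}$ realizing it, i.e.\ with $F_\varphi(0)=w$. For a prescribed angle $\theta$ I set $\psi=\varphi\circ\gamma_\theta$. As a composition of isometric isomorphisms, $\psi$ is again an isometric isomorphism $\cA_{A_q}\to\cA_{A_r}$, and hence $F_\psi(0)$ lies in $\cO(0;q,r)$. It then remains only to evaluate $F_\psi(0)$. Using the identification of the vacuum character $\rho_0\in\cM_{A_r}$ with $0$, and the defining relation $\psi_*\rho_0=\rho_0\circ\psi=\rho_{F_\psi(0)}$, I would compute
\[
F_\psi(0)=(\rho_0\circ\psi)(S^{A_q}_2)=\rho_0\bigl(\varphi(\gamma_\theta(S^{A_q}_2))\bigr)=\rho_0\bigl(\varphi(e^{i\theta}S^{A_q}_2)\bigr)=e^{i\theta}\rho_0\bigl(\varphi(S^{A_q}_2)\bigr)=e^{i\theta}F_\varphi(0)=e^{i\theta}w,
\]
where the linearity of $\varphi$ and of the character $\rho_0$ are used in the middle equalities. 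Thus $e^{i\theta}w\in\cO(0;q,r)$ for every $\theta$, which is exactly the claimed invariance of $\cO(0;q,r)$ under rotations around $0$.

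There is no serious obstacle here; the computation is routine once the right tool is chosen. The one point that genuinely requires care is \emph{which side} to compose on: one must precompose $\varphi$ with the gauge automorphism $\gamma_\theta$ of the \emph{domain} $\cA_{A_q}$. Postcomposing with a gauge automorphism of the target $\cA_{A_r}$ would not help, since the gauge action fixes the vacuum character (it sends $S^{A_r}_2$ to $e^{i\theta}S^{A_r}_2$, which the vacuum state annihilates), and so would leave $F(0)$ unchanged. With the correct side chosen, the forward-image $F_\varphi(0)$ gets multiplied by $e^{i\theta}$, giving the full circle of rotations.
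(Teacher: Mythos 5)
Your proof is correct and follows essentially the same route as the paper: the paper precomposes $\varphi$ with the isometric automorphism $\varphi_\lambda : S_i^{A_q}\mapsto \lambda S_i^{A_q}$ ($|\lambda|=1$) and evaluates $\rho_0((\varphi\circ\varphi_\lambda)(S_2^{A_q}))=\lambda b$, which is exactly your $\psi=\varphi\circ\gamma_\theta$ with $\lambda=e^{i\theta}$, since the gauge automorphism acts on the degree-one generators as multiplication by $e^{i\theta}$. Your closing remark about why one must precompose on the domain side (the gauge action fixes the vacuum character of the target) is also accurate.
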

\begin{proof}
For $\lambda$ with $|\lambda| = 1$, write $\varphi_\lambda$ for the isometric isomorphism mapping $S_i^{A_q}$ to
$\lambda S_i^{A_q}$ ($i = 1,2$). For $b = F_\varphi(0) \in \cO(0;q,r)$, consider $\varphi \circ \varphi_\lambda$.
We have $\rho_0((\varphi \circ \varphi_\lambda)(S_2^{A_q})) = \rho_0(\varphi(\lambda S_2^{A_q})) = \lambda \rho_0(\varphi(S_2^{A_q})) = \lambda b$.
Thus $\lambda b \in \cO(0;q,r)$.
\end{proof}

\begin{lemma}
Let $q,r \geq 0$. If $q \neq r$ then $\cA_{A_q}$ is not isometrically isomorphic to $\cA_{A_r}$.
\end{lemma}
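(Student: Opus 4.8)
The plan is to argue by contradiction and reduce everything, via a composition trick, to the single statement that \emph{every isometric automorphism of $\cA_{A_q}$ fixes the vacuum state}; the rotation invariance of Lemma \ref{lem:rotinv} together with the ``no zero'' property of Lemma \ref{lem:nozero} then close the argument. Suppose $q\neq r$ and that $\varphi:\cA_{A_q}\rightarrow\cA_{A_r}$ is an isometric isomorphism. For \emph{any} isometric isomorphism $\psi:\cA_{A_r}\rightarrow\cA_{A_q}$ the composite $\psi\circ\varphi$ is an isometric automorphism of $\cA_{A_q}$. Writing $F_\chi$ for the self-map of $\overline{\mb{D}}$ induced on the character space by an isomorphism $\chi$ (as set up just before Lemma \ref{lem:nozero}), a direct check of the definitions gives the functoriality $F_{\psi\circ\varphi}=F_\varphi\circ F_\psi$. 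Granting that $\cO(0;q)=\{0\}$, we get $F_{\psi\circ\varphi}(0)=0$, whence $F_\psi(0)=F_\varphi^{-1}(0)$ — a value independent of $\psi$. Since $\varphi^{-1}\in$ the set of isomorphisms $\cA_{A_r}\rightarrow\cA_{A_q}$, the set $\cO(0;r,q)$ is nonempty and, by the above, equals a single point. But by Lemma \ref{lem:rotinv} it is invariant under rotations about $0$, so a one-point set forces $\cO(0;r,q)=\{0\}$, contradicting Lemma \ref{lem:nozero}. Hence no such $\varphi$ exists.

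So the whole problem is reduced to proving $\cO(0;q)=\{0\}$: no isometric automorphism $\Phi$ of $\cA_{A_q}$ moves the vacuum. First I would pin down the shape of the induced disc map. Recall that every character kills $S^{A_q}_1$ (from $\rho(S^{A_q}_1)^2=0$), so $\cM_{A_q}\cong\overline{\mb{D}}$ is parametrized by $z=\rho(S^{A_q}_2)$ and $\cA_{A_q}$ maps onto the disc algebra with $S^{A_q}_2$ going to the coordinate function $z$. Because this function algebra is intrinsic and $\Phi$ and $\Phi^{-1}$ both push the coordinate forward to functions holomorphic on the open disc, $F_\Phi$ must be a M\"obius automorphism $z\mapsto e^{i\theta}\frac{z-a}{1-\bar a z}$. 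Next I would use the defining relation: writing $T_i=\Phi(S^{A_q}_i)$ and extracting the degree-two homogeneous component of $T_1^2+q(T_1T_2-T_2T_1)=0$ (using the graded decomposition of $\cA_{A_q}$ and the relation $S_1^2=q(S_2S_1-S_1S_2)$), one finds that the linear part of $\Phi$ is upper triangular in the basis $\{S^{A_q}_1,S^{A_q}_2\}$ with equal diagonal entries; in particular $\Phi(S^{A_q}_1)$ has no $S^{A_q}_2$-component and $F_\Phi'(0)$ equals the common diagonal entry.

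The hard part — and the reason this is the ``trickiest case'' — is showing $a=0$, equivalently that the constant term $b=F_\Phi(0)$ of $\Phi(S^{A_q}_2)$ vanishes. The obstruction is genuine: the relation $S_1^2=q(S_2S_1-S_1S_2)$ is unchanged under the translation $S_2\mapsto S_2-b$, so the vacuum cannot be singled out by the algebraic relations alone, and $0$ is not topologically distinguished in the featureless disc $\cM_{A_q}$. The vacuum must instead be located by the \emph{metric} data. My plan is to combine the M\"obius identity $|F_\Phi'(0)|=1-|a|^2$ with the norm estimates from isometry: by Lemma \ref{lem:vacuum_norm} and the orthogonality of the graded pieces of the Fock space one has $\|\Phi(S^{A_q}_i)\|=1$ and $\|\Phi(S^{A_q}_i)\Omega\|^2=\sum_n\|\Phi(S^{A_q}_i)^{(n)}\Omega\|^2$, and applying the same estimates to $\Phi^{-1}$ should force the diagonal entry of the linear part to have modulus $1$, hence $1-|a|^2=1$ and $a=0$. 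Carrying this out rigorously is the main technical obstacle, since the nonzero constant term of $\Phi(S^{A_q}_2)$ couples distinct graded components under composition, so the two-sided isometric estimates for $\Phi$ and $\Phi^{-1}$ must be organized with care; the degenerate cases $q=0$ or $r=0$, where the degree-two extraction gives less, I would treat by a separate and easier direct computation. Once $a=0$ is established, $\Phi$ preserves the vacuum and hence, by Theorem \ref{thm:algebra_iso} and Proposition \ref{prop:classifyX_A}, comes from a subproduct system automorphism; this gives $\cO(0;q)=\{0\}$ and completes the reduction above.
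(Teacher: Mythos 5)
Your reduction in the first paragraph is correct as far as it goes (the contravariant functoriality $F_{\psi\circ\varphi}=F_\varphi\circ F_\psi$ is right, and a rotation-invariant singleton must be $\{0\}$), but it rests entirely on the claim that $\cO(0;q)=\{0\}$, i.e.\ that every isometric automorphism of $\cA_{A_q}$ fixes the vacuum state. This claim is nowhere established in your argument, and it is the whole content of the problem: you yourself flag "carrying this out rigorously" as the main technical obstacle, and the sketch you give does not close. Concretely, writing $\Phi(S^{A_q}_2)=b+c_{21}S^{A_q}_1+c_{22}S^{A_q}_2+(\textrm{higher})$, the Fock-space estimate $\|\Phi(S^{A_q}_2)\Omega\|\leq\|\Phi(S^{A_q}_2)\|=1$ only yields $|c_{22}|\leq\sqrt{1-|b|^2}$, which is perfectly compatible with the M\"obius identity $|c_{22}|=|F_\Phi'(0)|=1-|b|^2$ for every $|b|<1$; and the attempt to squeeze from both sides using $\Phi^{-1}$ is blocked exactly by the issue you name, that a nonzero constant term makes the degree-one part of a composition fail to be the product of the degree-one parts. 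It is also far from clear that the claim is true a priori: for the closely related algebras whose character space is a ball or a disc (e.g.\ $\cA_1$ in the notation of Section \ref{sec:qcommuting}, or the disc algebra itself), isometric automorphisms implementing M\"obius maps do move the vacuum, so nothing formal forces $\cO(0;q)=\{0\}$ here.

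The paper avoids this question entirely by a purely topological argument. Assuming an isometric isomorphism $\varphi:\cA_{A_q}\rightarrow\cA_{A_r}$ exists, set $b=F_\varphi(0)$; Lemma \ref{lem:nozero} gives $b\neq 0$, Lemma \ref{lem:rotinv} puts the whole circle $C=\{|z|=|b|\}$ inside $\cO(0;q,r)$, and the preimage $C'=F_\varphi^{-1}(C)$ is a Jordan curve through the origin contained in $\cO(0;r)$. Rotation invariance of $\cO(0;r)$ then sweeps out the entire interior of $C'$, and pushing forward by the homeomorphism $F_\varphi$ places the interior of $C$ --- which contains $0$ --- inside $\cO(0;q,r)$, contradicting Lemma \ref{lem:nozero}. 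Note that this argument is happy to let $\cO(0;r)$ be large (it in fact shows it would contain a whole disc); it never needs to decide whether automorphisms fix the vacuum. If you want to rescue your approach you must either prove $\cO(0;q)=\{0\}$ outright, which appears to be at least as hard as the lemma itself, or replace that step with a topological argument of the above kind.
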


\begin{proof}
Assume that $\varphi :\cA_{A_q} \rightarrow \cA_{A_r}$ is an isometric isomorphism. We have
$\rho_0 \circ \varphi = \rho_b$, with $b = F_\varphi(0)$, and $F_\varphi$ is a homeomorphism of $\overline{\mb{D}}$
onto itself.

By definition, $b \in \cO(0;q,r)$. By Lemma \ref{lem:nozero}, $b \neq 0$. Denote $C := \{z:|z|=|b|\}$. By Lemma
\ref{lem:rotinv}, $C \subseteq \cO(0;q,r)$. Consider $C' := F_\varphi^{-1}(C)$. We have that $C' \subseteq \cO(0;r)$.
$C'$ is a simply connected closed path in $\mb{D}$ that goes through the origin. By Lemma \ref{lem:rotinv}, the
interior of $C'$, $\textrm{int}(C')$, is in $\cO(0;r)$. But then $F_\varphi(\textrm{int}(C'))$
is the interior of $C$, and it is in $\cO(0;q,r)$. But then $0 \in \cO(0;q,r)$, contradicting Lemma \ref{lem:nozero}.
\end{proof}

That concludes the proof of Theorem \ref{thm:classifyA_A}.

\section{The representation theory of Matsumoto's subshift C$^*$-algebras}\label{sec:subshift}

In \cite{Ma} Kengo Matsumoto introduced a class of C$^*$-algebras that arise from symbolic dynamical systems called ``subshifts" (we note that in the later paper \cite{CM04} Carlsen and Matsumoto suggest another way of associating a C$^*$-algebra with a subshift. Here we are discussing only the algebras originally introduced in \cite{Ma}). These \emph{subshift algebras}, as we shall call them, are strict generalizations of Cuntz-Krieger algebras and have been extensively studied by Matsumoto, T. M. Carlsen and others. For example, the following have been studied: criteria for simplicity and pure-infiniteness; conditions on the underlying dynamical systems for subshift algebras to be isomorphic; the automorphisms of the subshift algebras; K-theory of the subshift algebras; and much more. In this section we will use the framework constructed in the previous sections to give a complete description of all representations of a subshift algebra when the subshift is of finite type.


\subsection{Subshifts and the corresponding subproduct systems and C$^*$-algebras}

Our references for subshifts are \cite{Ma} and \cite[Chapter 3]{BrinStuck02}.

Let $\cI = \{1, 2, \ldots, d\}$ be a fixed finite set. $\cI^{\mb{Z}}$ is the space of all two-sided infinite sequences, endowed with the product topology. The \emph{left shift} (or simply \emph{the shift}) on $\cI^{\mb{Z}}$ is the homeomorphism $\sigma : \cI^{\mb{Z}} \rightarrow \cI^{\mb{Z}}$ given by $(\sigma(x))_k = x_{k+1}$. Let $\Lambda$ be a shift invariant closed subset of $\cI^{\mb{Z}}$. By this we mean $\sigma(\Lambda) = \Lambda$. The topological dynamical system $(\Lambda, \sigma\big|_\Lambda)$ is called a \emph{subshift}. Sometimes $\Lambda$ is also referred to as the subshift.

If $W$ is a set of words in $1,2,\ldots,d$, one can define a subshift by forbidding the words in $W$ as follows:
\bes
\Lambda_W = \{x \in \cI^{\mb{Z}} : \textrm{no word in $W$ occurs as a block in $x$}\}.
\ees
Conversely, every subshift arises this way: i.e., for every subshift $\Lambda$ there exists a collection of words $W$, called \emph{the set of forbidden words}, such that $\Lambda = \Lambda_W$. In this context, if $W$ can be chosen finite then $\Lambda = \Lambda_W$ is called \emph{a subshift of finite type}, or \emph{SFT} for short. By replacing $\cI$ if needed, we may always assume that $W$ has no words of length one. If $W$ can be chosen such that the longest word in $W$ has length $k+1$ then $\Lambda$ is called a \emph{$k$-step SFT}. A $1$-step SFT is also called a \emph{topological Markov chain}. A basic result is that every SFT is isomorphic to a topological Markov chain (\cite[Proposition 3.2.1]{BrinStuck02}).

For a fixed subshift $(\Lambda, \sigma\big|_\Lambda)$, we set
\bes
\Lambda^k = \{\alpha : \textrm{$\alpha$ is a word with length $k$ occurring in some $x \in \Lambda$}\},
\ees
and $\Lambda_l = \cup_{k=0}^l \Lambda^k$, $\Lambda^* = \cup_{k=0}^\infty \Lambda^k$. With the subshift $(\Lambda, \sigma\big|_\Lambda)$ we associate a subproduct system $X_\Lambda$ as follows. Let $\{e_i\}_{i \in \cI}$ be an orthonormal basis of a Hilbert space $E$. We define
\bes
X_\Lambda (0) = \mb{C},
\ees
and for $n \geq 1$ we define
\bes
X_\Lambda (n) = \textrm{span}\{e_\alpha : \alpha \in \Lambda^n\}.
\ees
We define a product $U_{m,n}: X_\Lambda (m) \otimes X_\Lambda(n) \rightarrow X_\Lambda(m+n)$
by
\bes
U_{m,n}(e_\alpha \otimes e_\beta) =
\begin{cases}
e_{\alpha \beta} , & \textrm{if } \alpha\beta \in \Lambda^{m+n} \cr
0  , & \textrm{else.}
\end{cases}
\ees
Since $\Lambda^{m+n} \subseteq \Lambda^m \cdot \Lambda^n$, $X_\Lambda$ is a standard subproduct system.

\begin{definition}
The C$^*$-algebra associated with a subshift $(\Lambda, \sigma\big|_\Lambda)$ is defined as the quotient algebra
\bes
\cO_\Lambda := \cO_{X_\Lambda} = \cT_{X_\Lambda} / \cK(\mathfrak{F}_{X_\Lambda}).
\ees
\end{definition}
\begin{remark}\emph{
Just to prevent confusion: In \cite{Ma}, $\cO_\Lambda$ was defined as the quotient by the compacts of the C$^*$-algebra generated by the ``creation operators" (that is, the $X$-shift) on $\mathfrak{F}_X$, without using the language of subproduct systems.}
\end{remark}

\subsection{Subproduct systems that come from subshifts}

\begin{proposition}
Let $X$ be a standard subproduct system such that there is an orthonormal basis $\{e_i\}_{i \in \cI}$ of $X(1)$, with $\cI$ finite, such that
 \begin{enumerate}
 \item Every $X(n)$, $n\geq1$, is spanned by vectors of the form $e_\alpha$ with $|\alpha| = n$.
 \item For all $m,n \in \mb{N}$, $|\alpha| = n$ and $e_\alpha \in X(n)$, implies that there is some $\beta,\gamma \in \cI^m$ such that $e_\beta \otimes e_\alpha$ and $e_\alpha \otimes e_{\gamma}$ are in $X(m+n)$.
 \end{enumerate}
 Then there is a shift invariant closed subset $\Lambda$ of $\cI^{\mb{Z}}$ such that $X = X_\Lambda$. $X$ is the maximal standard subproduct system with prescribed fibers $X(1), X(2), \ldots, X(k+1)$ if and only if $\Lambda$ is $k$-step SFT.
\end{proposition}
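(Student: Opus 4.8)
The plan is to construct the subshift $\Lambda$ directly from the given basis data and then verify both assertions. First I would define the set of allowed words to be $\Lambda^* = \{\alpha \in \mb{F}_\cI^+ : e_\alpha \in X(|\alpha|)\} \cup \{0\}$, and let $W$ be the complement, i.e., the set of words $\alpha$ with $e_\alpha \notin X(|\alpha|)$. The candidate subshift is $\Lambda = \Lambda_W$. Using hypothesis (1), each fiber $X(n)$ is spanned by the $e_\alpha$ with $\alpha$ allowed of length $n$; this already identifies $X(n)$ with $\textrm{span}\{e_\alpha : \alpha \in \Lambda^n\}$ provided I can show the combinatorially-defined set $\Lambda^n$ (words of length $n$ occurring in a bi-infinite point of $\Lambda$) coincides with the set of allowed words of length $n$. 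The inclusion ``occurring in a point $\Rightarrow$ allowed'' is automatic since $\Lambda$ forbids exactly the non-allowed words; the reverse inclusion ``allowed $\Rightarrow$ extends to a bi-infinite point'' is precisely where hypothesis (2) enters.

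The key step is the extension argument. Given an allowed word $\alpha$, hypothesis (2) says it can be extended by one letter on the right (and one on the left) to an allowed word; iterating and invoking a compactness/König's-lemma argument produces a one-sided infinite allowed extension in each direction, hence a point of $\cI^{\mb{Z}}$ all of whose subwords are allowed, i.e., a point of $\Lambda$ containing $\alpha$. I must also check that the factor condition ``$e_\alpha \in X(|\alpha|)$'' behaves like the subword condition of a subshift, namely that every subword of an allowed word is allowed: this follows from standardness of $X$, since $X(m+n) \subseteq X(m) \otimes X(n)$ forces $p_{m+n} \leq I \otimes p_n$ and $p_{m+n} \leq p_m \otimes I$ (Lemma \ref{lem:projection_subproduct}), so if $e_\alpha \in X(|\alpha|)$ then any splitting $\alpha = \beta\gamma$ has $e_\beta \in X(|\beta|)$ and $e_\gamma \in X(|\gamma|)$. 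With allowed-words closed under subwords and closed under the one-letter extensions of (2), the set $\Lambda = \Lambda_W$ has exactly $\Lambda^n = \{\alpha : e_\alpha \in X(n)\}$, and then $U^X_{m,n}(e_\alpha \otimes e_\beta) = p_{m+n}(e_\alpha \otimes e_\beta)$ equals $e_{\alpha\beta}$ when $\alpha\beta$ is allowed and $0$ otherwise, matching the product of $X_\Lambda$ exactly. This gives $X = X_\Lambda$.

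For the second assertion I would argue in both directions. Suppose $\Lambda$ is a $k$-step SFT, so that membership in $\Lambda$ is determined by forbidding a finite set $W$ of words of length at most $k+1$; equivalently a word $\alpha$ is allowed iff none of its length-$(\leq k+1)$ subwords lies in $W$, i.e., iff all of its subwords of length $k+1$ are allowed. This is exactly the statement that $X(n)$ for $n > k+1$ is the intersection $\bigcap_{i+j=n} X(i)\otimes X(j)$ cut out by the lower fibers, which is the defining property of the maximal standard subproduct system with prescribed fibers $X(1), \ldots, X(k+1)$. Conversely, if $X$ is the maximal standard subproduct system with prescribed fibers up to level $k+1$, then by the formula $X(n) = \bigcap_{i+j=n} X(i)\otimes X(j)$ a word $\alpha$ of length $n > k+1$ is allowed iff all its length-$\leq k+1$ subwords are allowed, so the finite collection $W$ of forbidden words of length exactly $k+1$ (together with shorter forbidden words) determines $\Lambda$, exhibiting it as a $(k)$-step SFT. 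I expect the main obstacle to be the careful bookkeeping in this last equivalence: translating the algebraic ``maximality'' condition $X(n) = \bigcap_{i+j=n} X(i)\otimes X(j)$ into the purely combinatorial statement that allowedness of a long word is decided by its length-$(k+1)$ subwords, and matching the step count $k$ precisely (the longest forbidden word has length $k+1$, so the fibers prescribed run up to $X(k+1)$). The compactness step in the extension argument is routine, and the identification of the product map is a direct computation, so the crux is nailing down the dictionary between maximality and finite type.
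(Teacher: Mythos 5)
Your proposal is correct and follows essentially the same route as the paper: build $\Lambda$ from the allowed words $\{\alpha : e_\alpha \in X(|\alpha|)\}$, use standardness of $X$ for closure of allowedness under passing to subwords, use hypothesis (2) together with a compactness argument to realize every allowed word inside a bi-infinite point, and translate maximality of $X$ into the finite-type condition via $X(n)=\bigcap_{i+j=n}X(i)\otimes X(j)$ for $n>k+1$. The one spot to tighten is the construction of the bi-infinite point: two independently built one-sided extensions of $\alpha$ need not concatenate to a legal sequence, so you should alternate, extending the entire allowed word built so far by one step to the right, then one step to the left, and so on, which hypothesis (2) permits since it applies to every allowed word.
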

\begin{proof}
For all $k \in \mb{N}$, define
\bes
\Lambda^{(k)} = \{\alpha \in \cI^k : e_\alpha \in X(k)\}.
\ees
For all $m \in \mb{Z}, k \in \mb{N}$, define the closed sets
\bes
A_{m,k} = \{x \in \cI^{\mb{Z}} : (x_m, x_{m+1}, \ldots, x_{m+k-1}) \in \Lambda^{(k)}\}.
\ees
Condition (2) implies that $X(k)$ always contains a nonzero vector of the form $e_\alpha$, $|\alpha| = k$. That implies that the family $\{A_{m,k} \}_{m,k}$ has the finite intersection property. Indeed,
\bes
A_{m_1,k_1} \cap A_{m_2,k_2} \supseteq A_{M,K} \neq \emptyset,
\ees
where $M = \min\{m_1, m_2 \}$, $K = \max\{m_2 + k_2, m_1 + k_1\} - M$. By compactness of $\cI^{\mb{Z}}$ we conclude that the closed set
\bes
\Lambda := \bigcap_{m,k}A_{m,k}
\ees
is non-empty. $\Lambda$ is invariant under the left and the right shifts, so $\sigma(\Lambda) = \Lambda$, so $(\Lambda, \sigma\big|_\Lambda)$ is a subshift. By condition (2), $\Lambda^k = \Lambda^{(k)}$. Condition (1) together with the definition of $X_\Lambda$  now imply that $X = X_\Lambda$.

The final assertion follows from the following facts, together with $X = X_\Lambda$. Fact number one:
\bes
E^{\otimes n} \ominus X_\Lambda(n) = \textrm{span}\{e_\alpha : \textrm{$\alpha$ is a forbidden word of length $n$} \}.
\ees
Fact number two: $X$ is the maximal standard subproduct system with prescribed fibers $X(1), \ldots, X(k+1)$ if and only if for every $n > k+1$,
\bes
X(n) = \bigcap_{i+j=n} X(i) \otimes X(j),
\ees
or in other words, if and only if
\begin{align*}
E^{\otimes n} \ominus X(n) &= \bigvee_{i+j=n} \left(E^{\otimes n} \ominus (X(i) \otimes X(j))\right) \\
&= \bigvee_{i+j=n} \left(E^{\otimes i}\otimes (E^{\otimes j} \ominus X(j)) + (E^{\otimes i} \ominus X(i)) \otimes E^{\otimes j}\right).
\end{align*}
Fact number three: $\Lambda$ is a $k$-step SFT if and only if for every $n > k+1$,
\begin{align*}
& \{\textrm{forbidden words of length $n$}\} = \\
& \bigcup_{i+j=n}\left(\cI^i \cdot \{\textrm{forbidden words of length $j$}\} \cup \{\textrm{forbidden words of length $i$}\}\cdot \cI^j \right).
\end{align*}
These facts assemble together to complete the proof.
\end{proof}

Not every subproduct system is isomorphic to one that comes from a subshift. Indeed, in the symmetric subproduct system $SSP$ (see Example \ref{expl:symm}) for any basis $\{e_i\}_{i \in \cI}$ of $X(1)$, the product $e_i \otimes e_j$ for $i \neq j$ is never in $X(2)$, and thus the images $f_i$ and $f_j$ of $e_i$ and $e_j$ in any isomorphic subproduct system $X$ can never be such that $f_i \otimes f_j$ is mapped isometrically to $U^X_{1,1}(f_i \otimes f_j)$. Thus if $SSP$ is isomorphic to $X_\Lambda$ for some subshift $\Lambda$, then $\Lambda$ must be the subshift containing only constant sequences. But such $X_\Lambda$ is clearly not isomorphic to $SSP$.

As another example, the subproduct system $X(0) = \mb{C}$, $X(1) = \mb{C}^2$, and $X(n) = 0$ for $n > 1$, cannot be of the form $X_\Lambda$ for any $\Lambda \subseteq \cI^{\mb{Z}}$.

\subsection{The representation theory of the C$^*$-algebra associated with a subshift of finite type}

Let $\Lambda$ be a fixed subshift in $\cI^{\mb{Z}}$ (with $\cI = \{1,2, \ldots, d\}$), and let $X = X_\Lambda$ be the associated subproduct system. We will denote the $X$-shift by $S$ (instead of $S^X$) to make some formulas more readable. Let $Z_i$ be the image of $S_i$ in the quotient $\cO_\Lambda$. We define for $i \in \cI$, $k \in \mb{N}$ the sets
\bes
E_i^k = \{\alpha \in \Lambda^k : i \alpha \in \Lambda^* \}.
\ees

\begin{lemma}\label{lem:E_i^k}
If $\Lambda$ is a $k$-step SFT, then for all $i\in\cI$,
\bes
\{\gamma \in \Lambda^* : |\gamma| \geq k,  i \gamma \in \Lambda^* \} = \{\alpha \beta \in \Lambda^*   : \alpha \in E_i^k, \beta \in \Lambda^* \}.
\ees
\end{lemma}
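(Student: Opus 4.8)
The plan is to deduce the identity from the standard \emph{gluing property} of finite-type subshifts and then read off the two inclusions. Recall that $\Lambda^*$ consists of the words that occur in some point of $\Lambda$, and that since $\Lambda$ is a $k$-step SFT it is cut out by a finite set $W$ of forbidden words, all of length $\le k+1$. Consequently a point $x\in\cI^{\mb{Z}}$ lies in $\Lambda$ if and only if none of its length-$(k+1)$ windows $x_{[n,n+k]}$ contains a word of $W$ (every occurrence of a forbidden word, being of length $\le k+1$, sits inside some such window). The first thing I would isolate is the gluing lemma: if $uv\in\Lambda^*$ and $vw\in\Lambda^*$ for words $u,v,w$ (with $u,w$ allowed to be empty) and $|v|\ge k$, then $uvw\in\Lambda^*$.

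To prove this gluing lemma I would choose points $x,y\in\Lambda$ in which $uv$ and $vw$ respectively occur, and translate them so that the common block $v$ occupies the same coordinates, say $[0,|v|-1]$, with $u$ immediately to the left of it in $x$ and $w$ immediately to the right of it in $y$. Splicing the left part of $x$ to the right part of $y$ along this block gives a well-defined sequence $z$ (the two copies agree on the overlap $v$) whose coordinates $[-|u|,\,|v|-1+|w|]$ spell $uvw$. The point is that every length-$(k+1)$ window of $z$ lies entirely in the part copied from $x$ or entirely in the part copied from $y$: a window meeting both a coordinate $<0$ and a coordinate $\ge|v|$ would have length at least $|v|+2\ge k+2$, which is impossible. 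Hence no window of $z$ contains a forbidden word, so $z\in\Lambda$ and $uvw\in\Lambda^*$. This splicing argument — and in particular the remark that an overlap of length $\ge k$ prevents any window from straddling the seam — is the heart of the matter, and the only place where the finite-type hypothesis enters. I expect this to be the main (indeed the only) obstacle; the rest is bookkeeping.

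Granting the gluing lemma, both inclusions are short. For $\supseteq$, take $\alpha\beta\in\Lambda^*$ with $\alpha\in E_i^k$ and $\beta\in\Lambda^*$; then $|\alpha\beta|\ge|\alpha|=k$, and applying gluing with $u=i$, $v=\alpha$ (so $|v|=k$) and $w=\beta$ to the admissible words $i\alpha\in\Lambda^*$ and $\alpha\beta\in\Lambda^*$ gives $i\alpha\beta\in\Lambda^*$, so $\alpha\beta$ belongs to the left-hand set. For $\subseteq$, take $\gamma\in\Lambda^*$ with $|\gamma|\ge k$ and $i\gamma\in\Lambda^*$, and split $\gamma=\alpha\beta$ with $\alpha$ its first $k$ letters. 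Since subwords of admissible words are admissible, $\alpha\in\Lambda^k$ and $\beta\in\Lambda^*$, while $i\alpha$, being the initial length-$(k+1)$ subword of $i\gamma\in\Lambda^*$, is itself admissible; hence $\alpha\in E_i^k$ and $\gamma=\alpha\beta$ lies in the right-hand set. Combining the two inclusions yields the asserted equality.
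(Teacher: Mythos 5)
Your proof is correct, and the easy inclusion ($\subseteq$: peel off the first $k$ letters of $\gamma$ and observe that $i\alpha$ is a subword of $i\gamma$) is exactly the paper's. Where you genuinely diverge is in the nontrivial inclusion $\supseteq$. The paper argues by contradiction at the level of forbidden words: if $i\gamma\notin\Lambda^*$ then $i\gamma$ contains a forbidden word, which (having length at most $k+1$ and not lying inside $\gamma\in\Lambda^*$) must sit inside $i\alpha$, contradicting $i\alpha\in\Lambda^*$. This is shorter but silently uses the fact that for a $k$-step SFT a word failing to belong to $\Lambda^*$ must actually contain a forbidden word — a statement that is false for arbitrary generating sets $W$ of short words and whose correct justification is precisely the splicing argument you spell out. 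Your route instead isolates an explicit gluing lemma ($uv,vw\in\Lambda^*$ and $|v|\ge k$ imply $uvw\in\Lambda^*$), proves it by splicing two points of $\Lambda$ along the common block and checking that no length-$(k+1)$ window straddles the seam, and then applies it with $u=i$, $v=\alpha$, $w=\beta$. The two arguments rest on the same locality principle, but yours is self-contained and fills in the step the paper leaves implicit, at the cost of being longer; the gluing lemma you prove is also reusable (it is the standard statement that the language of a $k$-step SFT is determined by its words of length $k+1$).
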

\begin{proof}
Assume that $\gamma \in \Lambda^*$ is such that $|\gamma| \geq k$ and $i \gamma \in \Lambda^*$. Defining $\alpha = \gamma_1 \cdots \gamma_k$ and $\beta = \gamma_{k+1}\cdots \gamma_{k+l}$, we have that $\gamma = \alpha \beta$ where $\alpha \in E_i^k$ and $\beta \in \Lambda^*$.

Conversely, if $\gamma = \alpha \beta \in \Lambda^*$ where $\alpha \in E_i^k$ and $\beta \in \Lambda^*$, then $i \gamma$ must be in $\Lambda^*$. Indeed, if not, then $i \gamma$ must contain a forbidden word. But $\gamma \in \Lambda^*$, thus the forbidden word must be in $i \alpha$ (since $\Lambda$ is a $k$-step SFT). But that is impossible because $\alpha \in E_i^k$.
\end{proof}

\begin{lemma}\label{lem:E=T}
If $\Lambda$ is a $k$-step SFT then for all $i,j \in \cI$, $i \neq j$,
\bes
S_i^*S_j = 0 ,
\ees
and
\be\label{eq:SXimodK}
S_i^*S_i = \sum_{\alpha \in E_i^k} \underline{S}^\alpha \underline{S}^{\alpha*}  \,\, \mod \cK_X.
\ee
Consequently, $\cE_X = \cT_X$.
\end{lemma}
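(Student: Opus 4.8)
I would prove the two operator identities by computing the action of everything on the orthonormal basis $\{e_\gamma : \gamma \in \Lambda^*\}$ of $\mathfrak{F}_X$. Since $\underline{S}^\alpha e_\beta = e_{\alpha\beta}$ when $\alpha\beta \in \Lambda^*$ and $0$ otherwise, the operator $\underline{S}^\alpha \underline{S}^{\alpha*}$ is the orthogonal projection onto $\overline{\textrm{span}}\{e_\gamma : \gamma \in \Lambda^*,\ \gamma \text{ begins with } \alpha\}$, and $S_i^* S_i$ is the projection onto $\overline{\textrm{span}}\{e_\beta : \beta \in \Lambda^*,\ i\beta \in \Lambda^*\}$. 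For $i \neq j$ the operator $S_i^* S_j$ annihilates every basis vector (the first letters of $i\beta$ and $j\beta'$ can never agree), which gives $S_i^* S_j = 0$ exactly. For the second identity, the projections $\underline{S}^\alpha \underline{S}^{\alpha*}$ with $\alpha \in \Lambda^k$ have mutually orthogonal ranges, so $\sum_{\alpha \in E_i^k} \underline{S}^\alpha \underline{S}^{\alpha*}$ is the projection onto $\overline{\textrm{span}}\{e_\gamma : \gamma \text{ begins with some } \alpha \in E_i^k\}$; by Lemma \ref{lem:E_i^k} this is precisely $\overline{\textrm{span}}\{e_\gamma : |\gamma| \geq k,\ i\gamma \in \Lambda^*\}$. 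This differs from $S_i^* S_i$ only on the span of $\{e_\beta : i\beta \in \Lambda^*,\ |\beta| < k\}$, which is finite dimensional over the finite alphabet $\cI$, so the difference is finite rank and hence compact, giving the identity modulo $\cK_X$.

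For the consequence $\cE_X = \cT_X$, note first that $\cE_X \subseteq \cT_X$ is automatic, that $\cE_X$ is a norm-closed self-adjoint subspace containing $I$ and every $S_i$, and that $\cK(\mathfrak{F}_X) = \cK_X \subseteq \cE_X$ by Proposition \ref{prop:KinE}. It therefore suffices to show that $\cE_X$ is closed under multiplication: it will then be a C$^*$-subalgebra of $\cT_X$ containing the generators, forcing $\cE_X = C^*(\{S_i\}, I) = \cT_X$. Since multiplication is continuous and $\cE_X = \overline{\textrm{span}}\{\underline{S}^\mu \underline{S}^{\nu*}\}$, it is enough to show that each product $(\underline{S}^\mu \underline{S}^{\nu*})(\underline{S}^\alpha \underline{S}^{\beta*})$ lies in $\cE_X$, and everything hinges on the middle factor $\underline{S}^{\nu*}\underline{S}^\alpha$.

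The crux is a ``source projection'' identity generalizing the lemma's second relation: for every $\gamma \in \Lambda^*$,
\[
Q_\gamma := \underline{S}^{\gamma*}\underline{S}^\gamma = \sum_{\zeta \in \Lambda^k,\ \gamma\zeta \in \Lambda^*} \underline{S}^\zeta \underline{S}^{\zeta*} \pmod{\cK_X},
\]
which is exactly the case $|\gamma| = 1$ of the lemma and follows in general by the same basis computation together with the $k$-step property (for $|\rho| \geq k$, admissibility of $\gamma\zeta\rho'$ splits into the two windows $\gamma\zeta$ and $\zeta\rho'$). In particular $Q_\gamma \in \cE_X$. A direct check on basis vectors then shows that, when $\beta$ is a prefix of $\gamma$ with $\gamma = \beta\gamma'$, one has $\underline{S}^{\beta*}\underline{S}^\gamma = \underline{S}^{\gamma'} Q_\gamma = \sum_\zeta \underline{S}^{\gamma'\zeta}\underline{S}^{\zeta*} \pmod{\cK_X}$; the adjoint covers the case $\gamma$ a prefix of $\beta$, and when neither is a prefix of the other $\underline{S}^{\beta*}\underline{S}^\gamma = 0$. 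Thus $\underline{S}^{\nu*}\underline{S}^\alpha$ is a finite sum of terms $\underline{S}^\sigma \underline{S}^{\tau*}$ modulo $\cK_X$, whence $(\underline{S}^\mu \underline{S}^{\nu*})(\underline{S}^\alpha \underline{S}^{\beta*})$ is a finite sum of terms $\underline{S}^{\mu\sigma}\underline{S}^{\beta\tau*}$ (using $\underline{S}^\mu\underline{S}^\sigma = \underline{S}^{\mu\sigma}$ and $\underline{S}^{\tau*}\underline{S}^{\beta*} = \underline{S}^{\beta\tau*}$ up to admissibility) plus an element of $\underline{S}^\mu \cK_X \underline{S}^{\beta*} \subseteq \cK_X$, and all of these lie in $\cE_X$.

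Concluding, $\cE_X \cE_X \subseteq \cE_X$, so $\cE_X = \cT_X$. The main obstacle I anticipate is precisely the termination of the reduction of $\underline{S}^{\nu*}\underline{S}^\alpha$ to the span $\{\underline{S}^\sigma\underline{S}^{\tau*}\}$: a naive letter-by-letter expansion that substitutes $S_i^*S_i = \sum_{\eta \in E_i^k}\underline{S}^\eta\underline{S}^{\eta*}$ in the interior reintroduces words of length $k$ and need not shorten the expression, so the argument does not obviously terminate. The resolution is to push the source projection $Q_\gamma$ to the \emph{outside} (i.e.\ to use $\underline{S}^{\beta*}\underline{S}^\gamma = \underline{S}^{\gamma'}Q_\gamma$ rather than expanding $Q_\gamma$ in place), and it is exactly the $k$-step hypothesis, funneled through the source-projection identity, that makes this closure work.
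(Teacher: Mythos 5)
Your proof is correct. For the two displayed identities it takes essentially the same route as the paper: you identify $S_i^*S_i$ as the projection onto $\overline{\textrm{span}}\{e_\beta : i\beta\in\Lambda^*\}$, use Lemma \ref{lem:E_i^k} to identify $\sum_{\alpha\in E_i^k}\underline{S}^\alpha\underline{S}^{\alpha*}$ as the projection onto the part of that space spanned by the $e_\beta$ with $|\beta|\geq k$, and observe that the discrepancy is a finite-rank projection because $\cI$ is finite; the orthogonality of the ranges of the $S_i$ gives $S_i^*S_j=0$ exactly. Where you genuinely diverge is the clause $\cE_X=\cT_X$: the paper dispatches it in one sentence (since $\cK_X\subseteq\cE_X\subseteq\cT_X$, the equality ``will be established once we prove (\ref{eq:SXimodK})''), leaving the closure of $\cE_X$ under multiplication entirely implicit --- the only place the text returns to this point is the avowedly ``rough'' manipulation of the relations in the proof of Theorem \ref{thm:rep_subshift}. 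Your argument via the source projections $Q_\gamma=\underline{S}^{\gamma*}\underline{S}^\gamma$, together with the prefix trichotomy for $\underline{S}^{\nu*}\underline{S}^\alpha$, supplies a complete and correct proof of that closure, and your warning that the naive in-place substitution of (\ref{eq:SXimodK}) need not terminate is exactly the right subtlety to flag: pushing $Q_\gamma$ to the outside, where it meets only the vacuum end of the word and can then be expanded once, is what makes the reduction finite. The only cost of your extra machinery is length; what it buys is that the ``consequently'' in the statement is actually proved rather than asserted.
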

%

\begin{proof}
Since the $S_i$ are partial isometries with orthogonal ranges, we have $S_i^* S_j = 0$ for all $i \neq j$. Since $\cK_X \subseteq \cE_X \subseteq \cT_X$ (Proposition \ref{prop:KinE}), $\cE_X = \cT_X$ will be established once we prove (\ref{eq:SXimodK}).

$S_i^*S_i$ is the projection onto the initial space of $S_i$. Call this space $G$. We have
\bes
G = \textrm{span}\{e_\alpha : \alpha \in \Lambda^* \textrm{ such that } i \alpha \in \Lambda^* \}.
\ees
The space
\bes
G' = \textrm{span}\{e_\alpha : \alpha \in \Lambda^* \textrm{ such that } i \alpha \in \Lambda^* \textrm{ and } |\alpha| \geq k\}
\ees
has finite codimension in $G$. But by Lemma \ref{lem:E_i^k},
\bes
G' = \{e_{\alpha \beta} : \alpha \beta \in \Lambda^*, \alpha \in E_i^k\},
\ees
that is, $G'$ is spanned by $e_\gamma$ where $\gamma$ runs through all legal words beginning with some $\alpha \in E_i^k$. Thus, $G'$ is the range of the projection $\sum_{\alpha \in E_i^k} \underline{S}^\alpha \underline{S}^{\alpha^*}$. Since $G'$ has finite codimension in $G$, we have (\ref{eq:SXimodK}).
\end{proof}

\begin{proposition}
For every subshift $\Lambda$, the $d$-tuple $\underline{Z} = (Z_1, \ldots, Z_d)$ satisfies the following relations:
\be\label{eq:Z1}
p(\underline{Z}) = 0 \,\, , \textrm{ for all } p \in I^X ,
\ee
\be\label{eq:Z2}
Z_i^* Z_j = 0 \,\,  , \textrm{ for all } i,j \in \cI \, , i \neq j ,
\ee
and
\be\label{eq:Z3}
\sum_{i=1}^d Z_i Z_i^* = 1.
\ee
In particular, $Z_i$ is a partial isometry for all $i \in \cI$.
If $\Lambda$ is a $k$-step SFT, the $\underline{Z}$ also satisfies
\be\label{eq:Z4}
Z_i^*Z_i = \sum_{\alpha \in E_i^k} \underline{Z}^\alpha \underline{Z}^{\alpha *} \,\, , \textrm{ for all } i \in \cI.
\ee
\end{proposition}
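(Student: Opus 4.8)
The plan is to prove the four relations by pushing the corresponding facts about the $X$-shift operators $S_i$ through the quotient map $\pi : \cT_X \to \cO_\Lambda = \cT_X / \cK(\mathfrak{F}_{X_\Lambda})$, recalling that $Z_i = \pi(S_i)$. Since $\pi$ is a $*$-homomorphism, any operator identity that holds for the $S_i$ exactly passes to the $Z_i$, and any identity that holds modulo $\cK_X$ becomes an exact identity after applying $\pi$. This is the central mechanism, and the work is mostly assembling facts already established for the standard $X$-shift.

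First I would prove \eqref{eq:Z1}. By Theorem \ref{thm:repZ(I)} (or Lemma \ref{lem:shiftpoly}), the shift tuple $\underline{S^X}$ satisfies $p(\underline{S^X}) = 0$ for every $p \in I^X$; applying $\pi$ gives $p(\underline{Z}) = 0$. Next, \eqref{eq:Z2} and the partial-isometry claim: since the $S_i$ are partial isometries with mutually orthogonal ranges (they send $e_\alpha$ to $e_{i\alpha}$ when legal and to $0$ otherwise, and distinct letters produce orthogonal images), we have exactly $S_i^* S_j = 0$ for $i \neq j$, which is the first displayed equation inside the proof of Lemma \ref{lem:E=T}; apply $\pi$. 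For \eqref{eq:Z3} I would invoke equation \eqref{eq:PC} from the proof of Proposition \ref{prop:KinE}, namely $I - \sum_{i} S_i^X (S_i^X)^* = P_{\mb{C}}$, the rank-one projection onto the vacuum, which is compact. Thus $\sum_i S_i S_i^* = I \ \mod \cK_X$, and applying $\pi$ yields $\sum_{i=1}^d Z_i Z_i^* = 1$. That $Z_i$ is then a partial isometry follows since $Z_i^* Z_i$ is a projection (as $Z_i Z_i^*$ are orthogonal projections summing to $1$, each $Z_i$ is a partial isometry).

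For the final relation \eqref{eq:Z4}, valid when $\Lambda$ is a $k$-step SFT, the content is precisely equation \eqref{eq:SXimodK} of Lemma \ref{lem:E=T}: $S_i^* S_i = \sum_{\alpha \in E_i^k} \underline{S}^\alpha \underline{S}^{\alpha*} \ \mod \cK_X$. Applying $\pi$ and using $Z_i = \pi(S_i)$, $\underline{Z}^\alpha = \pi(\underline{S}^\alpha)$ converts this congruence into the exact identity $Z_i^* Z_i = \sum_{\alpha \in E_i^k} \underline{Z}^\alpha \underline{Z}^{\alpha *}$. The main obstacle is not any single step but simply ensuring the bookkeeping is correct: one must verify that each ingredient really is available in the stated strength (exact equality for \eqref{eq:Z1} and \eqref{eq:Z2}, congruence modulo the compacts for \eqref{eq:Z3} and \eqref{eq:Z4}) and that $\underline{S}^\alpha$ maps correctly under $\pi$ to $\underline{Z}^\alpha$. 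Since all the analytic content has been done in Proposition \ref{prop:KinE} and Lemma \ref{lem:E=T}, this proof is essentially a one-line application of the quotient homomorphism to each previously-established relation.
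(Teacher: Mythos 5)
Your proof is correct and follows the paper's argument exactly: the paper likewise derives (\ref{eq:Z1}) from Theorem \ref{thm:repZ(I)}, (\ref{eq:Z2}) and (\ref{eq:Z4}) from Lemma \ref{lem:E=T}, and (\ref{eq:Z3}) from equation (\ref{eq:PC}), all by pushing through the quotient $*$-homomorphism. The only cosmetic remark is that the partial-isometry claim is most directly seen from the fact that $*$-homomorphisms preserve partial isometries, rather than from your parenthetical argument about the range projections.
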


\begin{proof}
The quotient map $\cT_X \rightarrow \cO_\Lambda$ is a $*$-homomorphism, so (\ref{eq:Z1}) follows from Theorem \ref{thm:repZ(I)}. (\ref{eq:Z2}) and (\ref{eq:Z4}) follow from the previous lemma, and (\ref{eq:Z3}) follows from equation (\ref{eq:PC}).
\end{proof}

\begin{theorem}\label{thm:rep_subshift}
Let $\Lambda$ be a $k$-step SFT. Every unital representation $\pi : \cO_\Lambda \rightarrow B(H)$ is determined by a row-contraction $\underline{T} = (T_1, \ldots, T_d)$ satisfying relations (\ref{eq:Z1})-(\ref{eq:Z4}) such that $\pi(Z_i) = T_i$ for all $i \in \cI$. Conversely, every row contraction in $B(H)^d$ satisfying the relations (\ref{eq:Z1})-(\ref{eq:Z4}) gives rise to a unital representation $\pi : \cO_\Lambda \rightarrow B(H)$ such $\pi(Z_i) = T_i$ for all $i \in \cI$.
\end{theorem}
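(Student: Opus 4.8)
The plan is to prove the two directions separately, using the correspondence between representations of the subproduct system $X_\Lambda$ and tuples in the zero set $Z(I^{X_\Lambda})$ established in Theorem~\ref{thm:repZ(I)}, together with the structure of the quotient $\cO_\Lambda = \cT_{X_\Lambda}/\cK(\mathfrak{F}_{X_\Lambda})$. The forward direction is essentially immediate: if $\pi:\cO_\Lambda \to B(H)$ is a unital representation, set $T_i = \pi(Z_i)$. Since the $Z_i$ satisfy relations \eqref{eq:Z1}--\eqref{eq:Z4} in $\cO_\Lambda$ (by the Proposition preceding the theorem), and $\pi$ is a unital $*$-homomorphism, the images $T_i$ satisfy the same relations. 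The row-contractivity of $\underline{T}$ follows from \eqref{eq:Z3}, since $\sum_i T_i T_i^* = \pi(\sum_i Z_i Z_i^*) = \pi(1) = I_H$. A subtle point worth noting is that $\pi$ is determined by the $T_i$ because $\cO_\Lambda$ is generated as a C$^*$-algebra by the $Z_i$ and $1$.

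The substance is the converse. Given a row contraction $\underline{T} = (T_1,\ldots,T_d)$ in $B(H)^d$ satisfying \eqref{eq:Z1}--\eqref{eq:Z4}, I must produce a unital representation $\pi:\cO_\Lambda \to B(H)$ with $\pi(Z_i) = T_i$. First I would observe that relation \eqref{eq:Z1}, namely $p(\underline{T}) = 0$ for all $p \in I^{X_\Lambda}$, says precisely that $\underline{T} \in Z(I^{X_\Lambda})$. By Theorem~\ref{thm:repZ(I)}, $\underline{T}$ therefore defines a completely contractive representation $T$ of the subproduct system $X_\Lambda$ on $H$. By the universality in Theorem~\ref{thm:universal} (or directly via the Poisson transform of Theorem~\ref{thm:CP1}), there is a unital completely contractive homomorphism from $\cA_{X_\Lambda}$ onto the algebra generated by the $T_i$ sending $S_i$ to $T_i$. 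The task is to promote this to a genuine $*$-representation of the \emph{C$^*$-algebra} $\cT_{X_\Lambda}$, and then to check that it descends to the quotient $\cO_\Lambda$.

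The key mechanism is the Poisson transform. I would form the map $\Psi = \Phi_{\underline{T}}: \cT_X = C^*(\underline{S}) \to B(H)$ as in Theorem~\ref{thm:CP1} (here one uses that $\cE_X = \cT_X$, which was proved in Lemma~\ref{lem:E=T} for $k$-step SFTs, so $\Psi$ is defined on the whole C$^*$-algebra). The relations \eqref{eq:Z2} and \eqref{eq:Z4} are exactly what is needed to force the completely positive map $\Psi$ to be multiplicative, i.e.\ a genuine $*$-homomorphism, rather than merely completely positive: the defect between $\Psi(ab)$ and $\Psi(a)\Psi(b)$ can be written in terms of $\Psi(S_i^* S_j) - T_i^* T_j$ and $\Psi(S_i^* S_i) - T_i^* T_i$, and these vanish precisely because the $T_i$ satisfy the same partial-isometry and range relations \eqref{eq:Z2}, \eqref{eq:Z4} that the $S_i$ satisfy modulo $\cK_X$. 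Concretely, one checks that $\Psi(S_i^* S_i) = T_i^* T_i = \sum_{\alpha \in E_i^k} \underline{T}^\alpha \underline{T}^{\alpha*}$ and $\Psi(S_i^* S_j) = T_i^* T_j = 0$, so the multiplicative domain of $\Psi$ contains the generators and hence all of $\cT_X$.

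Finally, to obtain a representation of $\cO_\Lambda = \cT_X/\cK(\mathfrak{F}_X)$ I would verify that the $*$-homomorphism $\Psi:\cT_X \to B(H)$ annihilates the compacts $\cK(\mathfrak{F}_X)$. This follows because, by Proposition~\ref{prop:KinE}, $\cK(\mathfrak{F}_X)$ is generated by the rank-one-type elements built from the defect $I - \sum_i S_i^X (S_i^X)^*$, which equals $P_{\mb{C}}$ by \eqref{eq:PC}; under $\Psi$ this defect maps to $I_H - \sum_i T_i T_i^* = 0$ by the row-isometry relation \eqref{eq:Z3}. Hence $\Psi$ kills a generating set of $\cK_X$ and so factors through the quotient, yielding the desired $\pi:\cO_\Lambda \to B(H)$ with $\pi(Z_i) = T_i$. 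I expect the main obstacle to be the careful justification that $\Psi$ is multiplicative on all of $\cT_X$ (not just on $\cA_X$); the cleanest route is the multiplicative-domain argument, showing the self-adjoint generators lie in the multiplicative domain of the unital completely positive map $\Psi$, which for C$^*$-algebras forces $\Psi$ to be a homomorphism on the generated subalgebra.
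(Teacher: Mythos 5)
Your proposal is correct and follows the same overall skeleton as the paper's proof: use the Poisson transform of Theorem \ref{thm:CP1} to get a unital completely positive map $\Psi$ on $\cE_X$, invoke Lemma \ref{lem:E=T} to know $\cE_X = \cT_X$, kill the compacts via relation (\ref{eq:Z3}) applied to the spanning operators $\underline{S}^\beta(I-\sum_i S_iS_i^*)\underline{S}^{\alpha*}$, and descend to $\cO_\Lambda$. Where you genuinely diverge is in the multiplicativity step. The paper passes first to the quotient, obtaining a positive unital map $\pi$ on $\cO_\Lambda$, and then argues (admittedly informally --- ``we will try to convince'') that $\pi$ is multiplicative because any product $(\underline{Z}^\alpha\underline{Z}^{\beta*})(\underline{Z}^{\alpha'}\underline{Z}^{\beta'*})$ can be rewritten as a sum of standard monomials using relations (\ref{eq:Z1})--(\ref{eq:Z4}), and the same rewriting is valid for $\underline{T}$. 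You instead prove multiplicativity upstairs on $\cT_X$ by a multiplicative-domain (Choi) argument: $\Psi(S_iS_i^*)=T_iT_i^*$ holds by construction, and $\Psi(S_i^*S_i)=\sum_{\alpha\in E_i^k}\underline{T}^\alpha\underline{T}^{\alpha*}=T_i^*T_i$ follows from (\ref{eq:SXimodK}), the vanishing of $\Psi$ on $\cK_X$, and relation (\ref{eq:Z4}); hence each generator lies in the multiplicative domain, which is a C$^*$-subalgebra, so $\Psi$ is a $*$-homomorphism on all of $\cT_X$. This is cleaner and more rigorous than the paper's rewriting argument, at the cost of one ordering constraint you should make explicit: the vanishing of $\Psi$ on the compacts must be established \emph{before} the computation of $\Psi(S_i^*S_i)$, since (\ref{eq:SXimodK}) only identifies $S_i^*S_i$ with $\sum_{\alpha\in E_i^k}\underline{S}^\alpha\underline{S}^{\alpha*}$ modulo a compact correction. (Also a notational nit: $\cT_X = C^*(\underline{S^X})$, the algebra of the $X$-shift, not $C^*(\underline{S})$ of the full shift.) With that reordering, your argument is complete.
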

\begin{proof}
It is the second assertion that is non-trivial, and we will try to convince that it is true. By Theorem \ref{thm:CP1}, there is unital completely positive map
\bes
\Psi : \cE_X \rightarrow B(H)
\ees
sending $\underline{S}^\alpha \underline{S}^{\beta*}$ to $\underline{T}^\alpha \underline{T}^{\beta*}$. Since enough of the rank one operators on $\mathfrak{F}_X$ arise as $\underline{S}^\alpha (I - \sum_{i = 1}^d S_i S_i^*) \underline{S}^{\beta*}$ (see equation (\ref{eq:PC})), and because $\underline{T}$ satisfies (\ref{eq:Z3}), we must have that $\Psi(K) = 0$ for every $K \in \cK(\mathfrak{F}_X)$. By Lemma \ref{lem:E=T}, $\cE_X = \cT_X$, and it follows that $\Psi$ induces a positive and unital (hence contractive) mapping
\bes
\pi : \cO_\Lambda \rightarrow B(H)
\ees
that sends $\underline{Z}^\alpha \underline{Z}^{\beta*}$ to $\underline{T}^\alpha \underline{T}^{\beta*}$. Roughly speaking: $\pi$ must be multiplicative because $\underline{Z}$ and $\underline{T}$ satisfy the same relations. In more detail: every product $(\underline{Z}^\alpha \underline{Z}^{\beta*})(\underline{Z}^{\alpha'} \underline{Z}^{\beta'*})$ may be written, using the relations (\ref{eq:Z1})-(\ref{eq:Z4}) as some sum $\sum_{\gamma,\delta} \underline{Z}^\gamma \underline{Z}^{\delta*}$. The mapping $\pi$ then takes this sum to $\sum_{\gamma,\delta} \underline{T}^\gamma \underline{T}^{\delta*}$, and this can be rewritten (using the same relations) as \bes
(\underline{T}^\alpha \underline{T}^{\beta*})(\underline{T}^{\alpha'} \underline{T}^{\beta'*}) =
\pi(\underline{Z}^\alpha \underline{Z}^{\beta*})\pi(\underline{Z}^{\alpha'} \underline{Z}^{\beta'*}).
\ees
This shows that
\bes
\pi\left((\underline{Z}^\alpha \underline{Z}^{\beta*})(\underline{Z}^{\alpha'} \underline{Z}^{\beta'*})\right) = \pi(\underline{Z}^\alpha \underline{Z}^{\beta*})\pi(\underline{Z}^{\alpha'} \underline{Z}^{\beta'*}),
\ees
and since the elements of the form $\underline{Z}^\alpha \underline{Z}^{\beta*}$ span $\cO_\Lambda$, and since $\pi$ is a positive linear map, it follows that $\pi$ is in fact a $*$-representation.
\end{proof}

%
%
%
%
%


\bibliographystyle{amsplain}

\end{document}